\numberwithin{equation}{section}
\theoremstyle{plain}
\newtheorem{thm}{Theorem}[section]
\newtheorem{prop}[thm]{Proposition}
\newtheorem{cor}[thm]{Corollary}
\newtheorem{lem}[thm]{Lemma}
\theoremstyle{definition}
\newtheorem{exa}[thm]{Example}
\newtheorem{rem}[thm]{Remark}
\newtheorem{defi}[thm]{Definition}
\newtheorem{prob}[thm]{Problem}
\newcommand{\mc}[1]{\mathcal{#1}}
\newcommand{\R}{\mathbb{R}}
\newcommand{\C}{\mathbb{C}}
\newcommand{\N}{\mathbb{N}}
\newcommand{\Z}{\mathbb{Z}}
\newcommand{\T}{\mathbb{T}}
\newcommand{\disc}{\mathbb{D}}
\newcommand{\An}{A}                    
\newcommand{\Ad}{\mathbf{A}}     
\newcommand{\bE}{\mathbb{E}}    
\newcommand{\cP}{\mathcal{P}}    
\newcommand\cL{\mathcal{L}}   
\newcommand{\Tr}{{\rm Tr}}        
\newcommand{\id}{{\rm 1}}            
\newcommand{\Id}{{\rm id}}            
\newcommand{\D}{\mathbf{D}}       
\newcommand{\Rot}{\mathbf{R}}       
\newcommand{\Law}{\mathcal{L}}  
\newcommand{\ep}{\varepsilon}     
\newcommand{\ii}{{\rm i}}               
\renewcommand{\Re}{\hspace{0.3mm}{\rm Re}}
\renewcommand{\Im}{\hspace{0.3mm}\text{\normalfont Im}}
\newcommand{\supp}{{\rm supp}} 
\newcommand{\Ind}{\mathbf{1}}   
\newcommand{\fD}{\mathfrak{D}}   
\newcommand{\M}{\mathbb{M}}    
\newcommand{\cE}{\mathcal{E}}    
\newcommand{\vv}{v}                    
\newcommand{\AS}{{\rm(AS)}\xspace}  
\newcommand{\ff}{\frac{\alpha t}{\alpha + (1-\alpha)t}}
\newcommand{\ID}{\mc{ID}}    
\newcommand{\bfC}{{\bf C}}    
\newcommand{\bfF}{{\bf F}}     
\newcommand{\bnu}{{\bm\nu}}       
\newcommand{\bfs}{\mathbf{s}}     
\newcommand{\bff}{\mathbf{f}}     
\newcommand{\bfc}{\mathbf{c}}    
\newcommand{\bfb}{\mathbf{b}}     
\renewcommand{\DH}{{\rm D \hspace{-0.5mm}H}}    
\newcommand{\Haar}{{\bf h}}  
  \def\mathcomposite{%
     \@ifstar
        {\def\@mathcomposite@option{%
            \baselineskip\z@skip\lineskiplimit-\maxdimen}%
         \@mathcomposite}%
        {\let\@mathcomposite@option\offinterlineskip
         \@mathcomposite}}
  \def\@mathcomposite{%
     \@ifnextchar[\@@mathcomposite{\@@mathcomposite[0]}}
  \def\@@mathcomposite[#1]#2#3#4{%
     #2{\mathchoice
        {\@mathcomposite@{#1}{#3}{#4}\displaystyle{1}}%
        {\@mathcomposite@{#1}{#3}{#4}\textstyle{1}}%
        {\@mathcomposite@{#1}{#3}{#4}%
         \scriptstyle\defaultscriptratio}%
        {\@mathcomposite@{#1}{#3}{#4}%
         \scriptscriptstyle\defaultscriptscriptratio}}}
  \def\@mathcomposite@#1#2#3#4#5{%
     \vcenter{\m@th\@mathcomposite@option
        \dimen@\f@size\p@\dimen@#1\dimen@\dimen@#5\dimen@
        \divide\dimen@ 18
        \edef\@mathcomposite@skipamount{\the\dimen@}%
        \ialign{\hfil$#4##$\hfil\cr
           #2\crcr
           \noalign{\vskip\@mathcomposite@skipamount}%
           #3\crcr}}}
\newcommand{\utimes}{\kern-0.1ex\mathcomposite[-12]{\mathrel}{\cup}{\times}}                         
\newcommand{\sutimes}{\kern-0.1ex\mathcomposite[-13.2]{\mathrel}{\cup}{\times}\kern-0.1ex}    
\def\tolaw{\overset{\rm law}{\longrightarrow}}    
\def\eqlaw{\overset{\rm  law}{=}}                       
\def\wto{\overset {\rm w}{\rightarrow}}               
\begin{document}
\title{Limit theorems for free L\'evy processes}

\author{Octavio Arizmendi and Takahiro Hasebe}   
\date{}
\maketitle
\abstract{We consider different limit theorems for additive and multiplicative free L\'evy processes.  The main results are concerned with positive and unitary multiplicative free L\'evy processes at small time, showing convergence to log free stable laws for many examples. The additive case is much easier, and we establish the convergence at small or large time to free stable laws. During the investigation we found out that a log free stable law with index $1$ coincides with the Dykema-Haagerup distribution. We also consider limit theorems for positive multiplicative Boolean L\'evy processes at small time, obtaining log Boolean stable laws in the limit. }
\tableofcontents

\section{Introduction}
\subsection{Background}

This article investigates the asymptotic behavior of additive and multiplicative free L\'evy processes (AFLP and MFLP, resp.) at small time and large time. 
These are the free analogs of L\'evy processes and were introduced by Biane \cite{Bia98} as particular cases of processes with free increments. There are two possibilities for doing these, depending on weather one considers stationary free increments or stationary Markov transition functions. We will only consider the former ones, since they are related directly to convolution semigroups for the free convolutions.
 
 In this setting there are various interesting questions which naturally appear as analogs of classical results. However, in the free world the answer to this questions sometimes are similar and sometimes can be quite different to the classical. 
  
The first question that we investigate is the following.  Given an AFLP $\{X_t\}_{t\geq0}$ such that $X_0=0$, when does the convergence in law of the process 
\begin{equation}\label{AFLP1}
a(t) X_t +b(t), \quad \text{as} \quad t\downarrow0  \quad \text{or} \quad t\to\infty, 
\end{equation} 
holds for some functions $a\colon (0,\infty) \to (0,\infty)$ and $b\colon (0,\infty) \to \R$?  
This problem can be settled by the Bercovici-Pata bijection, and the result has one-to-one correspondence with the classical case (see Section \ref{sec ALP}). In both cases of small time and large time, 
the set of limiting distributions is exactly the set of free stable distributions (Proposition \ref{prop stable}). It is notable that, in classical probability, while limit theorems for sums of independent random variables (discrete time case) have been well studied around 1930's and 1940's \cite{GK54}, limit theorems for L\'evy processes (continuous time case) were only settled rather recently by Maller and Mason \cite{MM08,MM09}. 

The second question that can be considered concerns, given a positive MFLP such that $X_0=\id$, the convergence in law of 
\begin{equation}\label{MFLP1}
b(t) (X_t)^{a(t)}, \quad \text{as} \quad t\to\infty, 
\end{equation}
where $a,b\colon (0,\infty) \to (0,\infty)$ are some functions. This problem was solved by Haagerup-M\"{o}ller \cite{HM}, following previous results of Tucci \cite{T10}. 
The set of possible limit distributions is completely known (see Section \ref{sec:Haagerup-Moeller}). In fact, for every positive MFLP $\{X_t\}_{t\geq0}$, the law of the process $(X_t)^{1/t}$ converges weakly to a probability measure $\nu$, and this map $\{X_t\}_{t\geq0} \mapsto \nu$  (more precisely, the map $\Law(X_1) \mapsto \nu$, where $\Law(X_1)$ is the law of $X_1$) is injective. This result is quite different from classical probability (see Proposition \ref{multiplicativeclassical}) where the limit distributions must be {\it log stable distributions},  which are push-forward of stable distributions by the map $x\mapsto e^x$. This terminology is adopted for other distributions as well, e.g.\ log Cauchy distributions. 
Note that in classical probability, additive and multiplicative classical L\'evy processes (ACLP and MCLP, resp.)\ can be identified by the exponential map, so one need not study MCLPs. However, due to the non-commutativity of processes, MFLPs cannot be identified with AFLPs by the exponential map. 

The third question to be considered is the limit in law of \eqref{MFLP1} at small time, namely 
\begin{equation}\label{MFLP2}
b(t) (X_t)^{a(t)}, \quad \text{as} \quad t\downarrow0, 
\end{equation}
where $\{X_t\}_{t\geq0}$ is a positive MFLP starting at $\id$, and $a,b\colon (0,\infty) \to (0,\infty)$ are some functions as before. However, as we will see, the situation is very different than for the large $t$ limit. The mains results in this direction are summarized in Section \ref{sec:main results}. 

A similar question one can consider is the limit distribution of 
\begin{equation}\label{MFLP3}
b(t) (U_t)^{a(t)},  \quad \text{as} \quad t\downarrow0, 
\end{equation}
where $\{U_t\}_{t\geq0}$ is a unitary MFLP such that $U_0=\id$ and $a\colon (0,\infty) \to \Z$ and $b\colon (0,\infty) \to \T$ are some functions. The function $a$ should take only integral values, since a power function $z^p$ is continuously defined on the unit circle only when $p$ is an integer. Notice that in this case we should talk about small time limits, since for large time, the distribution of $U_t$ spreads and hence we have to require $a(t) \to 0$ to get a non-Haar measure in the limit, but then $a(t) \equiv 0$ eventually. 

In other directions, we also consider the Boolean analogues of the processes \eqref{AFLP1} and \eqref{MFLP2}.  In the Boolean case we cannot talk about large time limits \eqref{MFLP1} since in generic cases positive multiplicative Boolean L\'evy processes (MBLP) do not exist at large time \cite{Ber06}. We do not analyze the unitary case in this paper.

We shall mention that another direction of study, not discussed in this paper, is the limit theorem for positive multiplicative monotone LPs, both when $t\to\infty$ and $t\downarrow0$ and also unitary ones when $t\downarrow0$. Also, even additive monotone LPs have some open problems (see Remark \ref{rem:monotone}). These problems are left to future research.

\subsection{Main results}\label{sec:main results}
Our main results are summarized in the following list. 

\begin{enumerate}[\rm(1)]
\item\label{Main1} The set of possible limit distributions of processes of the form \eqref{MFLP2} contains the following distributions:   
\begin{itemize} 
\item the log free stable distributions with index $1$, which contain log Cauchy distributions and the Dykema-Haagerup distribution (Theorems \ref{free Bessel} and \ref{limit} and Corollary \ref{cor DH}); 
\item some log free $\alpha$-stable distributions with $1 < \alpha \leq2$ (Theorem \ref{multi log FS} and Corollary \ref{cor LFS}). 
\end{itemize}
Moreover, we provide a general sufficient condition on $\{X_t\}_{t\geq0}$ and on functions $a$ and $b$ such that the law of \eqref{MFLP2} converges to the log Cauchy distribution (Theorem \ref{LCF}). 

\item\label{Main3} The set of possible limit distributions of \eqref{MFLP2}, now with $\{X_t\}_{0 \leq t \leq1}$ a positive MBLP, contains the log Boolean stable distributions with index $\leq1$. We provide a general condition on $\{X_t\}_{t\geq0}$, functions $a$ and $b$ such that the process converges in law (Theorems \ref{LCB} and \ref{LBS}). 

\item\label{Main4} The set of possible limit distributions of processes of the form \eqref{MFLP3} contains all ``wrapped free stable distributions'', which are the distributions of random variables $e^{\ii X}$ where $X$ follows a free stable law (Corollary \ref{cor UMFLP}). We also give a fairly large domain of attraction of a wrapped free stable distribution (Theorem \ref{thm UMFLP0}). A similar result is obtained for unitary MCLPs, which seems unknown in the literature. 
\end{enumerate}

Before going into the proofs, we would like to make some comments regarding the above results.

The Dykema-Haagerup distribution mentioned in \eqref{Main1} was introduced in \cite{DH04a} and it appeared as the limiting eigenvalue distribution of $T_N^* T_N$ where $T_N$ is an $N\times N$ upper-triangular random matrices with independent complex Gaussian entries. During our investigation of \eqref{Main1}, we discovered a mysterious fact that the Dykema-Haagerup distribution coincides with a log free $1$-stable law (Proposition \ref{prop DH}). 

One observation on the result \eqref{Main1} is that the limit distributions for positive MFLPs at small time seem to be universal, in contrast to the non-universal limit distributions of MFLPs at large time.

The proof of \eqref{Main1} is mostly based on the moment method. We find explicit MFLPs $\{X_t\}_{t\geq0}$ and explicit functions $a$ and $b$ such that the moments of \eqref{MFLP2} converge. A particularly strong result can be obtained for the convergence to log Cauchy distributions (Theorem \ref{LCF}). In this case we can reduce the problem to the Boolean case \eqref{Main3}, which is rather easy to analyze. This reduction procedure, however, needs a considerable generalization of free and boolean convolutions beyond probability measures, which we prepare in Section \ref{sec6}. 
After all our investigation, it remains open whether the set of possible limit distributions of \eqref{MFLP2} is exactly the set of all log free stable distributions.  

The term MBLP in \eqref{Main3} is not very rigorous since it is only defined in the sense of a convolution semigroup of distributions, and no operator model is known. Also, the convolution semigroup is only defined for time $t \in [0,1]$ in general. The proof of \eqref{Main3} is easier than the free case \eqref{Main1} and a more solid result can be proved. Thanks to a simple formula for multiplicative Boolean convolution, we can directly compute the density of the process, and show that it converges to the density of log Boolean stable distributions.

For the unitary case \eqref{Main4}, it again remains open whether the set of limit distributions of \eqref{MFLP3} is exactly the set of push-forwards of free stable distributions by the exponential map $x \mapsto e^{\ii x}$. 
The proof of \eqref{Main4} uses the (clockwise) exponential map $x \mapsto e^{-\ii x}$, in order to reduce unitary MFLPs to AFLPs. In spite of the non-commutativity of the process, such a reduction is possible, thanks to the work of Anshelevich and Arizmendi \cite{AA}. This method of using the exponential map has been limited to multiplicative convolutions on $\T$ so far, and not available to positive multiplicative convolutions, and hence not available to \eqref{Main1}.

\subsection{Organization of the paper}

Apart from this introduction there are six sections.  We introduce notations and preliminaries needed for the subsequents section in Section \ref{S2}. This includes standard background in free probability but also some useful lemmas on convergence of measures and the exponential map. In Section \ref{S3} we present, for completeness,  results which are known or which follow directly from other known results, including limit theorems for additive free L\'evy processes.  The main results are in the rest of the sections. More specifically, in Section \ref{sec PMFLP} we consider positive MFLPs at small time. This section is mostly devoted to give many examples of families for which we can prove convergence to log free stable distributions. The general result for log Cauchy is separated as Section \ref{sec6} since, on one hand, the proof is rather technical, and on the other hand, we introduce a class of generalized $\eta$-transforms which may be helpful in other problems in future. Section \ref{S5} is devoted to the positive MBLPs. Finally, in Section \ref{S7} we use the exponential map to study unitary MCLPs and MFLPs.

\section{Preliminaries} \label{S2}

\subsection{Notation}
\begin{enumerate}


\item $\C^+, \C^-$: the upper and lower half-planes of the complex plane $\C$, respectively.   

\item $\T$: the unit circle $\{z \in \C: |z| =1 \}$. 

\item $\disc$: the open unit disc $\{z \in \C: |z| <1 \}$. 

\item $\cP(T)$: the set of Borel probability measures on a topological space $T$. 

\item $\Law(X)$: the law of a random variable $X$ taking values in $\R$ or $\T$. 

\item $\mu^p, p \in\R$: the push-forward of a probability measure $\mu$ on $(0,\infty)$ by the map $x\mapsto x^p$. If $\mu$ is a probability measure on $[0,\infty)$ then we can define $\mu^p$ for $p \geq0$, and if $\mu$ is a probability measure on $\T$ then we define $\mu^n$ for $n\in\Z$.  

\item $\D_s(\mu), s \in \R$: the dilation of a probability measure $\mu$, that is, the push-forward of $\mu$ induced by the map $x\mapsto s x$. 

\item $\Rot_w(\mu), w \in \T$: the rotation of a probability measure $\mu$ on $\T$ induced by the map $z \mapsto w z$.

\item $z^\alpha, \log z$: the principal value unless specified otherwise. 

\end{enumerate}

\subsection{Classical convolution}
Recall that the classical convolution $\mu_1\ast\mu_2$ of Borel probability measures $\mu_1$ and $\mu_2$ on $\R$ is the law of $X_1+X_2$, where $X_1$ and $X_2$ are independent, $\R$-valued random variables such that $\Law(X_i)=\mu_i, i=1,2$. Equivalently, it is characterized by  
\begin{equation}
\int_\R f(x)\,d(\mu_1 \ast \mu_2)(x) = \int_{\R^2} f(x+y) \,d\mu_1(x)d\mu_2(y)
\end{equation}
for bounded continuous functions $f$ on $\R$. A central concept of this paper is {\it infinite divisibility}, which we define in a general framework for later use. 
\begin{defi}
Suppose that $\star$ is an associative binary operation on Borel probability measures on a topological space $T$. A Borel probability measure $\mu$ on $T$ is said to be $\star$-infinitely divisible (or $\star$-ID for short) if,  for any $n \in \N$, there exists a probability measure $\mu_n$ on $T$ such that $\mu=\mu_n ^{\star n}: = \mu_n \star \cdots  \star \mu_n$. The class of such probability measures is denoted by $\ID(\star)$ or $\ID(\star, T)$. 
\end{defi}

Recall also that if a probability measure $\mu$ on $\R$ is $\ast$-ID then its characteristic function has the L\'{e}vy-Khintchine representation (see e.g.\ \cite{GK54,Sat99})
\begin{equation}
\int_\R e^{\ii x z} \,d\mu(x)=\exp\left[\ii\xi z +\int_\R\left(e^{\ii z x}-1- \frac{\ii z x}{1+x^2}\right) \frac{1+x^2}{x^2}\,\tau(dx)\right],
\text{ \ \ }z\in \R,  \label{CLK}
\end{equation}
where $\xi \in \R$ and $\tau$ is a nonnegative finite Borel measure on $\R$. Conversely, given such a pair $(\xi,\tau)$, the RHS of \eqref{CLK} is the characteristic function of a $\ast$-ID distribution. The pair $(\xi,\tau)$ is unique and is called the {\it (additive) classical generating pair} of $\mu$. 
We denote by $\mu^{\xi,\tau}_\ast$ the $\ast$-ID distribution which has the classical generating pair $(\xi,\tau)$.
For each $\ast$-ID distribution $\mu$, there exists an ACLP $\{X_t\}_{t\geq0}$ such that $X_0=0$ and $\Law(X_1)=\mu$ (see \cite{Sat99}). Then the law of $X_t$ is denoted by $\mu^{\ast t}$, which is characterized by the generating pair $(t\xi, t \tau)$ and which forms a convolution semigroup, $\mu^{\ast s} \ast \mu^{\ast t}= \mu^{\ast (s+t)}$ for $s,t\geq0$.

\subsection{Free convolution} 
The {\it (additive) free convolution} $\mu_1 \boxplus \mu_2$ of (Borel) probability measures $\mu_1$ and $\mu_2$ on $\R$ is the distribution of noncommutative random variable $X_1+X_2$, where  $X_1, X_2$ are free selfadjoint random variables in some non-commutative probability space such that $\Law(X_1)=\mu_1$ and $\Law(X_2)=\mu_2$.  
Free convolution was first defined by Voiculescu \cite{Voi85,Voi86} for compactly supported distributions and then generalized by Maassen \cite{Maa92} for probability measures with finite variances, and finally by Bercovici-Voiculescu \cite{BV93} for arbitrary ones. 

Given a probability measure $\mu$ on $\R$, let 
\begin{equation}
G_\mu(z) = \int_{\R}\frac{\mu(dx)}{z-x},\qquad F_\mu(z)=\frac{1}{G_\mu(z)},\qquad z\in \C\setminus \R,
\end{equation} 
be the {\it Cauchy transform} and the \textit{reciprocal Cauchy transform (or $F$-transform)} of $\mu$, respectively. For $\alpha,\beta>0$, let 
$\Gamma_{\alpha,\beta}$ be the truncated cone 
\begin{equation}
\{z\in\C^+: |\Re(z)| < \alpha\Im(z), |z|>\beta\}.
\end{equation}
Bercovici and Voiculescu \cite{BV93} showed that for any $\alpha>0$, there exist $\beta,\alpha',\beta'>0$ such that $F_\mu$ is univalent in $\Gamma_{\alpha',\beta'}$ and $F_\mu(\Gamma_{\alpha',\beta'}) \supset \Gamma_{\alpha,\beta}$. Hence the right compositional inverse $F_\mu^{-1}$ of $F_\mu$ may be defined in $\Gamma_{\alpha,\beta}$. 
The \textit{Voiculescu transform} of $\mu$ is then defined by 
\begin{equation}
\varphi _{\mu }\left(z\right) =F_{\mu }^{-1}(z)-z,\qquad z \in \Gamma_{\alpha,\beta}. 
\end{equation}
The Voiculescu transform characterizes free convolution:  given two probability measures $\mu_1,\mu_2$ on $\R$,  the identity
\begin{equation}
\varphi_{\mu_1\boxplus\mu_2}(z) = \varphi_{\mu_1}(z) + \varphi_{\mu_2}(z)
\end{equation}
holds in the intersection of the domains of $\varphi_{\mu_1}, \varphi_{\mu_2}$ and $\varphi_{\mu_1\boxplus\mu_2}$.

A $\boxplus$-ID measure has a free analogue of the L\'{e}vy-Khintchine representation. 
\begin{thm}[Bercovici-Voiculescu \cite{BV93}] \label{thmBV93}
Let $\mu$ be a probability measure on $\R$. The following are equivalent. 
\begin{enumerate}[\quad\rm(1)] 
\item $\mu$ is $\boxplus$-ID. 
\item For any $t>0$, there exists a probability measure $\mu^{\boxplus t}$ satisfying $\varphi_{\mu^{\boxplus t}}(z) = t\varphi_\mu(z).$
\item There exist $\xi \in \R$ and a nonnegative finite Borel measure $\tau$ on $\R$ such that 
\begin{equation}
\varphi_{\mu}(z)=\xi+\int_\R \frac{1+z x}{z-x}\, \tau(d x),\qquad z\in \Gamma_{\alpha,\beta}.  \label{FLK}
\end{equation}
\end{enumerate}
Conversely, given a pair $(\xi,\tau)$ of a real number and a nonnegative finite Borel measure, there exists a $\boxplus$-ID distribution $\mu$ such that \eqref{FLK} holds. The pair $(\xi,\tau)$ is unique and is called the \textit{(additive) free generating pair} of $\mu$. 
\end{thm}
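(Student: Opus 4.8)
The plan is to prove the cycle $(3)\Rightarrow(2)\Rightarrow(1)\Rightarrow(3)$, and then the realizability and uniqueness claims of the converse, with the analytic heart being a Nevanlinna-type integral representation for $\varphi_\mu$. The organizing principle is that $\varphi_\mu$, a priori defined only on a truncated cone $\Gamma_{\alpha,\beta}$, detects $\boxplus$-infinite divisibility through whether it extends to a \emph{Pick-type function} on all of $\C^+$: an analytic function satisfying $\Im\varphi_\mu(z)\le 0$ there together with the sublinear growth $\varphi_\mu(\ii y)=o(y)$ as $y\to\infty$. The latter growth is automatic for any probability measure, since $G_\mu(z)\sim 1/z$ gives $F_\mu(z)=z(1+o(1))$ and hence $F_\mu^{-1}(z)=z(1+o(1))$ at infinity.

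The easy implication is $(2)\Rightarrow(1)$. Applying (2) with $t=1/n$ produces $\mu_n:=\mu^{\boxplus 1/n}$ with $\varphi_{\mu_n}=\tfrac1n\varphi_\mu$. By additivity of the Voiculescu transform, $\varphi_{\mu_n^{\boxplus n}}=n\cdot\tfrac1n\varphi_\mu=\varphi_\mu$ on a common cone; since a probability measure is determined by $\varphi_\mu$ (one recovers $F_\mu^{-1}$, hence $F_\mu$, $G_\mu$, and finally $\mu$ by Stieltjes inversion), we get $\mu_n^{\boxplus n}=\mu$, so $\mu$ is $\boxplus$-ID. The core is $(1)\Rightarrow(3)$. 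Writing $\mu=\mu_n^{\boxplus n}$, one first shows the array is \emph{infinitesimal}, i.e.\ $\mu_n\to\delta_0$ weakly, equivalently $F_{\mu_n}\to\Id$ uniformly on compacts of $\C^+$; this follows from the Bercovici--Voiculescu control of reciprocal Cauchy transforms on cones together with the fact that $\mu_n$ is an $n$-th $\boxplus$-root of a fixed law. Because each $F_{\mu_n}$ maps $\C^+$ into $\C^+$ with $\Im F_{\mu_n}(z)\ge\Im z$, the transforms $\varphi_{\mu_n}=F_{\mu_n}^{-1}-\Id$ take values in $\C^-\cup\R$ on ever larger domains, and the identity $\varphi_\mu=n\,\varphi_{\mu_n}$ lets one continue $\varphi_\mu$ analytically to all of $\C^+$ while preserving $\Im\varphi_\mu\le 0$ in the limit. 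Now apply the Nevanlinna representation to the Pick function $-\varphi_\mu$:
\[
-\varphi_\mu(z)=a+bz+\int_\R\frac{1+xz}{x-z}\,\rho(dx),\qquad a\in\R,\ b\ge 0,\ \rho\ \text{finite nonnegative}.
\]
Evaluating at $z=\ii y$ and using $\varphi_\mu(\ii y)=o(y)$ forces $b=0$; rearranging signs and setting $\xi=-a$, $\tau=\rho$ yields exactly \eqref{FLK}.

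For $(3)\Rightarrow(2)$ together with the converse existence statement, I would start from a pair $(\xi,\tau)$ and set $\varphi(z):=\xi+\int_\R\frac{1+xz}{z-x}\,\tau(dx)$, which is analytic on $\C^+$ with $\Im\varphi\le 0$ and sublinear growth. The realizability lemma asserts that any such function is the Voiculescu transform of a genuine probability measure: on a suitable cone $z\mapsto z+\varphi(z)$ satisfies the range and angle estimates characterizing reciprocal Cauchy transforms, so by the inverse function theorem it is $F_\mu^{-1}$ for a unique $\mu$, whence $\varphi=\varphi_\mu$. Replacing $(\xi,\tau)$ by $(t\xi,t\tau)$ gives, for every $t>0$, a probability measure $\mu^{\boxplus t}$ with $\varphi_{\mu^{\boxplus t}}=t\varphi_\mu$, which is (2); this simultaneously supplies the $\boxplus$-ID measure of the converse. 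Uniqueness of $(\xi,\tau)$ follows from uniqueness in the Nevanlinna representation: $\tau$ is recovered from the boundary values of $\Im\varphi_\mu$ by Stieltjes inversion and $\xi=\lim_{y\to\infty}\Re\varphi_\mu(\ii y)$, both determined by $\varphi_\mu$ and hence by $\mu$.

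The main obstacle is the analytic continuation inside $(1)\Rightarrow(3)$: upgrading $\varphi_\mu$ from the truncated cone to all of $\C^+$. This rests on proving infinitesimality $\mu_n\to\delta_0$ and the uniform convergence $F_{\mu_n}\to\Id$, and then on passing to the limit in $\varphi_\mu=n\,\varphi_{\mu_n}$ while keeping the domains of univalence under control and preserving $\Im\varphi_\mu\le 0$. These are precisely the quantitative estimates on reciprocal Cauchy transforms on cones (univalence and range) quoted before the theorem, and carrying them through uniformly is the delicate part; once the Pick-class membership is secured, the representation \eqref{FLK} is a routine consequence of Nevanlinna's theorem.
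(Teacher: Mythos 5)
The paper does not prove this theorem: it is imported verbatim from Bercovici--Voiculescu \cite{BV93} as background, so there is no internal proof to compare against. Your outline is, in substance, the original BV93 argument, and its architecture is sound: $(2)\Rightarrow(1)$ via $t=1/n$ and additivity of $\varphi$; $(1)\Rightarrow(3)$ by showing $\varphi_\mu$ extends to a Pick-type function on $\C^+$ with $\Im\varphi_\mu\le 0$ and then invoking the Nevanlinna representation, with the linear term killed by $\varphi_\mu(\ii y)=o(y)$; $(3)\Rightarrow(2)$ and the converse via the realizability lemma applied to $(t\xi,t\tau)$; uniqueness by Stieltjes inversion. The sign conventions and growth estimates you use all check out (e.g.\ $\Im F_\mu(w)\ge\Im w$ does give $\Im\varphi_\mu\le0$, and the Nevanlinna integral is indeed $o(y)$ along $\ii y$ by dominated convergence). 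That said, the three steps you yourself flag as delicate are precisely where all the work lives, and they are asserted rather than proved: (i) infinitesimality $\mu_n\to\delta_0$ of the $n$-th $\boxplus$-roots; (ii) the fact that the truncated cones on which $F_{\mu_n}$ is univalent (hence on which $\varphi_{\mu_n}$ is defined) exhaust $\C^+$ as $n\to\infty$ --- note the logical order matters here, since the domain growth is a consequence of infinitesimality via a Rouch\'e-type argument, not an independent fact; and (iii) the realizability lemma, which requires the local characterization of reciprocal Cauchy transforms on truncated cones (one only gets $F=H^{-1}$ on a cone and must then recognize it as the restriction of a genuine $F_\nu$). As a proposal this is an accurate map of the proof; as a proof it defers exactly the quantitative estimates of \cite{BV93} that make it go through.
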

We denote by $\mu_\boxplus^{(\gamma,\tau)}$ the $\boxplus$-ID distribution characterized by \eqref{FLK}. 
Similarly to classical probability, for each $\boxplus$-ID distribution $\mu$ there exists an AFLP $\{X_t\}_{t\geq0}$ (affiliated to a finite von Neumann algebra with normal faithful finite trace) such that $X_0=0$ and $\Law(X_t)=\mu^{\boxplus t}$ (see \cite{Bia98,B-NT02}). 

The bijection
\begin{equation}
\Lambda\colon  \ID(\ast) \to  \ID(\boxplus), \qquad \mu^{\xi,\tau}_\ast \mapsto \mu^{\xi,\tau}_\boxplus
\end{equation}
is called the {\it Bercovici-Pata bijection}. Moreover, this map is a homeomorphism with respect to weak convergence \cite[Corollary 3.9]{B-NT02}.

\subsection{Boolean convolution}
The \textit{Boolean convolution} $\mu_1\uplus\mu_2$ of probability measures $\mu_1$ and $\mu_2$ on $\R$ is the law of $X_1+X_2$ where $X_1$ and $X_2$ are Boolean independent selfadjoint random variables such that $\Law(X_1)=\mu_1$ and $\Law(X_2)=\mu_2$. Boolean convolution  was introduced by  Speicher and Woroudi \cite{SW97} for compactly supported probability measures and then by Franz \cite{Fra09a} for arbitrary ones.  

Boolean convolution is characterized by 
\begin{equation}
\eta_{\mu_1\uplus\mu_2}(z) = \eta_{\mu_1}(z) + \eta_{\mu_2}(z), \qquad z\in\C^-, 
\end{equation} 
where   
\begin{equation}\label{eta-f}
\eta_\mu(z)=1-zF_\mu\left(\frac{1}{z}\right),\qquad z\in\C^-, 
\end{equation}
which is called the {\it $\eta$-transform}. It can be proved that for any $t \geq 0$ and any probability measure $\mu$ on $\R$, there exists a probability measure $\mu^{\uplus t}$ which satisfies $\eta_{\mu^{\uplus t}}(z) = t \eta_\mu(z)$ in $\C^-$ \cite{SW97}.  This implies that every probability measure $\mu$ on $\R$ is $\uplus$-ID. 
Since $F_\mu$ is an analytic map from $\C^+$ into itself such that $F_\mu(z) = z(1+o(1))$ as $z \to \infty$ non-tangentially, it has the Pick-Nevanlinna representation 
\begin{equation}\label{PN}
F_\mu(z) = z - \xi+\int_\R \frac{1+z x}{x-z}\, \tau(d x),\qquad z\in\C^+, 
\end{equation}
where $\xi \in \R$ and $\tau$ is a nonnegative finite measure on $\R$. Conversely, if a map $F$ has the representation of the RHS of \eqref{PN}, it can be written as $F=F_\mu$ for some probability measure $\mu$. Thus we may denote by $\mu^{\xi,\tau}_\uplus$ the probability measure having the representation \eqref{PN}, and then define the bijection 
 \begin{equation}
 \Lambda_B\colon  \ID(\ast) \to  \ID(\uplus)=\cP(\R), \qquad \mu^{\xi,\tau}_\ast \mapsto \mu^{\xi,\tau}_\uplus
\end{equation}
which we call the {\it Boolean Bercovici-Pata bijection}. It can be proved that $\Lambda_B$ is a homeomorphism with respect to the weak convergence. A proof is not written in the literature but follows the free case \cite[Corollary 3.9]{B-NT02}.

\subsection{Stable distributions}\label{sec stable} 
Let $\star$ denote one of $\ast, \boxplus$ and $\uplus$. A non-degenerate probability measure $\mu$ on $\R$ is \emph{stable} (or free stable or Boolean stable, according to the choice of $\star$) if for every $a, b>0$ there exist $c>0, d \in \R$ such that 
\begin{equation}
\D_a(\mu) \star \D_b(\mu) = \D_c(\mu) \star \delta_d. 
\end{equation}
If we can always take $d=0$ then $\mu$ is said to be \emph{strictly stable}. There are several equivalent definitions of stable distributions (see \cite{Zol86}). 

Let $\Ad$ be the set of admissible parameters 
\begin{equation}\label{def_set_A}
\Ad:=\{\alpha \in (0,1], \; \rho \in [0,1]\} \cup 
\{\alpha\in(1,2], \; \rho \in [1-\alpha^{-1}, \alpha^{-1}]\}. 
\end{equation}
Up to scaling and shifts, stable distributions are classified by the admissible parameters. For $(\alpha,\rho)\in\Ad$ let $\bfs_{\alpha,\rho}$ be a classical stable distribution characterized by 
\begin{equation}\label{def_g_alpha_rho}
\int_{\R} e^{x z}  \,d\bfs_{\alpha,\rho}(x)=
\begin{cases}
\exp\left(- \frac{1}{\Gamma(1+\alpha)}e^{\ii \alpha \rho \pi}z^{\alpha}  \right), &\alpha\neq1, \\
\exp\left(-\ii \rho \pi z + (1-2\rho) z\log z  \right), &\alpha=1
 \end{cases}
\end{equation}
for $z\in \ii(-\infty,0)$, and let $\bff_{\alpha,\rho}$ be a free stable distribution characterized by 
\begin{equation}\label{Voiculescu}
\varphi_{\bff_{\alpha,\rho}}(z)=
\begin{cases}
-e^{\ii \alpha \rho \pi} z^{1-\alpha}, &\alpha\neq1, \\
-\ii \rho \pi - (1-2\rho) \log z, & \alpha=1,
\end{cases}
\end{equation}
for $z\in \C^+$. Note that the parametrization is changed from that of \cite{BP99}. The parameter $\rho$ expresses the mass on the positive line: $\rho = \bff_{\alpha,\rho}([0,\infty))$ if $\alpha \neq1$; see \cite{HK14}. The above free stable distributions $\bff_{\alpha,\rho}$ cover all the free stable distributions up to affine transformations; namely, the set 
$$
\{\D_a(\bff_{\alpha,\rho}) \boxplus \delta_b: a> 0, b \in \R, (\alpha,\rho) \in \Ad \}
$$ is equal to the set of free stable distributions.  Notice that the free shift $\boxplus \delta_b$ is equal to the usual shift $\ast \delta_b$. 

For notational simplicity we denote by $\bff_{\alpha}$ the free stable distribution with $\alpha \geq 1$ and $\rho =1-1/\alpha$, namely, 
$
\varphi_{\bff_{\alpha}}(z)= -(-z)^{1-\alpha} 
$
for $\alpha \in (1,2]$ and $\varphi_{\bff_{1}}(z)= - \log z$. The support of $\bff_{\alpha}$ is given by $\supp(\bff_1) =(-\infty,1]$, $\supp(\bff_\alpha) =[-\alpha(\alpha-1)^{1/\alpha-1},\infty)$ for $\alpha \in(1,2)$ and $\supp(\bff_2)=[-2,2]$. Note that $\bff_2$ is the standard semi-circular law. Further information on free stable laws is found in \cite{BP99,Dem11,HK14}.

The classical and free stable distributions are correspondent in terms of the Bercovici--Pata bijection: 
\begin{equation}
\bff_{\alpha,\rho} = \Lambda(\bfs_{\alpha,\rho}). 
\end{equation}

Boolean stable distributions are classified similarly. For later use we introduce an additional scaling parameter: 
\begin{equation}
F_{\bfb_{\alpha,\rho,r}}(z)= z + r e^{\ii \alpha \rho \pi} z^{1-\alpha}, \qquad z\in \C^+, (\alpha,\rho) \in\Ad, r>0.  
\end{equation}
The parameter $r>0$ corresponds to the convolution power and the dilation: $\bfb_{\alpha,\rho,r} =\bfb_{\alpha,\rho,1}^{\uplus r} = \D_{r^{1/\alpha}}(\bfb_{\alpha,\rho,1})$. For simplicity, we denote 
$\bfb_\alpha := \bfb_{\alpha,1,1}$ when $0<\alpha \leq1$. 
The Boolean stable laws have very explicit densities
 \begin{equation}
 \frac{d\bfb_{\alpha,\rho,r}}{dx}
 = 
 \begin{cases}
 \displaystyle \frac{ r \sin \alpha \rho\pi }{\pi} \cdot \frac{ x^{\alpha-1} }{x^{2\alpha} + 2r (\cos \alpha\rho\pi) x^\alpha + r^2}, & x>0, \\[5mm]
 \displaystyle \frac{ r \sin \alpha (1-\rho)\pi }{\pi} \cdot \frac{ |x|^{\alpha-1} }{|x|^{2\alpha} + 2r (\cos \alpha(1-\rho)\pi) |x|^\alpha + r^2} ,& x<0, 
\end{cases}
\end{equation}
see \cite{HS15}. For further information, see \cite{HS15,AH16} and the original article \cite{SW97}. 

Finally, we mention that the {\it Cauchy distribution} 
\begin{equation}
\bfc_{\beta,\gamma}(d x) = \frac{\gamma}{\pi}\cdot\frac{1}{(x-\beta)^2+\gamma^2}\Ind_{\R}(x)\,dx,\qquad\beta \in \R,\gamma>0, \qquad \bfc_{\beta,0}=\delta_\beta   
\end{equation}
plays a special role since it is a strictly $1$-stable distribution in classical, free and Boolean senses, and it satisfies 
\begin{equation}\label{special Cauchy}
\bfc_{\beta,\gamma} \ast \mu = \bfc_{\beta,\gamma} \boxplus \mu = \bfc_{\beta,\gamma} \uplus \mu
\end{equation}  
for all probability measures $\mu$. The Voiculescu transform of the Cauchy distribution is given by 
\begin{equation}
\varphi_{\bfc_{\beta,\gamma}}(z) = \beta -\ii \gamma,\qquad z\in\C^+. 
\end{equation}

\subsection{Multiplicative classical convolutions}
Let $G$ be either $(0,\infty)$ or $\T$. The multiplicative classical convolution $\mu_1\circledast\mu_2$ of Borel probability measures $\mu_1$ and $\mu_2$ on $G$ is the law of $X_1 X_2$, where $X_1$ and $X_2$ are independent, $G$-valued random variables such that $\Law(X_i)=\mu_i, i=1,2$. In fact for $G=(0,\infty)$, the multiplicative group $((0,\infty), \cdot)$ is isomorphic to the additive group $(\R,+)$ by the exponential map, and so L\'evy processes and probability measures on $(0,\infty)$ can be identified with those on $\R$. 
For the unit circle $G=\T$, such an identification is not possible since the map $x \mapsto e^{\ii x}$ from $\R$ to $\T$ is not injective. However, this map is still useful to prove limit theorems for L\'evy processes (see Sections \ref{exponential} and \ref{S7}).  

The structure of $\circledast$-ID distributions on $\T$ is well known. For simplicity let us avoid the case of vanishing mean; namely let $\ID_*(\circledast, \T)$ be the set of $\circledast$-ID distributions $\mu$ on $\T$ such that $\int_{\T} \zeta \,d\mu(\zeta)\neq0$. Any such measure has the L\'evy-Khintchine representation
\begin{equation}\label{CULK}
\int_{\T}\zeta^n \,d\mu (\zeta) = \gamma \exp\left( \int_{\T} \frac{\zeta^n -1- \ii n \Im(\zeta)}{1-\Re(\zeta)}\, d\sigma(\zeta) \right), \qquad n \in \Z, 
\end{equation}
where $\gamma \in\T$ and $\sigma$ is a finite Borel measure on $\T$. Conversely for any such pair $(\gamma,\sigma)$ there exists $\mu \in \ID_*(\circledast, \T)$ such that \eqref{CULK} holds. Note that given $\mu$ the pair $(\gamma,\sigma)$ is not unique. We call $(\gamma,\sigma)$ a {\it (multiplicative) classical generating pair} of $\mu$ and denote $\mu= \mu_\circledast^{\gamma,\sigma}$. To each generating pair $(\gamma,\sigma)$ and $t\geq0$ we can associate a probability measure $\mu_\circledast^{\gamma^t, t\sigma}$, denoted by $\mu^{\circledast t}$ if we write $\mu=\mu_\circledast^{\gamma, \sigma}$. Notice that a continuous function $t\mapsto \gamma^t$ is not uniquely defined, so we need to specify its branch. Once we choose a branch, we can associate a L\'evy process on $\T$ which has the distribution $\mu^{\circledast t}$ at time $t\geq0$. For further details see \cite{Ceb16,CG08,Par67}.

\subsection{Multiplicative free convolution on the positive real line}
For (Borel) probability measures $\mu$ and $\nu$ on $[0,\infty)$, the {\it multiplicative free convolution} $\mu \boxtimes \nu$ is the distribution of $X^{1/2}YX^{1/2}$, where $X$ and $Y$ are nonnegative free random variables with distributions $\mu$ and $\nu$, respectively. This binary operation was first introduced by Voiculescu for compactly supported probability measures \cite{Voi87}, and then by Bercovici and Voiculescu \cite{BV93} for general probability measures on $[0,\infty)$. The following presentation is based on \cite{BV93,BB05}. 

To investigate multiplicative free convolution, an important transform is the $\eta$-transform defined in \eqref{eta-f}. 
If $\mu \neq \delta_0$ is a probability measure on $[0,\infty)$, then the function $\eta_\mu$ is strictly increasing in $(-\infty, 0)$, $\eta_\mu(-0)=0$ and $\eta_\mu(-\infty)=1-1/\mu(\{0\})$ (which is $-\infty$ when $\mu(\{0\})=0$) so that 
we can define the compositional inverse map $\eta_\mu^{-1}$ and further define the {\it $\Sigma$-transform}
\begin{equation}
\Sigma_\mu(z) :=\frac{\eta_\mu^{-1}(z)}{z},\qquad  1-\frac{1}{\mu(\{0\})} < z <0.  
\end{equation}
For $\mu \neq \delta_0 \neq \nu$, the identity
\begin{equation}\label{eq sigma}
\Sigma_{\mu \boxtimes \nu}(z) = \Sigma_\mu(z) \Sigma_\nu(z) 
\end{equation}
holds in the common domain (of the form $(-\alpha, 0)$ for some $\alpha>0$) of the three $\Sigma$-transforms. 
A variant of the $\Sigma$-transform is the {\it $S$-transform}, which is related to $\Sigma$ by 
\begin{equation}
\Sigma_\mu(z) = S_\mu\!\left(\frac{z}{1-z}\right). 
\end{equation}
The $\boxtimes$-ID distributions on $[0,\infty)$ are characterized in the following way.  

\begin{thm}[Bercovici-Voiculescu \cite{BV92,BV93}]\label{BV} 
A probability measure $\mu \neq \delta_0$ on $[0,\infty)$ is $\boxtimes$-ID if and only if there exists a function $\vv_\mu$ satisfying the following: 
\begin{enumerate}[\quad\rm (1)] 
\item\label{MFID1} $\vv_\mu$ is analytic in $\C\setminus [0,\infty)$, $\vv_\mu(\overline{z})=\overline{\vv_\mu(z)}$ for $z\in\C^-$, and $\vv_\mu(\C^-) \subset \C^+ \cup \R$;  
\item\label{MFID2} $\Sigma_\mu(z)=e^{\vv_\mu(z)}$ for $z\in (1-1/\mu(\{0\}),0)$.  
\end{enumerate} 
Moreover, the conditions \eqref{MFID1} are equivalent to the Pick-Nevanlinna representation
\begin{equation}\label{eq FLK}
\vv_\mu(z)=-a z + b + \int_{[0,\infty)}\frac{1 +xz}{z-x}\,d\tau(x), 
\end{equation}
where $a \geq 0$, $b \in \R$ and $\tau$ is a non-negative finite measure on $[0,\infty)$. The triplet $(a, b,\tau)$ is unique.  
\end{thm}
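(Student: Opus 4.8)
The plan is to reduce everything to the additive structure carried by $\vv_\mu := \log \Sigma_\mu$. Since the $\Sigma$-transform multiplies under $\boxtimes$ by \eqref{eq sigma}, the function $\vv$ is additive, $\vv_{\mu\boxtimes\nu}=\vv_\mu+\vv_\nu$ on the common domain, and the relation $\mu=\mu_n^{\boxtimes n}$ translates into $\vv_{\mu_n}=\frac{1}{n}\vv_\mu$ on the interval $(1-1/\mu(\{0\}),0)$. Thus $\boxtimes$-ID is equivalent to the statement that $t\,\vv_\mu$ is again of the form $\log\Sigma_\nu$ for every $t>0$, and the whole theorem amounts to identifying the cone of functions arising this way with the class described by \eqref{MFID1}/\eqref{eq FLK}. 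I would therefore prove three implications separately: the equivalence of condition \eqref{MFID1} with the representation \eqref{eq FLK}; a realization statement that functions of this form are genuinely $\Sigma$-transforms of a semigroup; and the forward implication that $\log\Sigma_\mu$ of a $\boxtimes$-ID measure extends with the stated analytic and mapping properties.

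First I would dispose of the equivalence \eqref{MFID1}$\iff$\eqref{eq FLK}, which is a Nevanlinna--Pick representation adapted to the slit plane. The direction \eqref{eq FLK}$\Rightarrow$\eqref{MFID1} is a direct computation: for $x\ge0$ and $z\in\C^-$ one has $\Im\frac{1+xz}{z-x}=\frac{-(1+x^2)\Im z}{|z-x|^2}>0$ and $\Im(-az)=-a\,\Im z\ge0$, so $\vv_\mu(\C^-)\subseteq\overline{\C^+}=\C^+\cup\R$, while analyticity on $\C\setminus[0,\infty)$ and the reflection symmetry are immediate from $a,b\in\R$ and $\supp\tau\subseteq[0,\infty)$. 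For \eqref{MFID1}$\Rightarrow$\eqref{eq FLK} I would apply the classical Herglotz representation to the Pick function $-\vv_\mu$ (which, after reflection, maps $\C^+$ into $\overline{\C^+}$), obtaining $a\ge0$, $b\in\R$ and a finite positive measure $\tau$ on $\R$; the extra hypothesis that $\vv_\mu$ is analytic across $(-\infty,0)$ forces, by Stieltjes inversion, $\tau((-\infty,0))=0$, i.e.\ $\supp\tau\subseteq[0,\infty)$. Uniqueness of $(a,b,\tau)$ is inherited from the uniqueness in the Herglotz representation.

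Next, for the realization step (the ``if'' part of the main equivalence) I would use that the class \eqref{eq FLK} is a cone closed under $(a,b,\tau)\mapsto(ta,tb,t\tau)$, so it suffices to show that whenever $\vv$ satisfies \eqref{MFID1}, the function $e^{\vv}$ is the $\Sigma$-transform of some probability measure on $[0,\infty)$. Given such $\vv$, I would set $\Omega(z)=z\,e^{\vv(z)}$, show it is univalent on a suitable truncated cone with a half-plane-type image, invert it to produce a candidate $\eta$, define $F(w)=w\,(1-\eta(1/w))$ by inverting \eqref{eta-f}, and verify that $F$ has the reciprocal-Cauchy form \eqref{PN}; the converse half of that representation then furnishes a probability measure $\nu$ with $\Sigma_\nu=e^{\vv}$. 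Applying this to $t\,\vv_\mu$ for all $t>0$ builds the $\boxtimes$-semigroup interpolating $\mu$, hence $\boxtimes$-ID.

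The hard part, and the main obstacle, is the forward implication: a priori $\vv_\mu=\log\Sigma_\mu$ is defined only on the real interval $(1-1/\mu(\{0\}),0)$, and one must continue it analytically to all of $\C\setminus[0,\infty)$ and establish the mapping property $\vv_\mu(\C^-)\subseteq\overline{\C^+}$. I would realize $\mu$ as a limit of $\boxtimes$-convolutions of infinitesimal triangular arrays (equivalently, use the $n$-th roots $\mu_n$, which satisfy $\mu_n\to\delta_1$ so that $\eta_{\mu_n}\to\mathrm{id}$ on $\C\setminus[0,\infty)$), and then exploit the infinitesimal relation $w\,\vv_\mu(w)=-\lim_n n\,(\eta_{\mu_n}(w)-w)$, in which each $\eta_{\mu_n}(w)-w$ is already analytic on the slit plane. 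The delicate points are (i) proving that this limit exists locally uniformly off the real axis, not merely on the interval, and (ii) showing that the half-plane/sign structure survives the limit, since $\eta_{\mu_n}(w)-w$ is not itself a fixed-sign Herglotz function. Both require the quantitative control of the domains of $\eta_{\mu_n}$ and $\eta_{\mu_n}^{-1}$ for near-$\delta_1$ measures supplied by the Bercovici--Voiculescu theory of the $\Sigma$-transform, combined with the infinitesimality of the array; assembling these estimates so that the increments contribute small Herglotz-type pieces whose sum converges to the representation \eqref{eq FLK} is where the real work lies.
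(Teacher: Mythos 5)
First, a point of comparison: the paper does not prove this statement at all --- it is quoted from Bercovici--Voiculescu \cite{BV92,BV93} as a known theorem --- so your attempt can only be assessed on its own merits, not against an argument in the text. Your three-part architecture (Nevanlinna equivalence, realization of the semigroup, forward implication) is the standard and correct one, and the first part is essentially complete: the computation $\Im\frac{1+xz}{z-x}=\frac{-(1+x^2)\Im z}{|z-x|^2}$, the observation that the reflection symmetry makes $-\vv_\mu$ a Pick function on $\C^+$, and the use of Stieltjes inversion to kill the mass of $\tau$ on $(-\infty,0)$ are all sound, and uniqueness does follow from the Herglotz representation.

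The other two parts contain genuine gaps. In the realization step, verifying that your candidate $F$ has the form \eqref{PN} only produces a probability measure on $\R$; nothing in your outline forces the measure to be supported on $[0,\infty)$, nor that its $\Sigma$-transform agrees with $e^{\vv}$ on the negative half-line. For this one needs the finer characterization of $\eta$-transforms of measures on $[0,\infty)$ (analyticity on $\C\setminus[0,\infty)$, $\eta(\overline z)=\overline{\eta(z)}$, $\eta((-\infty,0))\subset(-\infty,0)$, and $\arg\eta(z)\in[\arg z,\pi)$ on $\C^+$), together with an actual univalence argument for $\Omega(z)=z e^{\vv(z)}$ on the whole negative half-line, not just a truncated cone. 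In the forward implication you explicitly stop before the proof, and your stated obstacle (ii) comes from looking at the wrong normalization: while $\eta_{\mu_n}(w)-w$ indeed has no fixed sign, the relevant quantity $g_n(w):=n\bigl(1-\eta_{\mu_n}(w)/w\bigr)=n\bigl(1+F_{\mu_n}(1/w)-1/w\bigr)$ does, because $F_{\mu_n}(u)-u$ has nonnegative imaginary part on $\C^+$ by \eqref{PN}; thus each $g_n$ already satisfies \eqref{MFID1}, hence has a representation \eqref{eq FLK}. Once one checks $g_n\to\vv_\mu$ pointwise on $(1-1/\mu(\{0\}),0)$ (using $\eta_{\mu_n}^{-1}(x)=x e^{\vv_\mu(x)/n}$ and $\mu_n\to\delta_1$, the latter itself requiring a short argument), a normal-families/Vitali argument upgrades this to local uniform convergence on the slit plane, and the class \eqref{eq FLK} is closed under such limits. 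So the forward direction is recoverable along your lines, but as written it is a plan rather than a proof, and the two items you defer are precisely where the content of the theorem lies.
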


\begin{rem}
If $\mu$ is $\boxtimes$-ID and $\mu\neq\delta_0$, then $\mu(\{0\})=0$ from \cite[Lemma 6.10]{BV93}. Therefore, we work only on probability measures on $(0,\infty)$ when considering $\boxtimes$-ID laws.  
\end{rem}

Given a probability measure $\mu\neq\delta_0$ on $[0,\infty)$ and $t\geq1$, there exists a unique probability measure 
$
\mu^{\boxtimes t}
$
on $[0,\infty)$ such that 
\begin{equation}
\Sigma_{\mu^{\boxtimes t}}(z) = \Sigma_\mu(z)^t
\end{equation}
on some interval $(-\alpha,0)$. The reader is referred to \cite{NS96,BB05} for further details.  

The free convolution power  
$
\mu^{\boxtimes t}
$
can be extended to arbitrary $t\geq0$ if (and only if) $\mu$ is $\boxtimes$-ID. Similarly to additive free convolution, for each $\boxtimes$-ID distribution $\mu$ on $(0,\infty)$ there exists a positive MFLP $\{X_t\}_{t\geq0}$ such that $X_0=\id$ and $\Law(X_t)=\mu^{\boxtimes t}$.

\subsection{Multiplicative free convolution on the unit circle}
The multiplicative free convolution $\mu\boxtimes\nu$ of probability measures in $\cP(\T)$ is the distribution of $UV$ when $U$ and $V$ are free unitary elements such that the laws of $U$ and $V$ are $\mu$ and $\nu$, respectively \cite{Voi87}.  
Let $\mu\in\cP(\T)$. Now, we consider $G_\mu(z)$ and $F_\mu(z)$ for $z$ outside the unit disc $\disc$, and $\eta_\mu(z) =1-zF_\mu\left(\frac{1}{z}\right)$ in the unit disc $\disc$.  
Suppose that the first moment $m_1(\mu)=\int_{\T} w\, d\mu(w)$ of $\mu$ is not zero. Then the function $\eta_\mu$ has a convergent series expansion $\eta_\mu(z)=m_1(\mu)z+ o(z)$, and so  
one can define the compositional inverse $\eta_\mu^{-1}(z)$ in a neighborhood of $0$ as a convergent series, and define 
\begin{equation}\label{sigma}
\Sigma_\mu(z) :=\frac{\eta_\mu^{-1}(z)}{z}
\end{equation}
in a neighborhood of $0$. 
Suppose that $m_1(\mu) \neq 0 \neq  m_1(\nu)$. Then the multiplicative free convolution is characterized by \cite{Voi87}
\begin{equation}
 \Sigma_{\mu \boxtimes \nu}(z) = \Sigma_\mu(z) \Sigma_\nu(z)  \label{boxtimes2}
\end{equation}
in a neighborhood of $0$. It is known that only the normalized Haar measure $\Haar$ is a $\boxtimes$-ID distribution with mean $0$. Thus we introduce the class $\ID_\ast(\boxtimes, \T):= \ID(\boxtimes, \T)\setminus\{\Haar\}$.

A probability distribution $\mu$ is a member of $\ID_\ast(\boxtimes, \T)$ if and only if $\Sigma_\mu$ can be written as \cite{BV92}
\begin{equation}\label{idm}
\Sigma_\mu(z)= \gamma^{-1}\exp\Big(\int_{\T}\frac{1 +\zeta z}{1-\zeta z}\sigma(d\zeta)\Big), \qquad z \in \disc, 
\end{equation}
where $\gamma \in \T$ and $\sigma$ is a non-negative finite measure on $\T$. The pair $(\gamma,\sigma)$ is unique and is called the {\it (multiplicative) free generating pair} of $\mu$. We denote by $\mu_\boxtimes^{\gamma,\sigma}$ the $\boxtimes$-ID distribution characterized by \eqref{idm}. 
The $\boxtimes$-infinite divisibility of $\mu$ is equivalent to the existence of a weakly continuous $\boxtimes$-convolution semigroup $\{\mu^{\boxtimes t}\}_{t \geq 0}$ with $\mu^{\boxtimes 0} = \delta_1$ and $\mu^{\boxtimes 1} = \mu$.  
This convolution semigroup can be realized as the law of a unitary MFLP, whose asymptotic behaviour at time $0$ is studied in Section \ref{S7}.

\subsection{Multiplicative Boolean convolution on the positive real line}\label{sec MBC}

There is no satisfactory definition of ``multiplicative Boolean convolution on $[0,\infty)$''.  Bercovici considered a possibility of an operation $\utimes$ defined by
\begin{equation}\label{Boolmult} \frac{\eta_\mu(z)}{z} \frac{\eta_\nu(z)}{z}=\frac{\eta_{\mu\sutimes\nu}(z)}{z},  \end{equation}
but the formula \eqref{Boolmult} does not always define a probability measure on $[0,\infty)$. In fact, Bercovici showed that the power $\mu^{\utimes n}$ does not exist for sufficiently large $n$ if $\mu \in \cP([0,\infty))$ is compactly supported and non-degenerate \cite{Ber06}. Franz also tried another definition of multiplicative Boolean convolution, which turned out to be non-associative \cite{Fra09a}. 
On the other hand, Bercovici proved that the formula
\begin{equation}\label{eq MBCP}
\frac{\eta_{\mu^{\sutimes t}}(z)}{z} = \left(\frac{\eta_{\mu}(z)}{z}\right)^t,\qquad z\in(-\infty,0),  
\end{equation}
defines a probability measure $\mu^{\utimes t}$ on $[0,\infty)$ for any $0 \leq t \leq1$ and any probability measure $\mu$ on $[0,\infty)$, and this definition works well e.g.\ in \cite{AH13}. We will investigate limit theorems for this Boolean convolution power in Section \ref{S5}.

\subsection{Multiplicative Boolean convolution on the unit circle}

The multiplicative Boolean convolution $\mu\utimes\nu$ of probability measures $\mu,\nu$ on $\T$ was defined by Franz \cite{Fra09b} as the distribution of $UV$ when $U$ and $V$ are unitary elements such that $U-1$ and $V-1$ are Boolean independent and such that $\Law(U)=\mu$ and $\Law(V)=\nu$. It is characterized in terms of the $\eta$-transforms by the formula 
\begin{equation}\label{Boolmult} \frac{\eta_\mu(z)}{z} \frac{\eta_\nu(z)}{z}=\frac{\eta_{\mu\sutimes\nu}(z)}{z}. \end{equation}
Similarly to the free case, only the normalized Haar measure is a $\utimes$-ID distribution with mean $0$, and so we set $\ID_*(\utimes,\T)=\ID(\utimes,\T) \setminus  \{\Haar\}$.  
In fact, a probability measure $\mu$ is a member of $\ID_*(\utimes,\T)$ if and only if $\eta_\mu'(0) \neq 0$ and $\eta_\mu$ does not have a zero in $\disc\setminus\{0\}$. This is also equivalent to the L\'evy--Khintchine representation 
\begin{equation}
\eta_{\mu}(z) = \gamma z \exp \left( - \int_{\T} \frac{1 + \zeta z}{1 - \zeta z} \,d\sigma(\zeta) \right), \qquad z \in \disc,  
\end{equation}
where $\gamma \in \T$ and $\sigma$ is a non-negative finite measure on $\T$. 
Similarly to the free case, $\utimes$-infinite divisibility of $\mu$ is equivalent to the existence of a weakly continuous $\utimes$-convolution semigroup $\{\mu^{\boxtimes t}\}_{t \geq 0}$ with $\mu^{\boxtimes 0} = \delta_1$ and $\mu^{\boxtimes 1} = \mu$ which can be realized as the law of a unitary MBLP. We do not analyze unitary MBLPs in this paper, but the class $\ID_*(\utimes,\T)$ is important for multiplicative free convolution; see Section \ref{exponential}.

\subsection{The wrapping map} \label{exponential}

\subsubsection{The classical case}
In the last section of this paper we will study unitary MFLPs. 
For this we will use the wrapping (or exponential) map $W\colon\cP(\R)\to \cP(\T)$ defined by  
\begin{equation}
\label{Wrapping}
d(W(\mu))(e^{-\ii x}) = \sum_{n \in \mathbb{Z}} d\mu(x + 2 \pi n).  
\end{equation}
Equivalently, the map $W\colon \mc{P}(\R) \rightarrow \mc{P}(\T)$ is the push-forward induced by the map $x\to e^{-\ii x}$. Namely, $W(\mu)$ equals $\Law(e^{-\ii X})$ when $\Law(X)=\mu$.  It is straightforward from the identity $e^{- \ii (X+Y)} = e^{-\ii X} e^{-\ii Y}$ that 
\begin{equation}\label{classical hom}
W(\mu \ast \nu) = W(\mu)\circledast W(\nu)
\end{equation}
for all probability measures $\mu$ and $\nu$ on $\R$, and hence $W$ maps $\ID(\ast, \R)$ into $\ID(\circledast, \T)$. From the computation \cite[Proposition 3.1]{Ceb16} we deduce the following formula for L\'evy--Khintchine representations.  
\begin{prop}\label{Wrap ID LK}
For $\mu_*^{\xi,\tau} \in \ID(\ast,\R)$ the measure $W(\mu_*^{\xi,\tau})$ has non zero mean, and the multiplicative classical generating pair $(\gamma, \sigma)$ of $W(\mu_*^{\xi,\tau})$ is given by 
\begin{align}
\gamma &= \exp\left[-\ii \xi - \ii\int_{\R}\left(\sin x -\frac{x}{1+x^2}\right) \frac{1+x^2}{x^2}\,d\tau(x) \right], \label{Wrapped ID gamma}
\end{align}
and 
\begin{align}
\frac{1}{1-\Re(\zeta)}\,d\sigma|_{\T\setminus\{1\}}(\zeta) &= d W\left(\frac{1+x^2}{x^2} \tau|_{\R\setminus\{0\}} \right)\left|_{\T\setminus\{1\}}\right. (\zeta), \label{Wrapped ID sigma}\\ 
\sigma(\{1\}) &= \frac{1}{2}\tau(\{0\}). \label{Wrapped ID sigma atom}
\end{align}
\end{prop}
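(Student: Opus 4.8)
The plan is to compute the Fourier coefficients (moments) of $W(\mu_\ast^{\xi,\tau})$ directly and match them term by term against the multiplicative L\'evy--Khintchine representation \eqref{CULK}, reading off $\gamma$ and $\sigma$. Since $W$ is the push-forward by $x\mapsto e^{-\ii x}$, so that $W(\mu_\ast^{\xi,\tau})=\Law(e^{-\ii X})$ with $\Law(X)=\mu_\ast^{\xi,\tau}$, for every $n\in\Z$ one has $\int_\T \zeta^n\,dW(\mu_\ast^{\xi,\tau})(\zeta) = \int_\R e^{-\ii n x}\,d\mu_\ast^{\xi,\tau}(x)$, which is the classical characteristic function \eqref{CLK} evaluated at $z=-n$. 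First I would substitute $z=-n$ into \eqref{CLK} to obtain the explicit exponent
\[
E_n = -\ii\xi n + \int_\R\Big(e^{-\ii n x}-1+\tfrac{\ii n x}{1+x^2}\Big)\tfrac{1+x^2}{x^2}\,\tau(dx),
\]
and then reorganize it so as to separate the part that is linear in $n$ (which produces the constant $\gamma$) from the genuinely circular part (which produces the spectral measure $\sigma$).

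The key algebraic step is to replace the additive compensator $\tfrac{\ii n x}{1+x^2}$ by the natural circular compensator $\ii n\sin x = -\ii n\,\Im(e^{-\ii x})$. Adding and subtracting this inside the integral splits the exponent as
\[
E_n = n\Big[-\ii\xi - \ii\int_\R\Big(\sin x-\tfrac{x}{1+x^2}\Big)\tfrac{1+x^2}{x^2}\,\tau(dx)\Big] + \int_\R\big(e^{-\ii n x}-1+\ii n\sin x\big)\tfrac{1+x^2}{x^2}\,\tau(dx),
\]
where the difference of the two compensators contributes a term linear in $n$ precisely because $\big(\tfrac{x}{1+x^2}-\sin x\big)\tfrac{1+x^2}{x^2}$ is $n$-independent. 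The bracketed coefficient is exactly $\log\gamma$ with $\gamma$ as in \eqref{Wrapped ID gamma}, so the first summand contributes the factor $\gamma^{n}$ demanded by \eqref{CULK}.

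For the remaining integral I would restrict to $\R\setminus\{0\}$ and apply the change of variables $\zeta=e^{-\ii x}$, under which $e^{-\ii n x}=\zeta^n$ and $\sin x=-\Im\zeta$, turning the integrand into $(\zeta^n-1-\ii n\,\Im\zeta)\tfrac{1+x^2}{x^2}$; by the definition \eqref{Wrapped ID sigma} of $\sigma$ on $\T\setminus\{1\}$ as $\tfrac{1}{1-\Re\zeta}\,d\sigma = dW\big(\tfrac{1+x^2}{x^2}\tau|_{\R\setminus\{0\}}\big)$, the push-forward converts this into $\int_{\T\setminus\{1\}}\frac{\zeta^n-1-\ii n\,\Im\zeta}{1-\Re\zeta}\,d\sigma(\zeta)$, matching the integrand of \eqref{CULK}. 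The atoms must be handled separately: a second-order Taylor expansion shows the integrand of the $\R$-integral tends to $-\tfrac{n^2}{2}$ as $x\to0$, while the integrand of the $\T$-integral tends to $-n^2$ as $\zeta\to1$, so the atom contributions agree iff $\sigma(\{1\})=\tfrac12\tau(\{0\})$, which is exactly \eqref{Wrapped ID sigma atom}. Assembling the pieces gives $\int_\T\zeta^n\,dW(\mu_\ast^{\xi,\tau}) = \gamma^{n}\exp\big(\int_\T\frac{\zeta^n-1-\ii n\,\Im\zeta}{1-\Re\zeta}\,d\sigma\big)$, so $(\gamma,\sigma)$ is a generating pair; finally, setting $n=1$ and using $\zeta-1-\ii\,\Im\zeta=-(1-\Re\zeta)$ yields $m_1(W(\mu_\ast^{\xi,\tau}))=\gamma\,e^{-\sigma(\T)}$, which is nonzero because $\sigma$ is finite, establishing the nonzero-mean claim.

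The main obstacle will be the treatment of the singularity at the origin: the L\'evy measure $\tfrac{1+x^2}{x^2}\tau$ is in general infinite near $0$, so both the change of variables and the push-forward identity must be justified through the quadratic vanishing of $e^{-\ii nx}-1+\ii n\sin x$ (equivalently of $\zeta^n-1-\ii n\,\Im\zeta$) at $x=0$, which guarantees integrability. Correctly bookkeeping the atom---in particular pinning down the factor $\tfrac12$ in $\sigma(\{1\})=\tfrac12\tau(\{0\})$---requires a careful expansion to second order. Since the underlying computation is that of \cite[Proposition 3.1]{Ceb16}, the genuine work is to organize this decomposition transparently rather than to establish any new estimate.
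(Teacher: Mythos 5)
Your computation is correct, and it is in fact more than the paper itself supplies: the paper gives no argument for Proposition \ref{Wrap ID LK}, deducing it directly from \cite[Proposition 3.1]{Ceb16}, whereas you reconstruct the underlying calculation. Your route --- evaluate the characteristic function \eqref{CLK} at $z=-n$, swap the additive compensator $\frac{\ii n x}{1+x^2}$ for the circular one $\ii n\sin x$ (legitimate because $\bigl(\sin x-\frac{x}{1+x^2}\bigr)\frac{1+x^2}{x^2}$ is bounded and $O(x)$ at the origin, hence $\tau$-integrable), push forward, and match atoms at $1$ via the second-order expansions $e^{-\ii n\theta}-1+\ii n\sin\theta\sim-\tfrac{n^2\theta^2}{2}$ and $1-\cos\theta\sim\tfrac{\theta^2}{2}$ --- is exactly the right decomposition, and your identification of the obstacle (the quadratic vanishing of $\zeta^n-1-\ii n\,\Im\zeta$ at $\zeta=1$ is what makes the integral against the generally infinite measure $W\bigl(\frac{1+x^2}{x^2}\tau|_{\R\setminus\{0\}}\bigr)$ converge, since $|\zeta^n-1-\ii n\,\Im\zeta|\leq C_n(1-\Re\zeta)$ and $\int(1-\Re\zeta)\,dW(\cdots)=\sigma(\T\setminus\{1\})<\infty$) is the correct one. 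Two small points you should make explicit in a final write-up. First, as stated in the paper \eqref{CULK} reads $\gamma$ rather than $\gamma^n$; your use of $\gamma^n$ is the correct normalization (it is forced by $n=0$ and by the semigroup convention $\mu_\circledast^{\gamma^t,t\sigma}$), so you are implicitly correcting a typo. Second, the preimage of $\{1\}$ under $x\mapsto e^{-\ii x}$ is $2\pi\Z$, not just $\{0\}$, so restricting the pushforward to $\T\setminus\{1\}$ discards any mass of $\tau$ at $2\pi k$, $k\neq0$; this causes no discrepancy because the integrand $e^{-\ii nx}-1+\ii n\sin x$ vanishes identically at those points, but it is the reason \eqref{Wrapped ID sigma atom} involves only $\tau(\{0\})$ and deserves a sentence (recall the multiplicative generating pair is in any case not unique).
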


We conclude this section by showing that the map $W|_{\ID(\ast)}$ is surjective onto $\ID_*(\circledast,\T)$. 

\begin{prop}\label{prop:wrapping-classical}
Given a $\circledast$-ID law $\mu_\circledast^{\gamma,\sigma}$ on $\T$, we define 
\begin{align}
\xi & = - \arg \gamma - \int_{\R\setminus\{0\}}\left( \sin x - \frac{x}{1+x^2} \right) \frac{1+x^2}{x^2} d\tau(x), \label{eq gamma} \\
\tau(d x) &= \frac{2}{1+x^2} \sum_{n\in\Z} (\tilde\sigma \ast \delta_{2\pi n})(d x), \label{ID wrap} 
\end{align}
where $\tilde\sigma$ is a measure on $[0,2\pi) \subset \R$ defined by $\tilde\sigma(A) = \sigma(\{e^{-\ii x}: x\in A\})$ for Borel subsets $A$, and $\arg \gamma$ is an arbitrary argument. Then $W(\mu_*^{\xi,\tau})= \mu_\circledast^{\gamma,\sigma}$. 
\end{prop}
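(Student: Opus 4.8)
The plan is to read Proposition \ref{Wrap ID LK} backwards. That proposition expresses the multiplicative classical generating pair of $W(\mu_\ast^{\xi,\tau})$ as an explicit function of $(\xi,\tau)$; call this pair $(\gamma',\sigma')$. Since a measure in $\ID_*(\circledast,\T)$ is determined by its moments $\int_\T \zeta^n\,d\mu$ through \eqref{CULK}, it is enough to prove that the pair $(\gamma',\sigma')$ produced from the $(\xi,\tau)$ of \eqref{eq gamma}--\eqref{ID wrap} equals the given pair $(\gamma,\sigma)$: then $W(\mu_\ast^{\xi,\tau})$ and $\mu_\circledast^{\gamma,\sigma}$ have the same Fourier coefficients and hence coincide. (In particular no appeal to uniqueness of generating pairs is needed.) Thus the proposition reduces to verifying the three identities \eqref{Wrapped ID gamma}, \eqref{Wrapped ID sigma}, \eqref{Wrapped ID sigma atom} with $(\gamma',\sigma')$ replaced by $(\gamma,\sigma)$.

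First I would check that the data are admissible, so that Proposition \ref{Wrap ID LK} applies. The measure $\tau$ of \eqref{ID wrap} is nonnegative, and it is finite because $\tau(\R)=\int_{[0,2\pi)}\big(\sum_{n\in\Z}\frac{2}{1+(y+2\pi n)^2}\big)\,d\tilde\sigma(y)\le C\,\sigma(\T)<\infty$, the inner series being uniformly bounded on $[0,2\pi)$. The integrand $g(x):=\big(\sin x-\frac{x}{1+x^2}\big)\frac{1+x^2}{x^2}$ appearing in \eqref{eq gamma} is bounded on $\R$ (it is $O(x)$ near $0$ and $O(1)$ at infinity), so the integral converges and $\xi\in\R$ is well defined. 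Hence $\mu_\ast^{\xi,\tau}\in\ID(\ast,\R)$ and Proposition \ref{Wrap ID LK} is available.

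The heart of the argument is the $\sigma$-identity \eqref{Wrapped ID sigma}. Using \eqref{ID wrap}, the factor $1+x^2$ cancels, giving $\frac{1+x^2}{x^2}\tau|_{\R\setminus\{0\}}=\frac{2}{x^2}\sum_{n\in\Z}(\tilde\sigma\ast\delta_{2\pi n})|_{\R\setminus\{0\}}$. Pushing this forward by $x\mapsto e^{-\ii x}$ and collecting the preimages $y+2\pi n$ $(n\in\Z)$ of a point $e^{-\ii y}\in\T\setminus\{1\}$ (so that $y\in(0,2\pi)$ and none of the preimages is $0$), I would obtain the weight $\sum_{n\in\Z}\frac{2}{(y+2\pi n)^2}$ against $\tilde\sigma$. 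The key analytic input is the classical series evaluation
\begin{equation*}
\sum_{n\in\Z}\frac{1}{(y+2\pi n)^2}=\frac{1}{4\sin^2(y/2)},
\end{equation*}
which, together with $1-\Re(e^{-\ii y})=1-\cos y=2\sin^2(y/2)$, turns that weight into exactly $\frac{1}{1-\Re(\zeta)}$. Since the push-forward of $\tilde\sigma$ by $x\mapsto e^{-\ii x}$ is $\sigma$ on $\T\setminus\{1\}$, this yields $dW\big(\frac{1+x^2}{x^2}\tau\big)\big|_{\T\setminus\{1\}}(\zeta)=\frac{1}{1-\Re(\zeta)}\,d\sigma|_{\T\setminus\{1\}}(\zeta)$, i.e.\ $\sigma'|_{\T\setminus\{1\}}=\sigma|_{\T\setminus\{1\}}$. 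I expect this push-forward/periodization computation, and in particular the matching of the resulting weight to the singular factor $1/(1-\Re\zeta)$ via the cotangent-type identity, to be the only real obstacle.

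It then remains to match the atom and the rotation constant. From \eqref{ID wrap} only the $n=0$ copy charges the point $0$, so $\tau(\{0\})=2\tilde\sigma(\{0\})=2\sigma(\{1\})$, and \eqref{Wrapped ID sigma atom} gives $\sigma'(\{1\})=\tfrac12\tau(\{0\})=\sigma(\{1\})$; combined with the previous paragraph, $\sigma'=\sigma$. Finally, since $g(0)=0$ one has $\int_\R g\,d\tau=\int_{\R\setminus\{0\}}g\,d\tau$, so substituting the definition \eqref{eq gamma} of $\xi$ into \eqref{Wrapped ID gamma} collapses the two integrals and leaves $-\ii\xi-\ii\int_\R g\,d\tau=\ii\arg\gamma$, whence $\gamma'=e^{\ii\arg\gamma}=\gamma$. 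Therefore $(\gamma',\sigma')=(\gamma,\sigma)$ and $W(\mu_\ast^{\xi,\tau})=\mu_\circledast^{\gamma,\sigma}$, as claimed.
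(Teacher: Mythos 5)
Your proposal is correct and follows essentially the same route as the paper's proof: reduce to verifying the three relations \eqref{Wrapped ID gamma}--\eqref{Wrapped ID sigma atom} of Proposition \ref{Wrap ID LK}, with the periodization of $\tilde\sigma$ and the series identity $\sum_{n\in\Z}(x-2\pi n)^{-2}=\tfrac{1}{2(1-\cos x)}$ (your $\tfrac{1}{4\sin^2(x/2)}$) doing the work for the $\sigma$-relation, and the atom and rotation constant checked directly. The extra admissibility checks (finiteness of $\tau$, convergence of the integral defining $\xi$) are details the paper leaves implicit but are correctly supplied.
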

\begin{proof} It suffices to check the three relations \eqref{Wrapped ID gamma}--\eqref{Wrapped ID sigma atom}. The first and the third ones are easy to check. For the second relation, considering the $2\pi$-periodicity of the measure $(1+x^2)\,d\tau(x)$, we obtain 
\begin{equation}
\begin{split}
d W\left(\frac{1+x^2}{x^2} d\tau(x)|_{\R\setminus\{0\}}\right)\left|_{\T\setminus\{1\}} \right. (e^{-\ii x}) 
&= \sum_{n\in\Z} \frac{1}{(x-2n \pi)^2} \left[(1+x^2) \tau(d x)\right]\left|_{(0,2\pi)} \right. \\
&= \frac{1}{2 (1-\cos x)} \left[(1+x^2) \tau(d x)\right]\left|_{(0,2\pi)} \right. \\
&= \frac{1}{1-\cos x} \,d\sigma|_{\T\setminus\{1\}}(e^{-\ii x}), 
\end{split}
\end{equation}
where we naturally identified the measure $[(1+x^2)\tau(d x)]|_{(0,2\pi)}$ with a measure on $\T \setminus \{1\}$ and used the identity 
\begin{equation}\label{series}
\sum_{n\in\Z} \frac{1}{(x-2\pi n)^2} = \frac{1}{2(1-\cos x)}. 
\end{equation}
Thus the second relation holds. 
\end{proof}

\subsubsection{The free case}

Furthermore, according to \cite{AA}, the map $W$ restricted to a subclass of probability measures provides a homomorphism from additive free/Boolean convolutions to multiplicative ones on the unit circle. Define  
\[
\mathcal{F}_{\cL} = \{F\colon \mathbb{C}^+ \rightarrow \mathbb{C}^+,~\text{analytic} \mid F(z + 2 \pi) = F(z) + 2 \pi\}
\]
and
\[
\cL = \{\mu \in \cP(\R) \ |\ F_\mu\in \mathcal{F}_{\cL}\}.
\]
An analytic function $F\colon \mathbb{C}^+ \rightarrow \mathbb{C}^+$  in $\mathcal{F}_{\cL}$ is the reciprocal Cauchy transform of some probability measure if and only if
\[
F(z) = z + f(e^{\ii z}),
\]
for some analytic transformation $f\colon \mathbb{D} \rightarrow \mathbb{C}^+$. Moreover, $\cL$ is closed under the three additive convolutions $\uplus, \boxplus$, and under Boolean additive convolution powers and free additive convolution powers whenever defined. On the other hand, for $\mu \in \cL$ and $n\in\Z$, we have
\begin{equation}\label{equal-convolutions}
\delta_{2 \pi n} \uplus \mu = \delta_{2 \pi n} \boxplus \mu = \delta_{2 \pi n} \ast \mu.
\end{equation}
Hence for $\mu,\nu \in \cL$ and a convolution $\star\in\{\ast,\boxplus,\uplus\}$, we may and do write "$\mu=\nu \mod \delta_{2\pi}$" if $\mu = \nu \star \delta_{2\pi n}$ for some $n\in\Z$. This defines an equivalence relation on $\cL$ independent of the choice of a convolution $\star$.   

It was proven in \cite{AA} that restricted to the class $\cL$ the map $W$ satisfies 
\begin{equation}\label{exp} 
\exp (\ii  F_\mu(z)) = \eta_{W(\mu)}(e^{\ii z}), \qquad z \in\C^+. 
\end{equation}
 Moreover, $W$ is weakly continuous and maps $\cL$ onto $\ID_*(\utimes,\T)$. While $(W|_\cL)^{-1}$ is not a bijection, the pre-image $(W|_\cL)^{-1}(\nu)$ of each $\nu\in \ID_*(\utimes,\T)$ is equal to the set $\{\mu \ast \delta_{2\pi n}: n \in \Z\}$, where $\mu$ is any probability measure in $(W|_\cL)^{-1}(\nu)$. The most important property is that $W|_\cL$ is a homomorphism between additive free and multiplicative free convolutions (also true for Boolean and monotone convolutions).

\begin{prop}[\cite{AA}]
\label{homomorphism}
For any $\mu_1, \mu_2 \in \cL$, we have 
\[
W(\mu_1 \boxplus \mu_2)=W(\mu_1) \boxtimes W(\mu_2).   
\]
Conversely, for any $\nu_1, \nu_2 \in \ID_*(\utimes,\T)$, we have 
\[
W^{-1}(\nu_1 \boxtimes \nu_2) = W^{-1}(\nu_1) \boxplus W^{-1}(\nu_2) \mod \delta_{2\pi}.
\]
\end{prop}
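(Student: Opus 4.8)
The plan is to reduce the whole statement to the single identity \eqref{exp}, namely $\exp(\ii F_\mu(z)) = \eta_{W(\mu)}(e^{\ii z})$, by converting it successively into a statement about $\eta^{-1}$, then about the $\Sigma$-transform, and finally about the Voiculescu transform. The guiding principle is that, under the substitution $\zeta = e^{\ii w}$, the multiplicativity of $\Sigma$ that characterizes $\boxtimes$ on $\T$ (see \eqref{boxtimes2}) becomes exactly the additivity of $\varphi$ that characterizes $\boxplus$ on $\R$. Since $W$ maps $\cL$ onto $\ID_*(\utimes,\T)$ and the Haar measure is the only $\utimes$-ID law with vanishing mean, every $W(\mu)$ with $\mu\in\cL$ has nonzero first moment, so that its $\Sigma$-transform is defined by \eqref{sigma} in a neighborhood of $0$; this legitimizes all the transforms used below.

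First I would invert \eqref{exp}. Fix $\mu\in\cL$ and, for $w$ in a truncated cone $\Gamma_{\alpha,\beta}$ with $\beta$ large, put $z=F_\mu^{-1}(w)\in\C^+$, so that $F_\mu(z)=w$ and $\Im z$ is large, whence $e^{\ii z}$ lies in the neighborhood of $0$ on which $\eta_{W(\mu)}$ is invertible. Then \eqref{exp} reads $\eta_{W(\mu)}(e^{\ii z}) = e^{\ii w}$, i.e.\ $\eta_{W(\mu)}^{-1}(e^{\ii w}) = e^{\ii F_\mu^{-1}(w)}$, and dividing by $e^{\ii w}$ gives
\begin{equation*}
\Sigma_{W(\mu)}(e^{\ii w}) = e^{\ii(F_\mu^{-1}(w)-w)} = e^{\ii\varphi_\mu(w)}, \qquad w\in\Gamma_{\alpha,\beta}.
\end{equation*}
For the direct statement, let $\mu_1,\mu_2\in\cL$ and note $\mu_1\boxplus\mu_2\in\cL$ by the closure of $\cL$ under $\boxplus$. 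On a common cone, \eqref{boxtimes2} and the additivity of the Voiculescu transform give
\begin{equation*}
\Sigma_{W(\mu_1)\boxtimes W(\mu_2)}(e^{\ii w}) = e^{\ii\varphi_{\mu_1}(w)}e^{\ii\varphi_{\mu_2}(w)} = e^{\ii\varphi_{\mu_1\boxplus\mu_2}(w)} = \Sigma_{W(\mu_1\boxplus\mu_2)}(e^{\ii w}).
\end{equation*}
Both $\Sigma$-transforms are analytic near $0$ and the points $e^{\ii w}$ accumulate at $0$ as $\Im w\to\infty$, so by the identity theorem the two $\Sigma$-transforms coincide near $0$; inverting back through $\eta^{-1}$, $\eta$ and $F$ yields $W(\mu_1)\boxtimes W(\mu_2)=W(\mu_1\boxplus\mu_2)$, which is the first identity.

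For the converse, given $\nu_1,\nu_2\in\ID_*(\utimes,\T)$ I would pick representatives $\mu_i\in(W|_\cL)^{-1}(\nu_i)\subset\cL$. The direct part gives $W(\mu_1\boxplus\mu_2)=\nu_1\boxtimes\nu_2$, and since $\mu_1\boxplus\mu_2\in\cL$ this also shows $\nu_1\boxtimes\nu_2\in W(\cL)=\ID_*(\utimes,\T)$, so that $W^{-1}(\nu_1\boxtimes\nu_2)$ is the whole fibre $\{(\mu_1\boxplus\mu_2)\ast\delta_{2\pi n}:n\in\Z\}$. Because a shift by $\delta_{2\pi n}$ is the same for $\ast$ and $\boxplus$ (by \eqref{equal-convolutions}), replacing $\mu_i$ by another representative $\mu_i\ast\delta_{2\pi k_i}$ only changes $\mu_1\boxplus\mu_2$ by $\ast\,\delta_{2\pi(k_1+k_2)}$; hence $W^{-1}(\nu_1)\boxplus W^{-1}(\nu_2)$ is well defined modulo $\delta_{2\pi}$ and equals $\mu_1\boxplus\mu_2 \bmod \delta_{2\pi}$, i.e.\ the fibre above. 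This is the second identity. The main obstacle I expect is not the algebra but the analytic bookkeeping in the inversion step: one must choose branches of $\log$ and $e^{\ii(\cdot)}$ consistently, verify that $F_\mu^{-1}$ indeed carries a truncated cone into $\C^+$ with $\Im z$ large (so that $e^{\ii z}$ falls in the disc where $\eta_{W(\mu)}$ is univalent and $\Sigma_{W(\mu)}$ is defined), and ensure the image $\{e^{\ii w}:w\in\Gamma_{\alpha,\beta}\}$ accumulates at $0$ inside the common domain of analyticity so that the identity theorem applies. Once these domain matchings are in place, the rest is the purely formal passage from products of $\Sigma$'s to sums of $\varphi$'s.
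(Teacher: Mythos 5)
The paper gives no proof of this proposition—it is quoted from \cite{AA}—so there is nothing internal to compare against; judged on its own, your argument is correct and is essentially the standard derivation (the one in \cite{AA}): invert \eqref{exp} to get $\Sigma_{W(\mu)}(e^{\ii w})=e^{\ii\varphi_\mu(w)}$ on a truncated cone, transfer the multiplicativity of $\Sigma$ in \eqref{boxtimes2} to the additivity of $\varphi$, and conclude by the identity theorem, with the converse following from the fibre description $(W|_\cL)^{-1}(\nu)=\{\mu\ast\delta_{2\pi n}\}$ and \eqref{equal-convolutions}. The domain and branch verifications you defer (that $F_\mu^{-1}$ maps a high truncated cone into a region where $e^{\ii z}$ lands in the univalence disc of $\eta_{W(\mu)}$, and that $\{e^{\ii w}\}$ is an open set near $0$) are routine and you have flagged exactly the right ones.
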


Recall that multiplicative convolution powers are in general multi-valued. This ambiguity can be naturally avoided using transformation $W$. 
\begin{prop}
\label{Prop:ID-preimage}
Let $\mu \in \cL$. Then the family of distributions $\{W(\mu^{\uplus t})\}_{t\geq0}$ defines a weakly continuous $\utimes$-convolution semigroup, which we express in the form 
\[
W(\mu^{\uplus t}) = W(\mu)^{\utimes t}. 
\]
Similarly, whenever $\mu^{\boxplus t}$ is defined, the family of distributions $\{W(\mu^{\boxplus t})\}_t$ defines a weakly continuous $\boxtimes$-convolution semigroup, which we denote by  
\[
W(\mu^{\boxplus t}) = W(\mu)^{\boxtimes t}. 
\]
Moreover, $W$ maps $\ID(\boxplus)\cap\cL$ onto $\ID_*(\boxtimes,\T)$. 
\end{prop}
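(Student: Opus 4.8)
The plan is to transport everything through the two structural facts about $W|_{\cL}$ recorded above: that it is weakly continuous, and that it intertwines the additive and multiplicative convolutions (Proposition \ref{homomorphism} and its Boolean analogue). First I would treat the Boolean case. Since every probability measure is $\uplus$-ID, the power $\mu^{\uplus t}$ is defined for all $t\ge0$, and by the stated closure of $\cL$ under Boolean convolution powers we have $\mu^{\uplus t}\in\cL$. The additive semigroup identity $\mu^{\uplus s}\uplus\mu^{\uplus t}=\mu^{\uplus(s+t)}$ (immediate from $\eta_{\mu^{\uplus t}}=t\,\eta_\mu$), together with the Boolean version of Proposition \ref{homomorphism}, gives $W(\mu^{\uplus s})\utimes W(\mu^{\uplus t})=W(\mu^{\uplus(s+t)})$, while $W(\mu^{\uplus0})=W(\delta_0)=\delta_1$. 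Weak continuity of $t\mapsto\mu^{\uplus t}$ (from continuity of $t\mapsto t\,\eta_\mu$) combined with weak continuity of $W$ makes $t\mapsto W(\mu^{\uplus t})$ weakly continuous, so $\{W(\mu^{\uplus t})\}_{t\ge0}$ is a weakly continuous $\utimes$-convolution semigroup starting at $\delta_1$; this is exactly the data exhibiting $W(\mu)$ as $\utimes$-ID and justifies writing it as $W(\mu)^{\utimes t}$. The free case is identical, now using Proposition \ref{homomorphism} directly, the closure of $\cL$ under free powers, and $\mu^{\boxplus s}\boxplus\mu^{\boxplus t}=\mu^{\boxplus(s+t)}$, for all $t$ for which $\mu^{\boxplus t}$ is defined.

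For the final assertion I would prove the two inclusions separately. The inclusion $W(\ID(\boxplus)\cap\cL)\subseteq\ID_*(\boxtimes,\T)$ is now easy: if $\mu\in\ID(\boxplus)\cap\cL$ then $\mu^{\boxplus t}$ is defined for every $t\ge0$, so the previous paragraph produces a weakly continuous $\boxtimes$-semigroup with time-$1$ value $W(\mu)$, whence $W(\mu)\in\ID(\boxtimes,\T)$; and since $W(\cL)\subseteq\ID_*(\utimes,\T)$ consists of measures with nonzero mean, $W(\mu)\neq\Haar$, giving $W(\mu)\in\ID_*(\boxtimes,\T)$.

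The surjectivity is the substantive part, and I would obtain it by mirroring the classical Propositions \ref{Wrap ID LK} and \ref{prop:wrapping-classical} in the free setting. The key computation is to read off the free multiplicative generating pair of $W(\mu)$ from the free additive generating pair $(\xi,\tau)$ of $\mu$. Using \eqref{exp} in the form $\eta_{W(\mu)}(e^{\ii z})=\exp(\ii F_\mu(z))$ and inverting, one finds $\eta_{W(\mu)}^{-1}(w)=w\exp(\ii\varphi_\mu(-\ii\log w))$, hence
\[
\Sigma_{W(\mu)}(w)=\exp\bigl(\ii\,\varphi_\mu(-\ii\log w)\bigr).
\]
Substituting the free L\'evy--Khintchine form \eqref{FLK} of $\varphi_\mu$ and converting the integral over $\R$ into an integral over $\T$ by the substitution $\zeta=e^{-\ii x}$ and the periodization identity \eqref{series}, one checks that the generating pair $(\gamma,\sigma)$ of $W(\mu)$ in \eqref{idm} is given by exactly the same formulas \eqref{Wrapped ID gamma}--\eqref{Wrapped ID sigma atom} as in the classical case. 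Granting this, surjectivity follows as in Proposition \ref{prop:wrapping-classical}: given $\nu=\nu_\boxtimes^{\gamma,\sigma}\in\ID_*(\boxtimes,\T)$, define $(\xi,\tau)$ by \eqref{eq gamma} and \eqref{ID wrap}. Then $(1+x^2)\,\tau$ is $2\pi$-periodic, so writing $\varphi_{\mu_\boxplus^{\xi,\tau}}(z)=\xi'+\int_\R (z-x)^{-1}(1+x^2)\,\tau(dx)$ and shifting $z\mapsto z+2\pi$ shows $\varphi_{\mu_\boxplus^{\xi,\tau}}$ is $2\pi$-periodic, hence $F_{\mu_\boxplus^{\xi,\tau}}^{-1}$ and therefore $F_{\mu_\boxplus^{\xi,\tau}}$ lie in $\mathcal{F}_{\cL}$, i.e. $\mu_\boxplus^{\xi,\tau}\in\ID(\boxplus)\cap\cL$; the generating-pair computation then yields $W(\mu_\boxplus^{\xi,\tau})=\nu$.

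The main obstacle is precisely this generating-pair computation. As in \cite{Ceb16} one must track the principal branches of $\log$ and $z\mapsto z^{\alpha}$ when passing between $\varphi_\mu(-\ii\log w)$ and the integral over $\T$, since the naive large-$z$ limit of $\varphi_\mu$ diverges for the relevant $2\pi$-periodic $\tau$ (which has $\int|x|\,d\tau=\infty$), and it is exactly the periodized contribution that produces the modulus $e^{\sigma(\T)}$ of $\Sigma_{W(\mu)}$ at the origin. One must also verify that the atomic part of the wrapping in \eqref{ID wrap} reproduces the boundary term \eqref{Wrapped ID sigma atom} and that the constant term matches \eqref{Wrapped ID gamma} for the prescribed choice of $\arg\gamma$. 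Everything else is a formal consequence of the homomorphism property and the weak continuity of $W$.
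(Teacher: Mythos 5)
Your argument is sound, but note that the paper gives no proof of this proposition at all: it is placed before the sentence ``The following two results are not stated in \cite{AA}, so we provide the proofs,'' i.e.\ the authors regard the semigroup statements and the surjectivity as consequences of \cite{AA} (specifically of Proposition \ref{homomorphism}, the closure properties of $\cL$, the weak continuity of $W$, and \cite[Proposition 26]{AA}). Your treatment of the two semigroup claims is exactly the intended formal argument and is complete: closure of $\cL$ under $\uplus$- and $\boxplus$-powers, the additive semigroup law, the homomorphism property of $W|_\cL$, $W(\delta_0)=\delta_1$, and weak continuity of both $t\mapsto\mu^{\uplus t}$ (resp.\ $t\mapsto\mu^{\boxplus t}$) and of $W$. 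The inclusion $W(\ID(\boxplus)\cap\cL)\subseteq\ID_*(\boxtimes,\T)$ via ``semigroup through $W(\mu)$ at $t=1$, and $W(\cL)\subseteq\ID_*(\utimes,\T)$ rules out $\Haar$'' is also correct.

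Where you genuinely diverge from the paper is the surjectivity. The paper's route (carried out in its Proposition \ref{prop:wrapping-free}) is the same in outline — define $(\xi,\tau)$ by \eqref{eq gamma} and \eqref{ID wrap}, use $2\pi$-periodicity of $(1+x^2)\,d\tau$ and Proposition \ref{prop shift-phi} to place $\mu_\boxplus^{\xi,\tau}$ in $\cL$, then match generating pairs — but the crucial fact that the multiplicative free generating pair of $W(\mu_\boxplus^{\xi,\tau})$ is given by \eqref{Wrapped ID gamma}--\eqref{Wrapped ID sigma atom} is simply imported from \cite[Proposition 26]{AA}. You instead propose to re-derive it from the identity $\Sigma_{W(\mu)}(w)=\exp\bigl(\ii\,\varphi_\mu(-\ii\log w)\bigr)$, which you correctly obtain by inverting \eqref{exp}. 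That is a legitimate and arguably more self-contained route, and it buys you independence from the external reference; but as written it is only a sketch — the passage from \eqref{FLK} to \eqref{idm} via $\zeta=e^{-\ii x}$ and \eqref{series}, with the branch bookkeeping and the compensation needed because $\int|x|\,d\tau=\infty$ for periodic $\tau$, is precisely the content you defer with ``one checks.'' You have correctly identified this as the only substantive obstacle; to make the proof complete you must either carry out that computation (the telescoping compensation in the paper's proof of Proposition \ref{prop shift-phi} shows how to handle the divergent linear term) or, as the authors do, cite \cite[Proposition 26]{AA} for it. Everything else in your proposal stands.
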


The following two results are not stated in \cite{AA}, so we provide the proofs. 

\begin{prop}\label{prop shift-phi} Let $\mu \in \ID(\boxplus)$ and let $\tau$ be the finite measure in \eqref{thmBV93}. The following conditions are equivalent. 
\begin{enumerate}[\rm(1)]
\item $\mu \in \cL$. 
\item $\varphi_\mu(z+2\pi) =\varphi_\mu(z)$ for all $z \in\C^+$. 
\item\label{periodic tau} The measure $(1+x^2) \tau(d x)$ is invariant under the shifts $2\pi n$ for all $n\in\Z$. 
\end{enumerate}
\end{prop}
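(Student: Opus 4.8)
The plan is to treat the two equivalences $(1)\Leftrightarrow(2)$ and $(2)\Leftrightarrow(3)$ separately, using throughout the free L\'evy--Khintchine representation $\varphi_\mu(z)=\xi+\int_\R\frac{1+zx}{z-x}\,\tau(dx)$ from Theorem \ref{thmBV93}; since its right-hand side is analytic on $\C^+$, this also shows that $\varphi_\mu$ extends analytically to all of $\C^+$, so condition (2) is meaningful there. It is convenient to introduce the (generally infinite) measure $\rho(dx)=(1+x^2)\,\tau(dx)$, and to record that $\int_\R(1+x^2)^{-1}\,d\rho(x)=\tau(\R)<\infty$, so that $\rho$ has exactly the growth for which Poisson integrals are well defined.

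For $(1)\Leftrightarrow(2)$, I would observe that both conditions assert that a map commutes with the translation $S(z)=z+2\pi$: condition (1) reads $F_\mu\circ S=S\circ F_\mu$ on $\C^+$, while (2), rewritten through $\varphi_\mu(z)=F_\mu^{-1}(z)-z$, reads $F_\mu^{-1}\circ S=S\circ F_\mu^{-1}$ on the truncated cone where $F_\mu^{-1}$ lives. These are equivalent by the general principle that a univalent map commutes with a translation iff its inverse does. Concretely, assuming (1), the map $z\mapsto F_\mu^{-1}(z+2\pi)-2\pi$ is a right inverse of $F_\mu$ on a subcone (using $F_\mu(w-2\pi)=F_\mu(w)-2\pi$), so by uniqueness it equals $F_\mu^{-1}$, giving (2) on that subcone and then on $\C^+$ by the identity theorem; the converse is the symmetric identity $F_\mu(F_\mu^{-1}(w)+2\pi)=F_\mu(F_\mu^{-1}(w+2\pi))=w+2\pi$. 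The only care needed is that $S$ maps a sufficiently deep subcone into $\Gamma_{\alpha,\beta}$, after which analytic continuation removes the domain restrictions.

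For $(2)\Leftrightarrow(3)$, I would pass to imaginary parts. A direct computation from the representation gives, for $z=u+\ii v$,
\[
\Im\varphi_\mu(u+\ii v)=-\int_\R\frac{v}{(u-x)^2+v^2}\,d\rho(x),
\]
so that $-\Im\varphi_\mu(\cdot+\ii v)$ is the Poisson integral of $\rho$. Condition (2) forces this to be $2\pi$-periodic in $u$, which says precisely that $\rho$ and its $2\pi$-translate have the same Poisson integral; since a measure with $\int(1+x^2)^{-1}\,d\rho<\infty$ is determined by its Poisson integral, this is equivalent to the $2\pi$-invariance of $\rho$, i.e.\ (3). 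The one remaining point, namely upgrading periodicity of $\Im\varphi_\mu$ to periodicity of $\varphi_\mu$ in the direction $(3)\Rightarrow(2)$, I would handle by noting that $\varphi_\mu(z+2\pi)-\varphi_\mu(z)$ is analytic on $\C^+$ with identically vanishing imaginary part, hence a real constant $c$, and then computing
\[
c=\int_\R\left(\frac{1}{z+2\pi-x}-\frac{1}{z-x}\right)d\rho(x)=0,
\]
where the vanishing follows by writing $\rho=\sum_{n\in\Z}(S_{2\pi n})_*(\rho|_{[0,2\pi)})$ and telescoping the resulting absolutely convergent (that is, $O(n^{-2})$) double series over the periods.

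The main obstacle, and the reason for routing everything through $\rho$ and Poisson kernels rather than bare Cauchy transforms, is that $\rho=(1+x^2)\tau$ is typically an infinite measure: the individual integrals $\int(z-x)^{-1}\,d\rho(x)$ need not converge, so one cannot split $\varphi_\mu$ into a constant plus a Stieltjes transform of $\rho$. Every step must therefore use kernels decaying like $x^{-2}$, namely the Poisson kernel in the imaginary-part identity and the telescoped difference in the evaluation of $c$, and the determinacy of $\rho$ from its Poisson integral must be invoked in the weighted class $\{\,\int(1+x^2)^{-1}\,d\rho<\infty\,\}$. A secondary but routine difficulty is the bookkeeping of domains and analytic continuation in $(1)\Leftrightarrow(2)$.
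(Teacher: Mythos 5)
Your argument is correct and follows essentially the same route as the paper: $(1)\Leftrightarrow(2)$ from the definition of $\cL$ via the inverse function, $(2)\Rightarrow(3)$ by recovering $(1+x^2)\,\tau(dx)$ from the boundary behaviour of $\Im\varphi_\mu$ (your Poisson-integral determinacy is exactly the Stieltjes inversion the paper invokes), and $(3)\Rightarrow(2)$ by the same telescoping over $2\pi$-periods of the $O(x^{-2})$ kernel. The only cosmetic difference is that you first reduce $\varphi_\mu(z+2\pi)-\varphi_\mu(z)$ to a real constant before evaluating it, whereas the paper substitutes the shift into the representing integral and kills the correction term directly.
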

\begin{proof}
The equivalence between (1) and (2) follows from the definition of the class $\mathcal{L}$.  That (2) implies (3) follows from the Stieltjes inversion formula. Indeed,  letting $\rho(d x) := (1+x^2)\tau(dx)$, we have that  
\begin{equation}
\begin{split}
\rho([a,b])
&=- \frac{1}{\pi}\lim _{y\downarrow 0}\int_a^b \Im\left[\varphi_\mu(x+\ii y)\right]dx \\
&= -\frac{1}{\pi} \lim _{y\downarrow 0}\int_a^b\Im\left[\varphi_\mu(x+2\pi+\ii y) \right] dx  \\
&=\rho([a+2\pi,b+2\pi]), 
\end{split}
\end{equation}
where $-\infty  < a < b <\infty$ are continuity points of $\rho$ and its $2\pi$ shift. Conversely, assume that (3) holds true. For simplicity, assuming $\xi=0$ we obtain 
\begin{equation}
\begin{split}
\varphi_\mu(z) 
&= \int_\R \left(\frac{1}{z-x}+\frac{x}{1+x^2} \right)\rho(dx) \\
&= \int_\R \left(\frac{1}{z-x}+\frac{x+2\pi }{1+(x+2\pi)^2} \right)\rho(dx) +  \int_\R \left(\frac{x}{1+x^2}-\frac{x+2\pi }{1+(x+2\pi)^2} \right)\rho(dx) \\
&= \int_\R \left(\frac{1}{z-(x-2\pi)}+\frac{x}{1+x^2} \right)\rho(dx)\\
&= \varphi_\mu(z+2\pi),  
\end{split}
\end{equation}
where we used the fact that
\begin{equation}
\begin{split}
& \int_\R \left(\frac{x}{1+x^2}-\frac{x+2\pi }{1+(x+2\pi)^2} \right)\rho(dx) \\
 &\qquad=   \lim_{n\to\infty}\int_\R \left(\frac{x+ 2\pi n }{1+(x+2\pi n)^2}-\frac{x+2\pi(n+1)}{1+(x+2\pi(n+1))^2} \right)\rho(dx) \\
 &\qquad=0.
\end{split}
\end{equation}
Thus (3) implies (2). 
\end{proof}


In the following we notice that the L\'evy-Khintchine representation used in \cite[Eq.\ (8)]{AA} for the $\Sigma$-transform was not correct, which should be replaced with \eqref{idm} in this paper. 
\begin{prop}\label{prop:wrapping-free}
Given a $\boxtimes$-ID law $\mu_\boxtimes^{\gamma,\sigma}$ on $\T$, let $(\xi, \tau)$ be defined as in \eqref{eq gamma} and \eqref{ID wrap}. Then the pre-images of $\mu_\boxtimes^{\gamma,\sigma}$ by the map $W|_{\cL}$ are given by the family $\{\mu_{\boxplus}^{\xi +2\pi n, \tau} \}_{n\in\Z} \subset \ID(\boxplus)\cap \cL$. 
\end{prop}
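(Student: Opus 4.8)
The plan is to prove the statement in three steps: (i) show $\mu_\boxplus^{\xi,\tau}\in\ID(\boxplus)\cap\cL$; (ii) verify the single wrapping identity $W(\mu_\boxplus^{\xi,\tau})=\mu_\boxtimes^{\gamma,\sigma}$; and (iii) upgrade this to the full description of the fibre $\{\mu_\boxplus^{\xi+2\pi n,\tau}\}_{n\in\Z}$. Steps (i) and (iii) are short. For (i), note that \eqref{ID wrap} gives $(1+x^2)\,d\tau(x)=2\sum_{n\in\Z}(\tilde\sigma\ast\delta_{2\pi n})(dx)$, which is manifestly invariant under all shifts by $2\pi n$; hence $\mu_\boxplus^{\xi,\tau}\in\cL$ by the equivalence (1)$\Leftrightarrow$(3) of Proposition \ref{prop shift-phi}, and then $W(\mu_\boxplus^{\xi,\tau})\in\ID_*(\boxtimes,\T)$ by Proposition \ref{Prop:ID-preimage}, so it admits a representation \eqref{idm}. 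For (iii), once (ii) is known, Proposition \ref{Prop:ID-preimage} tells us that the whole pre-image $(W|_\cL)^{-1}(\mu_\boxtimes^{\gamma,\sigma})$ is the coset $\{\mu_\boxplus^{\xi,\tau}\boxplus\delta_{2\pi n}:n\in\Z\}$; since $\varphi_{\delta_{2\pi n}}\equiv 2\pi n$, free generating pairs add, giving $\mu_\boxplus^{\xi,\tau}\boxplus\delta_{2\pi n}=\mu_\boxplus^{\xi+2\pi n,\tau}$, the claimed family.

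The heart of the argument is (ii), a direct computation of the $\Sigma$-transform. Writing $\mu=\mu_\boxplus^{\xi,\tau}$ and setting $\zeta=\exp(\ii F_\mu(z))$ in the identity \eqref{exp}, one inverts $\eta_{W(\mu)}$ explicitly: from $\eta_{W(\mu)}(e^{\ii z})=\exp(\ii F_\mu(z))$ and the relation $F_\mu^{-1}(u)=u+\varphi_\mu(u)$ one obtains, after simplification,
\[
\Sigma_{W(\mu)}(\zeta)=\exp\bigl(\ii\,\varphi_\mu(-\ii\log\zeta)\bigr),\qquad \zeta\in\disc .
\]
I would then substitute the free L\'evy--Khintchine form $\varphi_\mu(z)=\xi+\int_\R\frac{1+zx}{z-x}\,d\tau(x)$, rewritten (via the partial fraction $\frac{1+zx}{z-x}=\frac{1}{z-x}+x$, applied inside $\rho=(1+x^2)\tau$) as $\varphi_\mu(z)=\xi+\int_\R\bigl(\frac{1}{z-x}+\frac{x}{1+x^2}\bigr)\,d\rho(x)$, and compare with \eqref{idm}. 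Since the pair in \eqref{idm} is unique, it suffices to read off the generating pair of $W(\mu)$ and check it equals $(\gamma,\sigma)$.

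The combined integrand $\frac{1}{z-x}+\frac{x}{1+x^2}$ is $O(x^{-2})$, so the integral converges absolutely; grouping it by periods and using the $2\pi$-periodicity of $\rho$ from (i), I would separate the $\zeta$-dependent contribution from the $\zeta$-independent one. For the former, the first-power partial-fraction expansion of the cotangent (the antiderivative analogue of \eqref{series}), $\sum_{n\in\Z}\frac{1}{w-2\pi n}=\tfrac12\cot\frac{w}{2}$, together with $\tfrac12\cot\frac{z-x}{2}=-\tfrac{\ii}{2}\frac{1+\zeta\omega}{1-\zeta\omega}$ for $\zeta=e^{\ii z}$, $\omega=e^{-\ii x}$, collapses the folded sum of the $\frac{1}{z-x}$ terms into exactly $\int_\T\frac{1+\zeta\omega}{1-\zeta\omega}\,d\sigma(\omega)$, the integral in \eqref{idm}: indeed the $n=0$ term gives $\rho|_{[0,2\pi)}=2\tilde\sigma$, which pushes forward to $2\sigma$ under $x\mapsto e^{-\ii x}$, and the factor $\tfrac12$ cancels.

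The main obstacle is the $\zeta$-independent part, which must produce $\log\gamma^{-1}$. Here $\ii\xi$ is joined by the regularized value of the folded $\frac{x}{1+x^2}$ terms, a sum that is only conditionally convergent and must be summed symmetrically, precisely as in the proof of Proposition \ref{prop shift-phi}. The delicate point is to show that this regularized drift reproduces exactly the expression for $\xi$ in \eqref{eq gamma}, equivalently that $\gamma=\exp(-\ii\xi-\ii\,\mathrm{p.v.}\!\int\frac{x}{1+x^2}\,d\rho(x))$ coincides with the classical wrapping constant \eqref{Wrapped ID gamma} of Proposition \ref{Wrap ID LK}; this is the same bookkeeping underlying the classical case, now transported to the free setting. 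Once the constant term is matched, $\Sigma_{W(\mu)}$ equals the right-hand side of \eqref{idm} for $(\gamma,\sigma)$, whence $W(\mu)=\mu_\boxtimes^{\gamma,\sigma}$ and (ii) follows.
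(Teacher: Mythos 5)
Your argument is correct, but it takes a genuinely different route from the paper's. The paper outsources the analytic core to \cite[Proposition 26]{AA}: it quotes the fact that the multiplicative free generating pair of $W(\mu_\boxplus^{\xi,\tau})$ is given by the same formulas \eqref{Wrapped ID gamma}--\eqref{Wrapped ID sigma atom} as in the classical case, observes that with $(\xi,\tau)$ from \eqref{eq gamma} and \eqref{ID wrap} these return $(\gamma,\sigma)$, and then proves the converse inclusion by hand: for any pre-image $\mu_\boxplus^{\xi',\tau'}\in\cL$, the $2\pi$-periodicity of $(1+x^2)\,d\tau'$ together with \eqref{Wrapped ID sigma} and \eqref{series} forces $\tau'=\tau$ and $\xi'\in\xi+2\pi\Z$. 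You instead re-derive the generating pair of $W(\mu)$ from scratch via $\Sigma_{W(\mu)}(\zeta)=\exp\bigl(\ii\varphi_\mu(-\ii\log\zeta)\bigr)$, which is indeed a correct and single-valued consequence of \eqref{exp} and $F_\mu^{-1}=\Id+\varphi_\mu$ because $\varphi_\mu$ is $2\pi$-periodic by Proposition \ref{prop shift-phi}; and you obtain the full fibre from the coset description of $(W|_\cL)^{-1}$ -- note that this description is stated in the text preceding Proposition \ref{homomorphism}, not in Proposition \ref{Prop:ID-preimage} as you cite. Your route is more self-contained and arguably more illuminating, since the $\Sigma$-transform formula explains directly why additive free L\'evy--Khintchine data wraps to multiplicative data; the price is that the regularization of the drift, which the paper never has to touch, falls on you. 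That step, which you only assert, does go through and is worth recording: with $\rho=(1+x^2)\tau$, the absolutely convergent combination $\frac{1}{u-x}+\frac{x}{1+x^2}$ folds period by period into $\frac{1}{2}\cot\frac{u-x}{2}+h(x)$ with $h(x)=\sum_{n\in\Z}\frac{x+2\pi n}{1+(x+2\pi n)^2}$ (symmetric summation), and the identities $\frac{1}{w(1+w^2)}=\frac{1}{w}-\frac{w}{1+w^2}$ and $\sin x\sum_{n\in\Z}\frac{1}{(x+2\pi n)^2}=\frac{1}{2}\cot\frac{x}{2}=\sum_{n\in\Z}\frac{1}{x+2\pi n}$ yield
\begin{equation*}
\int_{[0,2\pi)}h(x)\,d\rho(x)=\int_{\R}\left(\sin x-\frac{x}{1+x^2}\right)\frac{1+x^2}{x^2}\,d\tau(x),
\end{equation*}
which is exactly the constant appearing in \eqref{Wrapped ID gamma}; hence your $\zeta$-independent term reproduces \eqref{eq gamma} and step (ii) closes, after which steps (i) and (iii) are sound as written.
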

\begin{proof}
The fact that $W(\mu_\boxplus^{\xi+2\pi n,\tau})=\mu_\boxtimes^{\gamma,\sigma}$ follows from \cite[Proposition 26]{AA}. Conversely, let $\mu_\boxplus^{\xi',\tau'}$ be a $\boxplus$-ID distribution in $\cL$ such that $W(\mu_\boxplus^{\xi',\tau'})=\mu_\boxtimes^{\gamma,\sigma}$. Note then that $(1+x^2)\,d\tau'(x)$ is $2\pi$-periodic by Proposition \ref{prop shift-phi} \eqref{periodic tau}. Again according to \cite[Proposition 26]{AA}, the pair $(\gamma,\sigma)$ is determined by \eqref{Wrapped ID gamma}--\eqref{Wrapped ID sigma atom} with $(\xi,\tau)$ replaced by $(\xi',\tau')$. Using \eqref{Wrapped ID sigma} and \eqref{series} shows that 
\begin{equation}
\begin{split}
\frac{1}{1-\cos x} \,d\sigma|_{\T\setminus\{1\}}(e^{-\ii x}) 
&= d W\left(\frac{1+x^2}{x^2} d\tau'(x)|_{\R\setminus\{0\}}\right) \left|_{\T\setminus\{1\}} \right. (e^{-\ii x})\\
&= \sum_{n\in\Z} \frac{1}{(x-2n \pi)^2} \left[(1+x^2) \tau'(d x)\right]\left|_{(0,2\pi)} \right. \\
&= \frac{1}{2 (1-\cos x)} \left[(1+x^2) \tau'(d x)\right]\left|_{(0,2\pi)}, \right. 
\end{split}
\end{equation}
where we naturally identified the measure $[(1+x^2)\tau'(d x)]|_{(0,2\pi)}$ with a measure on $\T \setminus \{1\}$. The same computation holds for $\tau$ instead of $\tau'$. Considering $\tau'(\{0\})= 2\sigma(\{1\}) = \tau(\{0\})$, we have $(1+x^2)\,d\tau'(x) = (1+x^2)\,d\tau'(x)$ on $[0,2\pi)$, and by periodicity, on $\R$.  This shows that $\tau'= \tau$. It is easy to show that $\xi'=\xi+ 2\pi n$ for some $n\in\Z$ from \eqref{Wrapped ID gamma}.  
\end{proof}


\subsection{Convergence of probability measures}
This section gives several facts on convergence in law of random variables. Most results are elementary. 

\begin{lem}\label{lemma limit1} Let $X,X_t, t>0$ be $\R$-valued random variables such that $X_t \tolaw X$ as $t\downarrow0$, and let $a,b\colon(0,\infty) \to \R$ be functions such that $a(t) \to \alpha \in \R, b(t)\to \beta \in\R$ as $t\downarrow0$. Then 
$$
a(t)X_t +b(t) \tolaw \alpha X +\beta \quad \text{as} \quad t\downarrow0. 
$$ 
\end{lem}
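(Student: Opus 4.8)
The plan is to deduce the statement from almost sure convergence by coupling, via the Skorokhod representation theorem, thereby avoiding any moment control on the $X_t$. First I would observe that convergence in law as $t\downarrow0$ is equivalent to convergence along every sequence $t_n\downarrow0$; it therefore suffices to fix an arbitrary such sequence and prove $a(t_n)X_{t_n}+b(t_n)\tolaw \alpha X+\beta$. Since $X_{t_n}\tolaw X$ and $\R$ is Polish, the Skorokhod representation theorem supplies random variables $\tilde X_n$ and $\tilde X$ on a common probability space with $\Law(\tilde X_n)=\Law(X_{t_n})$, $\Law(\tilde X)=\Law(X)$, and $\tilde X_n\to\tilde X$ almost surely.

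The core computation is then elementary. Because $a(t_n)\to\alpha$ and $b(t_n)\to\beta$ are deterministic, the bound
\[
|a(t_n)\tilde X_n+b(t_n)-(\alpha\tilde X+\beta)|\le |a(t_n)|\,|\tilde X_n-\tilde X|+|a(t_n)-\alpha|\,|\tilde X|+|b(t_n)-\beta|
\]
shows that $a(t_n)\tilde X_n+b(t_n)\to\alpha\tilde X+\beta$ almost surely: the first term tends to $0$ since $\tilde X_n\to\tilde X$ a.s.\ and $a(t_n)$ is bounded, while the remaining two terms are deterministic and vanish. Almost sure convergence implies convergence in law, and since $\Law(a(t_n)\tilde X_n+b(t_n))=\Law(a(t_n)X_{t_n}+b(t_n))$ and $\Law(\alpha\tilde X+\beta)=\Law(\alpha X+\beta)$, the convergence along $(t_n)$ follows. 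As the sequence was arbitrary, the lemma holds.

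The only genuinely delicate point is that the deterministic factor $a(t)$ multiplies the random variable $X_t$: a direct estimate against a Lipschitz test function $f$ would split $\bE[f(a(t)X_t+b(t))]-\bE[f(\alpha X_t+\beta)]$ off and require bounding something like $|a(t)-\alpha|\,\bE|X_t|$, but convergence in law gives no control of $\bE|X_t|$. The Skorokhod coupling sidesteps this entirely by replacing the product with a genuinely convergent a.s.\ quantity times a convergent scalar. If one preferred a characteristic-function argument instead, the same obstruction reappears as the need to evaluate $\bE[e^{\ii s a(t) X_t}]$ at the \emph{moving} argument $sa(t)$; this can be salvaged by noting that weak convergence forces tightness of $\{X_t\}_{t\downarrow0}$, which upgrades the pointwise convergence of characteristic functions to locally uniform convergence, but the coupling proof is cleaner and is the route I would take.
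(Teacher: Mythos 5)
Your proof is correct, but it takes a genuinely different route from the paper's. The paper argues directly with bounded continuous test functions: it fixes a sequence $t_n\downarrow0$, uses tightness of the weakly convergent family $\{\Law(X_{t_n})\}$ to truncate to a compact interval $[-M,M]$ up to mass $\ep$, and then controls $|f(a(t_n)x+b(t_n))-f(\alpha x+\beta)|$ on that compact set by the uniform continuity of $f$ there --- this is exactly how the paper sidesteps the ``moving scale factor'' obstruction you correctly identify (no moment of $X_t$ is ever needed, only tightness). Your Skorokhod coupling achieves the same end by converting weak convergence into almost sure convergence, after which the triangle-inequality bound is immediate. The trade-off is the usual one: the coupling argument is shorter and conceptually cleaner but invokes the Skorokhod representation theorem as a black box, whereas the paper's argument is entirely elementary and self-contained. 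One small imprecision in your write-up: the middle term $|a(t_n)-\alpha|\,|\tilde X|$ is not deterministic, since it involves the random variable $\tilde X$; it nevertheless vanishes almost surely because $|\tilde X(\omega)|<\infty$ for almost every $\omega$, so the conclusion stands. Your closing remarks on why a naive Lipschitz estimate fails, and on the characteristic-function alternative via tightness and locally uniform convergence, are both accurate.
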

\begin{proof}  Define $\mu_t := \Law(X_t)$ and $\mu:= \Law(X)$. 
Take any $\ep>0, f \in C_b(\R)$ and take any decreasing sequence $\{t_n\}_{n\geq1}$ such that $\lim_{n\to\infty}t_n=0$. Since $\mu_{t_n} \wto \mu$, there exists $M>0$ such that $\sup_{n\in\N}\mu_{t_n}([-M,M]^c) <\ep$. Then 
\begin{equation}
\begin{split}
&\left| \bE[ f( a(t_n) X_{t_n} +b(t_n)) ] - \bE[f(\alpha X +\beta)] \right| \\
&\leq \left| \int_{\R} f(a(t_n) x + b(t_n)) \, d\mu_{t_n} - \int_{\R} f(\alpha x +\beta) \,d\mu_{t_n}\right| \\
&\qquad+  \left| \int_{\R} f(\alpha x+\beta) \,d\mu_{t_n} - \int_{\R} f(\alpha x+\beta) \, d\mu\right|\\
&\leq  2 \|f\|_\infty \mu_{t_n}([-M,M]^c) +  \int_{[-M,M]} \left| f(a(t_n) x+b(t_n))-f(\alpha x+\beta)\right| \, d\mu_{t_n}\\ 
&\qquad+\left| \int_{\R} f(\alpha x+\beta) \,d\mu_{t_n} - \int_{\R} f(\alpha x+\beta) \, d\mu\right|. 
\end{split}
\end{equation}
The first term is bounded by $2 \|f\|_\infty\ep$, the second integral is bounded by $\ep$ for large $n \in\N$ by the uniform continuity of $f$ on finite intervals, and the third integral tends to 0 as $n\to\infty$ by the weak convergence $\mu_{t_n}\wto\mu$. Thus we have shown that $\bE[ f( a(t_n) X_{t_n} +b(t_n)) ] \to \bE[f(\alpha X +\beta)]$ for any sequence $t_n\downarrow0$, which implies that $\bE[ f( a(t) X_{t} +b(t)) ] \to \bE[f(\alpha X +\beta)]$ as $t\downarrow0$. 
\end{proof} 

This lemma can be expressed in the multiplicative form. 
\begin{lem}\label{lemma limit2}
 Let $X, X_t$ be $(0,\infty)$-valued random variables for $t>0$ such that $X_t \tolaw X$ as $t\downarrow0$, and let 
$a\colon (0,\infty) \to \R$ and $b\colon (0,\infty)\to (0,\infty)$ be functions such that $a(t) \to \alpha \in \R, b(t)\to \beta \in(0,\infty)$ as $t\downarrow0$. Then 
$$
b(t)(X_t)^{a(t)} \tolaw \beta X^\alpha \quad \text{as} \quad t\downarrow0. 
$$
\end{lem}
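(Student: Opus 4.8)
The plan is to reduce everything to the already-established additive statement, Lemma \ref{lemma limit1}, by passing through the logarithm, which turns the multiplicative structure on $(0,\infty)$ into the additive structure on $\R$. The key observation is that $x \mapsto \log x$ is a homeomorphism of $(0,\infty)$ onto $\R$ with inverse $x \mapsto e^x$, so weak convergence is preserved in both directions by the continuous mapping theorem.

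First I would set $Y_t := \log X_t$ and $Y := \log X$, which are $\R$-valued random variables. Since $\log$ is continuous on $(0,\infty)$, the hypothesis $X_t \tolaw X$ and the continuous mapping theorem give $Y_t \tolaw Y$ as $t \downarrow 0$. Next I would rewrite the quantity of interest in exponential form,
$$
b(t)(X_t)^{a(t)} = \exp\bigl(a(t)\,Y_t + \log b(t)\bigr),
$$
which is valid because $X_t > 0$. Since $b(t) \to \beta \in (0,\infty)$, continuity of $\log$ at $\beta$ yields $\log b(t) \to \log\beta \in \R$, and by hypothesis $a(t) \to \alpha \in \R$.

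At this point Lemma \ref{lemma limit1} applies directly to the $\R$-valued family $Y_t$ with the coefficient functions $a$ and $\log b$, giving
$$
a(t)\,Y_t + \log b(t) \;\tolaw\; \alpha Y + \log\beta \qquad \text{as } t \downarrow 0.
$$
Applying the continuous mapping theorem once more, now with the exponential map (a homeomorphism of $\R$ onto $(0,\infty)$), I obtain
$$
\exp\bigl(a(t)\,Y_t + \log b(t)\bigr) \;\tolaw\; \exp(\alpha Y + \log\beta) = \beta\,e^{\alpha \log X} = \beta X^\alpha,
$$
which is exactly the claim.

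There is essentially no hard step here: the whole argument is bookkeeping that transports Lemma \ref{lemma limit1} across the exponential correspondence. The only point requiring a small amount of care is the justification of the two applications of the continuous mapping theorem; if one prefers a self-contained argument in the spirit of the proof of Lemma \ref{lemma limit1}, one could instead repeat the $\varepsilon$-estimate there, testing against $f \in C_b((0,\infty))$ and using that $g \mapsto f(e^g)$ is bounded and continuous on $\R$. Either way the limiting distribution $\beta X^\alpha$ is well defined as a law on $(0,\infty)$, so no degeneracy issues arise.
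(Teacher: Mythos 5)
Your proposal is correct and follows essentially the same route as the paper, which proves the lemma in one line by noting that the isomorphism $\exp\colon\R\to(0,\infty)$ converts dilation to power and shift to dilation, so Lemma \ref{lemma limit1} applies. Your version merely spells out the two applications of the continuous mapping theorem that the paper leaves implicit.
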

\begin{proof}
The group isomorphism $\exp\colon \R \to (0,\infty)$ changes dilation to power and shift to dilation, and hence  Lemma \ref{lemma limit1} is available. 
\end{proof}

If we assume that the limit distribution is non-degenerate (i.e.\ not a point mass) and $a(t)>0$ then we can show the converse result of Lemma \ref{lemma limit1}. 
\begin{lem}\label{GK} Let $X, X_t$ be $\R$-valued random variables and let $a(t)>0, b(t)\in\R$ for $t >0$. Assume that $X_t \tolaw X$ as $t \downarrow0$ and $X$ is non-degenerate. Then $a(t) X_t +b(t)$ converges in law to some non-degenerate random variable $Y$ if and only if $a(t)$ and  $b(t)$ respectively converge to some $\alpha\in(0,\infty)$ and $\beta\in\R$ as $t\downarrow0$. Moreover, $Y\eqlaw \alpha X+\beta$.   
\end{lem}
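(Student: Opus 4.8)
The plan is to treat the two implications separately. The "if" direction is immediate from Lemma \ref{lemma limit1}: if $a(t)\to\alpha\in(0,\infty)$ and $b(t)\to\beta\in\R$, then $a(t)X_t+b(t)\tolaw \alpha X+\beta=:Y$, and this $Y$ is non-degenerate because $X$ is and $\alpha>0$. So the whole content is the "only if" direction, which is the continuous-time form of the classical convergence of types theorem (Khinchin; see \cite{GK54}). Since weak convergence as $t\downarrow0$ is equivalent to weak convergence along every sequence $t_n\downarrow0$, I would fix an arbitrary such sequence and prove that $a(t_n)$ and $b(t_n)$ converge to limits $\alpha\in(0,\infty)$ and $\beta\in\R$ that do \emph{not} depend on the sequence; this yields convergence as $t\downarrow0$.

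Write $F_t,F,G$ for the CDFs of $X_t,X,Y$, so that the CDF of $a(t)X_t+b(t)$ is $G_t(y)=F_t\big(\tfrac{y-b(t)}{a(t)}\big)$, with $F_t\Rightarrow F$ and $G_t\Rightarrow G$. The first and main step is to show that $\{a(t_n)\}$ is bounded and bounded away from $0$, and that $\{b(t_n)\}$ is bounded. Since $Y$ is non-degenerate, choose continuity points $y_1<y_2$ of $G$ with $0<G(y_1)<G(y_2)<1$; then $F_{t_n}(c_n^{(i)})\to G(y_i)\in(0,1)$ where $c_n^{(i)}=\tfrac{y_i-b(t_n)}{a(t_n)}$. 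Because $\{F_{t_n}\}$ is tight (being weakly convergent) and these values converge into $(0,1)$, the points $c_n^{(i)}$ must stay bounded; since $c_n^{(2)}-c_n^{(1)}=\tfrac{y_2-y_1}{a(t_n)}$, boundedness forces $\liminf a(t_n)>0$. For the upper bound I would run the symmetric argument: writing $X_t=\tfrac{1}{a(t)}\big(Y_t-b(t)\big)$ and using two continuity points $x_1<x_2$ of $F$ with values in $(0,1)$, tightness of $\{G_{t_n}\}$ gives that $a(t_n)x_i+b(t_n)$ stays bounded, and $a(t_n)(x_2-x_1)$ bounded yields $\limsup a(t_n)<\infty$. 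Boundedness of $b(t_n)$ then follows from either pair of relations.

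Having established boundedness, I would pass to a subsequence along which $a(t_n)\to\alpha$ and $b(t_n)\to\beta$; the lower bound gives $\alpha\in(0,\infty)$. Applying Lemma \ref{lemma limit1} along this subsequence gives $a(t_n)X_{t_n}+b(t_n)\tolaw \alpha X+\beta$, while by hypothesis the same sequence converges to $Y$, so uniqueness of weak limits yields $Y\eqlaw \alpha X+\beta$. The final rigidity step is to show that $(\alpha,\beta)$ is the \emph{unique} pair with $\alpha>0$ solving $\alpha X+\beta\eqlaw Y$: if $\alpha_1 X+\beta_1\eqlaw \alpha_2 X+\beta_2$ then $X\eqlaw\lambda X+\gamma$ with $\lambda=\alpha_1/\alpha_2$, so the characteristic function $\phi$ of $X$ satisfies $|\phi(s)|=|\phi(\lambda^k s)|$ for all $k$; letting $\lambda^k s\to0$ (using $\lambda\neq1$) forces $|\phi|\equiv1$, i.e.\ $X$ degenerate, a contradiction, whence $\lambda=1$ and then $\gamma=0$. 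Consequently every subsequential limit of $\{(a(t_n),b(t_n))\}$ equals this single $(\alpha,\beta)$, so the full sequence, and hence $a(t),b(t)$ as $t\downarrow0$, converge to $(\alpha,\beta)$ with $Y\eqlaw\alpha X+\beta$. I expect the boundedness step to be the main obstacle, since it is where the non-degeneracy of both $X$ and $Y$ and the tightness coming from weak convergence must be combined carefully to rule out simultaneously $a(t_n)\to0$ and $a(t_n)\to\infty$; the uniqueness step is short once phrased via characteristic functions.
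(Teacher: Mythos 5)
Your proof is correct, and your ``if'' direction is the same as the paper's (both are immediate from Lemma \ref{lemma limit1}). For the ``only if'' direction you take a genuinely different, self-contained route. The paper reduces to an arbitrary sequence $t_n\downarrow0$ exactly as you do, but then simply invokes the classical convergence-of-types theorems: \cite[p.~40, Theorem 1]{GK54} to obtain $Y\eqlaw\alpha X+\beta$ for some $\alpha>0$, $\beta\in\R$, and \cite[p.~42, Theorem 2]{GK54} (after a small change of notation) to obtain $a(t_n)\to\alpha$ and $b(t_n)\to\beta$; it then concludes, as you do, from the fact that the limiting pair does not depend on the chosen sequence. You instead reprove convergence of types from scratch: boundedness of $a(t_n)$ away from $0$ and $\infty$ and of $b(t_n)$ via distribution functions and tightness, identification of subsequential limits through Lemma \ref{lemma limit1}, and uniqueness of $(\alpha,\beta)$ via the relation $|\phi(s)|=|\phi(\lambda^k s)|$ forcing $\lambda=1$. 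Your version buys self-containedness at the cost of length; the paper's buys brevity at the cost of a slightly fiddly translation into the notation of \cite{GK54}. One cosmetic remark: for a non-degenerate $Y$ one cannot always choose continuity points with $0<G(y_1)<G(y_2)<1$ with the \emph{strict} middle inequality (take $Y$ uniform on a two-point set), but your argument never uses that strictness --- two continuity points $y_1<y_2$ with $G(y_i)\in(0,1)$, which do always exist for non-degenerate $G$, are all that is needed.
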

\begin{rem} By the transform $t\mapsto 1/t$, the same statement holds for the limit $t\to\infty$.  
\end{rem}
\begin{proof} If $a(t) \to \alpha \in(0,\infty)$ and $b(t) \to \beta \in\R$ then $a(t) X_t + b(t) \tolaw \alpha X +\beta$ by Lemma \ref{lemma limit1}. Conversely, 
suppose that $a(t) X_t +b(t)$ converges in law to some non-degenerate random variable $Y$. Take a sequence $t_n \downarrow 0$ and consider the discretized random variables $a(t_n) X_{t_n}+b(t_n)$. Then \cite[pp.\ 40, Theorem 1]{GK54} implies that there exist $\alpha>0$ and $\beta\in\R$ such that $Y \eqlaw \alpha X+\beta$. Then taking $\beta_n =1, \alpha_n=0$ and replacing $b_n$ and $a_n$ respectively with $\frac{\alpha}{a(t_n)}$ and $\frac{\beta - b(t_n)}{a(t_n)}$ in \cite[pp.\ 42, Theorem 2]{GK54}, we obtain the convergence $a(t_n) \to \alpha$ and $b(t_n) \to \beta$. Since the sequence $\{t_n\}_{n\in\N}$ was arbitrary, the conclusion follows. 
\end{proof}

The multiplicative version follows by the isomorphism $\exp\colon \R \to (0,\infty)$.  

\begin{lem}\label{GK2} Let $X, X_t$ be $(0,\infty)$-valued random variables and let $a(t)>0, b(t)>0\in\R$ for $t >0$. Assume that $X_t \tolaw X$ as $t \downarrow0$ and $X$ is non-degenerate. Then $b(t) (X_t)^{a(t)}$ converges in law to some non-degenerate random variable $Y$ if and only if $a(t)$ and  $b(t)$ respectively converge to some $\alpha\in(0,\infty)$ and $\beta\in (0,\infty)$ as $t\downarrow0$. Moreover, $Y\eqlaw \beta X^\alpha $.   
\end{lem}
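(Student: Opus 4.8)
The plan is to deduce Lemma \ref{GK2} from its additive counterpart, Lemma \ref{GK}, by transporting the problem through the group isomorphism $\exp\colon(\R,+)\to((0,\infty),\cdot)$, exactly as the preceding Lemma \ref{lemma limit2} and the remark indicate. First I would set $Y_t:=\log X_t$ and $Y:=\log X$, which are $\R$-valued random variables. Since $\log$ is a homeomorphism of $(0,\infty)$ onto $\R$, the continuous-mapping theorem gives $Y_t\tolaw Y$ as $t\downarrow0$, and $Y$ is non-degenerate precisely because $X$ is (a homeomorphism cannot turn a non-point-mass into a point mass). Next I would rewrite the multiplicative affine combination in logarithmic coordinates: applying $\log$ to $b(t)(X_t)^{a(t)}$ yields the additive affine combination $a(t)\,Y_t+\log b(t)$, where now $a(t)>0$ and $\log b(t)\in\R$ play the roles of the coefficients in Lemma \ref{GK}.

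With this dictionary in place the argument is essentially immediate. By the continuous-mapping theorem again, $b(t)(X_t)^{a(t)}$ converges in law to a non-degenerate $Y$ if and only if $\log\!\big(b(t)(X_t)^{a(t)}\big)=a(t)Y_t+\log b(t)$ converges in law to the non-degenerate random variable $\log Y$. Lemma \ref{GK}, applied to the $\R$-valued data $Y_t\tolaw Y$ with coefficient functions $a(t)>0$ and $\log b(t)\in\R$, then says this additive convergence holds if and only if $a(t)\to\alpha\in(0,\infty)$ and $\log b(t)\to\beta'\in\R$, in which case $\log Y\eqlaw \alpha\, Y+\beta'$. Setting $\beta:=e^{\beta'}\in(0,\infty)$, the convergence $\log b(t)\to\beta'$ is equivalent to $b(t)\to\beta\in(0,\infty)$ (continuity and strict monotonicity of $\exp$), and exponentiating the distributional identity $\log Y\eqlaw\alpha\log X+\beta'$ gives $Y\eqlaw\beta\,X^{\alpha}$.

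There is no genuine obstacle here; the lemma is a routine transcription of Lemma \ref{GK} through the exponential/logarithm isomorphism, and the proof in the excerpt will almost certainly just cite that isomorphism. The only points requiring a word of care are that one must preserve non-degeneracy under $\log$ (clear, since a homeomorphism carries distinct points to distinct points, so a non-point-mass law stays a non-point-mass law) and that the parameter ranges match: $a(t)>0$ is unchanged, while the multiplicative shift $b(t)$ ranges over $(0,\infty)$ exactly because its logarithm is the additive shift ranging over all of $\R$. I would therefore write the proof as a one-line invocation: \emph{Apply Lemma \ref{GK} to the random variables $\log X_t\tolaw\log X$ with coefficient functions $a(t)$ and $\log b(t)$, and translate the conclusion back via $\exp\colon\R\to(0,\infty)$, which sends dilation to power and shift to dilation.}
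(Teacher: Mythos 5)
Your proposal is correct and coincides with the paper's own argument: the paper disposes of Lemma \ref{GK2} in one line by invoking the isomorphism $\exp\colon\R\to(0,\infty)$ to reduce to Lemma \ref{GK}, exactly as you do. Your added checks (non-degeneracy is preserved under the homeomorphism $\log$, and the coefficient ranges match) are the right points of care and are all that is needed.
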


We give a sufficient condition for weak convergence in terms of the local uniform convergence of the absolutely continuous part. 
\begin{lem}\label{convergence1} Let $B$ be an open subset of $\R$. 
Let $\{\mu_t\}_{t>0}$ be a family of Borel probability measures on $\R$ and let $p\colon B \to [0,\infty)$ be a Borel measurable function such that $\int_{B} p(x)\,dx=1$. 
Suppose that for any compact subset $K \subset B$ there exists  $\delta>0$ such that $\mu_t$ is Lebesgue absolutely continuous on $K$ for any $0 < t <\delta$, and 
$$
\lim_{t\downarrow0}\sup_{x\in K} \left| \frac{d\mu_t}{dx}(x) -p(x) \right| =0. 
$$ 
Then $\mu_t$ converges weakly to the probability measure $p(x)\Ind_B(x) \,dx$. 
\end{lem}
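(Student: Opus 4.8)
The plan is to verify weak convergence directly against bounded continuous test functions: I would control $\mu_t$ on a large compact piece of $B$ using the uniform density convergence, and control the complementary mass by exhausting $B$ with compacts together with the normalisation $\int_B p\,dx=1$. Write $\mu := p\,\Ind_B\,dx$, which is a probability measure on $\R$ by hypothesis. The basic local estimate is this: if $K\subset B$ is compact, then by assumption $\mu_t|_K$ is Lebesgue absolutely continuous for all small $t$, so for every $f\in C_b(\R)$,
\[
\left| \int_K f\,d\mu_t - \int_K f p\,dx \right| = \left| \int_K f\left( \frac{d\mu_t}{dx} - p \right)dx \right| \leq \|f\|_\infty\, |K|\, \sup_{x\in K}\left| \frac{d\mu_t}{dx}(x) - p(x) \right| \longrightarrow 0
\]
as $t\downarrow0$, since $K$ has finite Lebesgue measure $|K|$. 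Taking $f\equiv1$ in particular yields $\mu_t(K)\to\int_K p\,dx$.

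Next I would exploit that an open set $B\subset\R$ admits an exhaustion by compacts $K_1\subset K_2\subset\cdots$ with $\bigcup_n K_n = B$, for instance $K_n = \{x\in B : |x|\le n,\ \mathrm{dist}(x,B^c)\ge 1/n\}$. Since $\int_{K_n}p\,dx \uparrow \int_B p\,dx = 1$, given $\varepsilon>0$ I fix $n$ with $\int_{K_n}p\,dx > 1-\varepsilon$. By the local estimate applied to $K_n$, we have $\mu_t(K_n)\to\int_{K_n}p\,dx$, hence $\mu_t(K_n^c) < 2\varepsilon$ for all sufficiently small $t$. This is the tail bound that rules out mass escaping to infinity or accumulating at $\partial B$.

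Finally I would assemble the pieces by splitting over $K_n$ and its complement:
\[
\left| \int_\R f\,d\mu_t - \int_\R f\,d\mu \right| \le \left| \int_{K_n} f\,d\mu_t - \int_{K_n} f\,d\mu \right| + \|f\|_\infty\,\mu_t(K_n^c) + \|f\|_\infty\,\mu(K_n^c).
\]
The first term tends to $0$ by the local estimate, while the last two are at most $2\varepsilon\|f\|_\infty$ (for small $t$) and $\varepsilon\|f\|_\infty$ respectively. Thus $\limsup_{t\downarrow0}\bigl|\int f\,d\mu_t - \int f\,d\mu\bigr| \le 3\varepsilon\|f\|_\infty$, and letting $\varepsilon\downarrow0$ gives $\int f\,d\mu_t\to\int f\,d\mu$ for every $f\in C_b(\R)$, i.e.\ $\mu_t\wto\mu$.

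The only genuine subtlety — and the one place where the hypotheses must interact — is the tail control of the second step. The uniform density convergence is purely local on compact subsets of $B$ and says nothing directly about $\mu_t(\R\setminus K)$; it is essential to couple it with the normalisation $\int_B p\,dx=1$ to conclude that eventually almost all the mass of $\mu_t$ lies inside a fixed large compact subset of $B$. Everything else is a routine $\varepsilon$-splitting.
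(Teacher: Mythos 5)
Your proof is correct and follows essentially the same route as the paper's: pick a compact $K\subset B$ carrying all but $\ep$ of the mass of $p$, use the uniform convergence of densities on $K$ both to compare the integrals over $K$ and (via $f\equiv 1$) to bound the escaping mass $\mu_t(K^c)$, then split the test-function integral accordingly. The only cosmetic difference is that you produce $K$ via an explicit compact exhaustion of $B$ where the paper invokes inner regularity of the Radon measure $p\,\Ind_B\,dx$; the substance is identical.
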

\begin{proof} Denote by $\text{Leb}$ the Lebesgue measure on $\R$. Take $f \in C_b(\R)$ and $\ep>0$. Since $p(x)\Ind_B(x)\, dx$ is a Radon measure on $\R$, there exists a compact subset $K$ of $B$ such that $\int_{K} p(x)\,dx >1-\ep$. Take $\delta>0$ such that $\mu_t$ is absolutely continuous w.r.t.\ $\text{Leb}|_{K}$ for all $t\in(0,\delta)$, and also satisfying that 
$$
\sup_{x\in K} \left| \frac{d\mu_t}{dx}(x) -p(x) \right| <\frac{\ep}{\max\{\text{Leb}(K),1\}} 
$$
for $t \in(0,\delta)$. Then $\mu_t(K)  \geq \int_{K} p(x)\,dx -\ep > 1-2\ep$. Then 
\begin{equation*}
\begin{split}
&\left| \int_\R f(x)\,\mu_t(dx) -\int_\R f(x)p(x)\Ind_B(x)\,dx \right|  \\
&\qquad\leq 3\ep \|f\|_\infty + \left| \int_{K} f(x)\frac{d\mu_t}{dx}(x)\,dx -\int_{K} f(x)p(x)\,dx \right| \\
&\qquad\leq 4\ep \|f\|_\infty
\end{split}
\end{equation*}
for $t\in(0,\delta)$. 
\end{proof}



\section{Organizing easy or known limit theorems} \label{S3}
This section summarizes known results and results that follow readily from known results. 

\subsection{Additive L\'evy processes at large and small time}\label{sec ALP}

Let $\{X_t\}_{t\geq0}$ be an AFLP such that $X_0=0$. We discuss the convergence of the process 
\begin{equation}\label{conv ALP}
a(t)X_t +b(t)  \quad \text{as} \quad t\to\infty \quad \text{or} \quad t\downarrow0,
\end{equation}
where $a\colon(0,\infty) \to (0,\infty)$ and $b\colon (0,\infty) \to \R$ are some functions. Alternatively, the above problem reads the weak convergence of 
\begin{equation}
\D_{a(t)}(\mu^{\boxplus t}) \boxplus \delta_{b(t)}. 
\end{equation}
Actually this problem can be solved by Bercovici-Pata bijection, and the result has a complete correspondence to a classical result.  

\begin{rem} If \eqref{conv ALP} converges in law to a non-degenerate $\R$-valued random variable $Y$, then Lemma \ref{GK} shows that the choice of functions $a, b$ are essentially unique: For other functions $\widetilde{a}, \widetilde{b}$, 
\begin{equation}\label{uniqueness}
\widetilde{a}(t)X_t +\widetilde{b}(t) = \frac{\widetilde{a}(t)}{a(t)}[a(t)X_t +b(t)] +\widetilde{b}(t) -  \frac{\widetilde{a}(t) b(t)}{a(t)}
\end{equation}
converges in law to a non-degenerate $\R$-valued random variable $\widetilde{Y}$ if and only if there exist $\alpha >0, \beta \in\R$ such that $\frac{\widetilde{a}(t)}{a(t)} \to \alpha$ and $\widetilde{b}(t) -  \frac{\widetilde{a}(t) b(t)}{a(t)} \to \beta$, and in this case $\widetilde{Y}\eqlaw \alpha Y + \beta$. Thus it suffices to find one specific pair of functions $(a(t),b(t))$ for which the distribution of \eqref{conv ALP} converges. 
\end{rem}

First we establish that the limit distribution of \eqref{conv ALP}, if exists, must be free stable. This fact follows from \cite[Theorem 2.3]{MM08} and the Bercovici--Pata bijection, but we give a direct simple proof which is valid for classical and Boolean cases as well.  
\begin{prop}\label{prop stable}
Let $\{\mu^{\ast t}\}_{t\geq0}$ be a weakly continuous $\ast$-convolution semigroup such that $\mu^{\ast 0}=\delta_0$. If there exist functions $a\colon (0,\infty) \to (0,\infty)$ and $b\colon(0,\infty)\to\R$  such that $\D_{a(t)}(\mu^{\ast t})\ast\delta_{b(t)}$ converges weakly to a non-degenerate distribution $\nu$ as $t \downarrow0$ or as $t\to\infty$, then $\nu$ is stable. If $b(t)\equiv0$ then $\nu$ is strictly stable. An analogous statement holds for weakly continuous $\boxplus$- and $\uplus$-convolution semigroups. 
\end{prop}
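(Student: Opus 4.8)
The plan is to reduce the two-parameter scaling relation in the definition of stability to the statement that every convolution power $\nu^{\star n}$ ($n\in\N$) is a \emph{type} of $\nu$, i.e.\ $\nu^{\star n}=\D_{c_n}(\nu)\star\delta_{d_n}$ for suitable $c_n>0$ and $d_n\in\R$; this is one of the classical equivalent definitions of stability, and it also characterizes $\boxplus$- and $\uplus$-stability. The argument is made uniform in $\star\in\{\ast,\boxplus,\uplus\}$ by two observations: first, dilation distributes over each convolution, $\D_a(\mu\star\nu)=\D_a(\mu)\star\D_a(\nu)$, and convolution by a point mass is ordinary translation, $\mu\star\delta_d=\mu\ast\delta_d$ for all three operations (see \eqref{special Cauchy}); second, because of this an expression $\D_A(\rho)\star\delta_B$ is literally the push-forward of $\rho$ under $x\mapsto Ax+B$, so Lemma \ref{GK} and its Remark (for the limit $t\to\infty$) apply verbatim irrespective of which convolution produced the measures.

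I would fix an integer $n\ge2$ and compare the normalizations of the semigroup at two ``times''. In the regime $t\to\infty$, the semigroup relation $(\mu^{\star\theta})^{\star n}=\mu^{\star n\theta}$ together with distributivity gives
\begin{equation*}
\bigl[\D_{a(\theta)}(\mu^{\star\theta})\star\delta_{b(\theta)}\bigr]^{\star n}=\D_{a(\theta)}(\mu^{\star n\theta})\star\delta_{n b(\theta)}\;\xrightarrow[\theta\to\infty]{}\;\nu^{\star n},
\end{equation*}
the convergence using joint weak continuity of $\star$. On the other hand $\eta_\theta:=\D_{a(n\theta)}(\mu^{\star n\theta})\star\delta_{b(n\theta)}\to\nu$ since $n\theta\to\infty$. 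Writing the first measure as a dilation–translation of $\eta_\theta$,
\begin{equation*}
\D_{a(\theta)}(\mu^{\star n\theta})\star\delta_{n b(\theta)}=\D_{a(\theta)/a(n\theta)}(\eta_\theta)\star\delta_{\,n b(\theta)-a(\theta)b(n\theta)/a(n\theta)},
\end{equation*}
I would invoke Lemma \ref{GK} with $\eta_\theta\to\nu$ non-degenerate and the non-degenerate limit $\nu^{\star n}$: it forces $a(\theta)/a(n\theta)\to c_n>0$ and the displayed shift $\to d_n\in\R$, and yields $\nu^{\star n}=\D_{c_n}(\nu)\star\delta_{d_n}$. The regime $t\downarrow0$ is identical after comparing the times $\theta/n$ and $\theta$ through $\mu^{\star\theta}=(\mu^{\star\theta/n})^{\star n}$.

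Having obtained $\nu^{\star n}=\D_{c_n}(\nu)\star\delta_{d_n}$ for every $n$, I would conclude that $\nu$ is $\star$-stable by the equivalence of this convolution-power characterization with the two-parameter definition of Section \ref{sec stable} (classical: \cite{Zol86,GK54}; free: \cite{BP99}; Boolean analogously). When $b\equiv0$ the shift above vanishes identically, so $d_n=0$ and $\nu^{\star n}=\D_{c_n}(\nu)$, i.e.\ $\nu$ is strictly stable. Two routine facts must be checked en route: that $\nu^{\star n}$ is non-degenerate, needed to apply Lemma \ref{GK} (immediate from the respective transforms, e.g.\ $\varphi_{\nu^{\boxplus n}}=n\varphi_\nu$ is non-constant unless $\nu$ is a point mass, and similarly for $\ast$ and $\uplus$), and the joint weak continuity of each convolution. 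I expect the genuine obstacle to be only this final appeal to the equivalence of definitions for $\boxplus$ and $\uplus$: unlike the classical setting, it rests on the structure theory of free and Boolean stable laws rather than on a one-line manipulation, while the rest of the proof is purely a transport of Lemma \ref{GK} through the semigroup.
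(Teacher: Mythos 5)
Your argument for $\ast$ and $\boxplus$ is correct and is, modulo the change from processes to measures, the same as the paper's: the paper writes the identity $a(t)X_{nt}+nb(t)=\sum_{k=1}^n\{a(t)(X_{kt}-X_{(k-1)t})+b(t)\}$ for a L\'evy process realizing the semigroup, compares the two ways of letting $t\downarrow0$, and applies Lemma \ref{GK} to conclude $Y_1+\cdots+Y_n\eqlaw \alpha_n Y+\beta_n$, citing \cite{Zol86} for the passage to stability. Your two displayed identities are the measure-level transcription of exactly this, and your closing worry about the equivalence of the $n$-fold-power characterization with the two-parameter definition for $\boxplus$ is the same point the paper disposes of with ``the proof for the free case is similar''.

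The genuine gap is in the Boolean case, and it sits precisely in the claim that ``convolution by a point mass is ordinary translation for all three operations''. This is true for $\ast$ and $\boxplus$ but false for $\uplus$: since $\eta_{\delta_d}(z)=dz$, one has $F_{\mu\uplus\delta_d}=F_\mu-d$, whereas $F_{\mu\ast\delta_d}(z)=F_\mu(z-d)$, and these give different measures for non-degenerate $\mu$ (e.g.\ for $\mu=\tfrac12(\delta_0+\delta_2)$ the third moments of $\mu\uplus\delta_1$ and $\mu\ast\delta_1$ are $13$ and $14$); equation \eqref{special Cauchy} cannot be invoked to supply the $\gamma=0$ Boolean identity. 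Consequently $\D_A(\rho)\uplus\delta_B$ is \emph{not} the push-forward of $\rho$ under $x\mapsto Ax+B$, Lemma \ref{GK} does not apply ``verbatim'' to extract $c_n$ and $d_n$, and the target relation $\nu^{\uplus n}=\D_{c_n}(\nu)\uplus\delta_{d_n}$ must in any case be read with the Boolean shift to match the definition of Boolean stability. Note that your algebraic identities do survive for $\uplus$ (they use only additivity of $\eta$, the rule $\eta_{\D_a\mu}(z)=\eta_\mu(az)$, and $\eta_{\delta_c}+\eta_{\delta_d}=\eta_{\delta_{c+d}}$); what is missing is either a Boolean analogue of Lemma \ref{GK}, or a transport of the whole Boolean statement to the classical one through the Boolean Bercovici--Pata homeomorphism $\Lambda_B$, which intertwines dilations, $\uplus\delta_b$ with $\ast\delta_b$ at the level of generating pairs, and $\uplus$-semigroups with $\ast$-semigroups (the mechanism of Theorem \ref{thm ALP}). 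To be fair, the paper's own proof also details only the classical case, but your proposal covers $\uplus$ with an identity that is false rather than leaving it open.
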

\begin{proof} We only focus on the limit $t\downarrow0$ since the other case is proved in the same way. Instead of distributions we use stochastic processes. Let $\{X_t\}_{t\geq0}$ be an ACLP that has the distribution $\mu^{\ast t}$ at time $t\geq0$, and let $Y$ be a non-constant random variable such that $\Law(Y)=\nu$. Take i.i.d.\ copies $(Y_i)_{i=1}^\infty$  of $Y$. 
The following identity holds true for each $n \in\N$:  
\begin{equation}\label{identity}
\begin{split}
&\frac{a(t)}{a(nt)}\left\{a(nt)X_{nt} +b(nt) \right\} -b(nt)\frac{a(t)}{a(nt)}+ n b(t) \\
&\quad=\left\{a(t)X_t +b(t)\right\} + \left\{a(t) (X_{2t} -X_t) +b(t)\right\} + \cdots + \left\{a(t)(X_{nt}-X_{(n-1)t}) +b(t)\right\}. 
\end{split}
\end{equation}
Since $a(nt)X_{nt} +b(nt)$ converge in law to $Y$ as $t\downarrow0$ and since the right hand side of (\ref{identity})  converge in law to $Y_1+\cdots+Y_n$, 
it holds true from Lemma \ref{GK} that $\frac{a(t)}{a(nt)}$ converge to some $\alpha_n\in [0,\infty)$ and $-b(nt)\frac{a(t)}{a(nt)}+ n b(t)$ converge to  some $\beta_n\in\R$  as $t \to 0$, and also  
$$
Y_1+\cdots+Y_n\eqlaw \alpha_nY+\beta_n. 
$$
Since $Y$ is not a constant, we must have $\alpha_n\in(0,\infty)$. This implies that $Y$ is stable, see \cite[p.14, Equation I.24]{Zol86}. 
 If $b(t)\equiv0$ then $\beta_n=0$ and so $Y$ is strictly stable. The proof for the free case is similar. 
\end{proof}

\begin{thm}\label{thm ALP} Let $\mu$ be a $\ast$-ID distribution. Let $a\colon(0, \infty)\to (0,\infty)$ and $b\colon(0,\infty) \to\R$ be functions, and $\nu$ be a stable distribution or a delta measure. Then the following are equivalent. 
\begin{enumerate}[\quad\rm(1)]
\item\label{Conv1} $\D_{a(t)}(\mu^{\ast t})\ast \delta_{b(t)} \wto \nu$ as $t \to \infty$ (resp.\ $t\downarrow 0$). 
\item\label{Conv2} $\D_{a(t)}(\Lambda(\mu)^{\boxplus t})\boxplus \delta_{b(t)} \wto \Lambda(\nu)$ as $t\to\infty$ (resp.\ $t\downarrow 0$). 
\item\label{Conv3} $\D_{a(t)}(\Lambda_B(\mu)^{\uplus t})\uplus \delta_{b(t)} \wto \Lambda_B(\nu)$ as $t\to\infty$ (resp.\ $t\downarrow 0$). 
\end{enumerate} 
\end{thm}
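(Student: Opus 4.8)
The plan is to derive all three equivalences from structural properties of the Bercovici--Pata bijections $\Lambda$ and $\Lambda_B$, the crucial one being that both are homeomorphisms for weak convergence (stated in the excerpt, see \cite[Corollary 3.9]{B-NT02} and the remark after it for $\Lambda_B$). The heart of the argument is the pair of operator identities
\begin{equation}\label{eq:BPcommute}
\Lambda\!\left(\D_{a(t)}(\mu^{\ast t})\ast\delta_{b(t)}\right) = \D_{a(t)}\!\left(\Lambda(\mu)^{\boxplus t}\right)\boxplus\delta_{b(t)},
\end{equation}
\begin{equation}\label{eq:BPcommuteB}
\Lambda_B\!\left(\D_{a(t)}(\mu^{\ast t})\ast\delta_{b(t)}\right) = \D_{a(t)}\!\left(\Lambda_B(\mu)^{\uplus t}\right)\uplus\delta_{b(t)},
\end{equation}
valid for every $\ast$-ID law $\mu$ and every $t>0$. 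Once these are available the theorem is immediate: since $\nu$ is $\ast$-ID (being stable or a point mass) and $\Lambda$ is a weak homeomorphism, the convergence in \eqref{Conv1} holds if and only if its $\Lambda$-image converges to $\Lambda(\nu)$, which by \eqref{eq:BPcommute} is exactly \eqref{Conv2}; thus \eqref{Conv1}$\Leftrightarrow$\eqref{Conv2}. The same reasoning with $\Lambda_B$ and \eqref{eq:BPcommuteB} yields \eqref{Conv1}$\Leftrightarrow$\eqref{Conv3}.

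To prove \eqref{eq:BPcommute}--\eqref{eq:BPcommuteB} I would reduce each to three intertwining relations, stated here for $\Lambda$ but holding verbatim for $\Lambda_B$ (with $\boxplus$ replaced by $\uplus$): commutation with dilations $\Lambda(\D_c\rho)=\D_c\Lambda(\rho)$, intertwining of shifts $\Lambda(\rho\ast\delta_b)=\Lambda(\rho)\boxplus\delta_b$, and the convolution-power rule $\Lambda(\rho^{\ast t})=\Lambda(\rho)^{\boxplus t}$. All three rest on the single observation that $\Lambda$ and $\Lambda_B$ are \emph{defined} so as to preserve the additive generating pair $(\xi,\tau)$, together with the fact that dilation, shift and convolution power act on $(\xi,\tau)$ by identical formulas in the classical representation \eqref{CLK}, the free representation \eqref{FLK} and the Boolean representation \eqref{PN}. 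For shifts this is transparent, as each of $\rho\ast\delta_b$, $\rho\boxplus\delta_b$, $\rho\uplus\delta_b$ sends $(\xi,\tau)\mapsto(\xi+b,\tau)$ (because $\varphi_{\delta_b}=b$ and $\eta_{\delta_b}(z)=bz$); for convolution powers it is equally direct, since $(\xi,\tau)\mapsto(t\xi,t\tau)$ in all three cases, as recorded after \eqref{CLK}, in Theorem \ref{thmBV93}, and via $\eta_{\mu^{\uplus t}}=t\,\eta_\mu$ respectively.

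The only relation requiring genuine computation---and hence the main obstacle---is the dilation rule, since the truncation $x/(1+x^2)$ appearing in all three L\'evy--Khintchine kernels is not homogeneous and so $\D_c$ does not merely rescale $(\xi,\tau)$. I would compute the action of $\D_c$ directly from $\widehat{\D_c\rho}(z)=\hat\rho(cz)$, $\varphi_{\D_c\rho}(z)=c\,\varphi_\rho(z/c)$ and $F_{\D_c\rho}(z)=c\,F_\rho(z/c)$, and verify that in every setting the generating pair transforms by the common rule
\[
\tau\longmapsto \frac{c^2+x^2}{1+x^2}\,\D_c(\tau),\qquad \xi\longmapsto c\,\xi-(c^2-1)\int_\R\frac{x}{1+x^2}\,d\bigl(\D_c\tau\bigr)(x).
\]
After the substitution $y=cx$, the agreement of the free and Boolean computations hinges on the elementary identity
\[
\frac{c^2+zy}{z-y}-\frac{1+zy}{z-y}\cdot\frac{c^2+y^2}{1+y^2}=-\frac{(c^2-1)y}{1+y^2},
\]
whose right-hand side is constant in $z$; this is precisely what absorbs the non-homogeneity of the truncation into the $\xi$-coordinate, and the parallel computation with the exponential integrand of \eqref{CLK} produces the identical $\xi$-shift in the classical case. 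Because $\Lambda$ and $\Lambda_B$ fix the generating pair, identical transformation rules force $\Lambda\D_c=\D_c\Lambda$ and $\Lambda_B\D_c=\D_c\Lambda_B$ (alternatively one may invoke the commutation of $\Lambda$ with dilations from \cite{B-NT02}). Substituting the three intertwining relations into the left-hand sides of \eqref{eq:BPcommute}--\eqref{eq:BPcommuteB} produces the right-hand sides, completing the reduction and hence the proof.
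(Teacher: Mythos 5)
Your proposal is correct and follows essentially the same route as the paper: reduce everything to the distributional identity $\Lambda(\D_{a(t)}(\mu^{\ast t})\ast\delta_{b(t)})=\D_{a(t)}(\Lambda(\mu)^{\boxplus t})\boxplus\delta_{b(t)}$ (and its Boolean analogue) together with the fact that $\Lambda$ and $\Lambda_B$ are weak homeomorphisms. The only difference is that the paper simply asserts these identities, whereas you verify them by tracking the action of dilation, shift and convolution power on the generating pair $(\xi,\tau)$; your computation of the dilation rule, including the kernel identity absorbing the non-homogeneous truncation into the $\xi$-coordinate, is accurate.
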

\begin{rem}\label{rem:monotone}
The Bercovici-Pata-type bijection $\Lambda_M$ from $\ast$-ID distributions onto monotonic ID distributions is defined in \cite{Has10}, but only $\Lambda_M$ is known to be continuous. Therefore we cannot prove a monotone analogue of Theorem \ref{thm ALP}. Establishing the continuity of $\Lambda_M^{-1}$ is an open problem. 
\end{rem}
\begin{proof} For the equivalence between \eqref{Conv1} and \eqref{Conv2} we only have to use the distributional identities $\Lambda(\D_{a(t)}(\mu^{\ast t})\ast \delta_{b(t)}) = \D_{a(t)}(\Lambda(\mu)^{\boxplus t})\boxplus \delta_{b(t)}$ and  the fact that the Bercovici-Pata bijection $\Lambda$ is a homeomorphism. The equivalence between \eqref{Conv1} and \eqref{Conv3} is proved similarly. 
\end{proof}

Let $\star$ denote any one of $\ast$ and $\boxplus$. In the present context, the {\it $\star$-domain of attraction of a probability measure $\nu$ on $\R$ at large time (resp.\ small time)} is the set of all $\star$-ID distributions $\mu$ on $\R$ such that $\D_{a(t)}(\mu^{\star t}) \star \delta_{b(t)} \wto \nu$ as $t\to\infty$ (resp.\ $t\downarrow0$) for some functions $a,b$ from $(0,\infty)$ into itself. This set being denoted by $\fD^\infty_\star(\nu)$ (resp.\ $\fD^0_\star(\nu)$), the above result shows that
$$
\fD^\infty_\ast(\nu) = \fD^\infty_\boxplus(\Lambda(\nu)) \qquad \text{and} \qquad \fD^0_\ast(\nu) = \fD^0_\boxplus(\Lambda(\nu)),  
$$
and they are nonempty if and only if $\nu$ is stable or degenerate.

A complete description of the domains of attraction of stable distributions is known in \cite[Theorem 2.3]{MM08} at small time and in \cite[Theorem 3]{MM09} at large time. For later use we quote the result for small time in a slightly different form which can be deduced from the proof. 
\begin{thm}\label{DA at 0} Let $\mu$ be a $\ast$-ID distribution with classical generating pair $(\xi,\tau)$. Define 
\begin{equation*}
\begin{split}
V(x) &= \int_{|y| \leq x} (1+y^2) \,d\tau(y), \qquad \overline{\Pi}^-(x) = \int_{-\infty}^{-x} \frac{1+y^2}{y^2}\,d \tau(y), \\
\overline{\Pi}^+(x) &= \int_x^{\infty} \frac{1+y^2}{y^2}\,d \tau(y), \qquad \overline{\Pi}(x) = \overline{\Pi}^+(x)+\overline{\Pi}^-(x), \qquad x>0.  
\end{split}
\end{equation*}
\begin{enumerate}[\rm(1)] 
\item $\mu \in \fD^0_\ast(\bfs_{2,1/2})$ if and only if the function $V$ is slowly varying as $x\downarrow0$. 

\item Let $(\alpha,\rho) \in\Ad, \alpha \neq2$. Then $\mu \in \fD^0_\ast(\bfs_{\alpha,\rho})$ if and only if the function $\overline{\Pi}$ is regularly varying with index $-\alpha$ as $x\downarrow0$, and 
$$
\lim_{x\downarrow0} \frac{\overline{\Pi}^+(x)}{\overline{\Pi}(x)} = 
\begin{cases}
\frac{1}{2} \left(1 + \frac{\tan(\rho-\frac{1}{2})\alpha\pi}{\tan\frac{\alpha \pi}{2}}\right), &\alpha \neq 1, \\
\rho, & \alpha =1. 
\end{cases}
$$ 
\end{enumerate}
\end{thm}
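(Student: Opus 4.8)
The plan is to deduce the statement directly from the small-time domain of attraction theorem of Maller and Mason \cite[Theorem 2.3]{MM08}, the only genuine content being a careful translation between the L\'evy--Khintchine parametrization \eqref{CLK} used here and the standard Gaussian-variance/L\'evy-measure parametrization, together with the identification of the limiting stable law with $\bfs_{\alpha,\rho}$.

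First I would read off the L\'evy triplet hidden in the classical generating pair $(\xi,\tau)$. Since $(e^{\ii zx}-1-\tfrac{\ii zx}{1+x^2})\tfrac{1+x^2}{x^2}\to -z^2/2$ as $x\to 0$, the atom of $\tau$ at the origin produces the Gaussian part: the Gaussian variance equals $\tau(\{0\})$, while the L\'evy measure is $\Pi(dy)=\tfrac{1+y^2}{y^2}\,\tau(dy)$ restricted to $\R\setminus\{0\}$. With this identification the quantities $V$, $\overline{\Pi}^{\pm}$ and $\overline{\Pi}$ defined in the statement coincide exactly with the functions appearing in \cite{MM08}: indeed $\int_{0<|y|\le x} y^2\,\Pi(dy)=\int_{0<|y|\le x}(1+y^2)\,\tau(dy)$, so adding the Gaussian variance $\tau(\{0\})$ gives $V(x)=\int_{|y|\le x}(1+y^2)\,\tau(dy)$, and $\overline{\Pi}^{\pm}$, $\overline{\Pi}$ are literally the one- and two-sided tails of $\Pi$.

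With the dictionary in place, \cite[Theorem 2.3]{MM08} asserts that $\mu$ lies in a small-time domain of attraction precisely when either $V$ is slowly varying as $x\downarrow 0$ (the Gaussian limit), or the two-sided tail $\overline{\Pi}$ is regularly varying of index $-\alpha$ at $0$ with $\alpha<2$ together with convergence of the tail balance $\overline{\Pi}^+(x)/\overline{\Pi}(x)$; the limiting stable law is then determined by $\alpha$ and the limiting balance. Since $\bfs_{2,1/2}$ is the standard normal law, the slowly varying criterion is exactly part (1), and the regular-variation criterion gives the first half of part (2); it remains only to match parametrizations of the limit law.

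The remaining, and only delicate, point is to verify that the stable law prescribed by the tail balance $p:=\lim_{x\downarrow0}\overline{\Pi}^+(x)/\overline{\Pi}(x)$ is precisely $\bfs_{\alpha,\rho}$ with the stated $\rho$. Here I would invoke Zolotarev's relation between the positivity parameter of a classical $\alpha$-stable law and the ratio of its L\'evy tail constants \cite{Zol86}: for $\alpha\ne 1$ the positivity parameter $\rho=\bfs_{\alpha,\rho}([0,\infty))$ satisfies $p=\tfrac12\bigl(1+\tan((\rho-\tfrac12)\alpha\pi)/\tan(\tfrac{\alpha\pi}{2})\bigr)$, which is exactly the formula in part (2), while for $\alpha=1$ the strictly $1$-stable laws $\bfs_{1,\rho}$ of \eqref{def_g_alpha_rho} have tail balance $p=\rho$. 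The main obstacle is precisely this parametrization matching: one must track the sign and branch conventions of \eqref{def_g_alpha_rho} across the three regimes $\alpha\in(0,1)$, $\alpha\in(1,2)$ and $\alpha=1$ and confirm agreement with the tail balances produced above, whereas the regular-variation and slow-variation statements themselves transfer verbatim once the triplet dictionary is fixed.
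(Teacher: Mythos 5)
Your proposal is correct and follows essentially the same route as the paper: both reduce the statement to \cite[Theorem 2.3]{MM08} via the dictionary between the generating pair $(\xi,\tau)$ and the Gaussian-variance/L\'evy-measure triplet, with the only real content being the identification of the limiting tail balance with the parametrization of $\bfs_{\alpha,\rho}$. The one (inessential) difference is the last step: the paper computes the generating measure of $\bfs_{\alpha,\rho}$ explicitly, obtaining the tail balance $\sin(\alpha\rho\pi)/(\sin(\alpha\rho\pi)+\sin(\alpha(1-\rho)\pi))$ and converting it to the $\tan$ expression by a trigonometric identity, whereas you invoke Zolotarev's positivity-parameter relation directly --- the two are equivalent.
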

\begin{proof}
Most of the statements can be inferred from \cite[Theorem 2.3]{MM08} and its proof. 
We only mention that the finite measure $\tau_{\alpha,\rho}$ that appears in the L\'evy--Khintchine representation of the stable distribution $\bfs_{\alpha,\rho}$ is given by 
\begin{equation}
(1+x^2)\tau_{\alpha,\rho}(d x) = 
\begin{cases} 
\frac{\sin(\alpha(1-\rho)\pi)}{\pi} |x|^{1-\alpha}\Ind_{(-\infty,0)}(x)\,dx +  \frac{\sin(\alpha\rho\pi)}{\pi} |x|^{1-\alpha}\Ind_{(0,\infty)}(x)\,dx, & \alpha\neq1, \\
(1-\rho)\Ind_{(-\infty,0)}(x)\, dx +  \rho \Ind_{(0,\infty)}(x)\,dx, &\alpha=1, 
\end{cases}
\end{equation}
and hence the proof of  \cite[Theorem 2.3]{MM08} shows that 
\begin{equation}
\lim_{x\downarrow0} \frac{\overline{\Pi}^+(x)}{\overline{\Pi}(x)} = 
\begin{cases}
\frac{\sin(\alpha\rho\pi)}{\sin(\alpha\rho\pi)+\sin(\alpha(1-\rho)\pi)}, & \alpha\neq1, \\
\rho, & \alpha=1. 
\end{cases}
\end{equation}
Elementary formulas for trigonometric functions show that 
\begin{equation}
\frac{\sin(\alpha\rho\pi)}{\sin(\alpha\rho\pi)+\sin(\alpha(1-\rho)\pi)} = \frac{1}{2} \left(1 + \frac{\tan(\rho-\frac{1}{2})\alpha\pi}{\tan\frac{\alpha \pi}{2}}\right),\qquad \alpha \neq 1
\end{equation}
Note that the last expression is an increasing function of $\rho$ when $0<\alpha<1$ and decreasing function when $1<\alpha<2$. 
\end{proof}

\subsection{Positive multiplicative free L\'evy processes at large time}\label{sec:Haagerup-Moeller}
We consider convergence in law of the process
\begin{equation}\label{large time}
b(t) (X_t)^{a(t)}, \qquad t\to\infty, 
\end{equation}
where $a,b\colon (0,\infty) \to (0,\infty)$ are functions and $\{X_t\}_{t\geq0}$ is a positive MFLP such that $X_0$ is an identity operator. In terms of the marginal law $\mu:= \Law(X_1)$, the above problem is written in the form
\begin{equation}\label{large time2}
\D_{b(t)} \!\left((\mu^{\boxtimes t})^{a(t)}\right).  
\end{equation}
In fact, the convergence of \eqref{large time} follows from the work of Tucci \cite{T10} and Haagerup-M\"{o}ller \cite{HM} and we can take the functions $a(t)= 1/t$ and $b(t) \equiv1$.  
Tucci \cite{T10} initiated the study of law of large numbers for multiplicative free convolution for compactly supported probability measures, and then Haagerup and M\"{o}ller \cite{HM} gave a proof of the general case. The following statement is formulated in the setting of continuous time which can be proved without changing the proof. 
\begin{thm}\label{THMthm}
Let $\mu$ be a probability measure on $[0,\infty)$. Then 
$$
(\mu^{\boxtimes t})^{1/t} \wto \Phi(\mu) \quad \text{as} \quad t\to\infty, 
$$
 where $\Phi(\mu)$ is characterized by 
$$
\Phi(\mu)\left(\left[0,\frac{1}{S_\mu(x-1)}\right]\right) = x,\quad x\in(\mu(\{0\}),1). 
$$
It holds that $\Phi(\mu)(\{0\})=\mu(\{0\})$ and the support of $\Phi(\mu)$ is the closure of the interval 
$$
\left( \left(\int_{[0,\infty)} \frac{1}{x}\,\mu(dx)\right)^{-1}, \int_{[0,\infty)}x\,\mu(dx)\right). 
$$ 
Moreover, $\Phi(\mu)$ is non-degenerate if and only if $\mu$ is non-degenerate. 
\end{thm}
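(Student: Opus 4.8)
The plan is to work directly with the distribution functions of $\nu_t := (\mu^{\boxtimes t})^{1/t}$, using the multiplicativity $\Sigma_{\mu^{\boxtimes t}} = (\Sigma_\mu)^{t}$ to obtain an exact parametric description of a \emph{smoothed} tail of $\mu_t := \mu^{\boxtimes t}$, and then showing that the smoothing acts asymptotically as a sharp cutoff once we pass to $1/t$-th powers. For $s>0$ set
\begin{equation*}
H_t(s) := \int_{(0,\infty)} \frac{x}{s+x}\,d\mu_t(x) = -\psi_{\mu_t}(-1/s), \qquad \psi_\nu(z) := \int_{(0,\infty)} \frac{zx}{1-zx}\,d\nu(x).
\end{equation*}
Using the standard relation $\eta_\nu = \psi_\nu/(1+\psi_\nu)$ together with $\eta_{\mu_t}^{-1}(u) = u\,\Sigma_{\mu_t}(u) = u\,\Sigma_\mu(u)^{t}$, I evaluate $\eta_{\mu_t}$ at the point $-1/s = u\,\Sigma_\mu(u)^{t}$. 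Since $\Sigma_\mu(u)>0$ for $u$ in its (negative) domain, this gives, for every such $u$, the exact identity
\begin{equation*}
H_t\big(s_t(u)\big) = \frac{u}{u-1}, \qquad s_t(u) := \frac{1}{|u|}\,\Sigma_\mu(u)^{-t},
\end{equation*}
valid for all $t$, and $s_t(u)^{1/t}\to 1/\Sigma_\mu(u)$ as $t\to\infty$ while the value $u/(u-1)$ stays fixed.

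Next I would show that the smoothing in $H_t$ disappears on the scale relevant to $\nu_t$. For any $\theta>1$ one has the elementary sandwich
\begin{equation*}
\frac{\theta}{1+\theta}\,\mu_t\big([\theta s,\infty)\big) \;\le\; H_t(s) \;\le\; \mu_t\big([s/\theta,\infty)\big) + \frac{1}{1+\theta}.
\end{equation*}
Writing $s=\lambda^{t}$ and using $\mu_t([c\lambda^{t},\infty)) = \nu_t([c^{1/t}\lambda,\infty))$, the multiplicative shifts $\lambda\mapsto\theta^{\pm 1/t}\lambda$ tend to the identity; choosing $\theta=\theta(t)\to\infty$ slowly (e.g.\ $\theta=t$, so that $\theta^{1/t}\to1$ and $(1+\theta)^{-1}\to0$) shows that $H_t(\lambda^{t})$ and $1-F_{\nu_t}(\lambda)$ have the same limit at every continuity point of the limiting distribution function, where $F_{\nu_t}$ is the distribution function of $\nu_t$. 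Feeding the exact identity above (with $s_t(u)^{1/t}\to1/\Sigma_\mu(u)$) into this yields
\begin{equation*}
\lim_{t\to\infty} F_{\nu_t}\!\left(\frac{1}{\Sigma_\mu(u)}\right) = 1 - \frac{u}{u-1} = \frac{1}{1-u}.
\end{equation*}
The substitution $x = 1/(1-u)$ now matches the statement: as $\Sigma_\mu(u) = S_\mu\big(u/(1-u)\big) = S_\mu(x-1)$, this reads $F_{\nu_t}\big(1/S_\mu(x-1)\big)\to x$, i.e.\ $\Phi(\mu)\big([0,1/S_\mu(x-1)]\big)=x$. Since $u\mapsto 1/\Sigma_\mu(u)$ sweeps an interval, this determines the limit at all continuity points, whence $\nu_t\wto\Phi(\mu)$.

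It remains to read off the qualitative description of $\Phi(\mu)$, which comes from the boundary behaviour of the $S$-transform. As $u\to 0^-$ one has $\Sigma_\mu(u)\to\big(\int x\,d\mu\big)^{-1}$, and at the other end of the domain $\Sigma_\mu(u)\to\int x^{-1}\,d\mu$; hence the quantiles $1/\Sigma_\mu(u)$ fill exactly the interval $\big((\int x^{-1}\,d\mu)^{-1},\ \int x\,d\mu\big)$ (with left endpoint $0$ when $\int x^{-1}\,d\mu=\infty$), giving the claimed support. For the atom I use that $\mu_t(\{0\})=\mu(\{0\})$ for all $t\ge1$ and that $x\mapsto x^{1/t}$ fixes $0$, so $\nu_t(\{0\})=\mu(\{0\})$; as the bulk stays bounded away from $0$, this mass persists, giving $\Phi(\mu)(\{0\})=\mu(\{0\})$ and restricting the bulk parameter to $x\in(\mu(\{0\}),1)$. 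Finally $\Phi(\mu)$ is non-degenerate iff its quantile function $x\mapsto1/S_\mu(x-1)$ is non-constant, iff the two endpoints above differ, i.e.\ $(\int x\,d\mu)(\int x^{-1}\,d\mu)\neq1$; by the equality case of the Cauchy--Schwarz inequality this fails precisely when $\mu$ is a point mass.

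The main obstacle is the second step: making rigorous that the kernel $x/(s+x)$, which varies on a bounded multiplicative scale, acts as a sharp cutoff once the support of $\mu_t$ has spread over a multiplicative range growing geometrically in $t$. The slowly growing $\theta(t)$ in the sandwich is the crucial device, and it must be combined with the monotonicity of $F_{\nu_t}$ and a restriction to continuity points in order to upgrade the fixed-value parametric identity into honest convergence of distribution functions. The case $\mu(\{0\})>0$ needs only bookkeeping of the domain $\big(1-1/\mu(\{0\}),0\big)$ of $\Sigma_\mu$, along which all the above monotonicity statements persist.
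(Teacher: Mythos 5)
The paper does not actually prove Theorem \ref{THMthm}: it is imported from Tucci \cite{T10} and Haagerup--M\"oller \cite{HM}, with only the remark that the continuous-time formulation ``can be proved without changing the proof.'' So there is no in-paper argument to compare against; measured against the cited source, your proposal is essentially a correct reconstruction of the Haagerup--M\"oller proof. The two pillars are the same as theirs: the exact parametric identity $H_t(s_t(u))=u/(u-1)$ obtained by evaluating $\eta_{\mu_t}$ at $\eta_{\mu_t}^{-1}(u)=u\Sigma_\mu(u)^t$ (their computation along the negative half-line via $\psi/(1+\psi)=\eta$), and the sandwich showing that the kernel $x/(s+x)$ becomes a sharp cutoff after taking $1/t$-th powers (their cutoff lemma, with your slowly growing $\theta(t)$ playing the role of their $\varepsilon$). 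Your bookkeeping of the endpoints $\Sigma_\mu(0^-)=(\int x\,d\mu)^{-1}$ and $\Sigma_\mu(-\infty)=\int x^{-1}\,d\mu$, of the atom at $0$ (which is preserved under $\boxtimes$-powers), and of the non-degeneracy via the equality case of Cauchy--Schwarz are all correct. The only places where you should be explicit when writing this up are: (i) the identity $\Sigma_{\mu^{\boxtimes t}}=\Sigma_\mu^t$ is a priori stated only on a small interval $(-\alpha,0)$ and must be extended to the full domain $(1-1/\mu(\{0\}),0)$ by real-analyticity before you may let $u$ range over that whole interval; and (ii) the upgrade from the parametric identity to $F_{\nu_t}(\lambda)\to F(\lambda)$ at continuity points needs the monotonicity of $u\mapsto 1/\Sigma_\mu(u)$ (equivalently, that $S_\mu$ is decreasing) together with the $\lambda(1\pm\epsilon)^2$ perturbation argument you sketch, plus the tightness that the same sandwich provides when $\int x\,d\mu=\infty$. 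None of these is a gap in the idea; they are the standard technical steps of the Haagerup--M\"oller argument.
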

Thus the problem of finding limit distributions of MFLPs at large time has been settled by Theorem \ref{THMthm}; we only need to restrict the initial measure $\mu$ to $\boxtimes$-ID laws on $(0,\infty)$. 

Let $\fD^\infty_\boxtimes(\nu)$ denote the $\boxtimes$-domain of attraction of $\nu$ at large time, i.e.\ the set of all $\boxtimes$-ID distributions $\mu$ on $(0,\infty)$ such that \eqref{large time2} converges to $\nu$ for some functions $a,b\colon (0,\infty)\to(0,\infty)$. 

The map $\mu \mapsto \Phi(\mu)$ is injective since the map $\mu \mapsto S_\mu$ is injective. This fact and arguments similar to the paragraph around \eqref{uniqueness} with Lemma \ref{GK2} completely determine the $\boxtimes$-domain of attraction of $\Phi(\mu)$ for a non-degenerate $\boxtimes$-ID law $\mu$ on $(0,\infty)$: 
$$
\fD_\boxtimes^\infty(\Phi(\mu))=\{\D_\beta (\mu^\alpha): \alpha,\beta >0 \}. 
$$  
Thus the limit distributions are not universal since each domain of attraction is small. 

By contrast, in classical probability, the limiting distributions of positive MCLPs at large (and small) time are universal.
\begin{prop}\label{multiplicativeclassical}
Let $\{X_t\}_{t\geq0}$ be a MCLP on $(0,\infty)$ such that $X_0=1$. 
If there exist functions $a, b\colon (0,\infty) \to (0,\infty)$ such that 
 $b(t) X_t^{a(t)}$ converges in law  to a non-constant positive random variable $Y$ as $t \to\infty$ or $t\downarrow0$,  then $Y$ is log stable, i.e.\ $\log Y$ is stable.  
\end{prop}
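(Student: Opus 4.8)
The plan is to transport the problem to the real line through the group isomorphism $\log\colon((0,\infty),\cdot)\to(\R,+)$ and then invoke Proposition \ref{prop stable}. First I would set $Z_t := \log X_t$. Since the logarithm is a topological group isomorphism, $\{Z_t\}_{t\geq0}$ is an ACLP on $\R$ with $Z_0 = \log X_0 = 0$; stochastic continuity of $\{X_t\}$ transfers to $\{Z_t\}$, so the associated convolution semigroup $\mu^{\ast t} := \Law(Z_t)$ is weakly continuous and satisfies $\mu^{\ast 0} = \delta_0$.

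Next I would rewrite the hypothesis additively. Taking logarithms,
\begin{equation}
\log\!\big(b(t)\, X_t^{a(t)}\big) = a(t)\, Z_t + \log b(t),
\end{equation}
so in terms of laws the left-hand side has distribution $\D_{a(t)}(\mu^{\ast t}) \ast \delta_{\log b(t)}$, with dilation factor $a(t) > 0$ and shift $\log b(t) \in \R$. Because $x\mapsto\log x$ is a homeomorphism of $(0,\infty)$ onto $\R$, the assumed convergence $b(t)\, X_t^{a(t)} \tolaw Y$ is equivalent to $a(t)\, Z_t + \log b(t) \tolaw \log Y$. As $Y$ is a non-constant positive random variable, $\log Y$ is a non-degenerate $\R$-valued random variable.

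Finally I would apply Proposition \ref{prop stable} to the weakly continuous semigroup $\{\mu^{\ast t}\}_{t\geq0}$, with dilation factors $a(t)$ and shifts $\log b(t)$: in either regime $t\to\infty$ or $t\downarrow0$, the weak limit $\Law(\log Y)$ of $\D_{a(t)}(\mu^{\ast t}) \ast \delta_{\log b(t)}$ is non-degenerate, hence $\log Y$ is stable. This is precisely the assertion that $Y$ is log stable, completing the proof.

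Since every step is a direct translation under the exponential isomorphism, there is essentially no analytic obstacle here; the only points requiring a word of care are that $\{Z_t\}$ is genuinely an additive classical L\'evy process (immediate from the isomorphism, together with stochastic continuity) and that $\log Y$ is non-degenerate. The latter is guaranteed by the non-constancy of $Y$ and is exactly the hypothesis needed to invoke Proposition \ref{prop stable}. I would emphasize that the contrast with the free multiplicative case in \eqref{large time2} is structural: there is no logarithm turning a positive MFLP into an AFLP because of non-commutativity, which is why the free limit distributions fail to be universal.
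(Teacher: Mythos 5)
Your proposal is correct and is essentially identical to the paper's proof, which simply applies Proposition \ref{prop stable} to the ACLP $Z_t = \log X_t$; your version merely spells out the intermediate bookkeeping (the identity $\log(b(t)X_t^{a(t)}) = a(t)Z_t + \log b(t)$ and the non-degeneracy of $\log Y$). No gaps.
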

\begin{proof}
 This follows from the additive case (Proposition \ref{prop stable}) applied to the ACLP $Z_t = \log(X_t)$. 
\end{proof}

Note that the Boolean analogue of \eqref{large time} cannot be formulated, as  Bercovici showed that a reasonable distribution $\mu^{\utimes n}$ does not exist for sufficiently large $n$ if $\mu$ is compactly supported and non-degenerate \cite{Ber06}. The monotone version of \eqref{large time} can be formulated but is not discussed in this paper.

\section{Positive multiplicative free L\'evy processes at small time} \label{sec PMFLP}
We consider the limit theorem of the type \eqref{large time}, but for small time. In terms of probability measures, the problem is convergence  
\begin{equation}\label{small time}
\D_{b(t)}(\mu^{\boxtimes t})^{a(t)}, \qquad t\downarrow0, 
\end{equation}
where $a,b\colon (0,\infty) \to (0,\infty)$ are functions and $\mu$ is a $\boxtimes$-ID distribution on $(0,\infty)$. 
In the case of large time, recall that we can always take $a(t)=1/t$ and $b(t)\equiv 1$. However, such is not anymore true at small time. 

In classical probability, the possible limit distributions are only log stable distributions and degenerate distributions; see Proposition \ref{multiplicativeclassical}. 
Our results for free case are similar to this classical case; we find log free stable distributions as the limit distribution of \eqref{small time}.

\subsection{Log Cauchy distribution}
In this section we present a limit theorem \eqref{small time} when the functions $a(t)=1/t$ and $b(t)\equiv 1$ can be taken. 
Let $\bfC_{\beta,\gamma}$ be a random variable following the Cauchy distribution $\bfc_{\beta,\gamma}$. 
The law $\Law(e^{\bfC_{\beta,\gamma}})$ is called the \emph{log Cauchy distribution} whose probability density is given by 
$$
\frac{\gamma}{\pi x}\cdot\frac{1}{(\log x -\beta)^2+\gamma^2}\Ind_{(0,\infty)}(x). 
$$

The main theorem here is the convergence to the log Cauchy distribution.  

\begin{thm}\label{LCF} Let $\mu$ be a $\boxtimes$-ID probability measure on $(0,\infty)$. Assume that the analytic function $\vv_\mu$ in \eqref{eq FLK} extends to a continuous function in $(\ii\C^+) \cup \C^- \cup I$ where $I$ is an open interval containing $1$, and assume that $-\beta+\ii\gamma:= \vv_\mu(1) \in \C^+$. Then for any compact set $K \subset (0,\infty)$, the measure $(\mu^{\boxtimes t})^{1/t}$ is Lebesgue absolutely continuous on $K$ for small $t>0$, and the convergence 
\[
\frac{d (\mu^{\boxtimes t})^{1/t}}{d x} \to \frac{\gamma}{\pi x [(\log x - \beta)^2 + \gamma^2]} \quad \text{as}\quad t \downarrow0
\] 
holds uniformly on $K$. In particular, $(\mu^{\boxtimes t})^{1/t}$ converges to $\Law(e^{\mathbf{C}_{\beta,\gamma}})$ weakly. 
\end{thm}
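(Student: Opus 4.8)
The plan is to compute the density of $\nu_t := (\mu^{\boxtimes t})^{1/t}$ directly by Stieltjes inversion, show it converges uniformly to the log Cauchy density, and then invoke Lemma \ref{convergence1}. Write $\rho_t := \mu^{\boxtimes t}$, so that $\nu_t$ is the push-forward of $\rho_t$ under $x\mapsto x^{1/t}$. Combining the change of variables $x=y^t$ with the Stieltjes inversion $\frac{d\rho_t}{dx}(x)=-\frac1\pi\Im G_{\rho_t}(x+\ii 0)$ and the identities $G_{\rho_t}=1/F_{\rho_t}$, $F_{\rho_t}(w)=w(1-\eta_{\rho_t}(1/w))$ gives, for $y>0$,
\[
\frac{d\nu_t}{dy}(y) = -\frac{t}{\pi y}\,\Im\,\frac{1}{1-\eta_{\rho_t}(y^{-t}-\ii 0)},
\]
provided the boundary value $\zeta := \eta_{\rho_t}(y^{-t}-\ii 0)$ exists. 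The problem is thereby reduced to the behaviour of $\eta_{\rho_t}$ near the point $1$: as $t\downarrow0$ and $y$ ranges over a compact $K\subset(0,\infty)$, the argument $y^{-t}\to1$, which is exactly why the hypotheses concern $\vv_\mu$ near $z=1$ and the value $\vv_\mu(1)$.

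To analyze $\eta_{\rho_t}$ near $1$ I would invert the relation from Theorem \ref{BV}, namely $\eta_{\rho_t}^{-1}(\zeta)=\zeta\,e^{t\vv_\mu(\zeta)}$. Putting $z=y^{-t}$, $\zeta=\eta_{\rho_t}(z)$ and taking logarithms turns this into
\[
\log\zeta = -t\big(\log y + \vv_\mu(\zeta)\big).
\]
Since $\log\zeta\to0$ forces $\zeta\to1$, and since $\vv_\mu$ is assumed continuous up to the boundary with $\vv_\mu(1)=-\beta+\ii\gamma\in\C^+$, the leading behaviour is $\log\zeta=-t[(\log y-\beta)+\ii\gamma]+o(t)$, whence $1-\zeta = t\,[(\log y-\beta)+\ii\gamma]+o(t)$ uniformly for $y\in K$. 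Substituting into the density formula and using $\Im\tfrac{1}{(\log y-\beta)+\ii\gamma}=-\gamma/[(\log y-\beta)^2+\gamma^2]$ yields the log Cauchy density $\frac{\gamma}{\pi y[(\log y-\beta)^2+\gamma^2]}$, the factors of $t$ cancelling. Note $\gamma>0$ ensures $1-\zeta$ lands in $\C^+$ with nonzero imaginary part, so that $\rho_t$ (hence $\nu_t$) is genuinely absolutely continuous near $1$ for small $t$, and the limiting density is non-degenerate.

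The main obstacle is making this inversion rigorous and \emph{uniform}. One must show that for small $t$ the analytic map $\Psi_t(\zeta)=\zeta e^{t\vv_\mu(\zeta)}$ has, for each real $z=y^{-t}$ near $1$, a unique solution $\zeta=\zeta_t(y)\in\overline{\C^-}$ near $1$; that this solution is the boundary value of the genuine transform $\eta_{\rho_t}$, so that the \emph{continued} functional equation (not merely its restriction to $(-\infty,0)$) is legitimate; and that $\zeta_t(y)=1-t[(\log y-\beta)+\ii\gamma]+o(t)$ with the error uniform over $y\in K$. Because only continuity of $\vv_\mu$ up to the boundary is assumed (not differentiability), I would avoid the inverse function theorem and instead locate $\zeta_t(y)$ by a winding-number/Rouch\'e argument on a contour of radius $\asymp t$ about $1$ drawn inside $\C^-$, where $\vv_\mu$ is analytic, controlling $\Psi_t$ there through $|\vv_\mu(\zeta)-\vv_\mu(1)|\to0$. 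Uniformity over $K$ is then automatic, since all $\zeta_t(y)$ lie within $O(t)$ of $1$ while $\log y$ stays in a compact set; the resulting uniform convergence of densities feeds into Lemma \ref{convergence1} to give $\nu_t\wto\Law(e^{\mathbf{C}_{\beta,\gamma}})$.

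Finally, an alternative organization—presumably the one adopted in Section \ref{sec6}—is to read the identity $\eta_{\rho_t}^{-1}(z)=z\,e^{t\vv_\mu(z)}$ as identifying the local inverse of $\eta_{\rho_t}$ with a multiplicative Boolean convolution power, thereby reducing the computation to the fully explicit Boolean setting after extending $\eta$-type transforms beyond probability measures; the same asymptotics are then extracted directly from the Boolean density formula, which is technically lighter than the contour argument above.
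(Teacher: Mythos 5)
Your computation is essentially the paper's own, just organized locally rather than globally: since $\Sigma_{\mu^{\boxtimes t}}=\Sigma_\mu^t$, the map $\Psi_t(\zeta)=\zeta e^{t\vv_\mu(\zeta)}$ you invert is exactly the map $\Phi_{1+t}$ of Section \ref{sec6}, and its inverse $\omega_{1+t}$ \emph{is} $\eta_{\mu^{\boxtimes t}}$ (by Proposition \ref{prop omega}, $\omega_{1+t}=(\eta^{\boxtimes(1+t)})^{\sutimes\frac{t}{1+t}}=\eta_{\mu^{\boxtimes t}}$). Your density formula, the expansion $1-\zeta=t[(\log y-\beta)+\ii\gamma]+o(t)$, and the appeal to Lemma \ref{convergence1} all match the paper's final computation, and your closing paragraph correctly guesses the Boolean-reduction organization actually used in Section \ref{sec6}.

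The genuine gap is in the identification step that you flag but do not resolve. A Rouch\'e/winding-number argument on a contour of radius $\asymp t$ about $1$ shows that $\Psi_t(\zeta)=z$ has a \emph{unique} solution $\zeta_t(y)$ in that small region, but it does not show that this solution is the boundary value of the actual transform $\eta_{\mu^{\boxtimes t}}$. The identity $\Psi_t\circ\eta_{\mu^{\boxtimes t}}=\Id$ is known a priori only on $(-\infty,0)$ (and, by analytic continuation, on $\C^-$), so $\eta_{\mu^{\boxtimes t}}(z)$ for $z\in\C^-$ near $y^{-t}$ is \emph{some} preimage of $z$ under $\Psi_t$; to conclude it is the one your local contour isolates, you need to know in advance that $\eta_{\mu^{\boxtimes t}}(z)$ lies within $O(t)$ of $1$, which is precisely what is being proved. (Locally uniform convergence $\eta_{\mu^{\boxtimes t}}\to\Id$ on $\C^-$ from $\mu^{\boxtimes t}\wto\delta_1$ does not help, because for fixed small $t$ it gives no control as $z$ approaches the real axis, and the continuous extension of $\eta_{\mu^{\boxtimes t}}$ to the boundary near $1$ is itself part of the claim.) The repair is to replace the local contour by a global univalence domain: show that $\Psi_t$ is injective on a region of $\C^-$ that contains both a large piece of $(-\infty,0)$ and a fixed neighborhood of the interval $I$ in $\overline{\C^-}$, with image covering a slightly smaller such region; the inverse so defined agrees with $\eta_{\mu^{\boxtimes t}}$ on the negative half-line, hence everywhere by the identity theorem, and its continuity up to $I$ gives the boundary values. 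This is exactly the content of the paper's Lemma \ref{continuation} (the curve $\Phi_{1+t}|_{\partial\Omega_{\ep/2}}$ winding once around each point of a neighborhood of $\overline{\Omega}_\ep$, using $\Im\Phi_{1+t}>0$ on $[\kappa,\lambda]$, which holds because $\gamma>0$). With that global statement in place, your asymptotic expansion and the passage to the log Cauchy density are correct and uniform on compacts.
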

\begin{rem}  
The assumption on $\vv_\mu$ is guaranteed if the generating measure $\tau_\mu$ in \eqref{eq FLK} is Lebesgue absolutely continuous on $I$ and $d\tau_\mu/d x$ is locally H\"older continuous and strictly positive on $I$. See Example \ref{exa Holder} for further details. 
\end{rem}
We reduce the problem to the Boolean case, and the proof is postponed to Section \ref{sec6}. The idea is the following. Suppose that we find a probability measure $\nu$ such that $\mu =(\nu^{\boxtimes 2})^{\sutimes \frac{1}{2}}$. Then using a commutation relation in \cite{AH13} we obtain 
\begin{equation}\label{key eq2}
\mu^{\boxtimes t}= \left[ (\nu^{\boxtimes (1+t)})^{\sutimes \frac{1}{1+t}}\right]^{\sutimes t}. 
\end{equation}
The measure $ (\nu^{\boxtimes (1+t)})^{\sutimes \frac{1}{1+t}}$ is approximately $\nu$ when $t \downarrow0$, and so the study of $\D_{b(t)}(\mu^{\boxtimes t})^{a(t)}$ reduces to the study of $\D_{b(t)}(\nu^{\sutimes t})^{a(t)}$ which is easier. 
 The relation between $\mu$ and $\nu$ is that $\mu$ is the image of $\nu$ by the multiplicative Bercovici--Pata map, which is not a bijection. Therefore, for some $\mu \in \ID(\boxtimes)$, we cannot find such a pre-image $\nu$. However, we do not need a ``probability measure'' $\nu$, but only need its $\eta$-transform. This idea will be made more precise in Section \ref{sec6}. The equation \eqref{key eq2} will then be generalized to \eqref{key eq}.

\begin{exa}\label{LC BS} The positive Boolean $\alpha$-stable law $(0<\alpha<1)$ has the $\Sigma$-transform 
$
\Sigma_{\bfb_\alpha}(z) = (-z)^{\frac{1-\alpha}{\alpha}}, 
$
and so
$$
\vv_{\bfb_\alpha}(z) = \frac{1-\alpha}{\alpha}\log (-z). 
$$
This implies that 
 $$
 \lim_{y\uparrow0}\vv_{\bfb_\alpha}(1+\ii y) = \ii \frac{(1-\alpha)\pi}{\alpha}, 
 $$
 and hence we get the convergence 
 $$
 \frac{d (\bfb_\alpha^{\boxtimes t})^{1/t}}{dx} \to \frac{1}{\pi x} \cdot\frac{\gamma}{(\log x)^2 + \gamma^2}
 $$
 uniformly on each compact set of $(0,\infty)$, where $\gamma=(1-\alpha)\pi/\alpha$. 
\end{exa}

\subsection{Dykema-Haagerup distribution}
In this section we find a limit distribution of \eqref{small time} which is not a log Cauchy distribution but still a log free stable distribution with index $1$. 
Dykema and Haagerup \cite{DH04a} investigated the $N \times N$ strictly upper triangular random matrix
\begin{equation*}
T_N:= 
\begin{pmatrix} 
0 & t_{12} & t_{13} & \cdots & t_{1, N-1} & t_{1 N}\\
0 & 0 & t_{23} & \cdots & t_{2, N-1} & t_{2 N}\\
0 & 0 & 0 & \cdots & t_{3, N-1} & t_{3N}\\
\vdots & \vdots  &&\ddots & & \vdots  \\
0 & 0 & 0 & \cdots & 0 & t_{N-1, N}\\
0&0&0&\cdots &0 & 0
\end{pmatrix}
\end{equation*}
where the entries $\{t_{i j}\}_{1 \leq i < j \leq N}$ are independent complex Gaussian with mean 0 and variance $1/n$. 
The showed that $(T_N, \bE\otimes \frac{1}{N}\Tr_N)$ converges in $\ast$-moments to some $(T,\tau)$, where $\tau$ is a trace. The operator $T$ is called the DT-operator. 
They conjectured that 
$$
\tau\!\left[((T^\ast)^kT^k)^n\right]=\frac{n^{kn}}{(1+kn)!},\qquad k,n \in \N, 
$$ 
which was proved by Dykema and Haagerup for $k=1$ and then proved by \'Sniady \cite{S03} in full generality. 
Cheliotis showed that the empirical eigenvalue distribution of $T_N^* T_N$ converges weakly almost surely \cite[Theorem 1]{Che}. 
A similar but different random matrix model was found by Basu et al.\ \cite[Theorem 3.1]{BBGH}. 

Generalizing natural numbers $k$ to positive real numbers, we introduce a probability measure $\DH_{r}$ $(r\geq0)$ whose moments are given by 
\begin{equation}\label{DH}
\frac{n^{r n}}{\Gamma(2+r n)},\quad n=0, 1,2,3,\ldots, \quad r\geq0, 
\end{equation}
with the convention $0^0=1$. More generally, the Mellin transform is given by 
\begin{equation}\label{DH2}
\int_{[0,\infty)} x^\gamma\,\DH_r(dx) = \frac{\gamma^{r\gamma}}{\Gamma(2+r\gamma)},\quad r,\gamma>0. 
\end{equation} 
The existence of such a probability measure is guaranteed by Theorem \ref{free Bessel} in this section because its proof implies the positive definiteness of the sequence \eqref{DH}. 
We call $\DH_{r}$ the \emph{Dykema-Haagerup distribution}. 
It can be easily shown that 
\begin{equation}\label{powerrelation}
(\DH_r)^a = \D_{a^{ar}}(\DH_{ar}),\quad a,r\geq0. 
\end{equation}
The probability distribution $\DH_1$ is the spectral distribution of the DT operator $T^* T$, and it is Lebesgue absolutely continuous and is supported on $[0,e]$ \cite[Theorem 8.9]{DH04a}. Hence, $\DH_r = \D_{r^{-r}}(\DH_1^r)$ is supported on $[0,r^{-r}e^r]$ for $r>0$. 
The $R$-transform of $\DH_1$ is explicitly computed in \cite[Theorem 8.7]{DH04a}, which is not used in this paper. 

The Dykema-Haagerup distribution is in fact a log free stable distribution, which seems unknown in the literature.

\begin{prop}\label{prop DH} Let $\bfF_1$ be a random variable following the free stable law $\bff_1$. Then 
$$
\DH_1 = \Law(e^{\bfF_1}). 
$$
\end{prop}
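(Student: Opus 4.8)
The plan is to show that $\DH_1$ and $\Law(e^{\bfF_1})$ have the same Mellin transform and then invoke determinacy. Since $\bff_1$ is supported on $(-\infty,1]$, the variable $e^{\bfF_1}$ takes values in $(0,e]$, while $\DH_1$ is supported on $[0,e]$; both measures are therefore compactly supported and uniquely determined by their moments. Hence it suffices to prove $\int_{(-\infty,1]} e^{\gamma x}\,d\bff_1(x) = \gamma^\gamma/\Gamma(\gamma+2)$ for $\gamma>0$ (or even only for $\gamma\in\N$), which by \eqref{DH2} is exactly the Mellin transform of $\DH_1$.

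First I would use the explicit transforms of $\bff_1$. By \eqref{Voiculescu} we have $\varphi_{\bff_1}(z)=-\log z$, so $F_{\bff_1}^{-1}(w)=w-\log w$ and $G_{\bff_1}(z)=1/F_{\bff_1}(z)$. The idea is to represent the moment integral as the contour integral $\frac{1}{2\pi\ii}\oint_C e^{\gamma z}G_{\bff_1}(z)\,dz$, where $C$ wraps the cut $(-\infty,1]$ counterclockwise. This representation is legitimate because, for $\gamma>0$, the factor $e^{\gamma z}$ decays as $\Re z\to-\infty$, which allows the contour to be closed at $-\infty$.

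The key step is the change of variables $w=F_{\bff_1}(z)$, equivalently $z=w-\log w$, under which $G_{\bff_1}(z)\,dz = w^{-1}(1-w^{-1})\,dw$ and $e^{\gamma z}=e^{\gamma w}w^{-\gamma}$. This turns the integral into $\frac{1}{2\pi\ii}\oint_{C'} e^{\gamma w}\big(w^{-\gamma-1}-w^{-\gamma-2}\big)\,dw$. Tracking the boundary values of $F_{\bff_1}$ along the two edges of the cut shows that the edge point $z=1$ maps to $w=1$ (a turning point of $z=w-\log w$, since $dz/dw=1-1/w$ vanishes there) while $z\to-\infty$ maps to $w\to\infty$ with $\arg w\to\pm\pi$. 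Consequently $C'$ is a Hankel contour encircling the branch cut $(-\infty,0]$ of $w^{-\gamma}$, and Hankel's formula $\frac{1}{2\pi\ii}\oint_H e^{\gamma w}w^{-s}\,dw = \gamma^{s-1}/\Gamma(s)$ gives $\gamma^\gamma/\Gamma(\gamma+1)-\gamma^{\gamma+1}/\Gamma(\gamma+2)$, which simplifies via $\Gamma(\gamma+2)=(\gamma+1)\Gamma(\gamma+1)$ to $\gamma^\gamma/\Gamma(\gamma+2)$, as desired.

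I expect the main obstacle to be the rigorous justification of the contour manipulations: verifying that the moment integral really equals the contour integral around the unbounded support, and, more delicately, identifying the image $C'$ as precisely the Hankel contour, including the behaviour at the branch point $w=1$ corresponding to the edge of the support and the control of the integrand as $|w|\to\infty$ with $\arg w\to\pm\pi$. Once these analytic points are settled, equality of the Mellin transforms, and hence $\DH_1=\Law(e^{\bfF_1})$, follows.
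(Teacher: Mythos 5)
Your proposal is correct, but it takes a genuinely different route from the paper. The paper's proof is a two-line verification: it quotes the implicit parametric formula for the density of $\DH_1$ from Dykema--Haagerup ($p(e^{\theta\cot\theta}\sin\theta/\theta)=\frac{\sin\theta}{\pi}e^{-\theta\cot\theta}$) and Biane's parametric formula for the density of $\bff_1$, and checks directly that the push-forward by $x\mapsto e^x$ matches under the substitution $x=e^{\theta\cot\theta}\sin\theta/\theta$. You instead compute the Laplace transform $\bE[e^{\gamma \bfF_1}]$ from scratch: the change of variables $w=F_{\bff_1}(z)$, $z=w-\log w$, is exactly right, the image of the two edges of the cut is the curve $w=\frac{\theta}{\sin\theta}e^{\pm\ii\theta}$, $\theta\in(0,\pi)$ (the same curve underlying Biane's density formula), which lies outside the unit disc, passes vertically through $w=1$, and recedes to infinity along $\arg w\to\pm\pi$ with $\Re w=\theta\cot\theta\to-\infty$, so it is a legitimate Hankel contour and the integrand decays; the final arithmetic $\gamma^\gamma/\Gamma(\gamma+1)-\gamma^{\gamma+1}/\Gamma(\gamma+2)=\gamma^\gamma/\Gamma(\gamma+2)$ matches \eqref{DH2}, and moment determinacy applies since both measures live in $[0,e]$. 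The analytic points you flag (closing the contour at $-\infty$, behaviour at the turning point $w=1$ where $dz/dw$ vanishes) are all routine to settle. What your approach buys is self-containedness: it derives the identity $\bE[e^{\gamma\bfF_1}]=\gamma^\gamma/\Gamma(2+\gamma)$ (the $\alpha=1$ analogue of Lemma \ref{laplace}, which the paper only states for $\alpha\in(1,2]$ citing \cite{HK14}) rather than relying on two density formulas imported from the literature; the cost is considerably more contour-integration bookkeeping than the paper's direct density comparison.
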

\begin{proof}
Dykema and Haagerup obtained an implicit expression of the density $p(x)$ of $\DH_1$ in \cite[Theorem 8.9]{DH04a}: 
\begin{equation}
p\!\left( \frac{\sin \theta}{\theta}e^{\theta \cot \theta}\right) =  \frac{\sin\theta}{\pi} e^{-\theta \cot \theta},\qquad \theta \in(0,\pi). 
\end{equation}
On the other hand Biane obtained an implicit expression of the density $q(x)$ of $\bff_1$ in \cite[Proposition A1.3]{BP99}: 
\begin{equation}
q\!\left(\theta \cot\theta + \log \frac{\sin\theta}{\theta} \right) = \frac{\sin^2\theta}{\pi\theta},\qquad \theta \in (0,\pi). 
\end{equation}
The density $r(x)$ of $\Law(e^{\bfF_1})$ is given by $r(x) = x^{-1}q(\log x)$. Define 
\begin{equation}\label{x-theta}
x=e^{\theta \cot\theta} \frac{\sin\theta}{\theta}. 
\end{equation}
Then we obtain 
\begin{equation}\label{}
\begin{split}
r(x) = x^{-1}q(\log x) =  \frac{\sin^2\theta}{\pi x\theta} = \frac{\sin\theta}{\pi} e^{-\theta \cot \theta} = p(x), 
\end{split}
\end{equation} 
where we used the identity \eqref{x-theta} in the third equality. 
\end{proof}

An interesting observation here is that the free $1$-stable law has a random matrix model.  
\begin{cor}
The eigenvalue distribution of the random matrix $\log(T_N^\ast T_N)$ weakly converges to $\bff_1$ almost surely as $N\to \infty$. 
\end{cor}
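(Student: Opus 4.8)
The plan is to deduce the statement from two facts already in hand: the almost sure weak convergence of the empirical spectral distribution of $T_N^\ast T_N$ to the Dykema--Haagerup law $\DH_1$, due to Cheliotis \cite[Theorem 1]{Che}, and the identification $\DH_1 = \Law(e^{\bfF_1})$ of Proposition \ref{prop DH}. The latter says precisely that $\bff_1$ is the push-forward of $\DH_1$ under the map $x \mapsto \log x$. Thus, writing $\mu_N$ for the empirical eigenvalue distribution of $T_N^\ast T_N$ and denoting by $\log_\ast\mu_N$ the push-forward of $\mu_N$ by $\log$, the eigenvalue distribution of $\log(T_N^\ast T_N)$ is exactly $\log_\ast\mu_N$, and the task reduces to transporting the convergence $\mu_N \wto \DH_1$ through $\log$, i.e.\ to a continuous-mapping argument.

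First I would record that $\DH_1(\{0\})=0$: this holds because $\DH_1$ is Lebesgue absolutely continuous (\cite[Theorem 8.9]{DH04a}), and is in any case forced by Proposition \ref{prop DH}, since $\bff_1$ is a genuine probability measure on $\R$. As $x \mapsto \log x$ is continuous on $(0,\infty)$, its only discontinuity on $[0,\infty)$ sits at the $\DH_1$-null point $0$. The continuous mapping theorem for weak convergence then gives $\log_\ast \mu_N \wto \log_\ast \DH_1 = \bff_1$ on the event of full probability on which $\mu_N \wto \DH_1$, which is exactly the claimed almost sure convergence.

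The one genuinely delicate point, and the step I expect to require the most care, is the behaviour at the origin. Since $T_N$ is strictly upper triangular it is singular (generically of rank $N-1$), so $\mu_N$ always carries an atom at $0$, and $\log 0 = -\infty$; a priori mass could therefore escape to $-\infty$ under the push-forward. I would handle this by regarding $\log$ as a continuous map $[0,\infty) \to [-\infty,\infty)$ into the compactification $[-\infty,\infty]$, so that $\log$ has \emph{no} discontinuities and $\log_\ast \mu_N$ converges weakly there. Since the limit $\bff_1$ is supported on $(-\infty,1]$ and assigns no mass to $\{-\infty\}$, a Portmanteau estimate applied to the closed sets $[-\infty,-M]$ yields tightness on $\R$, and the convergence descends to weak convergence of probability measures on $\R$. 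The atom of $\mu_N$ at $0$, of size $\approx 1/N$, contributes exactly the vanishing escaping mass and is consistent with $\DH_1(\{0\})=0$, so it poses no obstruction in the limit.
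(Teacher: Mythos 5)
Your proposal is correct and follows exactly the route the paper intends: the corollary is stated without proof as an immediate consequence of Cheliotis's almost sure convergence of the empirical spectral distribution of $T_N^\ast T_N$ to $\DH_1$ together with Proposition \ref{prop DH} ($\DH_1 = \Law(e^{\bfF_1})$), pushed forward by $\log$. Your extra care with the atom of the empirical measure at $0$ (via the compactification and a tightness estimate) is a sensible and correct way to make the continuous-mapping step rigorous.
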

We know that the semicircle law $\bff_{2,1/2}$ has the random matrix model $T_N + T_N^*$. Considering these facts, the following question comes up. 
\begin{prob} Find a random matrix model whose eigenvalue distribution converges to another free stable distribution. 
\end{prob}

In this section, we show that the Dykema-Haagerup distribution appears in the limit theorem \eqref{small time}, when we take the initial distribution $\mu$ to be the free Bessel law \cite{BBCC11}. 
Suppose that $r,s \geq0$ and $\max\{r,s\}\geq1$. The free Bessel law is defined by 
\begin{equation}
\bm\pi(r,s)
=
\begin{cases}
\bm\pi^{\boxtimes (r-1)}\boxtimes \bm\pi^{\boxplus s}, & r \geq1, s \geq0, \\
\left( (1-s)\delta_0 + s \delta_1\right) \boxtimes \bm\pi^{\boxtimes r}, & r \geq0, 0 \leq s \leq 1. 
\end{cases}
\end{equation}
The two definitions are compatible in the common domain $r \geq1$ and $0 \leq s \leq 1$. If $s\neq0$ then the $\Sigma$-transform is 
\begin{equation}\label{FBsigma}
\Sigma_{\bm\pi(r,s)}(z) = \frac{(1-z)^r}{(1-s)z +s}, 
\end{equation}
which holds for $z \in (-\infty, 0)$ if $s\geq1$ and $z \in (-s/(1-s),0)$ if $0 < s <1$. Note that $\Sigma$-transform is not defined for $\delta_0$, so the formula \eqref{FBsigma} fails for $s=0$. 

Before studying the limit theorem, we need to clarify when the free Bessel law is $\boxtimes$-ID. 

\begin{prop} Let $r,s \geq0$ and $\max\{r,s\}\geq1$. The free Bessel law $\bm\pi(r,s)$ is $\boxtimes$-ID if and only if either (a) $s=0$ and $s=1$,  or (b) $r\geq1$ and $s>1$. 
\end{prop}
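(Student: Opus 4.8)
The plan is to apply the Bercovici--Voiculescu characterization of $\boxtimes$-infinite divisibility (Theorem \ref{BV}) directly to the explicit $\Sigma$-transform \eqref{FBsigma}. Since $\bm\pi(r,s)\neq\delta_0$ exactly when $s\neq0$, and $\bm\pi(r,0)=\delta_0=\delta_0^{\boxtimes n}$ is trivially $\boxtimes$-ID, I may assume $s>0$ and set
\[
\vv(z) = \log\Sigma_{\bm\pi(r,s)}(z) = r\log(1-z) - \log\bigl((1-s)z+s\bigr),
\]
choosing the principal branch so that condition \eqref{MFID2}, namely $\Sigma_{\bm\pi(r,s)}(z)=e^{\vv(z)}$, holds on $(-\infty,0)$ (there $1-z>1$ and $(1-s)z+s>0$, so both logarithms are real). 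By Theorem \ref{BV} it then remains to decide, for each admissible $(r,s)$, whether this $\vv$ satisfies condition \eqref{MFID1}: analytic on $\C\setminus[0,\infty)$, real-symmetric, and mapping $\C^-$ into $\C^+\cup\R$. The real-symmetry $\vv(\bar z)=\overline{\vv(z)}$ is automatic since $\vv$ is built from logarithms with real coefficients.

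For analyticity I would locate the singularities of $\vv$. The term $r\log(1-z)$ is analytic off $[1,\infty)\subset[0,\infty)$, so the only possible obstruction is the logarithmic branch point of the second term, at the zero $z_0=s/(s-1)$ of $(1-s)z+s$. When $0<s<1$ one has $z_0<0$, so $\vv$ has a genuine branch point inside the simply connected domain $\C\setminus[0,\infty)$ and cannot be continued to a single-valued analytic function there; hence $\bm\pi(r,s)$ is \emph{not} $\boxtimes$-ID for $0<s<1$. When $s=1$ the second term disappears and $\vv=r\log(1-z)$ is analytic off $[1,\infty)$; when $s>1$ one has $z_0>1$, so again $\vv$ is analytic on $\C\setminus[1,\infty)\supset\C\setminus[0,\infty)$. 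Thus analyticity holds precisely for $s\geq1$, and it is the band $0<s<1$ that is ruled out at this step.

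The heart of the argument is the Nevanlinna mapping condition $\vv(\C^-)\subset\C^+\cup\R$ in the remaining case $s\geq1$, which means $\Im\vv(z)\geq0$ for $z\in\C^-$. Substituting $w=1-z$, which carries $\C^-$ onto $\C^+$, and writing $c=s-1\geq0$, one finds $(1-s)z+s=1+cw$ and hence
\[
\Im\vv(z) = r\arg(w) - \arg(1+cw),\qquad w\in\C^+ .
\]
The key elementary fact is that adding a positive real number strictly decreases the argument of a point in $\C^+$: for $c>0$ and $w\in\C^+$ one has $\arg(1+cw)<\arg(cw)=\arg(w)$, since $1+cw$ has the same positive imaginary part as $cw$ but a larger real part. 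Consequently, if $r\geq1$ then $\Im\vv(z)>(r-1)\arg(w)\geq0$, so \eqref{MFID1} holds and $\bm\pi(r,s)$ is $\boxtimes$-ID; the subcase $s=1$ ($c=0$) gives $\Im\vv(z)=r\arg(w)\geq0$ for every $r\geq0$. Conversely, if $s>1$ and $0\leq r<1$, then fixing $\arg(w)=\theta\in(0,\pi)$ and letting $|w|\to\infty$ yields $\arg(1+cw)\to\theta$, so $\Im\vv(z)\to(r-1)\theta<0$; thus the mapping condition fails and $\bm\pi(r,s)$ is not $\boxtimes$-ID.

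Collecting the three cases gives exactly the asserted dichotomy: $\bm\pi(r,s)$ is $\boxtimes$-ID if and only if $s=0$ or $s=1$, or $r\geq1$ and $s>1$. The step I expect to require the most care is the necessity direction for $s>1,\ r<1$: one must turn the boundary-value heuristic into a genuine statement about $z\in\C^-$, which the substitution $w=1-z$ together with the limit $|w|\to\infty$ makes rigorous, and one should verify that the principal branches chosen for the continuation into $\C^-$ indeed produce the displayed formula for $\Im\vv$ rather than a branch differing by a multiple of $2\pi\ii$ (this is immediate because, for $s\geq1$, neither $1-z$ nor $1+cw$ vanishes as $w$ ranges over $\C^+$, so both arguments stay in $(0,\pi)$).
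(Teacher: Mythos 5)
Your proof is correct, but the sufficiency half follows a genuinely different route from the paper's. For $r\geq1$, $s>1$ the paper simply invokes the defining decomposition $\bm\pi(r,s)=\bm\pi^{\boxtimes(r-1)}\boxtimes\bm\pi^{\boxplus s}$, cites a known result that each factor is $\boxtimes$-ID, and uses that $\ID(\boxtimes)$ is closed under $\boxtimes$; you instead verify the Nevanlinna condition of Theorem \ref{BV} directly from \eqref{FBsigma}, via the substitution $w=1-z$ and the elementary inequality $\arg(1+cw)<\arg(w)$ for $c>0$, $w\in\C^+$. Your version is self-contained and treats $s=1$ and $s>1$ uniformly, at the cost of a page of computation that the paper avoids by citation. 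The necessity half is essentially the paper's argument: for $0<s<1$ both proofs exploit the logarithmic singularity at $z_0=s/(s-1)<0$ sitting inside $\C\setminus[0,\infty)$ (the paper phrases this as non-analyticity of $\Im\log\Sigma$ there), and for $s>1$, $r<1$ the paper computes the boundary value $\Im\bigl(\log\Sigma_{\bm\pi(r,s)}(x+\ii0)\bigr)=\pi(1-r)>0$ on $(s/(s-1),\infty)$, whereas you take an interior limit $|w|\to\infty$ along a ray in $\C^+$ to get $\Im\vv\to(r-1)\theta<0$; these detect the same failure of the Herglotz property, and your interior version has the mild advantage of not needing to pass to boundary values at all. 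Your closing remarks on branch choices and on the identity-theorem step (any admissible $\vv_\mu$ must agree with your principal-branch $\vv$ on $(-\infty,0)$, hence everywhere) correctly address the one point where the argument could otherwise leak.
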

\begin{proof}
Since $\bm\pi(r,0)=\delta_0$ and $\bm\pi(r,1)=\bm\pi^{\boxtimes r}$ are both $\boxtimes$-ID, we may assume that $s \neq 0,1$. 
If $r,s \geq1$, then both $\bm\pi^{\boxtimes (r-1)}$ and $\bm\pi^{\boxplus s}$ are $\boxtimes$-ID by \cite[Example 5.5]{AH13}. Hence their multiplicative free convolution is $\boxtimes$-ID as well. 
Conversely, suppose that $0\leq r<1$ or $0 < s <1$. 
The formula \eqref{FBsigma} yields 
\begin{equation}
\log \Sigma_{\bm\pi(r,s)}(z) = r \log (1-z) - \log \left( (1-s)z+s\right). 
\end{equation}
If $0<s<1$ then $\Im(\log \Sigma_{\bm\pi(r,s)}(z))$ is not analytic at $z= - s/(1-s)<0$, which implies that $\bm\pi(r,s)$ is not $\boxtimes$-ID by Theorem \ref{BV}. 
If $0 \leq r <1$ and $s>1$ then $\Im(\log \Sigma_{\bm\pi(r,s)}(x+ \ii 0)) = \pi (1-r)  >0$ for $x>s/(s-1)$, and hence $\bm\pi(r,s)$ is not $\boxtimes$-ID by Theorem \ref{BV}.  
\end{proof}

We use the moment method to prove the weak convergence. Haagerup and M\"oller \cite[Lemma 10]{HM} found a connection between the Mellin transform and the $S$-transform, which is useful for our problem. 

\begin{lem}\label{lemma HM}
Let $\mu$ be a probability measure on $(0,\infty)$. Then
\begin{equation*}
\int_{(0,\infty)} x^\gamma\,\mu(dx) = \frac{1}{B(1-\gamma,1+\gamma)} \int_{(0,1)} \left(\frac{1-x}{x}S_\mu(x-1)\right)^{-\gamma}dx
\end{equation*}
for $\gamma\in(-1,1)$ as an equality in $[0,\infty]$, where $B(p,q)$ is the Beta function. Note that 
$$
\frac{1}{B(1-\gamma,1+\gamma)}=\frac{\sin\pi\gamma}{\pi\gamma}.
$$ 
\end{lem}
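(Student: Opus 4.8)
The plan is to trade the $S$-transform for the compositional inverse of the moment-generating function and then evaluate the right-hand side by a change of variables followed by a Fubini--Tonelli argument.

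First I would introduce $\psi_\mu(z) = \int_{(0,\infty)}\frac{zx}{1-zx}\,d\mu(x)$ for $z\in(-\infty,0)$. A short computation from the definitions gives $\eta_\mu(z) = \psi_\mu(z)/(1+\psi_\mu(z))$, and for each fixed $x>0$ the integrand lies in $(-1,0)$, so by dominated convergence $\psi_\mu$ is a strictly increasing continuous bijection of $(-\infty,0)$ onto $(-1,0)$, with $\psi_\mu(0^-)=0$ and $\psi_\mu(-\infty)=-1$, and this holds with no moment assumption. Writing $\chi_\mu := \psi_\mu^{-1}$ for the inverse on $(-1,0)$, the relations $\Sigma_\mu(z)=\eta_\mu^{-1}(z)/z$ and $\Sigma_\mu(z)=S_\mu\!\left(z/(1-z)\right)$ unwind to $S_\mu(w) = \frac{1+w}{w}\chi_\mu(w)$ for $w\in(-1,0)$. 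Consequently $\frac{1-x}{x}S_\mu(x-1) = -\chi_\mu(x-1)$, so the substitution $w=x-1$ rewrites the right-hand side of the lemma as $\frac{\sin\pi\gamma}{\pi\gamma}\int_{-1}^{0}(-\chi_\mu(w))^{-\gamma}\,dw$, where the prefactor is $1/B(1-\gamma,1+\gamma)$ by the reflection formula quoted in the statement.

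Next I would change variables $w=\psi_\mu(z)$ with $z\in(-\infty,0)$, under which $\chi_\mu(w)=z$ and $dw=\psi_\mu'(z)\,dz$ with $\psi_\mu'(z)=\int_{(0,\infty)}\frac{x}{(1-zx)^2}\,d\mu(x)$, turning the integral into $\frac{\sin\pi\gamma}{\pi\gamma}\int_{-\infty}^{0}(-z)^{-\gamma}\psi_\mu'(z)\,dz$. Because the integrand $(-z)^{-\gamma}\frac{x}{(1-zx)^2}$ is nonnegative on $(-\infty,0)\times(0,\infty)$, Tonelli's theorem permits interchanging the $z$-integral with the $\mu$-integral, with the resulting identity valid in $[0,\infty]$; this nonnegativity is precisely what lets the statement hold even when the Mellin transform is infinite. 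The inner integral $\int_{-\infty}^{0}(-z)^{-\gamma}\frac{x}{(1-zx)^2}\,dz$ is computed by the substitution $v=-zx$, giving $x^\gamma\int_0^\infty \frac{v^{-\gamma}}{(1+v)^2}\,dv = x^\gamma B(1-\gamma,1+\gamma)$; multiplying by the prefactor $1/B(1-\gamma,1+\gamma)$ yields $\int_{(0,\infty)}x^\gamma\,d\mu$, which is the claim. The case $\gamma=0$ follows by continuity, both sides equalling $1$.

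The routine parts are the differentiation under the integral sign, valid on compact subsets of $(-\infty,0)$, and the two elementary substitutions. The genuine crux, and the step I expect to demand the most care, is the reduction of $S_\mu$ to $\chi_\mu$ with the correct signs and domains on $(-1,0)$, together with the recognition that, once one passes to the variable $z$, the double integral collapses to a single Beta integral. After this is seen, the nonnegativity of the integrand makes the Tonelli interchange and the extension to $[0,\infty]$ automatic, so that no separate treatment of $\gamma>0$ versus $\gamma<0$ is needed.
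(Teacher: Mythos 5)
Your argument is correct and complete. Note that the paper itself gives no proof of this lemma; it is quoted from Haagerup--M\"oller \cite[Lemma 10]{HM}, so there is nothing in the text to compare against line by line. What you have written is essentially a reconstruction of their computation: the identities $\eta_\mu=\psi_\mu/(1+\psi_\mu)$ and $S_\mu(w)=\frac{1+w}{w}\chi_\mu(w)$ with $\chi_\mu=\psi_\mu^{-1}$ check out against the paper's definitions of $\eta_\mu$, $\Sigma_\mu$ and $S_\mu$; the bijectivity of $\psi_\mu$ from $(-\infty,0)$ onto $(-1,0)$ uses exactly the hypothesis that $\mu$ lives on $(0,\infty)$ (no atom at $0$); the substitutions $w=x-1$, $w=\psi_\mu(z)$, $v=-zx$ and the Beta integral $\int_0^\infty v^{-\gamma}(1+v)^{-2}\,dv=B(1-\gamma,1+\gamma)$ are all valid precisely for $\gamma\in(-1,1)$; and the Tonelli interchange of the nonnegative integrand correctly delivers the equality in $[0,\infty]$ without any integrability assumption. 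The only steps deserving explicit mention, which you do flag, are the differentiation under the integral sign for $\psi_\mu'$ (justified by the uniform bound $x(1-zx)^{-2}\le (4a)^{-1}$ for $z\le -a<0$) and the sign bookkeeping showing $\frac{1-x}{x}S_\mu(x-1)=-\chi_\mu(x-1)>0$.
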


\begin{thm}\label{free Bessel} Suppose that either (a) $r\geq0$ and $s=1$, or (b) $r \geq1$ and $s >1$. Then
$$
\D_{t^r}\!\left((\bm\pi(r,s)^{\boxtimes t})^{1/t}\right) \wto \DH_r \quad \text{as}\quad t\downarrow 0. 
$$ 
\end{thm}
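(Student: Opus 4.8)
The plan is to prove weak convergence by the method of moments, computing every integer moment of
\[
\mu_t := \D_{t^r}\!\bigl((\bm\pi(r,s)^{\boxtimes t})^{1/t}\bigr)
\]
and matching it with the moment $n^{rn}/\Gamma(2+rn)$ of $\DH_r$; weak convergence then follows because $\DH_r$ is compactly supported (on $[0,r^{-r}e^r]$) and hence moment-determinate. Writing $\lambda:=\bm\pi(r,s)^{\boxtimes t}$ and using that dilation and the power map act on moments by $\int x^\gamma\,d\D_c(\kappa)=c^\gamma\int x^\gamma\,d\kappa$ and $\int x^\gamma\,d(\kappa^{1/t})=\int x^{\gamma/t}\,d\kappa$, I obtain the exact identity
\[
\int_{(0,\infty)} x^n\,d\mu_t = t^{rn}\int_{(0,\infty)} z^{n/t}\,d\lambda(z),
\]
so everything reduces to evaluating the high-order moment $M_t(p):=\int z^p\,d\lambda$ at $p=n/t$.

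For $p\in(-1,1)$ this is exactly what Lemma \ref{lemma HM} computes. Since $\Sigma_\lambda=\Sigma_{\bm\pi(r,s)}^{\,t}$ and $S_\mu(w)=\Sigma_\mu(w/(1+w))$, the substitution $z=(x-1)/x$ (so that $x\in(0,1)\leftrightarrow z\in(-\infty,0)$) together with the explicit transform \eqref{FBsigma} gives, after a short computation,
\[
\frac{1-x}{x}\,S_\lambda(x-1) = \frac{1-x}{x}\left(\frac{x^{1-r}}{x+s-1}\right)^{t},
\]
and therefore Lemma \ref{lemma HM} yields
\[
M_t(p) = \frac{\sin\pi p}{\pi p}\int_0^1 (1-x)^{-p}\,x^{p(1+(r-1)t)}\,(x+s-1)^{tp}\,dx,\qquad p\in(-1,1).
\]
When $s=1$ the last factor is $x^{tp}$, the integral is a Beta function, and one gets the clean closed form $M_t(p)=\Gamma\bigl(p(1+rt)+1\bigr)\big/\bigl[\Gamma(p+1)\Gamma(prt+2)\bigr]$; for general $s$ the right-hand side is still an explicit meromorphic function of $p$ (a Beta function times a ${}_2F_1$).

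The crux is now analytic continuation. The left-hand side $\int z^p\,d\lambda$ is analytic on its strip of convergence and coincides on $(-1,1)$ with the explicit meromorphic expression, so the two agree at $p=n/t$; this is legitimate once one knows that $\lambda$ has finite moments of all positive orders, i.e.\ that $\bm\pi(r,s)^{\boxtimes t}$ is bounded above, which I expect to be the one genuinely technical point and which follows from the compact support of the free Bessel laws and their free convolution powers. Restricting to the integer value $tp=n$ is precisely what makes the evaluation elementary and $s$-independent: expanding $(x+s-1)^{n}=\sum_{j=0}^n \binom{n}{j}(s-1)^{n-j}x^{j}$ turns each summand into a (continued) Beta integral, and using $\tfrac{\sin\pi p}{\pi}\Gamma(1-p)=1/\Gamma(p)$ (so the oscillatory $\sin(\pi n/t)$ cancels) gives
\[
M_t(n/t)=\sum_{j=0}^n \binom{n}{j}(s-1)^{n-j}\,\frac{\Gamma\bigl(n/t+n(r-1)+j+1\bigr)}{\Gamma(n/t+1)\,\Gamma\bigl(n(r-1)+j+2\bigr)}.
\]
By the ratio asymptotics $\Gamma(n/t+c)/\Gamma(n/t+1)\sim (n/t)^{c-1}$ as $t\downarrow0$, the $j$-th summand is of order $(n/t)^{n(r-1)+j}$, so only $j=n$ survives in the limit and all $s$-dependent terms are subleading; hence $t^{rn}M_t(n/t)\to n^{rn}/\Gamma(2+rn)$.

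Finally, each $m_n(\mu_t)$ is a genuine moment sequence, hence positive-definite, so the limit $\bigl(n^{rn}/\Gamma(2+rn)\bigr)_n$ is positive-definite as well; combined with the compact-support growth bound this confirms the existence of $\DH_r$ (as promised after \eqref{DH}) and, via the method of moments and moment-determinacy of $\DH_r$, upgrades moment convergence to $\mu_t\wto\DH_r$. To summarize the difficulty: the heart of the argument is passing from the Lemma \ref{lemma HM} formula, valid only for $|p|<1$, to its value at the large argument $p=n/t$ by analytic continuation, which forces one to control the growth of $\bm\pi(r,s)^{\boxtimes t}$; the payoff is that working with integer moments makes $(x+s-1)^{tp}$ a polynomial, so the Beta-function computation closes in elementary terms and the dependence on $s$ disappears automatically in the limit.
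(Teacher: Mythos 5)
Your proof is correct and follows essentially the same route as the paper: the moment method via Lemma \ref{lemma HM}, analytic continuation of the Mellin-transform identity from exponents in $(-1,1)$ up to the exponent $n/t$, and Gamma-ratio asymptotics as $t\downarrow0$, with the finiteness of all moments of $\bm\pi(r,s)^{\boxtimes t}$ correctly flagged as the point needed to justify the continuation. The only difference is presentational: by fixing integer moments from the outset you can expand $(x+s-1)^{n}$ binomially, so the paper's terminating ${}_2F_1$ becomes an explicit finite sum and its asymptotic expansion reduces to termwise Gamma ratios, of which only the $j=n$ term survives after multiplying by $t^{rn}$.
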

\begin{proof} The proof is based on the moment method. The latter case $r \geq1$, $s >1$ is considered firstly. For $\gamma>0$, $t>0$ and $0<\xi<1/\gamma$, we get 
\begin{equation}\label{eq moments}
\begin{split}
&\int_{(0,\infty)} x^\gamma\,\D_{t^{r}}\!\left((\bm\pi(r,s)^{\boxtimes t})^{\xi}\right)(dx)\\
&\quad=t^{r \gamma}\int_{(0,\infty)} x^{\gamma \xi}\,\bm\pi(r,s)^{\boxtimes t}(dx)\\
&\quad=\frac{t^{r \gamma}}{B(1-\gamma \xi,1+\gamma \xi)} \int_{(0,1)} \left(\frac{1-x}{x} S_{\bm\pi(r,s)}(x-1)^{t}\right)^{-\gamma \xi}dx \\
&\quad=\frac{t^{r \gamma}}{B(1-\gamma \xi,1+\gamma \xi)}\int_0^1x^{\gamma \xi (1+ t(r-1) )} (1-x)^{-\gamma \xi}(x+s-1)^{\gamma \xi t} dx \\
&\quad= t^{r \gamma}(s-1)^{\gamma \xi t}\frac{B(\gamma\xi(1+t (r-1) )+1, 1-\gamma \xi)}{B(1-\gamma \xi,1+\gamma \xi)} \times \\
&\qquad \qquad {}_2 F_1(-\gamma \xi t, \gamma\xi  +\gamma \xi t (r-1)+1 ; \gamma \xi t (r-1) +2;  -(s-1)^{-1}) \\
&\quad= t^{r \gamma}(s-1)^{\gamma \xi t}\frac{\Gamma(\gamma\xi + \gamma \xi t (r-1)+1 )}{\Gamma(2+\gamma\xi t (r-1))\Gamma(1+\gamma \xi)} \times \\
&\qquad \qquad {}_2 F_1(-\gamma \xi t, \gamma\xi+\gamma\xi t (r-1) +1; \gamma \xi t (r-1) +2;  -(s-1)^{-1}), 
\end{split}
\end{equation}
where Lemma \ref{lemma HM} was used on the second equality. 
Note that this equality is valid only for $0<\xi < 1/\gamma$ at this moment. 

As a function of $\xi$, the last hypergeometric and gamma functions extend real analytically from $(0,1/\gamma)$ to $(0,\infty)$. On the other hand, the function 
$$
\xi\mapsto \int_{[0,\infty)} x^{\gamma \xi}\,\D_{t^r}\!\left(\bm\pi(r,s)^{\boxtimes t}\right)(dx)
$$
is real analytic in $(0,\infty)$. By analytic continuation, the last expression  \eqref{eq moments} is valid for all $\xi\in(0,\infty)$. 

Now we may put $\xi = 1/t$ and obtain 
\begin{equation}\label{eq moments2}
\begin{split}
&\int_{(0,\infty)} x^\gamma\,\D_{t^{r}}\!\left((\bm\pi(r,s)^{\boxtimes t})^{1/t}\right)(dx)\\
&\quad= \frac{\xi^{-r \gamma}(s-1)^{\gamma}}{\Gamma(2+\gamma(r-1))}\frac{\Gamma(\gamma\xi + \gamma(r-1)+1 )}{\Gamma(\gamma \xi+1)} \times \\
&\qquad \qquad {}_2 F_1(-\gamma, \gamma\xi+\gamma(r-1) +1; \gamma(r-1) +2;  -(s-1)^{-1}). 
\end{split}
\end{equation}
Suppose moreover that $s>2$. By \cite[6.1.47]{AS70} and \cite[15.7.2]{AS70}, we respectively obtain the asymptotic form
\begin{align}
&\frac{\Gamma(\gamma\xi + \gamma(r-1)+1 )}{\Gamma(\gamma \xi+1)} \sim (\gamma \xi)^{\gamma(r-1)}, & \xi\to\infty, \\
&{}_2 F_1(-\gamma, \gamma\xi+\gamma(r-1) +1; \gamma(r-1) +2;  -(s-1)^{-1}) \notag\\
&\quad\quad\quad\sim\frac{\Gamma(\gamma(r-1)+2)}{\Gamma(\gamma r+2)}\left(\frac{\gamma \xi}{s-1}\right)^{\gamma},& \xi\to\infty. \label{asymptoticshyper}
\end{align}
The case $s\in(1,2]$ can also be covered if we use the formula \cite[15.3.4]{AS70} and the asymptotic behavior (\ref{asymptoticshyper}) turn out to be the same. Eventually we obtain for every $\gamma >0$, 
\begin{equation}\label{eq moments3}
\begin{split}
&\int_{(0,\infty)} x^\gamma\,\D_{t^{r}}\!\left((\bm\pi(r,s)^{\boxtimes t})^{1/t}\right)(dx) \to \frac{\gamma^{r \gamma}}{\Gamma(r \gamma +2)} = \int_{[0,\infty)} x^\gamma \DH_r(dx)
\end{split}
\end{equation}
as $t \downarrow 0$. This implies the convergence of moments, and since the limit measure is compactly supported, this implies the weak convergence. 

If $s=1$, then the proof is easier since \eqref{eq moments} is reduced to
\begin{equation*}
\begin{split}\int_{(0,\infty)} x^\gamma\,\D_{t^{r}}\!\left((\bm\pi(r,1)^{\boxtimes t})^{\xi}\right)(dx)
&=\frac{t^{r \gamma}}{B(1-\gamma \xi,1+\gamma \xi)}\int_0^1x^{\gamma \xi (1+ t r )} (1-x)^{-\gamma \xi} dx \\
&= t^{r \gamma}\frac{B(\gamma\xi(1+t r)+1, 1-\gamma \xi)}{B(1-\gamma \xi,1+\gamma \xi)} \\
&= t^{r \gamma} \frac{\Gamma(\gamma\xi(1+t r)+1)}{\Gamma(\gamma \xi t r +2)\Gamma(1+\gamma\xi)}.  
\end{split}
\end{equation*}
This formula is valid for all $\xi\in(0,\infty)$ by analytic continuation. Then one may put $\xi=1/t$ and use the asymptotic form of gamma functions \cite[6.1.47]{AS70} to obtain the convergence of moments (when $\gamma \in \N$). 
\end{proof}

\subsection{Log free stable distributions with index $>1$}  
We find more log free stable distributions in the limit theorem. 
Suppose that $\alpha\in (1,2].$ As an initial probability measure, we take $\bnu_\alpha$ defined by 
\[
S_{\bnu_\alpha}(z)= e^{(-z)^{\alpha-1}}, 
\]
which is $\boxtimes$-ID on $(0,\infty)$ by Theorem \ref{BV}. The measure $\bnu_2$ is compactly supported and has the moment sequence $(\frac{n^n}{n!})_{n=0}^\infty$ (with convention $0^0=1$ as before). This measure already appeared in M\l otkowski \cite{M10} and in a certain limit theorem proved by Sakuma and Yoshida \cite{SY13}.

We need the Laplace transform of the free stable distribution, which was computed in \cite[Theorem 3]{HK14}.  
\begin{lem} \label{laplace}
For $\alpha \in (1,2]$, we have 
$$ 
\int_{-\alpha(\alpha-1)^{1/\alpha-1}}^\infty e^{-\gamma x}\, \bff_{\alpha}(dx)=\sum_{n=0}^\infty \frac{\gamma^{n\alpha}}{\Gamma(2+(\alpha-1)n)n!},\quad \gamma\geq0. 
$$ 
\end{lem}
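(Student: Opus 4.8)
The plan is to express the Laplace transform $L(\gamma):=\int e^{-\gamma x}\,\bff_\alpha(dx)$ as a contour integral of the Cauchy transform, convert it via the Voiculescu transform into a Hankel-type integral, and read off the reciprocal Gamma factors termwise. First I would record that $L(\gamma)$ is finite for every $\gamma\ge0$: for $\alpha\in(1,2)$ the law $\bff_\alpha$ is supported on $[-\alpha(\alpha-1)^{1/\alpha-1},\infty)$ with a power-law right tail, so $e^{-\gamma x}$ is integrable for $\gamma>0$ while $L(0)=1$, and for $\alpha=2$ the support is compact. Writing $G=G_{\bff_\alpha}$, I would use $\int f\,d\bff_\alpha=\frac{1}{2\pi\ii}\oint_{\mc C}f(z)G(z)\,dz$ with $f(z)=e^{-\gamma z}$, where $\mc C$ wraps the branch cut $[-\alpha(\alpha-1)^{1/\alpha-1},\infty)$ of $G$, coming in from $+\infty$ along one side of the axis, around the left endpoint, and out to $+\infty$ along the other, oriented so that the Stieltjes jump of $G$ across the cut reproduces $\int f\,d\bff_\alpha$ (the decay of $e^{-\gamma z}$ as $\Re z\to+\infty$ is what legitimizes this unbounded contour).

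Next I would change variables by $w=G(z)$, i.e.\ $z=K(w):=F_{\bff_\alpha}^{-1}(1/w)=\tfrac1w-(-1/w)^{1-\alpha}$, the last equality coming from $\varphi_{\bff_\alpha}(z)=-(-z)^{1-\alpha}$ together with $F_{\bff_\alpha}^{-1}(z)=z+\varphi_{\bff_\alpha}(z)$. This turns $\mc C$ into a loop $\mc C'$ around $w=0$ and gives $L(\gamma)=\frac{1}{2\pi\ii}\oint_{\mc C'}w\,K'(w)\,e^{-\gamma K(w)}\,dw$. An integration by parts, using $K'(w)e^{-\gamma K(w)}=-\tfrac1\gamma\frac{d}{dw}e^{-\gamma K(w)}$, reduces this to
\[
L(\gamma)=\frac{1}{2\pi\ii\,\gamma}\oint_{\mc C'}e^{-\gamma K(w)}\,dw,
\]
the boundary term $-\tfrac1\gamma\,w\,e^{-\gamma K(w)}$ vanishing because at the ends of the contour $w\to0$ with $\Re(1/w)\to+\infty$, so $e^{-\gamma K(w)}\to0$ super-exponentially.

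Finally I would substitute $s=\gamma/w$, which yields the clean identity $\gamma K(w)=s-\gamma^{\alpha}(-s)^{1-\alpha}$, and then $t=-s$ (the two sign reversals, from the Jacobian $dw=-\gamma s^{-2}\,ds$ and from $t=-s$, cancelling), turning $\mc C'$ into a Hankel contour around the negative real axis. Expanding $e^{\gamma^\alpha(-s)^{1-\alpha}}=\sum_{n\ge0}\frac{\gamma^{n\alpha}}{n!}(-s)^{n(1-\alpha)}$ and interchanging sum and integral, each term becomes
\[
\frac{1}{2\pi\ii}\oint e^{t}\,t^{\,n(1-\alpha)-2}\,dt=\frac{1}{\Gamma\!\left(2+(\alpha-1)n\right)}
\]
by Hankel's representation $\frac{1}{\Gamma(z)}=\frac{1}{2\pi\ii}\int_{\mathrm{Hankel}}e^{t}t^{-z}\,dt$ with $z=2+(\alpha-1)n$. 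Summing produces exactly $\sum_{n\ge0}\gamma^{n\alpha}/[\Gamma(2+(\alpha-1)n)\,n!]$.

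The tail estimate and the termwise integration (justified by uniform convergence of the series on the contour) are routine. The real work is the bookkeeping for the deformations: since $(-z)^{1-\alpha}$, and hence $e^{-\gamma K(w)}$, is multivalued, $\mc C$ and $\mc C'$ are genuinely open Hankel-type contours rather than closed loops, and the main obstacle will be checking that $w=G(z)$ carries the cut-hugging contour $\mc C$ to a contour $\mc C'$ whose image under $s=\gamma/w$, $t=-s$ is oriented and positioned precisely as the Hankel contour for $1/\Gamma$. I would verify this carefully, and as a consistency check note that at $\alpha=2$ the formula collapses to $\sum_{n}\gamma^{2n}/[(n+1)!\,n!]=I_1(2\gamma)/\gamma$, the Laplace transform of the standard semicircle law $\bff_2$.
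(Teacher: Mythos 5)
Your argument is correct in outline, but it is a genuinely different route from the paper: the paper does not prove this lemma at all, it simply quotes the formula from [HK14, Theorem 3] (Hasebe--Kuznetsov), so any self-contained derivation is "new" relative to the text. Your computational core checks out. With $K(w)=F_{\bff_\alpha}^{-1}(1/w)=\tfrac1w-(-1/w)^{1-\alpha}$, the substitutions $w=G(z)$, $s=\gamma/w$, $t=-s$ do produce $\gamma K=s-\gamma^{\alpha}(-s)^{1-\alpha}$ with consistent principal branches, the integration by parts has vanishing boundary terms since $w\to0$ while $\Re K(w)\to+\infty$, the termwise integration is dominated because $|t|^{n(1-\alpha)}\le R^{n(1-\alpha)}$ on a Hankel contour staying at distance $R$ from the origin, and the orientation bookkeeping works out: the counterclockwise contour around the support becomes clockwise around $w=0$ under $G$ (since $G(z)\sim 1/z$), counterclockwise again under $s=\gamma/w$, and stays counterclockwise under $t=-s$, which is exactly the orientation needed in Hankel's formula $1/\Gamma(z)=\frac{1}{2\pi\ii}\int e^{t}t^{-z}\,dt$; the check at $\alpha=2$ (residue at $t=0$ of $e^{t}e^{\gamma^2/t}t^{-2}$ equals $\sum_k\gamma^{2k}/[(k+1)!\,k!]$) and at $\gamma=0$ (the $n=0$ term $1/\Gamma(2)=1$) both confirm the signs. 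The one input you should make explicit rather than leave as "bookkeeping" is that $F_{\bff_\alpha}$ is injective on $\C^+\cup\R$ with inverse given globally by the explicit formula $u-(-u)^{1-\alpha}$ on the closure of its image, so that $K(G(z))=z$ holds along the whole cut-hugging contour and not merely in a truncated cone; this is supplied by Biane's appendix to [BP99] on free stable densities. What your approach buys is a transparent, purely complex-analytic proof living entirely inside the paper's framework; what the citation buys is brevity and access to the stronger Mellin-transform machinery of [HK14], which yields this Laplace transform alongside much finer information about $\bff_{\alpha,\rho}$.
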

We are ready to prove the following result. 
\begin{thm} \label{log FS}
For $\alpha\in(1,2]$, the convergence 
\[
(\bnu_\alpha^{\boxtimes t})^{t^{-1/\alpha}} \wto \Law(e^{\bfF_\alpha})  \quad \text{as} \quad t \downarrow 0 
\]
holds, where $\bfF_\alpha$ is a random variable following the law $\bff_\alpha$. The limiting distribution is called a log free $\alpha$-stable law, and in particular, log semicircle distribution if $\alpha=2$. 
\end{thm}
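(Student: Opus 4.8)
The plan is to adapt the moment method of Theorem~\ref{free Bessel}, but working with \emph{negative} moments. This is forced by the geometry of the target: for $\alpha\in(1,2)$ one has $\supp(\bff_\alpha)=[-\alpha(\alpha-1)^{1/\alpha-1},\infty)$, so $\Law(e^{\bfF_\alpha})$ is unbounded above and its positive moments diverge, whereas $\Law(e^{-\bfF_\alpha})$ is compactly supported and hence determined by its moments. Accordingly I would study, for $\lambda>0$ and the power $\xi:=t^{-1/\alpha}$,
\[
m_\lambda(t):=\int_{(0,\infty)} x^{-\lambda}\,(\bnu_\alpha^{\boxtimes t})^{\xi}(dx)=\int_{(0,\infty)} x^{-\lambda\xi}\,\bnu_\alpha^{\boxtimes t}(dx),
\]
and show that $m_\lambda(t)\to\sum_{n\ge0}\lambda^{\alpha n}/(\Gamma(2+(\alpha-1)n)\,n!)$, which by Lemma~\ref{laplace} equals $\int x^{-\lambda}\,\Law(e^{\bfF_\alpha})(dx)$.

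First I would apply Lemma~\ref{lemma HM} to $\mu=\bnu_\alpha^{\boxtimes t}$ at exponent $-\lambda\xi$, which is legitimate once $\lambda\xi<1$, using $S_{\bnu_\alpha^{\boxtimes t}}(z)=S_{\bnu_\alpha}(z)^t=e^{t(-z)^{\alpha-1}}$. Since $S_{\bnu_\alpha^{\boxtimes t}}(x-1)=e^{t(1-x)^{\alpha-1}}$ for $x\in(0,1)$, the lemma produces the integral of $\left(\frac{1-x}{x}\right)^{\lambda\xi}e^{\lambda\xi t(1-x)^{\alpha-1}}$ over $(0,1)$, up to the Beta prefactor $1/B(1-\lambda\xi,1+\lambda\xi)$. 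Expanding the exponential as $\sum_n \frac{(\lambda\xi t)^n}{n!}(1-x)^{(\alpha-1)n}$ and integrating term by term yields Beta integrals, and after cancelling Gamma factors one gets
\[
m_\lambda(t)=\sum_{n=0}^\infty \frac{(\lambda\xi t)^n}{n!}\cdot\frac{\Gamma(\lambda\xi+(\alpha-1)n+1)}{\Gamma(1+\lambda\xi)\,\Gamma(2+(\alpha-1)n)}.
\]
As in Theorem~\ref{free Bessel}, this identity is first valid only for $\lambda\xi<1$, and I would extend it to all $\xi>0$ by analytic continuation in the power, the point being that $\bnu_\alpha^{\boxtimes t}$ has finite negative moments of all orders for small $t$ (being supported away from $0$ and concentrating near $1$ as $t\downarrow0$), so the left-hand side is real-analytic in $\xi$ and agrees with the entire series on an interval.

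Next I would insert $\xi=t^{-1/\alpha}$, so that $\lambda\xi t=\lambda t^{(\alpha-1)/\alpha}$ and $\lambda\xi=\lambda t^{-1/\alpha}$. Using $\Gamma(z+a)/\Gamma(z)\sim z^a$ as $z\to\infty$ with $z=1+\lambda\xi$, the two powers of $t$ in the $n$-th term cancel exactly:
\[
\frac{(\lambda\xi t)^n}{n!}\cdot\frac{\Gamma(1+\lambda\xi+(\alpha-1)n)}{\Gamma(1+\lambda\xi)}\sim \frac{\lambda^n t^{(\alpha-1)n/\alpha}}{n!}\,(\lambda\xi)^{(\alpha-1)n}=\frac{\lambda^{\alpha n}}{n!},
\]
so each term tends to $\lambda^{\alpha n}/(\Gamma(2+(\alpha-1)n)\,n!)$, and summing gives the Laplace transform of $\bff_\alpha$ from Lemma~\ref{laplace}, i.e.\ the $(-\lambda)$-moment of $\Law(e^{\bfF_\alpha})$. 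To pass from this to weak convergence, write $W_t$ for a random variable with law $(\bnu_\alpha^{\boxtimes t})^{-\xi}$ (the reciprocal of the variable in the statement) and $W=e^{-\bfF_\alpha}$; taking $\lambda=k\in\N$ gives $\bE[W_t^{k}]\to\bE[W^{k}]$. Since $W$ is bounded, its law is determined by its moments, so the method of moments (Fr\'echet--Shohat) gives $W_t\tolaw W$, and applying the continuous map $x\mapsto 1/x$ on $(0,\infty)$ yields $(\bnu_\alpha^{\boxtimes t})^{\xi}\wto \Law(e^{\bfF_\alpha})$. For $\alpha=2$ the target is compactly supported and one may instead argue directly with positive moments.

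The main obstacle is the passage to the limit inside the infinite series: the termwise asymptotics is immediate, but justifying the interchange requires a tail estimate uniform in small $t$ (equivalently, in $u:=\lambda\xi\to\infty$), and a crude domination by the limiting series fails because $u^{-(\alpha-1)n}\Gamma(1+u+(\alpha-1)n)/\Gamma(1+u)$ is not bounded uniformly in $n$. I would control the tail by combining the super-exponential decay of $(\lambda\xi t)^n/n!=\lambda^{\alpha n}u^{-(\alpha-1)n}/n!$ with a Stirling bound on $\Gamma(1+u+(\alpha-1)n)/[\Gamma(1+u)\Gamma(2+(\alpha-1)n)]$, splitting the sum at a fixed $N$ and letting $N\to\infty$ after $t\downarrow0$. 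A secondary technical point, shared with Theorem~\ref{free Bessel}, is the analytic-continuation step, which rests on the finiteness of the negative moments of $\bnu_\alpha^{\boxtimes t}$ for small $t$.
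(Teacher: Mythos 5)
Your proposal is correct and follows essentially the same route as the paper: negative moments via Lemma \ref{lemma HM}, termwise expansion of $e^{\gamma\xi t(1-x)^{\alpha-1}}$ into a Gamma-function series, analytic continuation in the power, the substitution $\xi=t^{-1/\alpha}$ with Stirling asymptotics giving $\lambda^{\alpha n}$ termwise, a Stirling-based $t$-independent summable dominating sequence to justify the interchange of limits, and finally moment convergence of the compactly supported reciprocal law followed by the map $x\mapsto 1/x$. The paper's proof is exactly this argument, with the dominating bound worked out explicitly as $C(2^{\alpha-1}e^{\alpha-1}\gamma^{\alpha})^{n}/n!$.
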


\begin{proof} The proof is based on the moment method.  For $\gamma> 0$ and $\xi\in(0,1/\gamma)$, using Lemma \ref{lemma HM} we get  
\begin{equation}\label{eq05}
\begin{split}
\int_{(0,\infty)} x^{-\gamma}\,(\bnu_\alpha^{\boxtimes t})^{\xi}(dx)
&=\int_{(0,\infty)} x^{-\gamma \xi}\,\bnu_\alpha^{\boxtimes t}(dx)\\
&=\frac{1}{B(1-\gamma \xi,1+\gamma \xi)} \int_{(0,1)} \left(\frac{1-x}{x}S_{\bnu_\alpha}(x-1)^{t}\right)^{\gamma \xi}dx \\
&=\frac{1}{B(1-\gamma \xi,1+\gamma \xi)} \int_{(0,1)} x^{-\gamma \xi}(1-x)^{\gamma \xi}e^{\gamma\xi t(1-x)^{\alpha-1}}dx \\
&= \frac{1}{B(1-\gamma \xi,1+\gamma \xi)} \sum_{n=0}^\infty \frac{(\gamma\xi t)^n}{n!}\int_{(0,1)}x^{-\gamma \xi}(1-x)^{n(\alpha-1)+\gamma \xi}\,dx\\
&= \sum_{n=0}^\infty \frac{(\gamma\xi t)^n}{n!} \frac{\Gamma(1+n(\alpha-1)+\gamma\xi)}{\Gamma(1+\gamma\xi) \Gamma(2+n(\alpha-1))}. 
\end{split}
\end{equation}
By analytic continuation, this formula is valid for any $\xi\in(0,\infty)$. 

Now fix $\gamma \in \N$. The Stirling approximation \cite[6.1.37]{AS70} shows that, for each {\it fixed} $n\in\N$, 
\begin{equation}\label{AS4}
\frac{(\gamma\xi t)^n\Gamma(1+n(\alpha-1)+\gamma\xi)}{\Gamma(1+\gamma\xi)} \sim ((\gamma\xi)^{\alpha}t)^n,\quad \text{as}\quad \xi \to \infty, 
\end{equation} 
and hence putting $\xi=t^{-1/\alpha}$ and letting $t\downarrow0$ imply the convergence
\begin{equation}\label{convergence2}
\int_{(0,\infty)} x^{-\gamma}\,(\bnu_\alpha^{\boxtimes t})^{t^{-1/\alpha}}(dx) \to \sum_{n=0}^\infty \frac{\gamma^{\alpha n}}{n!\Gamma(2+n(\alpha-1))} =\bE[e^{-\gamma \bfF_\alpha}]
\end{equation} provided $\lim_{t\downarrow0}$ and $\sum_{n=0}^\infty$ are exchangeable in \eqref{eq05}. To justify the exchange of limits, we find a dominating function to use Lebesgue's dominated convergence.  
With new variables $x=\xi \gamma=t^{-1/\alpha}\gamma$ and $y=(\alpha-1)n$ such that $x,y \geq1$,   Stirling's formula yields 
\begin{equation}\label{AS3}
\begin{split}
\frac{(\gamma\xi t)^n}{n!} \frac{\Gamma(1+n(\alpha-1)+\gamma\xi)}{\Gamma(1+\gamma\xi) \Gamma(2+n(\alpha-1))}  
&=  \frac{\gamma^{\alpha n}x^{-y} \Gamma(1+x +y)}{n!(y+1) \Gamma(1+x) \Gamma(1+y)} \\
&\leq  C \cdot \frac{\gamma^{\alpha n} x^{-y}  e^{-x-y} (x+y)^{x+y+1/2}}{n! e^{-x} x^{x+1/2} e^{-y} y^{y+1/2}} \\
&= C \cdot \frac{\gamma^{\alpha n} (x+y)^{x+y+1/2}}{n! x^{x+y+1/2} y^{y+1/2}} \\
&= C \cdot \frac{\gamma^{\alpha n} }{n! } \cdot \left(1+\frac{y}{x}\right)^x \cdot \left(\frac{1}{y}+\frac{1}{x}\right)^{y+1/2}
\end{split}
\end{equation}
for some constant $C>0$. We use here the inequalities 
$$
\left(1+\frac{y}{x}\right)^x \leq e^y =e^{(\alpha-1) n}
$$
and 
$$
\left(\frac{1}{y}+\frac{1}{x}\right)^{y+1/2} \leq 2^{y+1/2} = 2^{(\alpha-1)n +1/2}. 
$$
Therefore, the LHS of \eqref{AS3} is bounded by the summable sequence 
\begin{equation}\label{AS5}
\sqrt{2}C \cdot \frac{(2^{\alpha-1} e^{\alpha-1}\gamma^\alpha)^n }{ n! }, 
\end{equation}
which does not depend on $t$. By Lebesgue's convergence theorem, the convergence \eqref{convergence2} holds for every $\gamma \in\N$.  

 This shows that all moments of $(\bnu_\alpha^{\boxtimes t})^{-t^{-1/\alpha}}$ converge to those of $\Law(e^{-\bfF_\alpha})$. Since $\bff_\alpha$ has a support bounded below, the measure $\Law(e^{-\bfF_\alpha})$ is compactly supported and hence $(\bnu_\alpha^{\boxtimes t})^{-t^{-1/\alpha}} \wto \Law(e^{-\bfF_\alpha})$. Finally taking the inverse we obtain the result. 
\end{proof}

The above method can be generalized to a larger class of initial distributions $\mu$. 

\begin{thm} \label{multi log FS}
Suppose that $k\in\N$, $2 \geq \alpha_1 > \cdots > \alpha_k >1$ and $p_1, \dots, p_k>0 $, and define 
$$
\bnu(\bm \alpha, \mathbf{p}):= \bnu_{\alpha_1}^{\boxtimes p_1}  \boxtimes \cdots \boxtimes \bnu_{\alpha_k}^{\boxtimes p_k},  
$$
where $\bm\alpha=(\alpha_1,\dots, \alpha_k)$ and $\mathbf{p}=(p_1,\dots, p_k)$. 
Then 
\[
(\bnu(\bm \alpha, \mathbf{p})^{\boxtimes t})^{t^{-1/\alpha_1}} \wto \Law(e^{p_1^{1/\alpha_1}\bfF_{\alpha_1} }) \quad \text{as}\quad t \downarrow 0. 
\]
\end{thm}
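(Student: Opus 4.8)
The plan is to run the moment method of Theorem~\ref{log FS} on the whole multiplicative convolution at once, the one new feature being that the moment expansion becomes a $k$-fold series; the task of the proof is to show that among all the scales only $\alpha_1$ survives. First I would record that each $\bnu_{\alpha_j}$ is $\boxtimes$-ID and that $\ID(\boxtimes)$ is closed under $\boxtimes$ and convolution powers, so $\bnu(\bm\alpha,\mathbf{p})^{\boxtimes t}$ is defined for $t>0$, and by multiplicativity of the $S$-transform (cf.\ \eqref{eq sigma}) together with $S_{\bnu_{\alpha_j}}(z)=e^{(-z)^{\alpha_j-1}}$ and $S_{\mu^{\boxtimes p}}=S_\mu^{\,p}$,
\[
S_{\bnu(\bm\alpha,\mathbf{p})^{\boxtimes t}}(z)=\exp\Big(t\sum_{j=1}^k p_j (-z)^{\alpha_j-1}\Big).
\]
Then, fixing $\gamma\in\N$ and following \eqref{eq05}, I would apply Lemma~\ref{lemma HM} to $\int_{(0,\infty)} x^{-\gamma}(\bnu(\bm\alpha,\mathbf{p})^{\boxtimes t})^{\xi}(dx)$ for $\xi\in(0,1/\gamma)$, expand $\exp(\gamma\xi t\sum_j p_j(1-x)^{\alpha_j-1})$ as a $k$-fold power series, and integrate term by term against $x^{-\gamma\xi}(1-x)^{\gamma\xi}$ using the Beta integral. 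Writing $\mathbf{n}=(n_1,\dots,n_k)\in\{0,1,2,\dots\}^k$ and $m=\sum_{j}n_j(\alpha_j-1)$, the $\gamma\xi$-dependence cancels in the denominator $\Gamma(a+b)$ and one obtains
\[
\int_{(0,\infty)} x^{-\gamma}(\bnu(\bm\alpha,\mathbf{p})^{\boxtimes t})^{\xi}(dx)=\sum_{\mathbf{n}}T_{\mathbf{n}}(\xi),\qquad T_{\mathbf{n}}(\xi):=\Big[\prod_{j=1}^k\frac{(\gamma\xi t p_j)^{n_j}}{n_j!}\Big]\frac{\Gamma(1+\gamma\xi+m)}{\Gamma(1+\gamma\xi)\,\Gamma(2+m)},
\]
which, exactly as in Theorem~\ref{log FS}, extends by analytic continuation from $(0,1/\gamma)$ to all $\xi>0$.

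Next I would set $\xi=t^{-1/\alpha_1}$ and let $t\downarrow0$, i.e.\ $\xi\to\infty$. Using $\gamma\xi t=\gamma\xi^{1-\alpha_1}$ and the asymptotics $\Gamma(1+\gamma\xi+m)/\Gamma(1+\gamma\xi)\sim(\gamma\xi)^m$, each $T_{\mathbf{n}}(\xi)$ carries the factor $\xi^{(1-\alpha_1)N+m}=\xi^{\sum_j n_j(\alpha_j-\alpha_1)}$ with $N=\sum_j n_j$. Since $\alpha_1>\alpha_j$ for $j\geq2$ this exponent is $\leq0$ and vanishes \emph{only} when $n_2=\cdots=n_k=0$; hence every mixed term tends to $0$ and only the $\alpha_1$-column survives, with $\lim_{t\downarrow0}T_{(n,0,\dots,0)}=p_1^{\,n}\gamma^{n\alpha_1}/(n!\,\Gamma(2+n(\alpha_1-1)))$. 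Summing over $n$ and comparing with Lemma~\ref{laplace} applied with $\gamma$ replaced by $\gamma p_1^{1/\alpha_1}$ identifies the candidate limit as $\bE[e^{-\gamma p_1^{1/\alpha_1}\bfF_{\alpha_1}}]$, which is the $(-\gamma)$-moment of $\Law(e^{p_1^{1/\alpha_1}\bfF_{\alpha_1}})$.

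The hard part will be justifying the interchange of $\lim_{t\downarrow0}$ with the multi-index sum, i.e.\ producing a summable dominating sequence over $\{0,1,2,\dots\}^k$ uniform for large $\xi$. Here I would adapt the Stirling estimate of \eqref{AS3}: with $x=\gamma\xi\geq1$ and $y=m$ one has $\prod_j(\gamma\xi t p_j)^{n_j}=\gamma^{\alpha_1 N}\big(\prod_j p_j^{n_j}\big)x^{-(\alpha_1-1)N}$, and the crucial new observation is that $\alpha_j-1\leq\alpha_1-1$ gives $(\alpha_1-1)N\geq m$, whence $x^{-(\alpha_1-1)N}\leq x^{-m}$ for $x\geq1$. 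This reduces matters to the single-index case, where the same Stirling bound yields $x^{-m}\Gamma(1+x+m)/[\Gamma(1+x)\Gamma(2+m)]\leq C(2e)^{m}$; collecting factors gives
\[
T_{\mathbf{n}}(\xi)\leq C\prod_{j=1}^k\frac{\big[\gamma^{\alpha_1}p_j(2e)^{\alpha_j-1}\big]^{n_j}}{n_j!},
\]
whose sum is the convergent product $C\prod_j\exp\big(\gamma^{\alpha_1}p_j(2e)^{\alpha_j-1}\big)$. The finitely many indices with $0<m<1$ (for which $N<\sum_j(\alpha_j-1)^{-1}$) I would treat separately by boundedness of each $T_{\mathbf{n}}(\xi)$ for large $\xi$. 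Dominated convergence with respect to counting measure then legitimises the termwise passage to the limit, proving that all negative integer moments converge to those of $\Law(e^{p_1^{1/\alpha_1}\bfF_{\alpha_1}})$.

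Finally, as in Theorem~\ref{log FS}, since $\bff_{\alpha_1}$ has support bounded below the reciprocal measure $\Law(e^{-p_1^{1/\alpha_1}\bfF_{\alpha_1}})$ is compactly supported, so convergence of all its positive integer moments forces weak convergence of the reciprocals of $(\bnu(\bm\alpha,\mathbf{p})^{\boxtimes t})^{t^{-1/\alpha_1}}$; applying the continuous reciprocal map on $(0,\infty)$ then yields $(\bnu(\bm\alpha,\mathbf{p})^{\boxtimes t})^{t^{-1/\alpha_1}}\wto\Law(e^{p_1^{1/\alpha_1}\bfF_{\alpha_1}})$, as claimed.
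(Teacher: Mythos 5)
Your proposal is correct and follows essentially the same route as the paper's proof: the Mellin/moment method via Lemma \ref{lemma HM}, the $k$-fold series with analytic continuation in $\xi$, the observation that after setting $\xi=t^{-1/\alpha_1}$ only the multi-indices with $n_2=\cdots=n_k=0$ survive, and dominated convergence justified by the Stirling estimate with $x=\gamma t^{-1/\alpha_1}$ and $y=\sum_i n_i(\alpha_i-1)$. Your bound $x^{-(\alpha_1-1)N}\leq x^{-m}$ is a clean way of making explicit the domination step that the paper only sketches as ``an estimate similar to \eqref{AS5}''.
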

\begin{proof} The proof is similar, so only the required changes are noted below. Let $\N_0:= \N \cup\{0\}$. Then \eqref{eq05} changes to 
\begin{equation}\label{Mellin}
\begin{split}
&\int_{(0,\infty)} x^{-\gamma}\,(\bnu(\bm \alpha, \mathbf{p})^{\boxtimes t})^{\xi}(dx) \\
&\qquad = \sum_{(n_1,\dots, n_k)\in\N_0^k} \left(\prod_{i=1}^k\frac{(\gamma\xi t p_i)^{n_i}}{n_i!}\right) \frac{\Gamma(1+\sum_{i=1}^k n_i(\alpha_i-1)+\gamma\xi)}{\Gamma(1+\gamma\xi) \Gamma(2+\sum_{i=1}^k n_i(\alpha_i-1))},  
\end{split}
\end{equation}
which holds for $\xi \in(0,\infty)$ by analytic continuation. If we put $\xi= t^{-1/\alpha_1}$ then the Stirling formula shows that, for each fixed $(n_1,\dots, n_k)$,   
\begin{equation}\label{}
\begin{split}
&\left(\prod_{i=1}^k (\gamma\xi t p_i)^{n_i}\right) \frac{\Gamma(1+\sum_{i=1}^k n_i(\alpha_i-1)+\gamma\xi)}{\Gamma(1+\gamma\xi)} \\
&\sim \left(\prod_{i=1}^k (\gamma\xi t p_i)^{n_i}\right) (\gamma \xi)^{\sum_{i=1}^kn_i(\alpha_i-1)}  \\ 
&= \left(\prod_{i=1}^k (\gamma p_i^{1/\alpha_i})^{\alpha_i n_i}\right) \prod_{i=2}^k t^{\frac{n_i(\alpha_1-\alpha_i)}{\alpha_1}} \quad \text{as}\quad t\downarrow0,  
\end{split}
\end{equation} 
which tends to 0 if $(n_2, \dots, n_k) \neq (0,\dots ,0)$. This shows that \eqref{Mellin} converges to $\bE[e^{-\gamma p_1^{1/\alpha_1}\bfF_{\alpha_1}}]$, supposing that the limits $\lim_{t\downarrow0}$ and $\sum_{(n_1,\dots, n_k)\in \N_0^k}$ are exchangeable. The exchange of the limits can be verified by an estimate similar to \eqref{AS5} with $x= \gamma t^{-1/\alpha_1}$ and $y = \sum_{i=1}^k n_i(\alpha_i-1)$. 
\end{proof}

\subsection{Further examples}
We find more examples of convergence to log free stable distributions by taking the multiplicative free convolution with Boolean stable distributions. We exploit several identities obtained in \cite{AH16}. 
We start from an obvious property which shows that the dilation and power of limit distributions are also limit distributions. 
\begin{prop}\label{power} Let $\mu$ be a $\boxtimes$-ID measure on $(0,\infty)$ and $\nu$ be a probability measure on $(0,\infty)$. Let $a, b \colon (0,\infty)\to (0,\infty)$ be functions such that  
$\D_{b(t)}\!\left((\mu^{\boxtimes t})^{a(t)}\right)\wto\nu$ as $t\downarrow0$. Then for any $r \in \R$ and $s>0$, 
$$
\D_{s b(t)^r}\!\left((\mu^{\boxtimes t})^{a(t)r}\right) \wto \D_s(\nu^r) \quad \text{as}\quad t\downarrow0. 
$$
\end{prop}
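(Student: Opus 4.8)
The plan is to reduce the statement to the elementary multiplicative convergence Lemma \ref{lemma limit2}, applied with \emph{constant} coefficient functions. The point is that passing from $(\mu^{\boxtimes t})^{a(t)}$ to $(\mu^{\boxtimes t})^{a(t) r}$ and adjusting the dilation corresponds, at the level of random variables, to raising to the fixed power $r$ and multiplying by the fixed scalar $s$, and a fixed power/dilation is exactly a constant-coefficient version of the convergence already in hand.

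First I would pass to random variables. For each $t$, let $X_t$ be a random variable with law $\mu^{\boxtimes t}$ (a probability measure on $(0,\infty)$ since $\mu$ is $\boxtimes$-ID), and set $W_t := b(t)\, X_t^{a(t)}$, whose law is precisely $\D_{b(t)}((\mu^{\boxtimes t})^{a(t)})$. The hypothesis then reads $W_t \tolaw Y$ as $t\downarrow0$, where $Y$ is $(0,\infty)$-valued with $\Law(Y)=\nu$. The key observation is the pointwise identity, valid because every factor is strictly positive,
$$
s\, b(t)^r\, X_t^{a(t) r} = s\,\bigl(b(t)\, X_t^{a(t)}\bigr)^r = s\, W_t^r .
$$
Hence $\D_{s b(t)^r}\!\bigl((\mu^{\boxtimes t})^{a(t) r}\bigr)$ is the law of $s\, W_t^r$, whereas $\D_s(\nu^r)=\Law(s\, Y^r)$. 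Therefore it suffices to establish $s\, W_t^r \tolaw s\, Y^r$ as $t\downarrow0$.

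This last convergence is exactly Lemma \ref{lemma limit2} applied to the family $\{W_t\}$, with the \emph{constant} functions $t\mapsto r$ (valued in $\R$) and $t\mapsto s$ (valued in $(0,\infty)$) playing the roles of the functions $a$ and $b$ there. These trivially converge to $\alpha=r\in\R$ and $\beta=s\in(0,\infty)$, and we have $W_t\tolaw Y$ with $Y$ taking values in $(0,\infty)$, so all hypotheses of Lemma \ref{lemma limit2} are met; the conclusion $s\, W_t^r\tolaw s\, Y^r$ is the desired claim.

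I expect essentially no genuine obstacle here: the proposition is a direct corollary of Lemma \ref{lemma limit2}. The only points warranting a line of care are (i) the positivity used to factor $\bigl(b(t)X_t^{a(t)}\bigr)^r=b(t)^rX_t^{a(t)r}$, which holds since $\mu^{\boxtimes t}$ lives on $(0,\infty)$ and $b(t)>0$ (so that $s\,b(t)^r>0$ and the dilation $\D_{s b(t)^r}$ is legitimate); and (ii) admitting $r\le0$, which is allowed because Lemma \ref{lemma limit2} permits $\alpha\in\R$, the map $x\mapsto s x^r$ being continuous on all of $(0,\infty)$ (this continuity, transported through the $\exp$/$\log$ reduction underlying Lemma \ref{lemma limit2}, is unaffected by the behaviour at the boundary point $0$ precisely because $\nu$ charges only $(0,\infty)$). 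The degenerate case $r=0$ is also covered, both sides reducing to $\delta_s$.
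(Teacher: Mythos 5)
Your proof is correct; the paper states Proposition \ref{power} as ``an obvious property'' and omits the proof entirely, and your reduction to Lemma \ref{lemma limit2} with the constant functions $t\mapsto r$ and $t\mapsto s$, via the identity $s\,b(t)^r X_t^{a(t)r}=s\bigl(b(t)X_t^{a(t)}\bigr)^r$, is exactly the intended argument. The side remarks on positivity and on the case $r\le 0$ are accurate and complete the justification.
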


Now we find more nontrivial examples of limit theorems using Boolean stable laws and some identities obtained in \cite{AH16}.

\begin{thm}\label{booleanmixture}
 Assume that $\mu,\nu$ are probability measures on $(0,\infty)$ and $\mu$ is $\boxtimes$-ID. Let $\alpha \in(0,1)$, $\beta \in [-1, \infty)$, $\gamma \in \R$ and  $a, b \colon (0,\infty)\to (0,\infty)$ be measurable functions such that 
\begin{enumerate}[\quad\rm(1)]
\item $\D_{b(t)}\!\left((\mu^{\boxtimes t})^{a(t)}\right)\wto\nu$ as $t\downarrow0$,
\item\label{AS1} $a(t) =t^\beta \!\left(1+o(|\log t|^{-1})\right)$ as $t\downarrow0$, 
\item the function $b(t)$ is regularly varying of index $\gamma$ as $t\downarrow0$. 
\end{enumerate}
 Then as $t\downarrow0$, 
$$
\D_{b(t)}\!\left((\bfb_{\alpha}\boxtimes \mu)^{\boxtimes t}\right)^{a(t)}\wto 
\begin{cases} 
\Law(e^{\bfC_{0,(1-\alpha)\pi/\alpha}})\circledast\nu, & \text{if}\quad \beta=-1, \\
\nu, & \text{if} \quad  \beta \in(-1,\infty).  
\end{cases} 
$$
\end{thm}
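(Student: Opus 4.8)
The plan is to reduce everything to hypothesis (1) by isolating the contribution of the Boolean stable factor $\bfb_\alpha$ through its $\Sigma$-transform. First I would record that $\bfb_\alpha$ is itself $\boxtimes$-ID: by Example \ref{LC BS} its function $\vv_{\bfb_\alpha}(z)=\frac{1-\alpha}{\alpha}\log(-z)$ has the Pick--Nevanlinna form required in Theorem \ref{BV}. Hence $\Sigma$-multiplicativity gives the factorization $(\bfb_\alpha\boxtimes\mu)^{\boxtimes t}=\bfb_\alpha^{\boxtimes t}\boxtimes\mu^{\boxtimes t}$ with $\Sigma_{\bfb_\alpha^{\boxtimes t}}(z)=(-z)^{t(1-\alpha)/\alpha}$, equivalently $S_{\bfb_\alpha^{\boxtimes t}}(x-1)=((1-x)/x)^{t(1-\alpha)/\alpha}$ for $x\in(0,1)$. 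Using Proposition \ref{power} I may normalize the bookkeeping of $a$ and $b$ as convenient.

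The main computation is the Mellin transform. Applying Lemma \ref{lemma HM} to $\sigma_t:=\D_{b(t)}((\bfb_\alpha\boxtimes\mu)^{\boxtimes t})^{a(t)}$ and substituting $S_{(\bfb_\alpha\boxtimes\mu)^{\boxtimes t}}=(S_{\bfb_\alpha}S_\mu)^t$, one obtains
\begin{equation*}
\int_{(0,\infty)}x^\gamma\,\sigma_t(dx)=b(t)^\gamma\frac{\sin(\pi\gamma a(t))}{\pi\gamma a(t)}\int_{(0,1)}\left(\frac{1-x}{x}\right)^{-\gamma a(t)}\left(\frac{1-x}{x}\right)^{-\gamma a(t)t\frac{1-\alpha}{\alpha}}S_\mu(x-1)^{-\gamma a(t)t}\,dx .
\end{equation*}
The crucial point is that the first, third and prefactor pieces are exactly what Lemma \ref{lemma HM} produces for $\rho_t:=\D_{b(t)}((\mu^{\boxtimes t})^{a(t)})$, so $\sigma_t$ differs from the already-convergent family $\rho_t$ only through the explicit correction $((1-x)/x)^{-\gamma a(t)t(1-\alpha)/\alpha}$ carrying the Boolean stable index.

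Everything then hinges on the exponent $a(t)t$. Hypothesis (2) gives $a(t)t=t^{\beta+1}(1+o(|\log t|^{-1}))$, so $a(t)t\to1$ when $\beta=-1$ and $a(t)t\to0$ when $\beta\in(-1,\infty)$; accordingly the correction tends to $((1-x)/x)^{-\gamma(1-\alpha)/\alpha}$ in the former case and to $1$ in the latter. The sharp rate $o(|\log t|^{-1})$ in (2) is needed precisely because the integral concentrates on a scale where $\log((1-x)/x)\asymp|\log t|$, so that a relative error $\delta(t)$ in $a(t)$ perturbs the leading exponent by $\delta(t)|\log t|$, which must vanish. The regular variation of $b$ in hypothesis (3) is used to guarantee that the Boolean factor enters without an extra location drift, so that the log-Cauchy part appears centered as $\bfC_{0,(1-\alpha)\pi/\alpha}$: the limiting kernel $((1-x)/x)^{-\gamma(1-\alpha)/\alpha}$ is exactly the one that, through Lemma \ref{lemma HM} read backwards and Example \ref{LC BS}, produces the Mellin data of $\Law(e^{\bfC_{0,(1-\alpha)\pi/\alpha}})$, and the decoupling of the concentrating factor $\bfb_\alpha^{\boxtimes t}\to\delta_1$ from $\mu^{\boxtimes t}$ is what turns the free product into a classical product $\circledast$ in the limit.

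The main obstacle is that the $\beta=-1$ limit $\Law(e^{\bfC_{0,(1-\alpha)\pi/\alpha}})\circledast\nu$ has no finite positive or negative moments, the log-Cauchy tails being heavy; consequently the real-moment method of Theorems \ref{free Bessel}, \ref{log FS} and \ref{multi log FS} cannot close the argument. I would therefore run the comparison at the level of the Fourier transform of the logarithm, testing against $x\mapsto x^{\ii s}$: weak convergence of $\rho_t$ gives $\int x^{\ii s}\rho_t(dx)\to\int x^{\ii s}\nu(dx)$ directly, and the remaining task is to show that the correction factor contributes the multiplier $e^{-(1-\alpha)\pi|s|/\alpha}$ (the characteristic function of $\bfC_{0,(1-\alpha)\pi/\alpha}$), after which L\'evy's continuity theorem for the push-forwards under $\log$ upgrades convergence of transforms to the asserted weak convergence. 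The technical heart is thus the uniform control of the oscillatory, only conditionally convergent version of the Lemma \ref{lemma HM} integral at imaginary argument and the justification of passing $t\downarrow0$ through it; this is where the concentration estimate governed by hypothesis (2) and the identities from \cite{AH16} do the real work.
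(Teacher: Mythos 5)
There is a genuine gap at the analytic heart of your argument, and the route you sketch is quite different from (and much harder than) the paper's. The paper does not compare Mellin integrands at all: it uses two \emph{exact} identities from \cite{AH16}, namely $(\bfb_{\alpha})^{\boxtimes t} = \bfb_{\alpha t/(\alpha+(1-\alpha)t)}$ and the fact that multiplicative free convolution of a Boolean stable law with a suitable free convolution power equals a \emph{classical} multiplicative convolution ($\bfb_{\alpha'}\boxtimes \lambda^{\boxtimes 1/\alpha'}=\bfb_{\alpha'}\circledast \lambda^{1/\alpha'}$, \cite[Theorem 4.5]{AH16}). This rewrites $\D_{b(\cdot)}\bigl((\bfb_\alpha\boxtimes\mu)^{\boxtimes t}\bigr)^{a(t)}$ exactly as $\bigl(\bfb_\alpha^{\boxtimes t}\bigr)^{a(t)}\circledast\bigl(\text{a time-changed copy of the hypothesis-(1) family}\bigr)^{f(t)}$, after which each classical factor converges separately (Example \ref{LC BS} and Lemma \ref{lemma limit2}), and hypotheses (2)--(3) enter only to show $f(t)\to1$ and $b(\ff)^{f(t)}/b(t)\to1$ via Karamata's representation. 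Your remark that ``the decoupling of the concentrating factor $\bfb_\alpha^{\boxtimes t}\to\delta_1$ \ldots turns the free product into a classical product'' cannot serve in the main case $\beta=-1$, where $(\bfb_\alpha^{\boxtimes t})^{a(t)}$ does \emph{not} concentrate but converges to the log Cauchy law; the passage from $\boxtimes$ to $\circledast$ is an exact algebraic fact here, not an asymptotic one.

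The concrete gap in your version is the step where you insert the correction factor $((1-x)/x)^{-\gamma a(t)t(1-\alpha)/\alpha}$ into the Lemma \ref{lemma HM} integral for $\rho_t$ and claim it contributes the multiplier $e^{-(1-\alpha)\pi|s|/\alpha}$. Weak convergence of $\rho_t$ controls only the \emph{value} of that integral (prefactor included), not the distribution of its integrand in $x$, so multiplying the integrand by a non-constant bounded function is not controlled by hypothesis (1). Worse, at imaginary argument $\gamma=\ii s$ with $a(t)\to\infty$ the Beta prefactor behaves like $\sinh(\pi s\, a(t))/(\pi s\, a(t))$, which blows up exponentially; the integral is therefore exponentially small only through oscillatory cancellation, and a bounded multiplicative perturbation of such an integrand can change the answer by an unbounded factor. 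Your concentration heuristic ($\log((1-x)/x)\asymp|\log t|$) is exactly what would need to be proved, uniformly in the general $\boxtimes$-ID measure $\mu$, and nothing in the hypotheses supplies it. As written the proposal does not close; the identities from \cite{AH16} that you defer to the last sentence are in fact the entire proof.
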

\begin{proof}
Recall a formula in \cite[Proposition 3.7]{AH16}: 
\begin{equation}
(\bfb_{\alpha})^{\boxtimes t} = \bfb_{\ff}.  
\end{equation}
We define the function 
\begin{equation}
f(t):= \frac{(\alpha+t(1-\alpha))a(t)}{\alpha a\!\left(\ff\right)}. 
\end{equation}
Then 
\begin{equation}
\begin{split}
&\D_{b\left(\ff\right)^{f(t)}}\!\left((\bfb_{\alpha}\boxtimes \mu)^{\boxtimes t}\right)^{a(t)} \\
&= \D_{b\left(\ff\right)^{f(t)}}\!\left(\bfb_{\ff}\boxtimes (\mu^{\boxtimes \ff })^{\boxtimes \frac{\alpha + (1-\alpha)t}{\alpha}}\right)^{a(t)} \\
&= \D_{b\left(\ff\right)^{f(t)}}\!\left(\bfb_{\ff}\circledast \left(\mu^{\boxtimes \ff}\right)^{\frac{\alpha+(1-\alpha)t}{\alpha}}\right)^{a(t)} \\
&= \left(\bfb_{\alpha}^{\boxtimes t}\right)^{a(t)}\circledast \left(\D_{b\left(\ff\right)}\left(\mu^{\boxtimes \ff}\right)^{a\left(\ff\right)}\right)^{f(t)},
\end{split}
\end{equation}
where \cite[Theorem 4.5]{AH16} was used on the second equality. We know that $\left(\bfb_{\alpha}^{\boxtimes t}\right)^{1/t} \wto \Law(e^{\bfC_{0,(1-\alpha) \pi/\alpha}})$ from Example \ref{LC BS}, and therefore we obtain by Lemma \ref{lemma limit2}, 
\begin{equation}
\lim_{t\downarrow0}\left(\bfb_{\alpha}^{\boxtimes t}\right)^{a(t)}=
\begin{cases}
\Law(e^{\bfC_{0,(1-\alpha) \pi/\alpha}}), &  \text{if}\quad \beta=-1, \\
\delta_1,&  \text{if}\quad \beta \in (-1,\infty).  
\end{cases}
\end{equation}
In view of Lemma \ref{lemma limit2} it suffices to show that 
\begin{equation}\label{convergence}
f(t)\to 1 \quad \text{and} \quad  \frac{b\!\left(\ff\right)^{f(t)}}{b(t)} \to 1 \quad \text{as}\quad  t\downarrow0. 
\end{equation}
Firstly, \eqref{AS1} implies that 
\begin{equation}\label{asymptotics-f}
f(t) = 1+o(|\log t|^{-1})  \quad \text{as} \quad t\downarrow0.
\end{equation}
For the second convergence in \eqref{convergence}, we use Karamata's representation \cite[Theorem 1.3.1 or (1.5.2)]{BGT87}, 
$$
b(t) = \exp\left(\theta(t)+ \int_t^T\frac{\psi(s)}{s}ds\right), \qquad t \in(0, T],
$$
for some constant $T>0$ and some bounded measurable functions $\theta, \psi\colon(0,T]\to\R$ such that $\lim_{t\downarrow0}\theta(t)=\theta_0 \in \R$ (and $\lim_{t\downarrow0}\psi(t)=\gamma$, which is not needed in the proof). Considering $\alpha t/(\alpha+(1-\alpha)t) = t(1+o(1))$ and \eqref{asymptotics-f}, we get 
\begin{equation}\label{}
\begin{split}
&\log \frac{b\!\left(\ff\right)^{f(t)}}{b(t)}  \\
&= \underbrace{f(t)\theta(t+ o(t)) - \theta(t)}_{=:I_1(t)} + \underbrace{\left(1+o\!\left(\frac{1}{|\log t|}\right)\right)\int_{t+o(t)}^T \frac{\psi(s)}{s}ds -  \int_t^T\frac{\psi(s)}{s}ds.}_{=:I_2(t)} 
\end{split}
\end{equation}
It is easy to see that $I_1(t) \to 0$.  For the second term $I_2$, 
\begin{equation}\label{estimate1}
\begin{split}
|I_2(t)| 
&= \left|\int_{t+o(t)}^t \frac{\psi(s)}{s}ds +  o\!\left(\frac{1}{|\log t|}\right) \int_{t+o(t)}^T \frac{\psi(s)}{s} ds \right| \\
&\leq \frac{\|\psi\|_\infty}{t+o(t)} o(t)  +   o\!\left(\frac{1}{|\log t|}\right) \|\psi\|_\infty \log \frac{T}{t+o(t)}  \to 0.  
\end{split}
\end{equation}
This establishes \eqref{convergence}. 
\end{proof}

We can then find more examples of probability measures which yield log free stable distributions.  
\begin{cor}\label{cor LFS}  Let $\beta \in(0,1)$. Following the notations in Theorem \ref{multi log FS}, we have   
\[
((\bfb_\beta \boxtimes \bnu(\bm \alpha, \mathbf{p}))^{\boxtimes t})^{t^{-1/\alpha_1}} \wto \Law(e^{p_1^{1/\alpha_1}\bfF_{\alpha_1}}) \quad \text{as} \quad t\downarrow0. 
\]
\end{cor}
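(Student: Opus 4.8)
The plan is to obtain this as a direct application of Theorem \ref{booleanmixture}, taking the $\boxtimes$-ID initial measure there to be $\mu = \bnu(\bm\alpha, \mathbf{p})$, the Boolean stable index there to be $\beta$, and the scaling functions to be $a(t) = t^{-1/\alpha_1}$ and $b(t) \equiv 1$. The one thing to watch is the clash of Greek letters: the symbol $\alpha$ in Theorem \ref{booleanmixture} (the Boolean index) corresponds to $\beta$ here, while the exponent called $\beta$ there corresponds to $-1/\alpha_1$ here.

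First I would record that $\mu = \bnu(\bm\alpha, \mathbf{p})$ is indeed $\boxtimes$-ID. Each $\bnu_{\alpha_i}$ is $\boxtimes$-ID on $(0,\infty)$ (as noted just before Theorem \ref{log FS}), hence so is each power $\bnu_{\alpha_i}^{\boxtimes p_i}$, and a multiplicative free convolution of $\boxtimes$-ID laws on $(0,\infty)$ is again $\boxtimes$-ID; this supplies the standing hypothesis on $\mu$. Next I would verify the three numbered hypotheses of Theorem \ref{booleanmixture} with $a(t) = t^{-1/\alpha_1}$ and $b(t) \equiv 1$: condition (1), namely
\[
\D_{b(t)}\!\left((\mu^{\boxtimes t})^{a(t)}\right) = (\bnu(\bm\alpha,\mathbf{p})^{\boxtimes t})^{t^{-1/\alpha_1}} \wto \Law(e^{p_1^{1/\alpha_1}\bfF_{\alpha_1}}),
\]
is precisely the content of Theorem \ref{multi log FS}, so the target measure $\nu$ of Theorem \ref{booleanmixture} is $\Law(e^{p_1^{1/\alpha_1}\bfF_{\alpha_1}})$; condition (2) holds because $a(t) = t^{-1/\alpha_1}$ identically, so it is of the required form $t^\beta(1+o(|\log t|^{-1}))$ with exponent $-1/\alpha_1$ and zero error term; and condition (3) holds because $b(t)\equiv1$ is regularly varying of index $0$.

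The only substantive point, and the step I would flag as the crux, is that the relevant exponent equals $-1/\alpha_1$ and, since $\alpha_1 \in (1,2]$, one has $-1/\alpha_1 \in (-1,-1/2] \subset (-1,\infty)$. Thus we land in the second branch of the conclusion of Theorem \ref{booleanmixture}, where the Boolean factor $\bfb_\beta$ contributes nothing in the limit and the answer is simply $\nu$; the critical value $-1$, at which a log Cauchy factor $\Law(e^{\bfC_{0,(1-\beta)\pi/\beta}})$ would persist, is never reached precisely because $\alpha_1 > 1$. Applying Theorem \ref{booleanmixture} with Boolean index $\beta \in (0,1)$ therefore gives
\[
((\bfb_\beta \boxtimes \bnu(\bm\alpha,\mathbf{p}))^{\boxtimes t})^{t^{-1/\alpha_1}} \wto \Law(e^{p_1^{1/\alpha_1}\bfF_{\alpha_1}}),
\]
which is exactly the assertion. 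Beyond the bookkeeping of the two parametrizations and the elementary inequality $-1/\alpha_1 > -1$, no real obstacle arises.
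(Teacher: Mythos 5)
Your proposal is correct and is exactly how the paper obtains this corollary: it is stated as an immediate consequence of Theorem \ref{booleanmixture} applied with $\mu=\bnu(\bm\alpha,\mathbf{p})$, Boolean index $\beta$, $a(t)=t^{-1/\alpha_1}$, $b(t)\equiv1$, with condition (1) supplied by Theorem \ref{multi log FS} and the second branch of the conclusion selected because $-1/\alpha_1>-1$. Your verification of the hypotheses (including the $\boxtimes$-infinite divisibility of $\bnu(\bm\alpha,\mathbf{p})$) matches the paper's intent precisely.
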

We deduce another corollary of Theorem \ref{booleanmixture}. In \cite{HM} Haagerup and M\"{o}ller considered the probability measures $\mu_{\alpha,\beta}$ defined by 
\begin{equation}
S_{\mu_{\alpha,\beta}}(z)=\frac{(-z)^{\alpha}}{(1+z)^\beta},\quad \alpha,\beta \geq0. 
\end{equation}
Its probability density function has an implicit expression. Computing the $S$-transform shows that  
\begin{equation}
\mu_{\alpha,\beta}=
\begin{cases} 
\bfb_{\frac{1}{1+\alpha}}\boxtimes \bm\pi^{\boxtimes(\beta-\alpha)},&\alpha \leq \beta,\\
\bfb_{\frac{1}{1+\beta}}\boxtimes \bff_{\frac{1}{1+\alpha-\beta}},&\alpha\geq\beta, 
\end{cases}
\end{equation}
and in particular $\mu_{\alpha,\beta}$ is $\boxtimes$-ID for any $\alpha,\beta\geq0$. We may restrict to the case $\alpha \leq \beta$ since for $\alpha > \beta$ the identity 
\begin{equation}
(\mu_{\alpha, \beta})^{-1}  = \mu_{\beta,\alpha} 
\end{equation}
holds, 
which can be verified by $S$-transform and the formula 
\begin{equation}
S_{\mu^{-1}}(z) = \frac{1}{S_\mu(-1-z)}
\end{equation}
 for a probability measure $\mu$ on $(0,\infty)$ (see \cite[Proposition 3.13]{HS07}). 
 Recall from Theorem \ref{free Bessel} that $\D_{t^{\beta-\alpha}}\left(\left(\mu^{\boxtimes t}\right)^{1/t}\right)\wto \DH_{\beta-\alpha},~t\downarrow0$ for $\mu=\bm\pi^{\boxtimes (\beta-\alpha)}$. Now Theorem \ref{booleanmixture} implies the following result. 
\begin{cor}\label{cor DH} For $0 \leq \alpha \leq \beta$, we have the convergence 
$$
\D_{t^{\beta-\alpha}}\left(\left(\mu_{\alpha,\beta}^{\boxtimes t}\right)^{1/t}\right)\wto \Law(e^{\bfC_{0,\alpha\pi}})\circledast\DH_{\beta-\alpha} \quad \text{as}\quad t\downarrow0. 
$$
\end{cor}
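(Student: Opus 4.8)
The plan is to read Corollary \ref{cor DH} off as a direct instance of Theorem \ref{booleanmixture}, applied to the factorization $\mu_{\alpha,\beta} = \bfb_{\frac{1}{1+\alpha}} \boxtimes \bm\pi^{\boxtimes(\beta-\alpha)}$ just recorded in the regime $\alpha\leq\beta$. The one thing to watch is the notational clash: the symbol serving as the Boolean stable index in Theorem \ref{booleanmixture} is \emph{not} the $\alpha$ of the corollary. Here the relevant Boolean index is $\frac{1}{1+\alpha}$, which lies in $(0,1)$ exactly when $\alpha>0$, and this is what will ultimately produce the Cauchy scale $\alpha\pi$.

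First I would set $\mu := \bm\pi^{\boxtimes(\beta-\alpha)} = \bm\pi(\beta-\alpha,1)$ and note it is $\boxtimes$-ID by the case $s=1$ of the infinite-divisibility criterion for the free Bessel law. Then Theorem \ref{free Bessel}, in the case $s=1$ and $r=\beta-\alpha\geq0$, supplies precisely hypothesis (1) of Theorem \ref{booleanmixture} with $\nu := \DH_{\beta-\alpha}$, namely
\[
\D_{t^{\beta-\alpha}}\!\left((\mu^{\boxtimes t})^{1/t}\right) \wto \DH_{\beta-\alpha} \qquad (t\downarrow0).
\]
I then take $a(t)=1/t$ and $b(t)=t^{\beta-\alpha}$. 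Hypothesis (2) holds with index $-1$, since $a(t)=t^{-1}$ trivially satisfies $a(t)=t^{-1}(1+o(|\log t|^{-1}))$ with vanishing error; and hypothesis (3) holds because the pure power $b(t)=t^{\beta-\alpha}$ is regularly varying of index $\beta-\alpha$. All assumptions of Theorem \ref{booleanmixture} are therefore met, with the index of $a$ equal to $-1$.

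Applying Theorem \ref{booleanmixture} in its case ``index of $a$ equals $-1$'' then gives
\[
\D_{t^{\beta-\alpha}}\!\left((\mu_{\alpha,\beta}^{\boxtimes t})^{1/t}\right) \wto \Law\!\left(e^{\bfC_{0,(1-\frac{1}{1+\alpha})\pi/\frac{1}{1+\alpha}}}\right) \circledast \DH_{\beta-\alpha},
\]
and it only remains to perform the elementary simplification $\left(1-\frac{1}{1+\alpha}\right)\big/\frac{1}{1+\alpha} = \alpha$, so that the Cauchy scale is exactly $\alpha\pi$. This is the asserted limit $\Law(e^{\bfC_{0,\alpha\pi}}) \circledast \DH_{\beta-\alpha}$.

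There is no substantial obstacle beyond the bookkeeping; the only genuinely non-routine points are the two degenerate endpoints, which I would dispatch separately. When $\alpha=0$ the Boolean index $\frac{1}{1+\alpha}$ equals $1$ and falls outside the range $(0,1)$ demanded by Theorem \ref{booleanmixture}; but then $\bfb_1=\delta_1$ and $\mu_{0,\beta}=\bm\pi^{\boxtimes\beta}$, so the statement collapses to Theorem \ref{free Bessel} itself, consistently with $\bfC_{0,0}=\delta_0$ and hence $\Law(e^{\bfC_{0,0}})=\delta_1$. Similarly the boundary $\alpha=\beta$ is harmless: there $\beta-\alpha=0$, so $b(t)\equiv1$ and $\DH_0=\delta_1$, and the conclusion reduces to the log Cauchy convergence already recorded in Example \ref{LC BS}. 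Thus it suffices to run the argument above for $0<\alpha\leq\beta$ and invoke these two reductions at the endpoints.
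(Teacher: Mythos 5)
Your proposal is correct and follows exactly the route the paper takes: use the factorization $\mu_{\alpha,\beta}=\bfb_{\frac{1}{1+\alpha}}\boxtimes\bm\pi^{\boxtimes(\beta-\alpha)}$, feed the conclusion of Theorem \ref{free Bessel} (case $s=1$, $r=\beta-\alpha$) into Theorem \ref{booleanmixture} with $a(t)=1/t$, $b(t)=t^{\beta-\alpha}$, and simplify $(1-\tfrac{1}{1+\alpha})/\tfrac{1}{1+\alpha}=\alpha$. Your explicit handling of the endpoint $\alpha=0$ (where the Boolean index degenerates to $1$ and $\bfb_1=\delta_1$) is a careful addition the paper leaves implicit, but the argument is essentially identical.
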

Using Proposition \ref{prop DH} shows that 
\begin{equation}
\Law(e^{\bfC_{\beta, \gamma}})\circledast (\DH_1)^{a} = \Law(e^{\bfC_{\beta, \gamma} + a \bfF_1 }), \qquad \beta \in\R, \gamma \geq0, a \in \R, 
\end{equation}
where the random variables $\bfC_{\beta, \gamma}$ and $ \bfF_1$ are assumed to be independent. Moreover, assuming free independence of $\bfC_{\beta, \gamma}$ and $ \bfF_1$ gives the same distribution, thanks to the speciality of the Cauchy law  \eqref{special Cauchy}. Since $\Law(\bfC_{\beta, \gamma} + a \bfF_1)$ covers all free $1$-stable laws, considering Theorem \ref{log FS}, Corollary \ref{cor DH} and Proposition \ref{power}, we have obtained a certain class of possible limit distributions.  
\begin{thm} \label{limit}
Any probability measure in the family 
$$
\{\Law(e^{u \bfF_\alpha +v}) \mid \alpha \in (1,2], u,v \in \R\} \cup \{\text{\rm Log free $1$-stable distributions}\}
$$ 
appears in the limit theorem of the form \eqref{small time}. 
\end{thm}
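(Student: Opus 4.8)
The plan is to assemble the statement from the three ingredients already established—Theorem \ref{log FS}, Corollary \ref{cor DH}, and Proposition \ref{power}—treating the two families (log free $\alpha$-stable with $\alpha\in(1,2]$ and log free $1$-stable) separately. The unifying device is that Proposition \ref{power} transports any convergence $\D_{b(t)}((\mu^{\boxtimes t})^{a(t)})\wto\nu$ of the form \eqref{small time} into $\D_{sb(t)^r}((\mu^{\boxtimes t})^{a(t)r})\wto\D_s(\nu^r)$, and that on the exponential scale a power $r$ and a dilation $s$ act by $\Law(e^{X})\mapsto\Law(e^{rX+\log s})$. Thus scaling a base limit amounts to a two-parameter affine action on the exponent, and the whole problem reduces to checking that this action, applied to the available base distributions, sweeps out the claimed families while keeping the exponent function positive.

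For the family $\{\Law(e^{u\bfF_\alpha+v})\}$ I start from the base case $(\bnu_\alpha^{\boxtimes t})^{t^{-1/\alpha}}\wto\Law(e^{\bfF_\alpha})$ of Theorem \ref{log FS}, i.e.\ $a(t)=t^{-1/\alpha}$, $b(t)\equiv1$. Applying Proposition \ref{power} with $r=u>0$ and $s=e^{v}$ gives $\D_{e^{v}}((\bnu_\alpha^{\boxtimes t})^{u t^{-1/\alpha}})\wto\Law(e^{u\bfF_\alpha+v})$, which is of the form \eqref{small time} with positive functions. The case $u<0$ needs care, since \eqref{small time} requires $a(t)>0$; here I pass to the reciprocal initial law $\bnu_\alpha^{-1}$. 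Because $S_{\bnu_\alpha^{-1}}(z)=1/S_{\bnu_\alpha}(-1-z)$ forces $(\bnu_\alpha^{-1})^{\boxtimes t}=(\bnu_\alpha^{\boxtimes t})^{-1}$, the reciprocal semigroup is again a $\boxtimes$-semigroup, so $\bnu_\alpha^{-1}$ is $\boxtimes$-ID and $((\bnu_\alpha^{-1})^{\boxtimes t})^{t^{-1/\alpha}}\wto\Law(e^{-\bfF_\alpha})$; a further use of Proposition \ref{power} with $r=|u|$ and $s=e^{v}$ yields $\Law(e^{u\bfF_\alpha+v})$ with positive $a,b$. The degenerate value $u=0$ gives the point mass $\delta_{e^{v}}$.

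For the log free $1$-stable family I use Corollary \ref{cor DH}, which realizes $\Law(e^{\bfC_{0,\alpha\pi}})\circledast\DH_{\beta-\alpha}$ as a limit of the form \eqref{small time} for every $0\le\alpha\le\beta$, with $a(t)=1/t$ and $b(t)=t^{\beta-\alpha}$. Rewriting the limit via $\DH_{\beta-\alpha}=\D_{(\beta-\alpha)^{-(\beta-\alpha)}}(\DH_1^{\beta-\alpha})$ from \eqref{powerrelation}, Proposition \ref{prop DH} ($\DH_1=\Law(e^{\bfF_1})$), and the identity $\Law(e^{\bfC_{\beta,\gamma}})\circledast(\DH_1)^{a}=\Law(e^{\bfC_{\beta,\gamma}+a\bfF_1})$, I identify it with $\Law(e^{\bfC_{0,\alpha\pi}+(\beta-\alpha)\bfF_1+c})$ for an explicit constant $c$. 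Applying Proposition \ref{power} (power $r>0$, dilation $s$) and using the dilation/reflection invariance $\D_r(\bfc_{0,\gamma})=\bfc_{0,|r|\gamma}$ of the Cauchy part, this becomes $\Law(e^{\bfC_{0,r\alpha\pi}+r(\beta-\alpha)\bfF_1+(rc+\log s)})$; negative coefficients of $\bfF_1$ are obtained by instead taking the reciprocal initial law $\mu_{\beta,\alpha}=\mu_{\alpha,\beta}^{-1}$, again keeping the exponent positive. A parameter count then shows that, as $(\alpha,\beta,r,s)$ vary, the Cauchy width, the coefficient of $\bfF_1$, and the overall shift inside the exponential range over all admissible values, the center of the Cauchy part being absorbed into the shift. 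Since $\bfC_{\beta,\gamma}+a\bfF_1$ exhausts all free $1$-stable laws, every log free $1$-stable law is attained.

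The main obstacle I anticipate is not analytic but organizational: verifying that the two-parameter scaling afforded by Proposition \ref{power}, applied to the one- and two-parameter base families of Theorem \ref{log FS} and Corollary \ref{cor DH}, exactly exhausts the claimed families, while simultaneously keeping the exponent functions strictly positive so that the realizations genuinely have the form \eqref{small time}. The only place demanding real care is the sign bookkeeping—negative $u$ and negative coefficient of $\bfF_1$—which I resolve respectively by passing to the reciprocals $\bnu_\alpha^{-1}$ and $\mu_{\beta,\alpha}=\mu_{\alpha,\beta}^{-1}$ (both legitimate $\boxtimes$-ID laws) and by exploiting the reflection symmetry of the Cauchy component.
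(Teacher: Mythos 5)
Your proposal is correct and follows essentially the same route as the paper, which likewise obtains the theorem by combining Theorem \ref{log FS}, Corollary \ref{cor DH}, Proposition \ref{prop DH}, the identity $\Law(e^{\bfC_{\beta,\gamma}})\circledast(\DH_1)^a=\Law(e^{\bfC_{\beta,\gamma}+a\bfF_1})$, and the rescaling Proposition \ref{power}. The only difference is cosmetic: the paper absorbs negative exponents directly into Proposition \ref{power} (which allows $r\in\R$), whereas you pass to the reciprocal initial laws $\bnu_\alpha^{-1}$ and $\mu_{\alpha,\beta}^{-1}=\mu_{\beta,\alpha}$ to keep $a(t)>0$ literally as in \eqref{small time} --- a slightly more careful bookkeeping of the same argument.
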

Note that the above probability measures are all log free stable with index $\geq1$. 

\begin{prob}
Determine all the possible limit distributions of \eqref{small time}. In particular, determine whether the following distributions can appear in the limit theorem: 
\begin{itemize}
\item log free stable laws with index $>1$ and with an arbitrary asymmetry parameter $\rho$;  
\item log free stable laws with index $<1$;  
\item probability measures which are not log free stable laws. 
\end{itemize} 
\end{prob}

\begin{prob}[Domain of attraction]
Characterize initial probability measures $\mu$ such that \eqref{small time} converges to a given non-degenerate distribution (e.g.\ probability measures in Theorem \ref{limit}) for some functions $a,b\colon (0,\infty) \to (0,\infty)$. Does a transfer principle (like in Theorem \ref{thm ALP}) holds between free and classical limit theorems? 
\end{prob}

\section{Positive multiplicative Boolean L\'evy processes at small time} \label{S5}
As mentioned in Section \ref{sec MBC}, the Boolean power $\mu^{\utimes t}$ is well defined for $0 \leq t \leq1$ and for any probability measure $\mu$ on $[0,\infty)$. Therefore, one may discuss the convergence of 
\begin{equation}\label{small time B}
\D_{b(t)}(\mu^{\utimes t})^{a(t)}, \qquad t\downarrow0, 
\end{equation}
where $a,b\colon (0,1] \to (0,\infty)$ are functions. We can give a more solid solution to this problem than the free case since the analysis is easier.  

The defining relation \eqref{eq MBCP} for the Boolean convolution power, combined with 
\begin{equation}
\eta_\mu(z) = 1-z F_\mu(1/z) = 1-\frac{z}{G_\mu(1/z)}, 
\end{equation}
yields that 
\begin{equation}\label{CauchyTransform}
G_{ \mu^{\sutimes t} }(z) =  \frac{1}{z - (z\eta_\mu(1/z))^t}=\frac{1}{z - (z- F_\mu(z))^t}. 
\end{equation}
We consider the following assumption on $\mu$: 
\begin{itemize} 
\vspace{3mm}
\item[\AS]  There exists an open interval $I \subset (0,\infty)$ such that $1 \in I$ and the limit 
$$
F_\mu(x):= F_\mu(x+\ii0) := \lim_{y\downarrow0}F_\mu(x +\ii y) \in \C^+ \cup \R
$$
exists for each $x\in I$, and the map $F_\mu\colon I \to  \C^+ \cup \R$ is continuous at $1$. 
\vspace{3mm}
\end{itemize}
A sufficient condition for \AS is the existence of a H\"older continuous density around $x=1$; see Example \ref{exa Holder}. 
The assumption \AS, equation \eqref{CauchyTransform} and Stieltjes inversion imply that $\mu^{\utimes t}$ is Lebesgue absolutely continuous on $I$ and 
\begin{equation}
\frac{d\mu^{\utimes t}}{dx} = \frac{1}{\pi} \Im\!\left(\frac{1}{(x- F_\mu(x+\ii0))^t-x} \right), \qquad x \in I, 0<t<1, 
\end{equation}
unless the denominator is zero. Moreover, for $0<t<1$ and $s>0$, the probability measure $(\mu^{\utimes t})^{1/s}$ is Lebesgue absolutely continuous on $I^{1/s} := \{x \in (0,\infty): x^s \in I\}$ with density
\begin{equation}\label{density B}
\frac{d(\mu^{\utimes t})^{1/s}}{dx}=   \frac{s}{\pi x} \Im\!\left(\frac{1}{x^{-s} (x^s - F_\mu(x^s+\ii0))^t -1} \right), \qquad x \in I^{1/s}
\end{equation}
unless the denominator is zero.

\subsection{Log Cauchy distribution}\label{sec B}
We first consider the log Cauchy distributions. 
\begin{thm}\label{LCB} Let $\mu$ be a probability measure on $[0,\infty)$ satisfying \AS and $F_\mu(1) \in \C^+ \cup (1,\infty)$, and so we may write $\log(1- F_\mu(1) -\ii0) = \beta - \ii \gamma$, where $(\beta,\gamma) \in \R \times (0, \pi]$. Then the convergence  
\[
\frac{d(\mu^{\utimes t})^{1/t}}{d x}\to \frac{\gamma}{\pi x [(\log x - \beta)^2 + \gamma^2]} \quad \text{as}\quad t \downarrow0
\] 
holds uniformly on each compact set of $(0,\infty)$. In particular, $(\mu^{\utimes t})^{1/t}$ converges to $\Law(e^{\mathbf{C}_{\beta,\gamma}})$ weakly. 
\end{thm}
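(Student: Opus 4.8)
The plan is to work directly from the explicit density formula \eqref{density B}, specialized to $s=t$, and to extract the limit by a first-order expansion of the denominator as $t\downarrow0$. Fix a compact set $K\subset(0,\infty)$. Since $x^t=e^{t\log x}\to1$ uniformly on $K$ and $I$ is an open interval containing $1$, we have $x^t\in I$ for all $x\in K$ once $t$ is small, so \eqref{density B} is valid on $K$; writing $w_t(x):=x^t-F_\mu(x^t+\ii0)$, it reads
\[
\frac{d(\mu^{\utimes t})^{1/t}}{dx}=\frac{1}{\pi x}\,\Im\!\left(\frac{t}{x^{-t}w_t(x)^t-1}\right).
\]
The continuity of $F_\mu$ at $1$ coming from \AS, together with the uniform convergence $x^t\to1$, gives $w_t(x)\to w_0:=1-F_\mu(1)$ uniformly on $K$. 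Because $F_\mu(1)\in\C^+\cup(1,\infty)$ we have $w_0\in\C^-\cup(-\infty,0)$, so $w_0\neq0$ and $\log w_0=\beta-\ii\gamma$ with $\gamma\in(0,\pi]$.

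The key point is then a first-order expansion. Writing $x^{-t}w_t(x)^t=e^{B_t(x)}$ with $B_t(x):=t(\log w_t(x)-\log x)$, I would record that $B_t\to0$ uniformly on $K$ (the bracketed factor is bounded), and factor
\[
\frac{t}{e^{B_t(x)}-1}=\frac{1}{\log w_t(x)-\log x}\cdot\frac{B_t(x)}{e^{B_t(x)}-1}.
\]
Since $\Im(\log w_t(x)-\log x)\to-\gamma<0$, the quantity $\log w_t(x)-\log x$ stays bounded away from $0$ uniformly on $K$ for small $t$ (so in particular the denominator in \eqref{density B} does not vanish), while $B_t(x)/(e^{B_t(x)}-1)\to1$ uniformly. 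Hence the right-hand side converges uniformly on $K$ to $1/(\beta-\ii\gamma-\log x)$. Taking imaginary parts and multiplying by $1/(\pi x)$, the density converges uniformly on $K$ to
\[
\frac{1}{\pi x}\,\Im\!\left(\frac{1}{\beta-\ii\gamma-\log x}\right)=\frac{\gamma}{\pi x\,[(\log x-\beta)^2+\gamma^2]},
\]
which is exactly the claimed limit. Weak convergence to $\Law(e^{\mathbf{C}_{\beta,\gamma}})$ then follows from Lemma \ref{convergence1}, once one checks that the log Cauchy density integrates to $1$ over $(0,\infty)$ (immediate via the substitution $u=\log x$).

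The delicate point I expect is the branch bookkeeping for the power $w_t(x)^t$ and the identification $\log w_0=\beta-\ii\gamma$. For $z\in\C^+$ the Pick--Nevanlinna representation \eqref{PN} gives $\Im(z-F_\mu(z))=-\Im(z)\int(1+x^2)|z-x|^{-2}\,d\tau(x)<0$, so $z-F_\mu(z)$ maps $\C^+$ into the open lower half-plane, away from the branch cut of the principal power; thus $(z-F_\mu(z))^t=e^{t\operatorname{Log}(z-F_\mu(z))}$ is analytic on $\C^+$, and the symbol $w_t(x)^t$ in \eqref{density B} is its boundary value at $x^t+\ii0$, so that $\log w_t(x)$ must be read as the limit of $\operatorname{Log}(z-F_\mu(z))$ from within $\C^-$. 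In the borderline case $F_\mu(1)>1$ (so $w_0<0$), this limit-from-below convention is precisely what produces $\gamma=\pi$ rather than $-\pi$, matching the definition $\log(1-F_\mu(1)-\ii0)=\beta-\ii\gamma$ in the statement; making this branch selection rigorous and uniform in $x$ is the main obstacle, whereas the remaining estimates are routine uniform bounds.
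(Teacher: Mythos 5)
Your proof is correct and follows essentially the same route as the paper's: both specialize the density formula \eqref{density B} to $s=t$, use \AS{} and the uniform convergence $x^t\to1$ to replace $F_\mu(x^t+\ii0)$ by $F_\mu(1)$, perform a first-order expansion of $x^{-t}(x^t-F_\mu(x^t+\ii0))^t-1$ in $t$ to obtain the denominator $\log(1-F_\mu(1)-\ii0)-\log x$, and conclude via Lemma \ref{convergence1}. Your factorization $\tfrac{t}{e^{B_t}-1}=\tfrac{1}{B_t/t}\cdot\tfrac{B_t}{e^{B_t}-1}$ and the explicit justification that $x^t-F_\mu(x^t+\ii0)$ stays in $\overline{\C^-}$ (so the branch giving $\gamma\in(0,\pi]$ is the right one) are welcome refinements of steps the paper treats more tersely, but they do not change the argument.
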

\begin{rem} It is notable that the parameter $\gamma$ is less than or equal to $\pi$, while it was an arbitrary positive number in the free case in Theorem \ref{LCF}.  
\end{rem}
\begin{proof} Take any compact set $K$ of $(0,\infty)$. Then $x^t\in I$ for sufficiently small $t\in(0,1)$ and any $x\in K$, and hence the density formula \eqref{density B} is valid on $K$ unless the denominator is zero. Note that $x^t = 1 + t \log x + o(t)$ as $t\downarrow0$ by calculus and $F_\mu(x^t+\ii0) = w +  o(1)$ as $t\downarrow0$ uniformly on $x \in K$ by \AS. Then 
\begin{equation}\label{eq LCB}
\begin{split}
\frac{d(\mu^{\utimes t})^{1/t}}{d x}
&= \frac{t}{\pi x} \Im\!\left(\frac{1}{(1-t \log x+o(t)) (1 - w -\ii0 +o(1))^t -1} \right)   \\
&= \frac{t}{\pi x} \Im\!\left(\frac{1}{(1-t \log x +o(t)) (1 - w-\ii0)^t(1 +o(1))^t -1} \right)   \\
&= \frac{t}{\pi x} \Im\!\left(\frac{1}{(1-t \log x +o(t)) (1 + t \log(1 - w-\ii0) +o(t))(1 +o(t)) -1} \right)   \\
&= \frac{1}{\pi x} \Im\!\left(\frac{1}{\log(1 - w-\ii0) -\log x +o(1)} \right)   \\
&\to \frac{1}{\pi x} \Im\!\left(\frac{-1}{\log x - \beta + \ii\gamma} \right) \quad  \text{as} \quad t \downarrow0. 
\end{split}
\end{equation}
This convergence is uniform on $K$. The weak convergence follows from Lemma \ref{convergence1} with $B=(0,\infty)$. 
\end{proof}

\begin{exa} \label{exa Holder}
Suppose that $\mu$ is Lebesgue absolutely continuous in a finite open interval $I$ containing the point $1$, and $d\mu/dx$ is strictly positive and locally $\rho$-H\"{o}lder continuous on $I$ for some $0< \rho<1$. Then the assumption \AS  is satisfied and $F_\mu(1) \in \C^+$. Therefore, $\gamma \in (0,\pi)$ and the convergence of Theorem \ref{LCB} holds. 

The proof is as follows. In the decomposition 
$$
G_\mu(z) = \int_{I} \frac{1}{z- u} \mu(d u) + \int_{I^c} \frac{1}{z- u} \mu(d u) =: G_1(z)+G_2(z), 
$$
the second part $G_2$ extends continuously to $\C^+ \cup I$, taking real-values on $I$. Considering 
$$
G_1(x+\ii y) = \int_I \frac{x-u}{(x-u)^2 + y^2} \mu(d u) - \ii   \int_I \frac{y}{(x-u)^2 + y^2} \mu(d u)
$$
and \cite[Lemmas $\alpha,\beta,\delta$]{Tit26} (with some modification of proofs because we only assume the local H\"older continuity, not the global one), the Cauchy transform $G_1$ extends to a continuous function on $\C^+ \cup I$ and 
$$
G_1(x) = p.v.\int_I \frac{1}{x-u} \mu(d u) - \ii\pi \frac{d\mu}{dx}
$$
on $I$. 
The real part is locally $\rho$-H\"older continuous \cite[3.36]{Tit26} in $I$, and so $G_1(x)$ is continuous in $I$. Since $\Im(G_1(1)) = -\pi \frac{d\mu}{dx}\big|_{x=1}<0$, it follows that $F_\mu(1)\in\C^+$. 
\end{exa}

\begin{exa} Let $\mu= \frac{1}{2}(\delta_2 + \delta_p)$ for $p \in (0,\infty)$. Then 
$$
F_\mu(z) = \frac{(z-2)(z-p)}{z- 1- p/2}. 
$$
Hence $F_\mu$ satisfies \AS.  The condition $F_\mu(1) = 2(1-p)/p>1$ is satisfied if and only if $0<p<2/3$. If this condition is satisfied then $\beta -\ii\gamma=\log \frac{2-3 p}{p} - \ii \pi$, and hence
$$
(\mu^{\utimes t})^{1/t} \wto \frac{1}{x [(\log \frac{px}{2-3 p} )^2 + \pi^2]} \quad \text{as}\quad t \downarrow0. 
$$ 
\end{exa}

\subsection{Log Boolean stable distributions with index $<1$}

The distribution $\Law(e^{\mathbf{B}_{\alpha,\rho,r}})$ is called the log Boolean stable law, where $\mathbf{B}_{\alpha,\rho,r}$ is a random variable following the law $\bfb_{\alpha, \rho, r}$. 
The convergence to log Boolean stable distributions is shown below.  
\begin{thm}\label{LBS} Let $\mu$ be a probability measure on $[0,\infty)$ satisfying \AS, and for some $\alpha \in(0,1), \rho \in [0,1]$ and $r>0$,  
\begin{equation}\label{AS2}
F_\mu(x) = r e^{\ii \alpha\rho\pi} (x-1+\ii0)^{1-\alpha} + o(|x-1|^{1-\alpha})  \quad \text{as}\quad x \to 1. 
\end{equation}
Then the convergence  
\[
\frac{d(\mu^{\utimes t})^{t^{-1/\alpha}}}{d x} \to \frac{ r \sin \alpha \rho\pi }{\pi x} \cdot \frac{ (\log x)^{\alpha-1} }{(\log x)^{2\alpha} + 2 r (\cos \alpha\rho\pi) (\log x)^\alpha + r^2} \quad \text{as}\quad t \downarrow0
\] 
holds uniformly on each compact set of $(1,\infty)$, and 
\[
\frac{d(\mu^{\utimes t})^{t^{-1/\alpha}}}{d x} \to \frac{r  \sin \alpha(1- \rho)\pi }{\pi x} \cdot \frac{ (-\log x)^{\alpha-1} }{(-\log x)^{2\alpha} + 2 r (\cos \alpha(1-\rho)\pi) (-\log x)^\alpha + r^2} \quad \text{as}\quad t \downarrow0
\] 
holds uniformly on each compact set of $(0,1)$. 
 In particular, $(\mu^{\utimes t})^{t^{-1/\alpha}}$ converges to $\Law(e^{\mathbf{B}_{\alpha,\rho, r}})$ weakly. 
\end{thm}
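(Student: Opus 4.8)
The plan is to follow the same strategy as in the proof of Theorem \ref{LCB}: insert the explicit density formula \eqref{density B} and carry out a careful asymptotic expansion, the genuinely new feature being the fractional power entering through \eqref{AS2}. First I would set $s = t^{1/\alpha}$, so that $1/s = t^{-1/\alpha}$ and $(\mu^{\utimes t})^{t^{-1/\alpha}}$ is precisely the measure whose density is given by \eqref{density B}; note $s \downarrow 0$ and $t = s^{\alpha}$ as $t \downarrow 0$. Fixing a compact set $K$ of $(1,\infty)$ (resp.\ of $(0,1)$), for all sufficiently small $t$ one has $x^{s} \in I$ for every $x \in K$ by \AS, so the formula is valid there, and the task reduces to computing the limit of
\[
\frac{s}{\pi x}\,\Im\!\left(\frac{1}{x^{-s}\bigl(x^{s} - F_\mu(x^{s}+\ii 0)\bigr)^{t} - 1}\right)
\]
as $s \downarrow 0$, uniformly in $x \in K$.

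The core is the expansion of the denominator. Writing $x^{s} = 1 + s\log x + o(s)$ and using \eqref{AS2} with $x^{s}-1 = s\log x + o(s)$, I would obtain $F_\mu(x^{s}+\ii 0) = r\,e^{\ii\alpha\rho\pi}(x^{s}-1+\ii 0)^{1-\alpha}(1+o(1))$. Since $0<\alpha<1$, the term $(x^{s}-1)^{1-\alpha}$ is of order $s^{1-\alpha}$ and dominates the linear term $s\log x$ inside $x^{s}-F_\mu$; raising to the power $t = s^{\alpha}$ and taking logarithms then converts the order $s^{1-\alpha}$ into order $s$. For $x>1$, where $(x^{s}-1+\ii 0)^{1-\alpha} = (s\log x)^{1-\alpha}$ is positive, this yields $\bigl(x^{s}-F_\mu\bigr)^{t} = 1 - r\,e^{\ii\alpha\rho\pi} s\,(\log x)^{1-\alpha}(1+o(1))$. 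Multiplying by $x^{-s} = 1 - s\log x + o(s)$, whose linear correction must be retained because it contributes the $(\log x)^{\alpha}$ term below, gives
\[
x^{-s}\bigl(x^{s}-F_\mu\bigr)^{t} - 1 = -\,s\,(\log x)^{1-\alpha}\bigl[(\log x)^{\alpha} + r\,e^{\ii\alpha\rho\pi}\bigr](1+o(1)),
\]
so the prefactor $s$ cancels and one is left with $\tfrac{1}{\pi x}\Im\bigl(-(\log x)^{\alpha-1}/[(\log x)^{\alpha}+r\,e^{\ii\alpha\rho\pi}]\bigr)$; a direct computation of this imaginary part recovers exactly the first claimed density.

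The main obstacle, and the only essential novelty, is the branch of $(x^{s}-1+\ii 0)^{1-\alpha}$ when $x<1$. There $x^{s}-1 = s\log x + o(s)$ is negative, and the boundary value $+\ii 0$ forces argument $\pi$, so $(x^{s}-1+\ii 0)^{1-\alpha} = (s|\log x|)^{1-\alpha}e^{\ii(1-\alpha)\pi}$. The composite phase becomes $e^{\ii(\alpha\rho + 1 - \alpha)\pi} = -\,e^{-\ii\alpha(1-\rho)\pi}$, which is exactly the mechanism replacing $\rho$ by $1-\rho$. Repeating the expansion with $M := -\log x > 0$ produces the denominator $s\,M^{1-\alpha}\bigl[M^{\alpha} + r\,e^{-\ii\alpha(1-\rho)\pi}\bigr](1+o(1))$, and taking the imaginary part gives the second claimed density. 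Throughout I would verify that the $o(\cdot)$ terms are uniform on $K$: this holds because $x^{s}\to 1$ uniformly on $K$ and the error in \eqref{AS2} is $o(|x-1|^{1-\alpha})$ as $x\to 1$, while $\log x$ and $(\log x)^{1-\alpha}$ remain bounded and bounded away from $0$ on $K$. Finally, with uniform convergence of the densities on every compact subset of $(1,\infty)$ and of $(0,1)$, the weak convergence of $(\mu^{\utimes t})^{t^{-1/\alpha}}$ to $\Law(e^{\mathbf{B}_{\alpha,\rho,r}})$ follows from Lemma \ref{convergence1} with $B=(0,1)\cup(1,\infty)$, the limiting densities integrating to $1$ by the explicit Boolean stable density recalled in Section \ref{sec stable}.
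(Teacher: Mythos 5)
Your proposal is correct and follows essentially the same route as the paper's proof: substitute the density formula \eqref{density B} with $s=t^{1/\alpha}$, expand $x^{s}-F_\mu(x^{s}+\ii 0)$ using \eqref{AS2}, observe that raising to the power $t=s^{\alpha}$ converts the $s^{1-\alpha}$ correction into an $s$-order one, track the branch shift $e^{\ii(1-\alpha)\pi}$ for $x<1$ (the paper writes the resulting phase as $e^{\ii(\alpha\rho\pi-\alpha\pi)}$, which equals your $e^{-\ii\alpha(1-\rho)\pi}$), and conclude weak convergence via Lemma \ref{convergence1} on $B=(0,\infty)\setminus\{1\}$. The computations and the uniformity discussion match the paper's argument.
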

\begin{rem} \begin{enumerate}[\rm(1)]
\item The asymptotic behavior \eqref{AS2} is equivalent to $G_\mu(x) = (1/r) e^{-\ii \alpha\rho\pi} (x-1+\ii0)^{\alpha-1} + o(|x-1|^{\alpha-1})$. By Stieltjes inversion, we obtain that 
\begin{equation}\label{AS6}
\frac{d\mu}{dx} = 
\begin{cases} 
\displaystyle \frac{\sin\alpha \rho\pi}{\pi r} (x-1)^{\alpha-1}+o((x-1)^{\alpha-1}), & x\downarrow1, \\[4mm]
\displaystyle \frac{\sin\alpha (1-\rho)\pi}{\pi r}(1-x)^{\alpha-1}+o((1-x)^{\alpha-1}), & x \uparrow1. 
\end{cases}
\end{equation}
Hence the triplet $(\alpha, \rho, r)$ can be determined from a local behavior of $d\mu/d x$ at $1$. Conversely, it is not known if the asymptotic behavior \eqref{AS6} of $d\mu/dx$ implies the asymptotic behavior \eqref{AS2} of $F_\mu$ (or $G_\mu$). When $d\mu/dx$ satisfies an analytic property then the converse  is true, see Example \ref{Analytic}.  
\item While the Cauchy distribution is a Boolean $1$-stable law, we cannot unify Theorems \ref{LCB} and \ref{LBS}. This is because the estimate \eqref{estimate2} below fails to hold for $\alpha=1$. 
\end{enumerate}
\end{rem}
\begin{proof} We define $\theta := \alpha \rho \pi$. Take any compact set $K_1$ of $(1,\infty)$. Then $x^t\in I$ for sufficiently small $t<1$ and any $x\in K_1$, and hence the density formula \eqref{density B} is valid on $K_1$ unless the denominator is zero. Note that 
\begin{equation}\label{}
\begin{split}
x^{t^{1/\alpha}}- F_\mu(x^{t^{1/\alpha}}+\ii0) 
&= 1 + t^{1/\alpha}\log x +o(t^{1/\alpha}) -r e^{\ii\theta}(t^{1/\alpha} \log x + o(t^{1/\alpha})+ \ii0)^{1-\alpha}  \\
&= 1 -r e^{\ii\theta} t^{(1-\alpha)/\alpha} (\log x)^{1-\alpha} + o(t^{(1-\alpha)/\alpha})
\end{split}
\end{equation}
as $t\downarrow0$ uniformly on $x \in K_1$. By calculus we obtain 
\begin{equation}\label{estimate2}
\begin{split}
(x^{t^{1/\alpha}}- F_\mu(x^{t^{1/\alpha}}+\ii0) )^t 
&=1 -r e^{\ii\theta} t^{1/\alpha} (\log x)^{1-\alpha} + o(t^{1/\alpha}). 
\end{split}
\end{equation}
 Therefore, 
\begin{equation}
\begin{split}
\frac{d(\mu^{\utimes t})^{t^{-1/\alpha}}}{d x}
&= \frac{t^{1/\alpha}}{\pi x} \Im\!\left(\frac{1}{[1-t^{1/\alpha} \log x+o(t^{1/\alpha})] [1 -r e^{\ii\theta} t^{1/\alpha} (\log x)^{1-\alpha} + o(t^{1/\alpha}) ]-1} \right)   \\
&= \frac{1}{\pi x} \Im\!\left(\frac{1}{-\log x -r e^{\ii\theta} (\log x)^{1-\alpha} + o(1) } \right)   \\
&\to \frac{1}{\pi x} \Im\!\left(\frac{-1}{\log x +r e^{\ii\theta} (\log x)^{1-\alpha}} \right)  \quad  \text{as} \quad t \downarrow0 \\
&= \frac{r \sin \theta}{\pi x}\cdot \frac{(\log x)^{\alpha-1}}{(\log x)^{2\alpha} +2 r (\cos \theta) (\log x)^{\alpha} +r^2}. 
\end{split}
\end{equation}
The convergence is uniform on $K_1$. 

Take a compact set $K_2 \subset (0,1)$. Note that for $x<1$, 
$$
F_\mu(x) = r e^{\ii (\theta+\pi (1-\alpha))} (1- x)^{1-\alpha} + o((1-x)^{1-\alpha}).
$$
Hence, \eqref{estimate2} holds true if we replace $e^{\ii\theta}$ by $e^{\ii (\theta+\pi (1-\alpha))}$ and $\log x$ by $-\log x$: 
 \begin{equation}\label{estimate3}
\begin{split}
(x^{t^{1/\alpha}}- F_\mu(x^{t^{1/\alpha}}+\ii0) )^t 
&=1 + r e^{\ii(\theta-\alpha\pi)} t^{1/\alpha} (-\log x)^{1-\alpha} + o(t^{1/\alpha})
\end{split}
\end{equation}
uniformly on $K_2$. Hence 
\begin{equation}
\begin{split}
\frac{d(\mu^{\utimes t})^{t^{-1/\alpha}}}{d x}
&= \frac{t^{1/\alpha}}{\pi x} \Im\!\left(\frac{1}{[1-t^{1/\alpha} \log x+o(t^{1/\alpha})] [1 + r e^{\ii(\theta-\alpha\pi)} t^{1/\alpha} (-\log x)^{1-\alpha} + o(t^{1/\alpha})]-1} \right)   \\
&\to \frac{1}{\pi x} \Im\!\left(\frac{1}{-\log x +r e^{\ii(\theta-\alpha\pi)} (-\log x)^{1-\alpha}} \right)  \quad  \text{as} \quad t \downarrow0 \\
&= \frac{r \sin (\alpha\pi-\theta)}{\pi x}\cdot \frac{(-\log x)^{\alpha-1}}{(-\log x)^{2\alpha} +2 r \cos (\alpha\pi-\theta) (-\log x)^{\alpha} +r^2}
\end{split}
\end{equation}
uniformly on $K_2$. The limiting function is the probability density function of $\Law(e^{\mathbf{B}_{\alpha,\rho}})$. The weak convergence follows from Lemma \ref{convergence1} with $B=(0,\infty) \setminus\{1\}$. 
\end{proof}

\begin{exa}\label{Analytic} Suppose that $\alpha \in (0,1), c_1,c_2 \geq0, c_1+c_2>0, \delta>0$ and $\mu$ is a Borel probability measure such that 
$\mu|_{(1-\delta, 1+\delta)}$ has a local density function $p(x)$  
 of the form 
\begin{equation}\label{eq910}
p(x)= 
\begin{cases}
c_1(x-1)^{\alpha-1}(1+f_1(x)), & 1<x< 1 + \delta, \\
c_2(1-x)^{\alpha-1}(1+f_2(x)), & 1 - \delta < x <1, 
\end{cases}
\end{equation}
where $f_k$ is analytic in a neighborhood of $1$ and $f_k(0)=0$, $k=1,2$ (the assumption of analyticity of $f_k$ can be weakened slightly). 
From the proof of \cite[Theorem 5.1, (5.6)]{Has14}, for some $\beta_1\geq 0$
\begin{equation}
\int_{(1,1+\delta)} \frac{1}{z-x} p(x)\,dx= -\beta_1(1-z)^{\alpha-1}+o(|1-z|^{\alpha-1}) \quad \text{as} \quad z \to 1, 
\end{equation}
uniformly on $z \in \C^+$. Considering the symmetry, we obtain  for some $\beta_2 \geq0$, 
\begin{equation}
\int_{(1-\delta,1)} \frac{1}{z-x} p(x)\,dx= \beta_2(z-1)^{\alpha-1}+o(|1-z|^{\alpha-1}) \quad \text{as} \quad z \to 1.  
\end{equation}
Combining these two asymptotic behaviors gives 
\begin{equation}
G_\mu(z)= (\beta_1 e^{-\alpha\pi \ii} + \beta_2)(z-1)^{\alpha-1}+o(|1-z|^{\alpha-1}) \quad \text{as} \quad z \to 1, 
\end{equation}
and hence the assumption \eqref{AS2} of Theorem \ref{LBS} is satisfied. Since $c_1+c_2>0$, the Stieltjes inversion implies that $\beta_1 + \beta_2>0$ too. 

\end{exa}

So far we have obtained limit theorems converging to log Boolean stable laws (including the log Cauchy as index 1), and described their domains of attraction. An unsolved problem is: 

\begin{prob} Are there non-degenerate limit distributions \eqref{small time B} except log Boolean stable laws with index $\leq 1$? 
\end{prob}

\section{Proof of Theorem \ref{LCF}}\label{sec6} 

The convergence in distribution of positive MFLPs to the log Cauchy distribution can be reduced to the easier problem of MBLPs, the latter of which was discussed in Section \ref{sec B}. 
However, we need a framework of free and Boolean convolutions beyond convolutions of probability measures. This framework is developed below, and in particular, we generalize concepts and results introduced in \cite{AH13,BB05}.

\subsection{Convolutions of maps on the negative half-line}
\begin{defi}
Let $\cE$ be the set of maps $\eta\colon(-\infty,0)\to(-\infty,0)$ of the form 
\begin{equation}\label{eta-u}
\eta(x) = x \exp[-u(x)], 
\end{equation}
where $u\colon (-\infty,0)\to \R$ is a continuous non-increasing function. 
\end{defi}

This class generalizes the class of $\eta$-transforms of probability measures on $[0,\infty)$ which is not $\delta_0$.  

\begin{prop} 
If $\mu \neq \delta_0$ is a probability measure on $[0,\infty)$ then $\eta_\mu|_{(-\infty,0)} \in\cE$. 
\end{prop}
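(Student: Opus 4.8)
The plan is to make the representation $\eta(x)=x\exp[-u(x)]$ explicit and reduce the required monotonicity of $u$ to Jensen's inequality. First I would record the integral formula for $\eta_\mu$ on the negative half-line. For $x<0$ and $y\ge 0$ one has $1-xy\ge 1$, so setting
\begin{equation}
I(x):=\int_{[0,\infty)}\frac{1}{1-xy}\,\mu(dy)\in(0,1],
\end{equation}
a short computation from $\eta_\mu(z)=1-z/G_\mu(1/z)$ and $G_\mu(1/x)=xI(x)$ gives $\eta_\mu(x)=1-1/I(x)$. Since $\mu\neq\delta_0$ the integrand is strictly $<1$ on a set of positive $\mu$-measure, whence $I(x)\in(0,1)$ and $\eta_\mu(x)<0$; thus $\eta_\mu$ indeed maps $(-\infty,0)$ into $(-\infty,0)$ (this also matches $\eta_\mu(-\infty)=1-1/\mu(\{0\})$ from the preliminaries). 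Because $x$ and $\eta_\mu(x)$ are both negative, $\eta_\mu(x)/x$ is positive, so I can \emph{define}
\begin{equation}
u(x):=\log(-x)-\log(-\eta_\mu(x)),\qquad x\in(-\infty,0),
\end{equation}
which is continuous (indeed real-analytic, since $I$ is smooth and $\eta_\mu$ is non-vanishing there) and satisfies $\eta_\mu(x)=x\exp[-u(x)]$ by construction. It then remains only to prove that $u$ is non-increasing.

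For the monotonicity I would differentiate. Writing $f=f_x(y)=1/(1-xy)\in(0,1]$, differentiation under the integral sign (justified since $0<f\le 1$ and $y(1-xy)^{-2}$ admits a $\mu$-integrable bound locally uniform in $x$) gives $I'(x)=\int y(1-xy)^{-2}\,\mu(dy)$ and hence $\eta_\mu'(x)=I'(x)/I(x)^2$. From the definition of $u$,
\begin{equation}
u'(x)=\frac{1}{x}-\frac{\eta_\mu'(x)}{\eta_\mu(x)},
\end{equation}
and since $x\,\eta_\mu(x)>0$, the inequality $u'(x)\le 0$ is equivalent to $\eta_\mu(x)\le x\,\eta_\mu'(x)$. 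Substituting $\eta_\mu=1-1/I$ and $\eta_\mu'=I'/I^2$ and clearing the positive factor $I^2$, this becomes $I^2-I\le xI'$. Using the algebraic identity $xy(1-xy)^{-2}=f^2-f$ one computes $xI'(x)=\int(f^2-f)\,\mu(dy)=\int f^2\,\mu(dy)-I$, so the desired inequality collapses to
\begin{equation}
\Big(\int f\,\mu(dy)\Big)^2=I^2\le\int f^2\,\mu(dy),
\end{equation}
which is exactly Jensen's inequality for the probability measure $\mu$ and the convex function $t\mapsto t^2$. Hence $u'\le 0$ throughout $(-\infty,0)$, so $u$ is non-increasing, and therefore $\eta_\mu|_{(-\infty,0)}\in\cE$.

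The computation is elementary; the only points requiring care are the justification of differentiating $I$ under the integral and the verification of the identities $\eta_\mu=1-1/I$ and $xy(1-xy)^{-2}=f^2-f$, after which the whole statement reduces to a single application of Jensen (equivalently Cauchy--Schwarz), which I regard as the conceptual crux rather than a genuine obstacle. If one prefers to avoid differentiation, the same conclusion follows by showing directly that $x\mapsto \eta_\mu(x)/x=J(x)/I(x)$ is non-decreasing, where $J(x)=\int y(1-xy)^{-1}\,\mu(dy)$, via a correlation inequality for the integrals $I,J$; but the derivative route above is shorter, so I would present that one.
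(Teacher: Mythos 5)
Your proof is correct, but it follows a genuinely different route from the paper's. The paper argues complex-analytically: it observes that $z\mapsto \eta_\mu(z)/z=1/z-F_\mu(1/z)$ is analytic on $\C\setminus[0,\infty)$ with definite imaginary part off the real axis, takes its principal logarithm, invokes the Pick--Nevanlinna integral representation of that logarithm, and reads off the monotonicity on $(-\infty,0)$ by differentiating the representation. You instead stay entirely on the negative half-line: the identity $\eta_\mu(x)=1-1/I(x)$ with $I(x)=\int(1-xy)^{-1}\,d\mu(y)$, the algebraic identity $xy(1-xy)^{-2}=f^2-f$, and Jensen/Cauchy--Schwarz reduce the required inequality $u'\le 0$ to $(\int f\,d\mu)^2\le\int f^2\,d\mu$. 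Your computations check out (including the justification of differentiating under the integral and the strict negativity of $\eta_\mu$ on $(-\infty,0)$ when $\mu\neq\delta_0$), and your sign conventions are the correct ones: the non-increasing function in the definition of $\cE$ is $\log(x/\eta_\mu(x))$, whereas the paper's displayed $u=\log(\eta_\mu(z)/z)$ is actually non-decreasing (and $\eta_\mu(z)/z$ maps $\C^-$ into $\C^-\cup(0,\infty)$, not $\C^+\cup(0,\infty)$) --- the paper's conclusion survives after replacing $u$ by $-u$, but your elementary route sidesteps that slip entirely. What the paper's approach buys is the full Nevanlinna representation of $\log(\eta_\mu(z)/z)$ as an analytic object on $\C\setminus[0,\infty)$, which is in the spirit of the analytic machinery used later in Section 6; what yours buys is a short, self-contained real-variable argument whose only input is Cauchy--Schwarz.
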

\begin{proof} The Pick-Nevanlinna representation \eqref{PN} of $F_\mu$ shows that 
$$
z\mapsto\frac{\eta_\mu(z)}{z} = \frac{1}{z} - F_\mu\left(\frac{1}{z}\right)
$$
is an analytic map from $\C\setminus[0,\infty)$ into $\C$, and maps $\C^-$ into $\C^+\cup(0,\infty)$. Its principal logarithm can therefore be defined as an analytic map from $\C^-$ to $\C^+\cup\R$, and hence has the Pick-Nevanlinna representation 
$$
u(z):=\log\frac{\eta_\mu(z)}{z}  = -a z + b + \int_{[0,\infty)} \frac{1+z t}{z-t}\, \sigma(d t)  
$$  
 for some $a \geq0, b\in\R$ and a nonnegative finite measure $\sigma$ on $[0,\infty)$. By calculus we see that $u'(x)\leq0$ for $x<0$. 
\end{proof}

\begin{defi}
Given $\eta \in \cE$ and $s\geq0$, we define a {\it multiplicative Boolean convolution power} $\eta^{\utimes s}\in\cE$ by 
$$
\eta^{\utimes s} (x) := x \left( \frac{\eta(x)}{x}\right)^s = x \exp(-s u(x)). 
$$
\end{defi}

Then we generalize multiplicative free convolution to the class $\cE$. 
For $t\geq 1$, define a map $\Phi_t\colon(-\infty,0)\to(-\infty,0)$ by 
\begin{equation}\label{def Phi}
\Phi_t(x) = x \left( \frac{x}{\eta(x)}\right)^{t-1} = x \exp[(t-1)u(x)],    
\end{equation}
which is continuous and strictly increasing. Moreover, since $u$ is non-increasing, $u(-\infty) \in\R\cup\{\infty\}$ and $u(-0) = \R \cup\{-\infty\}$, and hence 
$$
\Phi_t(-\infty) = -\infty \qquad \text{and} \qquad \Phi_t(-0)=0. 
$$
Therefore, $\Phi_t$ is a homeomorphism of $(-\infty,0)$. Denote by $\omega_t$ its inverse map. We define a map $\eta^{\boxtimes t} \in\cE$ by 
\begin{equation}\label{FCP}
\eta^{\boxtimes t}(x):= \eta (\omega_t(x)). 
\end{equation}
It is not obvious if $\eta^{\boxtimes t}$ belongs to $\cE$, but it does. Since 
$$
\eta^{\boxtimes t}(x) = x \exp\left( \log \frac{\omega_t(x)}{x} - u(\omega_t(x)) \right), 
$$
it suffices to check that  $u_t(x):= -\log \frac{\omega_t(x)}{x} + u(\omega_t(x))$ is continuous and non-increasing, which is the case since  
$u_t(\Phi_t(x))=-\log \frac{x}{\Phi_t(x)} +u(x)= t u(x)$ is continuous and non-increasing. 

\begin{defi} Suppose that $\eta \in\cE$ and $t\geq1$. 
\begin{enumerate}[\rm(1)]
\item  The map $\eta^{\boxtimes t} \in\cE$ defined by \eqref{FCP} is called the {\it multiplicative free convolution power} of $\eta$. 
\item  The map $\omega_t:= \Phi_t^{-1}$ is called the {\it subordination function} of $\eta^{\boxtimes t}$ with respect to $\eta$. 
\end{enumerate}
\end{defi}

A formula for $\omega_t$ is given. The proof of \cite[Theorem 2.6(3)]{BB05} is available without a change.  
\begin{prop}\label{prop omega} For $t\geq1$, $\eta \in \cE$ and $x <0$, 
$$
\omega_t(x) = \eta^{\boxtimes t}(x) \left(\frac{x}{\eta^{\boxtimes t}(x)}\right)^{1/t}. 
$$
\end{prop}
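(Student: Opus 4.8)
The plan is to establish the identity by a direct computation, working throughout with the representation $\eta(x) = x\exp[-u(x)]$ from the definition of $\cE$, where $u$ is continuous and non-increasing on $(-\infty,0)$. All the maps involved ($\eta$, $\Phi_t$, $\omega_t$, $\eta^{\boxtimes t}$) are built from $u$ by elementary operations, so the whole statement should reduce to a cancellation of exponentials, with no estimates required.

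First I would fix $x<0$ and set $y := \omega_t(x)$, which lies in $(-\infty,0)$ since $\omega_t$ is a homeomorphism of $(-\infty,0)$. By the definition $\omega_t = \Phi_t^{-1}$ together with \eqref{def Phi}, this means
$$
x = \Phi_t(y) = y\exp[(t-1)u(y)].
$$
On the other hand, by \eqref{FCP} and \eqref{eta-u},
$$
\eta^{\boxtimes t}(x) = \eta(y) = y\exp[-u(y)].
$$
Both $x$ and $\eta^{\boxtimes t}(x)$ are negative, so their quotient is a positive real and
$$
\frac{x}{\eta^{\boxtimes t}(x)} = \frac{y\exp[(t-1)u(y)]}{y\exp[-u(y)]} = \exp[t\,u(y)].
$$

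Next I would take the $1/t$-th power of this positive real, obtaining $\left(x/\eta^{\boxtimes t}(x)\right)^{1/t} = \exp[u(y)]$, and multiply back by $\eta^{\boxtimes t}(x)$:
$$
\eta^{\boxtimes t}(x)\left(\frac{x}{\eta^{\boxtimes t}(x)}\right)^{1/t} = y\exp[-u(y)]\cdot\exp[u(y)] = y = \omega_t(x),
$$
which is exactly the claimed formula. The only point requiring a word of care—and the closest thing to an obstacle here—is that the power $(\cdot)^{1/t}$ must be read as the ordinary real power of the \emph{positive} number $x/\eta^{\boxtimes t}(x)$, so that no branch ambiguity enters; this is guaranteed by the negativity of both $x$ and $\eta^{\boxtimes t}(x)$, which was established when verifying that $\eta^{\boxtimes t}\in\cE$. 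Otherwise the argument is a pure manipulation of exponentials, matching the structure of \cite[Theorem 2.6(3)]{BB05}.
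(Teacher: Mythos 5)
Your proof is correct and is essentially the paper's own argument: the paper likewise verifies the formula by substituting $\omega_t(x)$ into \eqref{def Phi} and using $\Phi_t(\omega_t(x))=x$ together with $\eta(\omega_t(x))=\eta^{\boxtimes t}(x)$. You have merely written out the cancellation explicitly in terms of $u$, which also makes the branch issue for the real power transparent.
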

\begin{proof} The formula follows just by substituting $\omega_t(x)$ into \eqref{def Phi} and using the identities $\Phi_t(\omega_t(x))=x$ and $\eta(\omega_t(x))=\eta^{\boxtimes t}(x)$.  
\end{proof}
From the above expression, the subordination function $\omega_t, t \geq1$ belongs to $\cE$ since  
$$
\omega_t = (\eta^{\boxtimes t})^{\utimes \frac{t-1}{t}}. 
$$
Note that $\Phi_t, t >1$ does not belong to $\cE$ by definition unless $u$ is constant.

\begin{prop}
\begin{enumerate}[\quad\rm(1)]
\item\label{PowerB} Suppose that $s, t\geq 0$ and $\eta\in\cE$. Then 
$$
(\eta^{\utimes s})^{\utimes t}=\eta^{\utimes st},\qquad \eta^{\utimes 1}=\eta.  
$$
\item\label{PowerF} Suppose that $s, t\geq 1$ and $\eta\in\cE$. Then 
$$
(\eta^{\boxtimes s})^{\boxtimes t}=\eta^{\boxtimes st},\qquad \eta^{\boxtimes 1}=\eta. 
$$
\end{enumerate}
\end{prop}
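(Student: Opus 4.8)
For part \eqref{PowerB}, the plan is to read everything off the defining formula. First I would observe that if $\eta(x) = x\exp(-u(x)) \in \cE$, then $\eta^{\utimes s}$ again lies in $\cE$ and its associated function is simply $su$ (which is continuous and non-increasing since $s \geq 0$), because $\eta^{\utimes s}(x) = x(\eta(x)/x)^s = x\exp(-su(x))$. Composing two Boolean powers then merely multiplies the exponents: treating $\eta^{\utimes s}$ as an element of $\cE$ with function $su$, we get $(\eta^{\utimes s})^{\utimes t}(x) = x\exp(-t(su(x))) = x\exp(-(st)u(x)) = \eta^{\utimes st}(x)$, and $\eta^{\utimes 1}(x) = x\exp(-u(x)) = \eta(x)$. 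This part needs no analysis beyond the definition.

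For part \eqref{PowerF}, the key is to track how the homeomorphisms $\Phi_t$ and their inverses $\omega_t$ compose. I would use the relation, already verified in the text while showing $\eta^{\boxtimes t}\in\cE$, that the function $u_t$ of $\eta^{\boxtimes t}$ satisfies $u_t(\Phi_t(x)) = tu(x)$. Set $\eta_1 := \eta^{\boxtimes s}$, with associated function $u_1 = u_s$ (an element of $\cE$, as the text records), and let $\Psi_t$ be the map built from $\eta_1$ as in \eqref{def Phi}, namely $\Psi_t(y) = y\exp[(t-1)u_1(y)]$, so that $(\eta^{\boxtimes s})^{\boxtimes t} = \eta_1 \circ \Psi_t^{-1}$.

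The crucial step is the composition identity $\Psi_t\circ\Phi_s = \Phi_{st}$ on $(-\infty,0)$, which I would verify by substituting $\Phi_s(x)$ and invoking $u_1(\Phi_s(x)) = su(x)$:
\[
\Psi_t(\Phi_s(x)) = \Phi_s(x)\exp[(t-1)su(x)] = x\exp[(s-1)u(x)]\exp[(t-1)su(x)] = x\exp[(st-1)u(x)] = \Phi_{st}(x).
\]
Granting this, I evaluate both sides of the claimed identity at $z = \Phi_{st}(x)$. On one hand $\eta^{\boxtimes st}(\Phi_{st}(x)) = \eta(\omega_{st}(\Phi_{st}(x))) = \eta(x)$. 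On the other hand, $\Phi_{st} = \Psi_t\circ\Phi_s$ gives $\Psi_t^{-1}(\Phi_{st}(x)) = \Phi_s(x)$, so $(\eta^{\boxtimes s})^{\boxtimes t}(\Phi_{st}(x)) = \eta_1(\Phi_s(x)) = \eta^{\boxtimes s}(\Phi_s(x)) = \eta(\omega_s(\Phi_s(x))) = \eta(x)$. Since $\Phi_{st}$ is a homeomorphism of $(-\infty,0)$ onto itself (established in the text), the points $\Phi_{st}(x)$ exhaust $(-\infty,0)$, whence the two maps agree everywhere; the normalization $\eta^{\boxtimes 1} = \eta$ follows from $\Phi_1 = \mathrm{id}$. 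The one point deserving care — the main obstacle, such as it is — is the exponent bookkeeping in the composition identity, where one must combine $(s-1) + (t-1)s = st - 1$ correctly; everything else is a formal consequence of the defining relations and the surjectivity of $\Phi_{st}$.
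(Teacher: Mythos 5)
Your proof is correct and takes essentially the same route as the paper: the heart of both arguments is the composition identity for the subordination homeomorphisms, which the paper writes as $\Phi_{s,t}=\Phi_{st}\circ\omega_s$ and verifies by substituting $\omega_s(x)$ and using the explicit formula for $\omega_s$, while you verify the equivalent form $\Phi_{s,t}\circ\Phi_s=\Phi_{st}$ at the points $\Phi_s(x)$ using $u_s(\Phi_s(x))=su(x)$. The exponent bookkeeping $(s-1)+(t-1)s=st-1$ and the concluding surjectivity argument are both sound, and part (1) is, as in the paper, immediate from the definition.
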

\begin{proof}
\eqref{PowerB} follows by definition. 

\eqref{PowerF} Let 
\begin{equation}\label{eq51}
\Phi_{s,t}: = x \left( \frac{x}{\eta^{\boxtimes s}(x)}\right)^{t-1} 
\end{equation}
and $\omega_{s,t}$ be its inverse. Then $(\eta^{\boxtimes s})^{\boxtimes t} = \eta^{\boxtimes s}\circ \omega_{s,t} = \eta \circ \omega_s \circ \omega_{s,t}$. Thus the claim is equivalent to $\omega_{st}=\omega_s \circ \omega_{s,t}$, which is also equivalent to 
$
\Phi_{s,t}= \Phi_{s t}\circ \omega_s. 
$
This identity follows from the calculation 
\begin{equation}\label{}
\begin{split}
\Phi_{s t}(\omega_s(x))
&= \omega_s(x)\left(\frac{\omega_s(x)}{\eta(\omega_s(x))}\right)^{s t -1} = \omega_s(x) \left(\frac{\omega_s(x)}{\eta^{\boxtimes s}(x)}\right)^{s t-1} \\
&= \eta^{\boxtimes s}(x) \left(\frac{x}{\eta^{\boxtimes s}(x)}\right)^{\frac{1}{s}}   \left(\frac{x}{\eta^{\boxtimes s}(x)}\right)^{\frac{s t -1}{s}}  \\
&= x  \left(\frac{x}{\eta^{\boxtimes s}(x)}\right)^{t-1}  =\Phi_{s,t}(x), 
 \end{split}
\end{equation}
where Proposition \ref{prop omega} was used on the third equality. 
\end{proof}

The following result extends \cite[Proposition 4.13]{AH13} with a slightly different formulation. The same proof is available, but we give a simpler proof.  

\begin{prop}\label{commutation} For $\eta \in\cE$ and $p \geq0,q \geq 1$, the commutation relation
$$
(\eta^{\utimes p})^{\boxtimes q} =(\eta^{\boxtimes q'})^{\utimes p'}
$$
holds, where $p' := p q/(1-p+p q)$ and $q':= 1-p+p q$. Note that $p' \geq 0$ and $q'\geq 1$. 
\end{prop}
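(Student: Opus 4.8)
The plan is to reduce the identity to a single statement about subordination functions, exploiting the fact that every object in $\cE$ is encoded by its exponent. Writing $\eta(x) = x e^{-u(x)}$ with $u$ continuous and non-increasing, the Boolean power acts by scaling the exponent, $\eta^{\utimes p}(x) = x e^{-p u(x)}$, while the free power is governed by the homeomorphism $\Phi_t(x) = x e^{(t-1)u(x)}$ and its inverse $\omega_t$. My strategy is to compute both sides entirely in terms of compositions $u \circ \omega$, so that the identity collapses to an equality of subordination functions together with one arithmetic identity on the parameters.

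First I would record the two parameter identities $q'-1 = p(q-1)$ and $p'q' = pq$, both immediate from $q' = 1-p+pq$ and $p' = pq/q'$; these also yield $q' \geq 1$ and $p' \geq 0$, so that $\eta^{\boxtimes q'}$ and $(\cdot)^{\utimes p'}$ are legitimately defined. The crucial step is then to observe that the forward map attached to $\zeta := \eta^{\utimes p}$ at power $q$ coincides with the forward map attached to $\eta$ at power $q'$. Indeed, since $\zeta(x) = x e^{-pu(x)}$, its $\Phi$-map is $x \mapsto x(x/\zeta(x))^{q-1} = x e^{(q-1)p\,u(x)}$, whereas the $\Phi$-map of $\eta$ at $q'$ is $x \mapsto x e^{(q'-1)u(x)} = x e^{p(q-1)u(x)}$; the two are equal by the first parameter identity. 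As both are homeomorphisms of $(-\infty,0)$, their inverses agree, so the subordination function of $(\eta^{\utimes p})^{\boxtimes q}$ equals $\omega_{q'}$.

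With this in hand I would compute each side. Using the formula $\eta^{\boxtimes t}(x) = x e^{-t u(\omega_t(x))}$, which follows from Proposition \ref{prop omega} exactly as in the derivation of $u_t(x) = t u(\omega_t(x))$ in the text, applied to $\zeta$ with exponent $pu$, the left-hand side becomes $(\eta^{\utimes p})^{\boxtimes q}(x) = x e^{-qp\,u(\omega_{q'}(x))}$. For the right-hand side, $\eta^{\boxtimes q'}(x) = x e^{-q' u(\omega_{q'}(x))}$, and applying $\utimes p'$ simply multiplies the exponent by $p'$, giving $x e^{-p'q'\,u(\omega_{q'}(x))}$. The two sides now match because $p'q' = pq$.

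I do not expect a genuine obstacle here; the only care needed is bookkeeping, namely keeping straight which subordination function belongs to which base map and confirming at the outset that $q' \geq 1$ so that the free power on the right is defined and Proposition \ref{prop omega} applies. The conceptual content lies entirely in the observation that the two $\Phi$-maps coincide, after which the claim reduces to the scalar identity $p'q' = pq$.
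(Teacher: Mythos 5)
Your proposal is correct and follows essentially the same route as the paper: the key step in both is the observation that $\Phi_{\eta^{\sutimes p},q}=\Phi_{\eta,q'}$ (equivalently $q'-1=p(q-1)$), so the two subordination functions coincide, after which the identity reduces to $p'q'=pq$. The only difference is cosmetic: you finish by writing both sides as $x\exp[-c\,u(\omega_{q'}(x))]$, whereas the paper substitutes the formula of Proposition \ref{prop omega} for $\omega_{q'}$ and manipulates the ratios $\eta^{\boxtimes q'}(x)/x$ directly.
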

\begin{proof} Denote by $\omega_{\eta,t}$ the subordination function of $\eta^{\boxtimes t}$ wrt $\eta$, and $\Phi_{\eta,t}$ similarly. 
Firstly, note that the identity 
\begin{equation}
\omega_{\eta^{\sutimes p}, q} = \omega_{\eta, q'}
\end{equation}
holds since 
\begin{equation}\label{}
\begin{split}
\Phi_{\eta^{\sutimes p}, q}(x) 
&= x \left( \frac{x}{\eta^{\utimes p}(x)}\right)^{q-1} = x \left( \frac{x}{\eta(x)}\right)^{q p-p} \\
&=  x \left( \frac{x}{\eta(x)}\right)^{q'-1} = \Phi_{\eta,q'}(x).  
\end{split}
\end{equation}
Therefore, 
\begin{equation}\label{eq commutation}
\begin{split}
 (\eta^{\utimes p})^{\boxtimes q}(x) 
 &=  \eta^{\utimes p}(\omega_{\eta^{\sutimes p}, q}(x)) =  \eta^{\utimes p}(\omega_{\eta, q'}(x)) \\
 &= \omega_{\eta, q'}(x) \left(\frac{\eta(\omega_{\eta, q'}(x))}{\omega_{\eta, q'}(x)}\right)^{p} = \omega_{\eta, q'}(x) \left(\frac{\eta^{\boxtimes q'}(x)}{\omega_{\eta, q'}(x)}\right)^{p}. 
\end{split}
\end{equation}
Proposition \ref{prop omega} then yields 
\begin{equation}\label{}
\begin{split}
\eqref{eq commutation}
 &=\eta^{\boxtimes q'}(x) \left(\frac{x}{\eta^{\boxtimes q'}(x)}\right)^{1/q'} \left(\frac{\eta^{\boxtimes q'}(x)}{x}\right)^{p/q'} \\
 &= x  \left(\frac{\eta^{\boxtimes q'}(x)}{x}\right)^{\frac{q'-1+p}{q'}}  =x  \left(\frac{\eta^{\boxtimes q'}(x)}{x}\right)^{p'} = (\eta^{\boxtimes q'})^{\utimes p'}(x), 
\end{split}
\end{equation}
the conclusion. 
\end{proof}

\begin{defi}
\begin{enumerate}[\rm(1)]
\item  A family $\{\eta_t\}_{t\geq0} \subset \cE$ is called a {\it  $\boxtimes$-convolution semigroup} if $\eta_0=\Id$ and $\eta_t^{\boxtimes s}=\eta_{s t}$ for all $s\geq1, t\geq 0$. 

\item We say that $\eta\in\cE$ {\it embeds into a $\boxtimes$-convolution semigroup} if there exists a $\boxtimes$-convolution semigroup $\{\eta_t\}_{t\geq0} \subset \cE$ such that $\eta_1=\eta$. Note that $\eta_t=\eta^{\boxtimes t}$ for all $t\geq1$. 
\end{enumerate}
\end{defi}

\begin{prop}\label{unique}
Embedding of a given $\eta\in\cE$ into a $\boxtimes$-convolution semigroup is unique, if exists.  
\end{prop}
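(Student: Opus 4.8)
The plan is to reduce the uniqueness statement to the injectivity of a single power map $T_r\colon\cE\to\cE$, $T_r(\zeta)=\zeta^{\boxtimes r}$, for each fixed $r>1$. Suppose $\{\eta_t\}_{t\geq0}$ and $\{\widetilde\eta_t\}_{t\geq0}$ are two $\boxtimes$-convolution semigroups with $\eta_1=\widetilde\eta_1=\eta$. For $t\geq1$ the defining relation (with base index $1$ and exponent $t$) gives $\eta_1^{\boxtimes t}=\eta_t$, forcing $\eta_t=\eta^{\boxtimes t}=\widetilde\eta_t$, while $\eta_0=\Id=\widetilde\eta_0$ by definition; so only $t\in(0,1)$ requires attention. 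For such $t$, applying the semigroup law with $s=1/t\geq1$ yields $\eta_t^{\boxtimes 1/t}=\eta_{1}=\eta$ and likewise $\widetilde\eta_t^{\boxtimes 1/t}=\eta$. Hence $\eta_t$ and $\widetilde\eta_t$ are both $T_{1/t}$-preimages of $\eta$, and once $T_{1/t}$ is known to be injective we conclude $\eta_t=\widetilde\eta_t$ for every $t\in(0,1)$.

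The heart of the matter is therefore the injectivity of $T_r$, which I would establish using the explicit formula for the subordination function in Proposition~\ref{prop omega}. Suppose $\zeta_1,\zeta_2\in\cE$ satisfy $\zeta_1^{\boxtimes r}=\zeta_2^{\boxtimes r}=:\psi$. The crucial point is that Proposition~\ref{prop omega} expresses the subordination function of $\zeta_i^{\boxtimes r}$ with respect to $\zeta_i$ solely through the \emph{output} $\zeta_i^{\boxtimes r}=\psi$, namely
\begin{equation*}
\omega(x):=\psi(x)\left(\frac{x}{\psi(x)}\right)^{1/r},\qquad x<0,
\end{equation*}
so that the subordination functions $\omega^{(\zeta_1)}_r$ and $\omega^{(\zeta_2)}_r$ both coincide with this single map $\omega$, which depends on $\psi$ and $r$ alone. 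Being equal to $(\Phi_r^{(\zeta_i)})^{-1}$, the inverse of the strictly increasing homeomorphism $\Phi_r^{(\zeta_i)}(x)=x(x/\zeta_i(x))^{r-1}$ of $(-\infty,0)$ (recall $r>1$), the map $\omega$ is a bijection of $(-\infty,0)$ onto itself.

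It then remains only to cancel $\omega$ on the right. By the definition \eqref{FCP} of the free convolution power, $\psi=\zeta_i^{\boxtimes r}=\zeta_i\circ\omega$ for $i=1,2$, whence $\zeta_1\circ\omega=\zeta_2\circ\omega$; since $\omega$ is surjective this gives $\zeta_1=\zeta_2$ on all of $(-\infty,0)$, proving that $T_r$ is injective and hence the desired uniqueness. The one delicate point, and the step I expect to require the most care, is the justification that $\omega$ is a genuine homeomorphism of $(-\infty,0)$: this is what legitimizes the right-cancellation and is exactly where the structure of $\cE$ is used, through the monotonicity of $u$ which guarantees that $\Phi_r$ is an increasing homeomorphism for $r>1$. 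The remaining manipulations are routine rewritings via Proposition~\ref{prop omega}.
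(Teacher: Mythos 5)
Your proposal is correct and is essentially the paper's own argument: the paper likewise handles $t\geq 1$ trivially, and for $t<1$ uses Proposition~\ref{prop omega} to observe that the subordination function of $\eta_t^{\boxtimes 1/t}=\eta$ with respect to $\eta_t$ equals $\eta(x)(x/\eta(x))^t=\eta^{\utimes(1-t)}$, hence depends only on $\eta$, and then recovers $\eta_t=\eta\circ\omega^{-1}$ using that $\omega$ is a homeomorphism of $(-\infty,0)$ (a fact already established right after \eqref{def Phi}). Your packaging of this as injectivity of the power map $T_r$ is only a cosmetic reorganization of the same computation.
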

\begin{proof} For clarity we denote by $\omega_{\eta, t}, t \geq1$ the subordination function of $\eta^{\boxtimes t}$ with respect to $\eta$. 

Suppose that $\{\eta_t\}_{t\geq0} \subset \cE$ is a $\boxtimes$-convolution semigroup into which $\eta$ embeds. 
For $0<t\geq1$, the map $\eta_t$ is given by $\eta^{\boxtimes t}$ and hence is unique. For $t<1$, we have $\eta_t^{\boxtimes 1/t} = \eta$ by definition, and so 
$$
\eta_t \circ \omega_{\eta_t, 1/t} =\eta, 
$$
where 
$$
\omega_{\eta_t,1/t}(x)= \eta_t^{\boxtimes 1/t}(x) \left(\frac{x}{\eta_t^{\boxtimes 1/t}(x)}\right)^{t} = \eta(x) \left(\frac{x}{\eta(x)}\right)^{t} =\eta^{\utimes(1-t)}
$$
 by Proposition \ref{prop omega} and $\eta_t^{\boxtimes 1/t} = \eta$. This implies that $\eta_t = \eta \circ \omega^{-1}_{\eta_t,1/t}$ only depends on $\eta$, showing the uniqueness of $\eta_t$ for $0<t<1$. 
\end{proof}

Thanks to the uniqueness, we may write $\eta_t=\eta^{\boxtimes t}$ for $t\geq0$ without ambiguity, when $\eta$ embeds into a $\boxtimes$-convolution semigroup $\{\eta_t\}_{t\geq0} \subset \cE$.

\begin{defi} We define a map $\M\colon \cE\to\cE$ by 
$$
\M(\eta):= (\eta^{\boxtimes 2})^{\utimes \frac{1}{2}}. 
$$
This map is called the {\it multiplicative Boolean-to-free Bercovoci-Pata map}, which generalizes the injective map (but not surjective) defined in \cite{AH13} from the class of probability measures on $[0,\infty)$ to the class of $\boxtimes$-ID measures.  
\end{defi}

\begin{prop}\label{M}  For any $\eta\in\cE$, the map $\M(\eta)$ embeds into a $\boxtimes$-convolution semigroup and 
$$
(\M(\eta)^{\boxtimes t})^{\utimes 1/t} = (\eta^{\boxtimes (1+t)})^{\utimes \frac{1}{1+t}}, \qquad t>0.  
$$
\end{prop}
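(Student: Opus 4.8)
The plan is to exhibit an explicit $\boxtimes$-convolution semigroup containing $\M(\eta)$ and to read off the stated identity directly from its definition. Guided by the commutation relation of Proposition \ref{commutation}, I define the candidate family
\[
\xi_t := \bigl(\eta^{\boxtimes(1+t)}\bigr)^{\utimes\frac{t}{1+t}}, \qquad t\geq0,
\]
which lies in $\cE$ because $\eta^{\boxtimes(1+t)}\in\cE$ for $1+t\geq1$ and Boolean powers preserve $\cE$. First I would record the boundary values $\xi_0 = \eta^{\utimes 0} = \Id$ and $\xi_1 = (\eta^{\boxtimes 2})^{\utimes\frac12} = \M(\eta)$, so that $\M(\eta)$ is indeed the time-one element of the candidate.

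The heart of the argument is to verify the semigroup law $\xi_t^{\boxtimes s} = \xi_{st}$ for all $s\geq1$ and $t\geq0$. Writing $\beta := \eta^{\boxtimes(1+t)}$ and $p := t/(1+t)\in[0,1)$, I would apply Proposition \ref{commutation} to $(\beta^{\utimes p})^{\boxtimes s}$ with $q=s\geq1$, obtaining $(\beta^{\boxtimes q'})^{\utimes p'}$ with $q' = 1-p+ps$ and $p' = ps/q'$. A short computation gives $q' = (1+ts)/(1+t)$ and $p' = ts/(1+ts)$, and since $q'\geq1$ the power law $(\eta^{\boxtimes(1+t)})^{\boxtimes q'} = \eta^{\boxtimes(1+t)q'}$ proved above applies with $(1+t)q' = 1+ts$. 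Hence $\xi_t^{\boxtimes s} = (\eta^{\boxtimes(1+ts)})^{\utimes\frac{ts}{1+ts}} = \xi_{st}$, as required. This shows $\{\xi_t\}_{t\geq0}$ is a $\boxtimes$-convolution semigroup with $\xi_1 = \M(\eta)$, so $\M(\eta)$ embeds into one; by the uniqueness in Proposition \ref{unique} we may write $\M(\eta)^{\boxtimes t} = \xi_t$ for all $t\geq0$.

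The claimed formula then follows by a single use of the Boolean power law $(\eta^{\utimes a})^{\utimes b} = \eta^{\utimes ab}$:
\[
\bigl(\M(\eta)^{\boxtimes t}\bigr)^{\utimes\frac1t} = \xi_t^{\utimes\frac1t} = \Bigl(\bigl(\eta^{\boxtimes(1+t)}\bigr)^{\utimes\frac{t}{1+t}}\Bigr)^{\utimes\frac1t} = \bigl(\eta^{\boxtimes(1+t)}\bigr)^{\utimes\frac{1}{1+t}},
\]
which is exactly the asserted identity, valid for every $t>0$.

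I expect the only real obstacle to be bookkeeping rather than anything conceptual. One must guess the correct exponent $t/(1+t)$ in the definition of $\xi_t$ (it is forced by requiring $\xi_1 = \M(\eta)$ together with the shape of the commutation relation), and then track the parameter transformations $p\mapsto p'$ and $s\mapsto q'$ carefully, checking at each application that the free-power exponents remain $\geq1$ so that Propositions \ref{prop omega} and \ref{commutation} are legitimately invoked. All the needed inequalities hold automatically because $p\in[0,1)$ and $s\geq1$ force $q' = 1+p(s-1)\geq1$, so no separate case analysis is required.
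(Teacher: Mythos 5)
Your proposal is correct and follows essentially the same route as the paper: the same candidate semigroup $\xi_t=(\eta^{\boxtimes(1+t)})^{\utimes\frac{t}{1+t}}$, the same application of Proposition \ref{commutation} with $p=t/(1+t)$, $q=s$ to verify $\xi_t^{\boxtimes s}=\xi_{st}$, and the same final Boolean-power computation. The only cosmetic difference is that the paper additionally verifies $\M(\eta)^{\boxtimes t}=\xi_t$ for $t\geq1$ by a second direct use of Proposition \ref{commutation} (with $p=1/2$, $q=t$), whereas you obtain this from the semigroup law and the uniqueness of the embedding, which is equally valid.
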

\begin{proof} Define 
$$
\xi_t:=(\eta^{\boxtimes (1+t)})^{\utimes \frac{t}{1+t}},\qquad t\geq0.  
$$
Then $\xi_0=\Id$, $\xi_1=\M(\eta)$ and for $s\geq1, t>0$, 
$$
\xi_{t}^{\boxtimes s} = ((\eta^{\boxtimes (1+ t)})^{\utimes \frac{t}{1+t}})^{\boxtimes s} = ((\eta^{\boxtimes (1+ t)})^{\boxtimes \frac{1+s t}{1+t}})^{\utimes \frac{s t}{1+s t}} = \xi_{s t}, 
$$
where Proposition \ref{commutation} was used for $p = \frac{t}{1+t}$ and $q=s$. 
Again Proposition \ref{commutation} for  $p=1/2$ and $q=t$ yields that, for $t\geq1$ 
$$
\M(\eta)^{\boxtimes t} =  ((\eta^{\boxtimes 2})^{\utimes \frac{1}{2}})^{\boxtimes t} = ((\eta^{\boxtimes 2})^{\boxtimes \frac{1+t}{2}})^{\utimes \frac{t}{1+t}} =\xi_t, 
$$
and hence  $\M(\eta)$ embeds into the $\boxtimes$-convolution semigroup $\{\xi_t\}_{t\geq0}$. We therefore may write $\xi_t=\M(\eta)^{\boxtimes t}$ for $t\geq0$, and then  
$$
(\M(\eta)^{\boxtimes t})^{\utimes 1/t} = \xi_t^{\utimes 1/t} = (\eta^{\boxtimes (1+t)})^{\utimes \frac{1}{1+t}},\qquad t >0,  
$$
the conclusion. 
\end{proof}

From now on we assume the analyticity of $\eta\in\cE$ for a later application to the limit theorem. Let $\An(S)$ be the set of analytic functions in an open set $S\subset \C$. 

\begin{lem}\label{continuation} Let $\Omega:= (\ii\C^+) \cup \C^-$ and $0<\kappa < 1 < \lambda <\infty$. Suppose that $u \in C(\Omega \cup [\kappa,\lambda]) \cap\An(\Omega)$ such that $u([\kappa,\lambda])\subset \C^+$ and $u|_{(-\infty,0)}$ is a non-increasing map from $(-\infty,0)$ into itself. Let $\eta \in\cE$ be the map associated to $u|_{(-\infty,0)}$. Then for every $\ep\in(0,(\lambda -\kappa)/2)$ there exists $\delta>0$ such that for every $t\in(0,\delta)$ the map $(\eta^{\boxtimes (1+t)})^{\utimes \frac{1}{1+t}}$ extends to a function in $C(\overline{\Omega}_\ep)\cap \An(\Omega_\ep)$, where  
$$
\Omega_\ep:= \{z \in \Omega: \text{\rm dist}(z, \Omega^c) > \ep, |z|< \ep^{-1} \} \cup \{z\in \Omega: \Re(z) \in(\kappa+\ep, \lambda -\ep), |z|<\ep^{-1} \} 
$$
and $\overline{\Omega}_\ep$ is its closure. 
 Moreover, $(\eta^{\boxtimes (1+t)})^{\utimes \frac{1}{1+t}}(z) = \eta(z)(1+O(t))$ as $t\downarrow0$ uniformly on $\overline{\Omega}_\ep$. 
\end{lem}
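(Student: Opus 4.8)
The plan is to reduce everything to the behaviour of a single subordination function and then to run a quantitative inverse-function argument. Write $s=1+t$ and recall that $\eta^{\boxtimes s}(x)=\eta(\omega_s(x))$, where $\omega_s=\Phi_s^{-1}$ and $\Phi_s(x)=x\,(x/\eta(x))^{s-1}=x\,e^{t u(x)}$. First I would record the explicit shape of $g_t:=(\eta^{\boxtimes(1+t)})^{\utimes\frac{1}{1+t}}$ on $(-\infty,0)$. Using $x=\Phi_s(\omega_s(x))=\omega_s(x)\,e^{t u(\omega_s(x))}$ together with Proposition \ref{prop omega}, one computes $\eta^{\boxtimes s}(x)/x=e^{-s u(\omega_s(x))}$, so that
\[
g_t(x)=x\left(\frac{\eta^{\boxtimes s}(x)}{x}\right)^{1/s}=x\,e^{-u(\omega_s(x))},\qquad x<0.
\]
This identity is the crux of the reduction: apart from the elementary prefactor $x$, the function $g_t$ is simply $e^{-u}$ evaluated at the subordination point $\omega_s(x)$. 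Consequently the candidate analytic extension of $g_t$ is $z\mapsto z\,e^{-u(\omega_s(z))}$, and by the identity theorem any continuation must coincide with it; in particular the branch ambiguity of the power $(\cdot)^{1/s}$ disappears once the extension is written in this form. The whole problem is thereby reduced to continuing $\omega_s$ analytically to $\Omega_\ep$ and controlling it uniformly.

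Second, I would extend $\Phi_s$ to $\Omega$ by the same formula $\Phi_s(z)=z\,e^{t u(z)}$, which is analytic on $\Omega$ and continuous on $\Omega\cup[\kappa,\lambda]$ because $u$ is. On the compact set $\overline{\Omega}_{\ep'}$ (with $\ep'<\ep$ chosen so that $\overline{\Omega}_\ep$ lies well inside $\Omega_{\ep'}$), the bound $|e^{t u(z)}-1|\le C t$ gives $\Phi_s=\mathrm{id}+O(t)$ uniformly, so $\Phi_s$ is a small perturbation of the identity. To invert it I would solve $\Phi_s(w)=z$ for $w$ near $z$ by the fixed-point map $\Psi_z(w)=z\,e^{-t u(w)}$, whose fixed points are exactly the preimages. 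For $z$ in the ``bulk'' part of $\Omega_\ep$ (at distance $\ge\ep$ from $\partial\Omega$) this is routine: Cauchy's estimate bounds $u'$ on the $\ep'$-neighbourhood, $\Psi_z$ contracts a disc of radius $O(t)$ into itself, and the resulting $\omega_s$ is analytic with $\omega_s(z)=z+O(t)$. The delicate region is the part of $\Omega_\ep$ abutting the segment $(\kappa+\ep,\lambda-\ep)$ of the positive axis. Here the hypothesis $u([\kappa,\lambda])\subset\C^+$ is decisive: since $\arg\bigl(z\,e^{-t u(w)}\bigr)\approx\arg z-t\,\Im u(w)$ and $\Im u>0$ near the segment, $\Psi_z$ maps into the open lower half-plane $\C^-\subset\Omega$, i.e.\ the subordination point is pushed strictly downward into the region where $u$ is analytic, away from the forbidden quadrant. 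This is what lets the contraction/inverse-function argument close uniformly up to the boundary segment and yields $\omega_s\in C(\overline{\Omega}_\ep)\cap\An(\Omega_\ep)$ with $\omega_s(z)-z=O(t)$ uniformly on $\overline{\Omega}_\ep$.

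Third, with $\omega_s$ in hand, $g_t(z)=z\,e^{-u(\omega_s(z))}$ is analytic on $\Omega_\ep$ and continuous on $\overline{\Omega}_\ep$ as a composition, once one checks that $\omega_s(\overline{\Omega}_\ep)$ stays inside the domain $\Omega\cup[\kappa,\lambda]$ of $u$ (again guaranteed by $\omega_s(z)-z=O(t)$ and the downward push). For the asymptotics I would write
\[
\frac{g_t(z)}{\eta(z)}=\frac{z\,e^{-u(\omega_s(z))}}{z\,e^{-u(z)}}=\exp\bigl(u(z)-u(\omega_s(z))\bigr),
\]
so it remains to show $u(z)-u(\omega_s(z))=O(t)$ uniformly. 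Combining $\omega_s(z)-z=O(t)$ with the fact that $u$ has bounded derivative on the relevant compact set---on the bulk by Cauchy's estimate, and near the segment through the depth $\gtrsim t$ of $\omega_s(z)$ forced by the $\C^+$-condition---gives the required bound, hence $g_t(z)=\eta(z)(1+O(t))$ uniformly on $\overline{\Omega}_\ep$.

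The step I expect to be the main obstacle is making the inversion of $\Phi_s$ and the increment estimate for $u$ \emph{uniform up to the boundary segment} $[\kappa,\lambda]$, where $u$ is only assumed continuous and $u'$ may be unbounded as one approaches the positive axis. The entire argument hinges on quantifying how far $u([\kappa,\lambda])\subset\C^+$ pushes $\omega_s$ into $\C^-$ (a distance of order $t$) and on verifying that on this $t$-scale the contraction constant of $\Psi_z$ stays below $1$ while the variation of $u$ remains $O(t)$; this is precisely where the interplay between the scale $t$ of the perturbation and the regularity of $u$ near the cut must be controlled carefully.
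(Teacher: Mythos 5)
Your reduction is attractive: the identity $(\eta^{\boxtimes(1+t)})^{\utimes\frac{1}{1+t}}(x)=x\,e^{-u(\omega_{1+t}(x))}$ is correct (it follows from $x=\omega_{1+t}(x)e^{tu(\omega_{1+t}(x))}$ and Proposition \ref{prop omega}) and neatly disposes of the branch ambiguity that the paper treats only implicitly. The mechanism you isolate --- that $u([\kappa,\lambda])\subset\C^+$ forces the preimage of a point near the segment down into $\C^-$ by a depth of order $t$ --- is exactly the paper's key observation (there phrased as $\Im\Phi_{1+t}(x)=x e^{t\Re u(x)}\sin[t\Im u(x)]>0$ on $[\kappa,\lambda]$).

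However, the step you yourself flag as the main obstacle is a genuine gap, and the contraction scheme you propose cannot close it. Since $u$ is only assumed continuous up to $[\kappa,\lambda]$ and analytic in $\Omega$, the only available bound on $u'$ at a point $w$ at depth $d$ below the segment is Cauchy's estimate $|u'(w)|\lesssim M/d$. Your fixed-point map $\Psi_z(w)=z e^{-tu(w)}$ pushes points to depth $d\asymp t$, so the Lipschitz constant $t|z||u'(w)|e^{t|u(w)|}$ is only $O(1)$, with a constant $\lambda M/c_0$ that has no reason to be below $1$; the iteration therefore need not converge, and the inverse function theorem is equally unavailable because $\Phi_{1+t}'$ is not controlled up to the segment. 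The same defect reappears in your final step: $u(z)-u(\omega_{1+t}(z))=O(t)$ requires a uniform bound on $u'$ along the path from $z$ to $\omega_{1+t}(z)$, which fails near $[\kappa,\lambda]$; uniform continuity alone yields only $o(1)$. The paper sidesteps both problems with a global, derivative-free argument: one checks that $\Phi_{1+t}=\mathrm{id}\cdot(1+O(t))$ uniformly on $\partial\Omega_{\ep/2}$ and that $\Phi_{1+t}$ maps the segment part of that boundary into $\C^+$, so for small $t$ the image curve $\Phi_{1+t}(\partial\Omega_{\ep/2})$ winds exactly once around every point of a neighbourhood of $\overline{\Omega}_\ep$; the argument principle then produces a unique preimage for each such point and an analytic right inverse $\omega_{1+t}$ with $\omega_{1+t}(z)=z(1+O(t))$, with no bound on $u'$ ever needed. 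To repair your proof you would have to replace the contraction by such a winding-number (or Rouch\'e) argument, or else strengthen the hypothesis on $u$ (e.g.\ H\"older or Lipschitz continuity up to $[\kappa,\lambda]$), which the lemma does not assume.
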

\begin{figure}[hbpt]
\begin{center}
 \begin{tikzpicture}[scale = 1]
 \draw[thick, ->] (0,0) -- (2,0) node [below] {};
\draw[thick, ->] (0,0) -- (0,2) node [left] {};
    \draw (-0.15,-0.15)-- (-0.15,2) arc (95:350:2) -- (2,-0.15) -- (0.65,-0.15) -- (0.65,0) -- (0.25,0) -- (0.25,-0.15) -- (-0.15,-0.15);  
       \coordinate (P1) at (0,0);
       \node at (0,0)  {$\bullet$};
       \node at (0,0) [above left]  {$O$};
       \node at (0.45,0)  {$\bullet$};
       \node at (0.45,0) [above]  {$1$};
       \node at (0,-0.7) {\Large$\Omega_\ep$}; 
\end{tikzpicture}
\end{center}
\end{figure}
\begin{proof} 
The functions $\eta, \Phi_{1+t}$ extend to $\An(\Omega)\cap C(\Omega \cup [\kappa,\lambda])$ by 
$$
\eta(z)=z \exp[-u(z)], \qquad \Phi_{1+t}(z) = z\exp[t u(z)]. 
$$
Since $u$ is bounded on $\Omega_{\ep/2}$, then 
\begin{equation}\label{AS-Phi}
\Phi_{1+t}(z) = z(1 +O(t))
\end{equation}
 uniformly on $z \in \Omega_{\ep/2}$. The assumption $\inf \Im(u([\kappa,\lambda]))  >0$ implies that 
\begin{equation}\label{AS-Im}
\Im(\Phi_{1+t}(x)) =x \exp[t \Re(u(x))] \sin [t \Im(u(x))] >0
\end{equation} 
for all $0<t< \pi [\sup\Im(u([\kappa,\lambda]))]^{-1}$ and all $x\in[\kappa,\lambda]$. By \eqref{AS-Phi} and \eqref{AS-Im}, for sufficiently small $t>0$ the curve $\Phi_{1+t}|_{\partial \Omega_{\ep/2}}$ surrounds every point of an open neighborhood $N_t$ of $\overline{\Omega}_{\ep}$ exactly once. Hence its right inverse $\omega_{1+t}$ (i.e.\ $\Phi_{1+t}\circ\omega_{1+t}=\Id$) can be defined as an injective analytic map from $N_t$ into $\Omega_{\ep/2}$. Thus the map
$$
\eta^{\boxtimes (1+t)} = \eta\circ \omega_{1+t} \in \An(\Omega_{\ep}) \cap C(\overline{\Omega}_{\ep})
$$ 
can be defined, which equals the original map $\eta^{\boxtimes (1+t)}\in\cE$ on the negative half-line. Moreover, plugging $\omega_{1+t}(z)$ into \eqref{AS-Phi} shows that $\omega_{1+t}(z) = z(1+O(t))$ uniformly on $\overline{\Omega}_{\ep}$, and hence 
$$
\eta^{\boxtimes (1+t)}(z) =\eta(\omega_{1+t}(z))= \eta(z)+O(t) 
$$ 
uniformly on $\overline{\Omega}_{\ep}$. Note that the last estimate may be expressed in the form $\eta(z)(1+O(t))$ since there exists a number $c>0$ such that $c^{-1} \leq |\eta(z)| \leq c$ on $\overline{\Omega}_\ep$. Finally, some calculus shows
$$
(\eta^{\boxtimes (1+t)})^{\utimes\frac{1}{1+t}}(z) = z \!\left[\frac{\eta(z)}{z}(1+O(t))\right]^{\frac{1}{1+t}} = \eta(z)(1+ O(t))
$$ 
uniformly on $\overline{\Omega}_\ep$. 
\end{proof}

\subsection{Proof of Theorem \ref{LCF}}
In this section we apply the general framework of convolutions to the limit theorem. 
Suppose that $\mu$ is $\boxtimes$-ID on $(0,\infty)$ whose $\Sigma$-transform has the expression \eqref{eq FLK}. 
Then define 
\begin{equation}\label{eta-def}
\eta(x):= \frac{x}{\Sigma_\mu(x)} = x \exp(-\vv_\mu(x)),  \qquad x<0. 
\end{equation} 
The correspondence $\mu \mapsto \eta$ is a generalization of the multiplicative Bercovici-Pata map from Boolean to free \cite{AH13} which is not surjective. 
We can show that $\eta\in\cE$, but $\eta$ may not be the $\eta$-transform of a probability measure. Moreover, the map $\eta$ is not even injective on $(-\infty,0)$ in general, so 
it seems not easy to define a $\Sigma$- or $S$-transform of $\eta$.  
This is why the previous section has investigated convolution operations for maps on $(-\infty,0)$ without using $\Sigma$-transforms. 

The following result shows that the map $\M$ is a generalization of the multiplicative Bercovici-Pata map from Boolean to free (denoted by $\M_1$ in \cite{AH13}). 

\begin{lem} Under the above notation, the identity $
\M(\eta) = \eta_\mu
$
holds. 
\end{lem}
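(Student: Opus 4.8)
The plan is to show that $\M(\eta)$ coincides on $(-\infty,0)$ with the subordination function $\omega_2$ of $\eta^{\boxtimes 2}$ with respect to $\eta$, and then to identify $\omega_2$ with $\eta_\mu$ by comparing their compositional inverses. The whole argument is essentially the remark that the map $\M$ inverts the very same map $x\mapsto x\Sigma_\mu(x)$ that defines $\eta_\mu$ through the $\Sigma$-transform.

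First I would note that the two coincide almost by definition. By the identity $\omega_t = (\eta^{\boxtimes t})^{\utimes \frac{t-1}{t}}$ recorded just after Proposition \ref{prop omega}, setting $t=2$ gives $\omega_2 = (\eta^{\boxtimes 2})^{\utimes \frac{1}{2}} = \M(\eta)$. (If one prefers a direct verification, Proposition \ref{prop omega} gives $\omega_2(x) = \eta^{\boxtimes 2}(x)\,(x/\eta^{\boxtimes 2}(x))^{1/2}$, and writing $g := \eta^{\boxtimes 2}(x)$ one checks $x(g/x)^{1/2} = g(x/g)^{1/2}$ since their ratio is $\tfrac{x}{g}\cdot\tfrac{g}{x}=1$; here all ratios are positive on $(-\infty,0)$, so no branch ambiguity arises.) Thus it remains only to prove $\omega_2 = \eta_\mu$.

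Next I would compute the map $\Phi_2$ whose inverse is $\omega_2$. By definition $\Phi_2(x) = x\,(x/\eta(x))^{2-1} = x^2/\eta(x)$, and since $\eta(x) = x\exp(-\vv_\mu(x))$ this gives $\Phi_2(x) = x\exp(\vv_\mu(x))$. By condition \eqref{MFID2} of Theorem \ref{BV} we have $\Sigma_\mu(x) = e^{\vv_\mu(x)}$ on $(-\infty,0)$, so $\Phi_2(x) = x\Sigma_\mu(x)$, which is exactly $\eta_\mu^{-1}(x)$ by the defining relation $\Sigma_\mu(z) = \eta_\mu^{-1}(z)/z$ of the $\Sigma$-transform. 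Inverting yields $\omega_2 = \Phi_2^{-1} = \eta_\mu$, and combined with the previous paragraph this gives $\M(\eta) = \eta_\mu$.

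The one point needing care is that all these compositional inverses are taken on the same interval and genuinely agree there. Since $\mu$ is $\boxtimes$-ID with $\mu\neq\delta_0$, the remark after Theorem \ref{BV} gives $\mu(\{0\})=0$, so $\eta_\mu$ is a strictly increasing homeomorphism of $(-\infty,0)$ with $\eta_\mu(-0)=0$ and $\eta_\mu(-\infty)=-\infty$; hence $\eta_\mu^{-1}$ is defined on all of $(-\infty,0)$, the $\Sigma$-transform identity holds on the full interval $(1-1/\mu(\{0\}),0)=(-\infty,0)$, and these domains match those of $\Phi_2$ and $\omega_2$. I do not expect any genuine obstacle beyond this domain bookkeeping, since the computation is otherwise immediate.
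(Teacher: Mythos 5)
Your proof is correct and follows essentially the same route as the paper: both hinge on the computation $\Phi_2(x) = x^2/\eta(x) = x\Sigma_\mu(x) = \eta_\mu^{-1}(x)$, whence $\omega_2 = \eta_\mu$. Your observation that $\M(\eta)=\omega_2$ via the identity $\omega_t = (\eta^{\boxtimes t})^{\utimes \frac{t-1}{t}}$ merely shortcuts the paper's concluding two-line computation of $\eta^{\boxtimes 2}(x) = \eta_\mu(x)^2/x$ and its Boolean half-power.
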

\begin{proof} Recall that $\eta_\mu$ is a homeomorphism of $(-\infty,0)$ since $\mu(\{0\})=0$. Since $\eta(x) = \frac{x^2}{\eta_\mu^{-1}(x)}$, we obtain 
$$
\Phi_2(x) = \frac{x^2}{\eta(x)} = \eta_\mu^{-1}(x), 
$$
and hence 
$$
\omega_2 = \eta_\mu. 
$$
Therefore, 
$$
\eta^{\boxtimes 2}(x) = \eta(\omega_2(x)) = \frac{\eta_\mu(x)^2}{x}, 
$$
and so 
$$
(\eta^{\boxtimes 2})^{\utimes \frac{1}{2}}(x) = x \left( \frac{\eta^{\boxtimes 2}(x)}{x}\right)^\frac{1}{2} = x\left( \frac{[-\eta_\mu(x)]^2}{(-x)^2}\right)^\frac{1}{2}=\eta_\mu(x). 
$$
\end{proof}

Under the setting of \eqref{eta-def}, since $\mu$ embeds into the $\boxtimes$-convolution semigroup $\{\mu^{\boxtimes t}\}_{t\geq0}$ of probability measures on $(0,\infty)$, the map $\eta_\mu$ embeds into the $\boxtimes$-convolution semigroup $\eta_{\mu^{\boxtimes t}}$. This convolution semigroup may be written as $(\eta_\mu)^{\boxtimes t}$ and it coincides with $(\eta^{\boxtimes (1+t)})^{\utimes \frac{t}{1+t}}$ by Proposition \ref{M}. Now, we have the identity 
\begin{equation}\label{key eq}
\eta_{\mu^{\boxtimes t}}= ((\eta^{\boxtimes (1+t)})^{\utimes \frac{1}{1+t}})^{\utimes t}. 
\end{equation}
We have shown that the function $(\eta^{\boxtimes (1+t)})^{\utimes \frac{1}{1+t}}$ is close to $\eta$ up to $O(t)$ when $t$ is small. This estimate and \eqref{key eq} enable us to reduce the convergence problem of a MFLP to the Boolean case. 

\begin{proof}[Proof of Theorem \ref{LCF}] By assumption, there exist $0<\kappa<1<\lambda<\infty$ such that $\vv_\mu$ extends to a continuous function on $\Omega \cup [\kappa,\lambda]$ and such that $\vv_\mu([\kappa,\lambda]) \subset \C^+$ as required in Lemma \ref{continuation}. Fixing $\ep >0$ such that $\lambda+\ep <1 <\kappa-\ep$, Lemma \ref{continuation} shows that the map $\eta_t:=(\eta^{\boxtimes (1+t)})^{\utimes \frac{1}{1+t}}$ continuously extends to $\overline{\Omega}_\ep$ for small $t>0$. 
Therefore, Eq.\ \eqref{key eq} implies that, for $1/z \in \Omega_\ep$, 
\begin{equation}\label{}
\begin{split}
G_{\mu^{\boxtimes t}}(z) 
&= \frac{1}{z - z \eta_{\mu^{\boxtimes t}}(1/z) } = \frac{1}{z - z \eta_t^{\utimes t}(1/z) } \\
&=\frac{1}{z - (z \eta_t(1/z))^t } = \frac{1}{z - \exp[ -t \vv_\mu(1/z)](1+O(t))^t },  
\end{split}
\end{equation}
where the asymptotic behavior of Lemma \ref{continuation} was used on the last equality. Notice that the map $z\mapsto (z \eta_t(1/z))^t$ may not be the principal value in $\C^-$, but is defined as the (unique) continuous extension of the real-valued map on $(-\infty,0)$.  
Therefore, the density of $\mu^{\boxtimes t}$ is given by 
\begin{equation}\label{}
\begin{split}
\frac{d \mu^{\boxtimes t}}{dx} 
&= -\frac{1}{\pi}\Im [G_{\mu^{\boxtimes t}}(x+\ii0)] \\
&=  -\frac{1}{\pi}\Im\!\left(\frac{1}{x - \exp[-t \vv_\mu(1/x -\ii0)](1+o(t)) } \right)
\end{split}
\end{equation}
in a neighborhood of 1 unless the denominator is zero. 

Take a compact subset $K$ of $(0,\infty)$. The density of  $(\mu^{\boxtimes t})^{1/t}$ is given by 
\begin{equation}
\begin{split}
\frac{d (\mu^{\boxtimes t})^{1/t}}{d x} 
&= -\frac{t}{\pi x}\Im\!\left(\frac{1}{1 - x^{-t} \exp[-t \vv_\mu(x^{-t} -\ii0)](1+o(t))  }\right) \\
&= -\frac{t}{\pi x}\Im\!\left(\frac{1}{1 - (1-t \log x+o(t)) (1-t \vv_\mu(1)+o(t))(1+o(t)) }\right) \\
&= -\frac{1}{\pi x}\Im\!\left(\frac{1}{\log x+\vv_\mu(1)+o(1) }\right)
\end{split}
\end{equation}
as $t\downarrow0$ uniformly on $K$. This computation shows that $d (\mu^{\boxtimes t})^{1/t}/ d x$ exists on $K$ if $t$ is small enough (since the denominator is not zero),  
and converges to
$$
\frac{1}{\pi x} \cdot \frac{\gamma}{(\log x -\beta)^2 + \gamma^2} \quad \text{as} \quad t\downarrow0
$$
uniformly on $K$. 
\end{proof}

We do not know if this method somehow extends to other log free stable distributions. The main difficulty is that most of the log free stable distributions do not have explicit densities; they only have densities described by some implicit functions. In the above analysis, it is not obvious how these implicit densities appear in the limit.

\section{Unitary multiplicative L\'evy processes at small time}\label{S7}

In this section we will find limit distributions for unitary MFLPs and MCLPs at small time. We do not discuss unitary MBLPs because of some technical difficulty. We want to study the convergence in law of the unitary process
\begin{equation}
b(t) (U_t)^{a(t)},  \quad \text{as~} t\downarrow0, 
\end{equation}
where $\{U_t\}_{t\geq0}$ is a unitary MFLP and $a\colon (0,\infty) \to \N$ and $b\colon (0,\infty) \to \T$ are functions. Note that the function $a$ is assumed to be $\N$-valued, or at least $\Z$-valued, because non-integral powers $z^p$ cannot be continuously defined on $\T$. In terms of probability measures, our aim is to obtain weak limits of
\begin{equation}\label{UMFLP}
\Rot_{b(t)}(\mu^{\boxtimes t})^{a(t)},  \quad \text{as~} t\downarrow0, 
\end{equation}
where $\{\mu^{\boxtimes t}\}_{t\geq0}$ is a weakly continuous $\boxtimes$-convolution semigroup on $\T$ such that $\mu_0=\delta_1$. 
\begin{rem}
We cannot formulate a limit theorem for unitary MFLPs at large time. In order to do so we need to consider the situation where $a(t)\to 0$ as $t \to \infty$, but such is impossible for $\N$-valued functions. 
\end{rem}

Before going to the general case let us analyze the important example of unitary free Brownian motion for which we have an explicit description in terms of moments.
\begin{exa}\label{limit semicircle}
Let $\{U_t\}_{t\geq0}$ be standard unitary free BM. The $m$-th moment of $U_t$ is calculated by Biane \cite{Bia97}: 
$$ \bE[U_t^m]= e^{-\frac{m t}{2}}\sum^{m-1}_{k=0} (-1)^k\frac{t^k}{k!} m^{k-1} \binom{m}{k+1}, \qquad m \geq1.  
$$
If we take $m = n [1/\sqrt{t}]$ for a fixed $n \in\N$ then as $t$ tends to 0 we have 
\begin{equation*}
\begin{split}
\bE\left[\left(U_{t}^{[1/\sqrt{t}]}\right)^n\right] \sim e^{-\frac{n \sqrt{t}}{2}}\sum^{n [1/\sqrt{t}]-1}_{k=0} (-1)^k t^k \frac{(n t^{-1/2})^{2k}}{k! (k+1)!}   \to \sum^{\infty}_{k=0} (-1)^k \frac{n^{2k}}{k! (k+1)!} = \frac{J_1(2n)}{n},  
\end{split}
\end{equation*}
where $J_1$ is the Bessel function of the 1st kind. Let $S$ be a semicircular random variable with mean 0 and variance 1. Then, it is well known that the characteristic function is given by 
$$
\bE[e^{\ii \gamma S}] =  \frac{J_1(2\gamma)}{\gamma}, \qquad \gamma\in\R. 
$$
So we have proved that 
$$
(U_t)^{[1/\sqrt{t}]} \tolaw e^{\ii S}, \qquad t\downarrow0.   
$$
\end{exa}

In order to discuss the general case, it is instructive to understand the classical version of \eqref{UMFLP} in which $\{\mu^{\boxtimes t}\}_{t\geq0}$ is replaced by $\{\mu^{\circledast t}\}_{t\geq0}$. Let $\{X_t\}_{t\geq0}$ be an ACLP on $\R$ such that $X_0=0$, and let $U_t =e^{\ii X_t}$. Then $\{U_t\}_{t\geq0}$ is a unitary MCLP and the identity  
\begin{equation}\label{UMLP}
b(t) (U_t)^{a(t)} = e^{\ii [a(t) X_t + \arg b(t) ]} 
\end{equation}
holds, where $ \arg b(t)$ is defined modulo $2\pi$. If we take the ACLP $\{X_t\}$ and functions $a$ and $b$ in such a way that $a(t) X_t + \arg b(t)$ converges in law to a stable random variable $X$, then the law of \eqref{UMLP} converges in law to $e^{\ii X}$.  For example we may trivially take $\{X_t\}_{t\geq0}$ to be a stable process! This argument shows that the set of possible limit distributions contains all the laws of $e^{\ii X}$, where $X$ is a stable random variable. Moreover, it is easy to see that the Haar measure can appear in the limit. 
The authors do not know whether other distributions appear in the limit.

A similar idea works for unitary MFLPs. The wrapping map $W$ is useful to establish a transfer principle from additive convolutions to multiplicative ones, provided that we restrict to the class $\cL$ of probability measures (see Section \ref{exponential}).

\begin{thm}\label{thm UMFLP0}
Let $\mu$ be an $\boxtimes$-ID distribution on $\T$ with free generating pair $(\gamma,\sigma)$ and define 
\begin{align*}
\overline{\sigma}^+(x)= \int_{(x,\pi)} \theta^{-2} \,d\sigma(\theta), &\qquad \overline{\sigma}^-(x)= \int_{(-\pi,-x)} \theta^{-2} \,d\sigma(\theta), \\
\overline{\sigma}(x)=\overline{\sigma}^+(x)&+\overline{\sigma}^-(x),  \qquad 0<x<\pi, 
\end{align*}
where the measure $\sigma$ on $\T$ is identified with the measure on $[-\pi, \pi)$. 
\begin{enumerate}[\rm(1)]
\item If the function $x\mapsto \sigma((-x,x))$ is slowly varying as $x\downarrow0$ then there exist functions $a\colon (0,\infty) \to \N$ and $b\colon(0,\infty) \to \T$ such that \eqref{UMFLP} weakly converges to $W(\bff_{2,1/2})$. 
\item Let $(\alpha,\rho)\in \Ad, \alpha \neq2$.  If the function $ \overline{\sigma}(x)$ is regularly varying with index $-\alpha$ as $x\downarrow0$ and if 
$$
\lim_{x\downarrow0} \frac{\overline{\sigma}^+(x)}{\overline{\sigma}(x)} = 
\begin{cases}
\frac{1}{2} \left(1 + \frac{\tan(\rho-\frac{1}{2})\alpha\pi}{\tan\frac{\alpha \pi}{2}}\right), &\text{if}~\alpha \neq 1, \\
\rho, &\text{if}~ \alpha =1,  
\end{cases}
$$ 
then there exist functions $a\colon (0,\infty) \to \N$ and $b\colon(0,\infty) \to \T$ such that \eqref{UMFLP} weakly converges to $W(\bff_{\alpha,\rho})$. 
\end{enumerate}
Similar statements hold for the classical case. 
\end{thm}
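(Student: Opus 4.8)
The plan is to transfer the entire problem to the additive free (resp.\ classical) setting via the wrapping map $W$, where the small-time domain-of-attraction result Theorem \ref{DA at 0} is available. First I would fix a preimage $\nu := \mu_\boxplus^{\xi,\tau} \in \ID(\boxplus)\cap\cL$ of $\mu$ furnished by Proposition \ref{prop:wrapping-free} (resp.\ $\nu := \mu_\ast^{\xi,\tau}$ via Proposition \ref{prop:wrapping-classical} in the classical case), with $(\xi,\tau)$ given by \eqref{eq gamma}--\eqref{ID wrap}. By Proposition \ref{Prop:ID-preimage} one has $W(\nu^{\boxplus t}) = W(\nu)^{\boxtimes t} = \mu^{\boxtimes t}$ for all $t\geq0$, so the prescribed $\boxtimes$-semigroup is exactly the $W$-image of the additive semigroup generated by $\nu$.

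The next step is a reduction identity. For $n\in\N$ the push-forward identity $e^{-\ii nX}=(e^{-\ii X})^n$ gives $W(\rho)^n = W(\D_n(\rho))$, while for $c\in\R$ rotation corresponds to shift, $\Rot_{e^{-\ii c}}(W(\rho')) = W(\rho'\boxplus\delta_c)$ (using $\boxplus\delta_c=\ast\delta_c$). Combining these with the homomorphism yields, for $a(t)=n(t)\in\N$ and $b(t)=e^{-\ii c(t)}$,
$$
\Rot_{b(t)}\big((\mu^{\boxtimes t})^{n(t)}\big) = W\big(\D_{n(t)}(\nu^{\boxplus t}) \boxplus \delta_{c(t)}\big).
$$
Since $W$ is weakly continuous, it then suffices to find $n(t)\in\N$ and $c(t)\in\R$ with $\D_{n(t)}(\nu^{\boxplus t})\boxplus\delta_{c(t)} \wto \bff_{\alpha,\rho}$ (resp.\ $\wto\bfs_{\alpha,\rho}$ classically), and the limit of \eqref{UMFLP} becomes $W(\bff_{\alpha,\rho})$ (resp.\ $W(\bfs_{\alpha,\rho})$), as claimed.

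To produce such norming I would invoke Theorem \ref{DA at 0} together with Theorem \ref{thm ALP}, which forces me to translate the hypotheses on $\sigma$ into the tail conditions on $(\xi,\tau)$. Using the periodization \eqref{ID wrap}, only the fundamental copy of $\tilde\sigma$ near the origin contributes divergently as $x\downarrow0$, the remaining copies and the far tail being bounded; since $\tfrac{1+y^2}{y^2}\sim\theta^{-2}$ near $0$ and $(1+y^2)\,\tau(dy)=2\tilde\sigma$ on the fundamental domain, one obtains $\overline{\Pi}^\pm(x) = 2\overline{\sigma}^\pm(x) + O(1)$ and $V(x) = 2\sigma((-x,x))$ as $x\downarrow0$. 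Consequently slow variation of $\sigma((-x,x))$ matches that of $V$, and regular variation of $\overline{\sigma}$ of index $-\alpha$ with the stated limit of $\overline{\sigma}^+/\overline{\sigma}$ matches the conditions of Theorem \ref{DA at 0} placing $\nu$ in $\fD^0_\ast(\bfs_{\alpha,\rho})$ (resp.\ $\fD^0_\boxplus(\bff_{\alpha,\rho})$). \textbf{This is the main obstacle:} the orientation and sign bookkeeping must be carried out so that the ``$+$'' tails correspond correctly under the clockwise identification of $\sigma$ with a measure on $[-\pi,\pi)$ used by $W$; with the opposite orientation the relation $\overline{\Pi}^+/\overline{\Pi}\to 1-\lim\overline{\sigma}^+/\overline{\sigma}$ would land on $\bff_{\alpha,1-\rho}$, and one would have to repair it through the symmetry $R(\rho)+R(1-\rho)=1$ of the ratio function in Theorem \ref{DA at 0}. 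This step delivers a \emph{real} norming $a(t)\to\infty$ with $c(t)$ such that $\D_{a(t)}(\nu^{\boxplus t})\boxplus\delta_{c(t)}\wto\bff_{\alpha,\rho}$.

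Finally, Theorem \ref{DA at 0} gives a real $a(t)$, whereas \eqref{UMFLP} needs an integer power. Because the limit is non-degenerate and $a(t)\to\infty$ in the small-time regime, I would set $n(t):=\lceil a(t)\rceil$, so $n(t)/a(t)\to1$, and by the convergence-of-types principle (Lemma \ref{GK}, or directly Lemma \ref{lemma limit1}) replace $\D_{a(t)}$ by $\D_{n(t)}$ after recentering to $c'(t):=(n(t)/a(t))\,c(t)$, preserving the same stable limit. Substituting $n(t)$ and $b(t)=e^{-\ii c'(t)}$ into the reduction identity and using weak continuity of $W$ then yields the stated convergence to $W(\bff_{\alpha,\rho})$. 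The classical statement follows by the identical argument, now using the homomorphism \eqref{classical hom}, Proposition \ref{prop:wrapping-classical}, and the classical part of Theorem \ref{DA at 0}, to obtain the limit $W(\bfs_{\alpha,\rho})$.
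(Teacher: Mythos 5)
Your proposal is correct and follows essentially the same route as the paper: take the additive preimage $\mu_\boxplus^{\xi,\tau}$ from Proposition \ref{prop:wrapping-free}, apply Theorem \ref{DA at 0} to get real norming functions, round the dilation to an integer (the paper uses $[A(t)]$ and the identity $\D_{[A(t)]}(\cdot)\boxplus\delta_{B(t)[A(t)]/A(t)}=\D_{[A(t)]/A(t)}[\D_{A(t)}(\cdot)\boxplus\delta_{B(t)}]$, which is the same convergence-of-types step), and push forward through $W$ via Proposition \ref{homomorphism}. Your explicit verification that $\overline{\Pi}^{\pm}(x)=2\overline{\sigma}^{\pm}(x)+O(1)$ and $V(x)\sim 2\sigma((-x,x))$, together with the orientation bookkeeping for the clockwise wrapping, fills in a step the paper leaves as the bare assertion that ``$\tau$ satisfies the assumption of Theorem \ref{DA at 0}.''
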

\begin{proof} Let $(\xi,\tau)$ be a pair defined by \eqref{eq gamma} and \eqref{ID wrap}, and let $\tilde\mu:= \mu_\boxplus^{\xi,\tau}$. Then $\tilde \mu$ is a pre-image of $\mu$ by the map $W|_\cL$ from Proposition \ref{prop:wrapping-free}. The measure $\tau$ satisfies the assumption of Theorem \ref{DA at 0}, which implies that there exist functions $A(t), B(t)>0$ such that $A(t)\to \infty$ and $\D_{A(t)}(\tilde\mu ^{\boxplus t})\boxplus \delta_{B(t)} \wto \bff_{\alpha,\rho}$.  Since 
\begin{equation}
\D_{[A(t)]}(\tilde\mu ^{\boxplus t})\boxplus \delta_{B(t) [A(t)]/ A(t)} =\D_{[A(t)]/A(t)}\left[ \D_{A(t)}(\tilde\mu ^{\boxplus t})\boxplus \delta_{B(t)} \right]  \wto \bff_{\alpha,\rho}, 
\end{equation}
we may a priori assume that $A(t)$ is $\N$-valued. Then, by Proposition \ref{homomorphism} we have 
\begin{equation}\label{Wrap-affine}
W(\D_{A(t)}(\tilde\mu^{\boxplus t})\boxplus \delta_{B(t)}) = \Rot_{e^{-\ii B(t)}}\left[ (W(\tilde\mu)^{\boxtimes t})^{A(t)} \right] =\Rot_{e^{-\ii B(t)}}\left[ (\mu^{\boxtimes t})^{A(t)} \right], 
\end{equation}
which weakly converges to $W(\bff_{\alpha,\rho})$. This shows that we can take $a(t) = A(t)$ and $b(t)= e^{-\ii B(t)}$ such that \eqref{UMFLP} converges to $W(\bff_{\alpha,\rho})$. 

The proof for the classical case is similar; one only needs to use Proposition \ref{prop:wrapping-classical} instead of Proposition \ref{prop:wrapping-free}, and replace free objects by the corresponding classical ones.  
\end{proof}
\begin{rem}  Note that the measure $\D_{A(t)}(\tilde\mu^{\boxplus t})$ above may not belong to $\cL$, since the class $\cL$ is not closed under dilation. Due to this, the converse statement of Theorem \ref{thm UMFLP0} cannot be proved. Also, we cannot prove a similar statement for the Boolean case: since $\D_{A(t)}((\mu_\uplus^{\xi,\tau})^{\uplus t})$ may not belong to $\cL$ and $\uplus \delta_{B(t)}$ may not be the usual shift (cf.\ \eqref{equal-convolutions}), we do not know how to compute $W(\D_{A(t)}((\mu_\uplus^{\xi,\tau})^{\uplus t})\uplus \delta_{B(t)})$. 
\end{rem}

\begin{cor}\label{cor UMFLP}
The set of possible limit distributions of \eqref{UMFLP} contains the set $\{W(\mu): \mu \text{~is free stable}\}$. A similar statement holds for the classical case. 
\end{cor}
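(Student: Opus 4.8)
The plan is to reduce everything to Theorem \ref{thm UMFLP0}, whose conclusion already exhibits each canonical wrapped free stable law $W(\bff_{\alpha,\rho})$ as a weak limit of \eqref{UMFLP}. The hypotheses there are satisfiable for every $(\alpha,\rho)\in\Ad$: one chooses a $\boxtimes$-ID law $\nu$ on $\T$ whose free generating measure $\sigma$ exhibits the required regular (resp.\ slow) variation near the identity, which is easily arranged by prescribing the local behaviour of $\sigma$ at $\theta=0$; by Proposition \ref{prop:wrapping-free} and Theorem \ref{DA at 0} the pre-image $\tilde\nu=\mu_\boxplus^{\xi,\tau}\in\cL$ then lies in $\fD^0_\boxplus(\bff_{\alpha,\rho})$. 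Since, as recalled in Section \ref{sec stable}, every free stable law has the form $\mu=\D_a(\bff_{\alpha,\rho})\boxplus\delta_b$ with $(\alpha,\rho)\in\Ad$, $a>0$, $b\in\R$, it remains to upgrade the convergence from $W(\bff_{\alpha,\rho})$ to $W(\mu)$, i.e.\ to absorb an arbitrary dilation $\D_a$ and shift $\boxplus\delta_b$ into the construction.

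The shift is the easy part. I would keep the $\N$-valued function $A_0(t)$ and the real function $B_0(t)$ produced in the proof of Theorem \ref{thm UMFLP0}, for which $\D_{A_0(t)}(\tilde\nu^{\boxplus t})\boxplus\delta_{B_0(t)}\wto\bff_{\alpha,\rho}$, and simply replace $B_0(t)$ by $B_0(t)+b$. By Lemma \ref{lemma limit1} the affine image converges to $\bff_{\alpha,\rho}\boxplus\delta_b$, and applying $W$ together with the homomorphism property (Proposition \ref{homomorphism}, in the form \eqref{Wrap-affine}) turns the extra shift into the rotation factor $b(t)=e^{-\ii(B_0(t)+b)}$.

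The dilation is where the real work lies, and the obstacle is precisely the constraint that the exponent $a(t)$ in \eqref{UMFLP} must be integer-valued, so that a dilation $\D_a$ with non-integer $a$ cannot be produced by rescaling the normalizing exponent verbatim (only integer powers are defined on $\T$). The remedy I propose is to round: set $A(t):=\lfloor a\,A_0(t)\rceil\in\N$ (nearest integer) and $B(t):=\tfrac{A(t)}{A_0(t)}B_0(t)+b$. Since $A_0(t)\to\infty$, the rounding error is negligible and $c(t):=A(t)/A_0(t)\to a$; writing $\D_{A(t)}(\tilde\nu^{\boxplus t})\boxplus\delta_{B(t)}=\D_{c(t)}\!\big(\D_{A_0(t)}(\tilde\nu^{\boxplus t})\boxplus\delta_{B_0(t)}\big)\boxplus\delta_{b}$ and invoking Lemma \ref{lemma limit1} (dilation and free shift being ordinary push-forwards, so weak continuity applies) yields convergence to $\D_a(\bff_{\alpha,\rho})\boxplus\delta_b=\mu$. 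Now $A(t)\in\N$, so Proposition \ref{homomorphism} applies and gives $W\big(\D_{A(t)}(\tilde\nu^{\boxplus t})\boxplus\delta_{B(t)}\big)=\Rot_{e^{-\ii B(t)}}\big[(\nu^{\boxtimes t})^{A(t)}\big]$; by weak continuity of $W$ this converges to $W(\mu)$, so \eqref{UMFLP} with $a(t)=A(t)$ and $b(t)=e^{-\ii B(t)}$ does the job. The classical case runs identically, replacing Proposition \ref{prop:wrapping-free} and the free domain-of-attraction input by Proposition \ref{prop:wrapping-classical} and its classical counterpart.
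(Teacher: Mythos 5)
Your proposal is correct and follows essentially the same route as the paper: the paper states this corollary without proof as an immediate consequence of Theorem \ref{thm UMFLP0}, whose hypotheses are clearly satisfiable for every $(\alpha,\rho)\in\Ad$ (the subsequent example with $\lambda_{\alpha,\rho}$ makes this explicit). The only detail you add beyond what the paper leaves implicit is the reduction of a general free stable law $\D_a(\bff_{\alpha,\rho})\boxplus\delta_b$ to the canonical case by rounding $aA_0(t)$ to a nearest integer (the same device the paper already uses to make $A(t)$ integer-valued), and this is handled correctly.
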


\begin{exa}
Let $\lambda_{\alpha,\rho}$ be the $\boxplus$-ID distribution with the Voiculescu transform 
\begin{align}
\varphi_{\lambda_{\alpha,\rho}}(z) = \varphi_{\bff_{\alpha,\rho}}(\tan z)=
\begin{cases}
 - e^{\ii \alpha\rho\pi}  \left(\tan z\right)^{1-\alpha}, & \alpha \neq1, \\
 -\ii \rho \pi - (1-2\rho) \log \tan z, & \alpha =1, 
\end{cases}
\end{align}
where $(\alpha,\rho)$ is admissible. Since $\tan z$ maps $\C^+$ into itself, and $\tan (\ii y) \to \ii$ as $y\to \infty$, those functions have Pick--Nevanlinna representations of the form \eqref{FLK} and hence  by Theorem \ref{thmBV93} such a $\boxplus$-ID distribution $\lambda_{\alpha,\rho}$ exists. Furthermore, $\varphi_{\lambda_{\alpha,\rho}}$ is a periodic function with respect to $2\pi$ shifts, and hence $\lambda_{\alpha,\rho}\in\cL.$ 
If we let $\mu_t=\D_{a(t)} (\lambda_{\alpha,\rho}^{\boxplus t})$ then, for $\alpha \neq1$,  
\begin{equation}
\varphi_{\mu_t}(z)= -t a(t) \varphi_{\lambda_{\alpha,\rho}}(z/a(t))= -t a(t) e^{\ii \alpha\rho\pi}  \left( \tan \frac{z}{a(t)}\right)^{1-\alpha}
\end{equation}
Supposing $a(t)\to\infty$ we have that for $t$ small enough  
\begin{equation}
\varphi_{\mu_t}(z)\sim -t a(t)^\alpha e^{\ii \alpha\rho\pi} z^{1-\alpha}.
\end{equation}
Taking $a(t)=[t^{-1/\alpha}]$ we obtain that $\varphi_{\mu_t}(z)\to- e^{\ii \alpha\rho\pi}  {z^{1-\alpha}}$.
This implies  that $\mu_t \to \mathbf{f}_{\alpha,\rho}$ and hence 
\begin{equation}
(W(\lambda_{\alpha,\rho})^{\boxtimes t})^{[t^{-1/\alpha}]} = W(\D_{a(t)}(\lambda_{\alpha,\rho}^{\boxplus t}))\wto W(\mathbf{f}_{\alpha,\rho}).
\end{equation}
Similarly, setting $\tilde\mu_t=\D_{\tilde a(t)} (\lambda_{1,\rho}^{\boxplus t}) \boxplus \delta_{\tilde b(t)}$ where $\tilde a(t) \to \infty$, we obtain
\begin{equation}
\varphi_{\tilde\mu_t}(z)\sim -t \tilde a(t)  \ii\rho\pi - (1-2\rho)t \tilde a(t) \log z +(1-2\rho) t \tilde a(t) \log \tilde a(t) + \log \tilde b(t).
\end{equation}
Therefore taking $\tilde a(t) = [1/t]$ and $\tilde b(t) = t^{1-\rho}$ yields the convergence $\varphi_{\tilde\mu_t}(z) \to \varphi_{\bff_{1,\rho}}(z)$. This shows the weak convergence
\begin{equation}
\Rot_{\exp(-\ii t^{1-\rho})}(W(\lambda_{1,\rho})^{\boxtimes t})^{[1/t]} \wto W(\mathbf{f}_{1,\rho}).
\end{equation}
\end{exa}

In the unitary case, the Haar measure can appear as a limit distribution. For example if the measure $\mu$ itself is the Haar measure, then the measure  \eqref{UMFLP} is the Haar measure at any time. 
\begin{prob}
Is the set $\{W(\mu)\mid \mu \text{~is free stable}\} \cup\{\text{Haar measure, delta measures}\}$ the only possible limits of \eqref{UMFLP}?
\end{prob}

\section*{Acknowledgements} O.A.\ was supported by CONACYT grant 222667. T.H.\ is supported by JSPS Grant-in-Aid for Young Scientists (B) 15K17549 and  by JSPS and MAEDI Japan--France Integrated Action Program (SAKURA).  The authors are grateful to Victor Rivero who informed them of the references \cite{MM08,MM09}.


~ ~\\
Department of Probability and Statistics, Centro de Investigaci\'on en Matem\'aticas, Guanajuato, M\'exico. \emph{E-mail address}  octavius@cimat.mx

~~\\
Department of Mathematics, Hokkaido University, Kita 10, Nishi 8, Kita-ku, Sapporo 060-0810, Japan.  \emph{E-mail address}  thasebe@math.sci.hokudai.ac.jp

\end{document}